\theoremstyle{plain}
\newtheorem{thm}{Theorem}[section]
\newtheorem{cor}[thm]{Corollary}
\newtheorem{lem}[thm]{Lemma}
\newtheorem{prop}[thm]{Proposition}
\theoremstyle{definition}
\newtheorem{defi}[thm]{Definition}
\theoremstyle{remark}
\newtheorem{rem}[thm]{Remark}
\numberwithin{equation}{section}
\newcommand{\average}{{\mathchoice {\kern1ex\vcenter{\hrule height.4pt
width 6pt depth0pt} \kern-9.7pt} {\kern1ex\vcenter{\hrule
height.4pt width 4.3pt depth0pt} \kern-7pt} {} {} }}
\newcommand{\N}{\mathbb{N}}
\newcommand{\R}{\mathbb{R}}
\newcommand{\Ss}{\mathbb{S}}
\newcommand{\vep}{\varepsilon}
\def\intrr{\mathrm{int}}
\def\dist{\mathrm{dist}}
\def\Reg{\mathrm{Reg}}
\begin{document}

\title[Regularity theory for fully nonlinear parabolic obstacle problems]{Regularity theory for fully nonlinear parabolic obstacle problems}

\author{Alessandro Audrito}
\address{Alessandro Audrito \newline \indent 
Department of Mathematics, ETH Zurich, R\"amistrasse 101, 8092 Zurich, Switzerland}
\email{alessandro.audrito@math.ethz.ch}

\author{Teo Kukuljan}
\address{Teo Kukuljan \newline \indent
Universitat de Barcelona, Departament de Matem\`atiques i Inform\`atica, Gran Via de les \newline \indent 
Corts Catalanes 585, 08007 Barcelona, Spain.}
\email{tkukuljan@ub.edu}

\thanks{TK is supported by the European Research Council (ERC) under the Grant Agreement No 801867 and the AEI project PID2021-125021NA-I00 (Spain).
AA is supported by the European Union's Horizon 2020 research and innovation programme under the Marie Sk{\l}odowska--Curie grant agreement 892017 (LNLFB-Problems). \\
We wish to thank Prof. X. Ros-Oton for his constant support and inspiring discussions concerning the results presented in the paper.
}

\keywords{Free boundary, fully nonlinear diffusion, Obstacle problem, regularity.}

\subjclass[2010]{35R35, 35K55, 35B44, 35B65.}

\begin{abstract}
We study the free boundary of solutions to the parabolic obstacle problem with fully nonlinear diffusion. We show that the free boundary splits into a regular and a singular part: near regular points the free boundary is $C^\infty$ in space and time. Furthermore, we prove that the set of singular points is locally covered by a Lipschitz manifold of dimension $n-1$ which is also $\varepsilon$-flat  in  space, for any $\varepsilon>0$.
\end{abstract}

\maketitle

%\tableofcontents

%
%
%%%%%%%%%%%%%%%%%%%%%%%%%%%%%%%%%%%%%%%%%%%%%%%%%%%%%%%%%%%%%%%%%%%%%%%%%%%%%%%%%%%%%%%%
%
%%%%%%%%%%%%%%%%%%%%%%%%%%%%%%%%%%%%%%%%%%%%%%%%%%%%%%%%%%%%%%%%%%%%%%%%%%%%%%%%%%%%%%%%
%
%
\section{Introduction}
The classical parabolic obstacle problem can be formulated as follows:
\begin{equation*}
\begin{cases}
\partial_tu - \Delta u = -\chi_{\{u > 0\}} &\text{in } Q_1\subset \R^{n+1} \\
u \geq 0, \; \partial_t u \geq 0             &\text{in } Q_1.
\end{cases}
\end{equation*}
It arises in many fields and applications such as the study of phase transitions in the Stefan problem and optimal stopping (see e.g. \cite{F18} and \cite{E13}, respectively) and it is one of the most remarkable examples in the class of \textit{free boundary} problems. Indeed, the positivity set of the solution and its topological boundary are \emph{not} fixed a priori, but are \textit{unknowns} of the problem. For this reason, the main goal is to describe as precisely as possible both the solution $u$ and its free boundary $\partial\{u>0\}$.

For what concerns the solution, one can establish the optimal regularity ($C^{1,1}$ is space and $C^1$ in time) plus additional qualitative/quantitative properties like non-degeneracy and semi-convexity in space (see \cite{Caf78,Caf78Bis,CF79}). However, a much more
%The regularity of solutions to the obstacle problem follows  from the theory of linear PDE --  solutions are $C^{1,1}$ in space and $C^1$ in time and not more in general \color{red} I think this is only partially true: $C^{1,1}$ regularity is not true for any bounded r.h.s. while I believe we have $u \in C_t^\alpha \cap C_x^{1,\alpha}$ by the linear theory \normalcolor.
 challenging and delicate issue is to investigate the regularity of the free boundary, namely to answer the following question:
$$\text{Is the free boundary } C^\infty?$$
The regularity theory for the free boundary was developed by Caffarelli in his 
groundbreaking paper \cite{Caf78}, where he showed that the free boundary can be split into \textit{regular points} and \textit{singular points}: the regular points form an open subset of the free boundary which is locally $C^\infty$, while the singular point enjoy some \emph{stratification} properties. The set of singular points was later studied in \cite{B1anchet06,B1anchet06Bis,BlaDolMon06}, where the authors proved the uniqueness of blow-ups at singular points and the $C_t^1$ regularity of solutions for general linear parabolic operators with smooth coefficients (see also \cite{Caf78Bis,CF79} for the standard diffusion framework), and \cite{LM15}, where the authors showed that the singular set can be locally covered with a $C^1_x\cap C^{1/2}_t$ manifold of dimension $n-1$ (see also \cite{B01,B02} for a covering of class $C^{0,1}$). Only recently, Figalli, Ros-Oton and Serra (\cite{FigRosSerra1}) gave a sharp bound on the parabolic Hausdorff dimension of the singular set and a $C^\infty$-expansion of solutions near singular points, up to a set of higher codimension. In particular, when $n = 3$, it turns out that the free boundary is smooth (in space) for a.e. time. It is important to mention that the assumption $\partial_tu \geq 0$ plays a crucial role in the whole analysis: many of the results mentioned above are not known when the time-monotonicity of solutions is not assumed a priori. We quote \cite{CafPetSha04} by Caffarelli, Petrosyan and Shahagholian for the analysis of the free boundary without any assumption on the sign of both $u$ and $\partial_tu$.

The goal of this paper is to study the parabolic obstacle problem with fully nonlinear diffusion:
\begin{equation}\label{eq:ProblemIntro}
\begin{cases}
\partial_tu - F(D^2u,x) = f(x) \chi_{\{u > 0\}} &\text{in } Q_1 \\
u \geq 0, \; \partial_t u \geq 0             &\text{in } Q_1,
\end{cases}
\end{equation}
where $F: \mathcal{S} \times B_1 \to \R$ and $f: B_1 \to \R$ are given functions and $\mathcal{S}$ denotes the linear space of $n \times n$ symmetric matrices. We assume that $F$ satisfies the following conditions
\begin{equation}\label{eq:AssOnFIntro}
\begin{cases}
F \in C^\infty \\
F(\cdot,x) \text{ is convex for all } x \in B_1, \\
F \text{ is uniformly elliptic}, \\
F(O,x) = 0 \text{ for all } x \in B_1.
\end{cases}
\end{equation}
We say that $F$ is \emph{uniformly elliptic} if there exist $0 < \lambda \leq \Lambda$ such that
\[
\lambda \| N \| \leq F(M+N,x) - F(M,x) \leq \Lambda\| N \|,
\]
for all $x \in B_1$, all $M \in \mathcal{S}$, and all $N \in \mathcal{S}$ satisfying $N \geq 0$; see \cite{CC95,FR21,IS12,Lieberman1996} for a comprehensive treatise about fully nonlinear uniformly elliptic operators. We  also assume that  
\begin{equation}\label{eq:AssOnfIntro}
f \in C^\infty \text{ and } f \leq - c_\circ,
\end{equation}
for some $c_\circ > 0$.\footnote{The assumptions $F,f \in C^\infty$ are not needed in the majority of our statements but, at this stage, useful to simplify the presentation. Weaker assumptions on the regularity of $F$ and $f$ will be given later on in the paper.}

In the stationary version of the problem
\[
F(D^2u,x) =  \chi_{\{u > 0\}}
\]
% \cite{PetShah}
the optimal regularity of solutions and the $C^{1,\alpha}$ regularity of the regular part of the free boundary was studied by Lee in his PhD thesis \cite{Lee1998}. The optimal regularity was investigated in a more general setting by Figalli and Shahgholian in \cite{FigShah}, where they additionally proved that if the free boundary is Lipschitz, then it is $C^1$. They extended such results to the parabolic setting in \cite{FigShahBis}, see also \cite{IM16}. We also quote \cite{IN19} (elliptic framework), where the author shows that the free boundary can be locally written as the graph of a $C^1$ function up to the fixed boundary: in particular, this shows that the free boundary leaves the fixed boundary in a tangential way (see also \cite{IN19Bis}).

The higher regularity of the free boundary in both elliptic and parabolic setting is a consequence of the celebrated work by Kinderlehrer and Nirenberg \cite{KN77}. However, notice that there was still a small gap between the $C^1$ regularity and the higher regularity, since in \cite{KN77} the solution is assumed to be $C^2$ in the positivity set up to the boundary, while in the above papers the solutions are only proved to be $C^{1,1}$.

The singular set has been studied in \cite{B01,B02} (fully nonlinear elliptic setting), where Bonorino showed that the singular set can be locally covered with a Lipschitz $(n-1)$-dimensional manifold, and in \cite{SY21} where Savin and Yu improved the regularity of the covering manifold to $C^{1,\log^\varepsilon}$, see also \cite{SY22}. For the parabolic case (with fully nonlinear diffusion) no results were known for singular points. We finally mention the papers \cite{PetShah,Shah} for further analysis of the free boundary in fully nonlinear obstacle problems with non-smooth obstacles and some applications to finance.

\subsection{General strategy and main results} As mentioned above, in this paper we study the free boundary of solutions to \eqref{eq:ProblemIntro}, when the standard diffusion is replaced by the fully nonlinear one, in the sense of \eqref{eq:AssOnFIntro}.

The general strategy we follow is well-known by the experts in free boundary problems and is based on the asymptotic analysis of the so-called \emph{blow-up families}. A blow-up family of $u$ at $(x_0,t_0) \in \partial\{u > 0\}$ is a normalised re-scaling of $u$ centred at $(x_0,t_0)$, defined by the formula
\begin{equation}\label{eq:ParNormRescaling}
u^{(x_0,t_0)}_r(x,t) := \frac{u(x_0 + rx,t_0 + r^2t)}{r^2}, \qquad r \in (0,1).
\end{equation}
We will write $u_r$, when $(x_0,t_0) = (0,0)$. Notice that the process of blowing-up ``zooms-in'' the solution around free boundary points and, moreover, thanks to the normalisation factor $r^{-2}$, it ``preserves'' the equation: indeed, notice that each re-scaling in \eqref{eq:ParNormRescaling} satisfies
\begin{equation}\label{eq:RescalingEqn}
\partial_t v - F_r^{x_0}(D^2v,x) = f_r^{x_0}(x)\chi_{\{v > 0\}} \quad \text{ in } Q_{1/r},
\end{equation}
where $F_r^{x_0}(M,x) := F(M,x_0 + rx)$ and $f_r^{x_0}(x) := f(x_0 + rx)$. In light of these heuristic observations, one wants to study the limit as $r \downarrow 0$ and prove that the local behaviour of solutions to \eqref{eq:ProblemIntro} can be described in terms of the local behaviour of \emph{blow-up limits} (or, shortly, \emph{blow-ups}), i.e., limits of blow-up families along some sequence $r_k \downarrow 0$:
\[
u_{r_k}^{(x_0,t_0)} \to u_0^{(x_0,t_0)},
\]
as $k \to \infty$, in some suitable topology. The rough idea is that some of the properties of the blow-ups are shared by the blow-up families (for small $r$'s) and so, in turn, by the solution itself at very small scales. For instance, if $(0,0) \in \partial\{u > 0\}$, can we prove that
\[
\partial \{ u_r > 0 \} \text{ is locally } C^\infty \text{ for } r \sim 0, \text{ if } \partial \{ u_0 > 0 \} \text{ is locally } C^\infty ?
\]
This plan presents several difficulties and is developed in steps as follows. First, to prove the mere existence of blow-ups (along some suitable sequence), one has to establish \emph{compactness} properties for blow-up families such as optimal growth and optimal regularity of solutions (see Lemma \ref{lem:OptimalGrowth} and Theorem \ref{thm:OptimalReg}). Further, additional properties of solutions like non-degeneracy, semi-convexity in space and $C_t^1$ regularity are required in order to derive the blow-ups equation (see Lemma \ref{lem:NonDegeneracy} and Proposition \ref{prop:semicnovexity}): as a consequence of these properties, blow-ups turn out to be time independent, convex and satisfy an elliptic problem in the whole space (see \eqref{eq:BlowUpEqn}). It is then crucial to \emph{classify} blow-ups according to the notion of \emph{regular} and \emph{singular} points given in Definition \ref{def:RegSingPoints}. In this part, we basically adapt the ideas of Caffarelli \cite{Caf78}.

At this point, the analysis of the regular part of the free boundary, denoted by $\Reg(u)$, begins. It is divided in three main steps we summarize as follows:

\smallskip

$\bullet$ $\Reg(u)$ is locally $C^{0,1}$;

$\bullet$ If $\Reg(u)$ is locally $C^{0,1}$, then $\Reg(u)$ is locally $C^1_x \cap C^{0,1}_t$;

$\bullet$ If $\Reg(u)$ is locally $C^1_x \cap C^{0,1}_t$, then $\Reg(u)$ is locally $C^\infty$.

\smallskip

\noindent  We anticipate that, in the second step, it will be essential to show that $u$ is locally $C^2_x$ up to the free boundary near regular points (see Proposition \ref{prop:C1ofFB}). This property will allow us to bootstrap the regularity by means of some higher order boundary Harnack estimates from \cite{K21} and deduce the (local) smoothness of $\Reg(u)$ (see the proof of Theorem \ref{thm:SmoothnessFB}). Alternatively, we may apply \cite[Theorem 3]{KN77} by Kinderlehrer and Nirenberg.

Finally, we turn our attention to the singular part, denoted with $\Sigma(u)$. In the spirit of \cite{B01} and assuming that the function $F$ is independent of $x$, we show that singular points can be locally covered by a Lipschitz manifold of  dimension $n-1$ and, as a consequence of \cite[Corollary 7.8]{FigRosSerra}, we also obtain that almost every time-slice of $\Sigma(u)$ has at most Hausdorff dimension $n-2$. The main ingredients in this part of the analysis are two ``improved'' bounds: one for the time derivative (see Lemma \ref{lem:towardsLipschitzness}) and one for the second spatial derivatives along the main direction of growth of the blow-ups (see Lemma \ref{lem:positivityOfSecondDer}). The first estimate allows us to prove that the free boundary can be written as a graph, where the time variable is a Lipschitz function depending on the space ones. The second yields the $\varepsilon$-flatness in space of the Lipschitz covering, for any $\varepsilon>0$.

This is the strategy we follow through the rest of the paper. Now, we state our main results, starting with the classification of blow-ups and the $C^\infty$ regularity of $\Reg(u)$.
\begin{thm}\label{thm:1.1}
	Let $u$ be a solution to \eqref{eq:ProblemIntro} with $F,f$ satisfying \eqref{eq:AssOnFIntro}-\eqref{eq:AssOnfIntro}. Then, for every free boundary point $(x_0,t_0) \in \partial\{ u > 0 \}$ it holds
	\begin{enumerate}[(i)]
		\item either there are $c > 0$ and $e \in \mathbb{S}^{n-1}$ such that
		\[
		u_r^{(x_0,t_0)} \to c(e\cdot x)_+^2 \quad \text{ in } C^{1,\alpha}_x \cap C^\alpha_t \text{ locally in } \R^{n+1},
		\]
		as $r \to 0$,
		
		\item or for every sequence $r_k \to 0$, there is a subsequence $r_{k_j}$ and a matrix $A \geq 0$ such that
		\[
		u_{r_{k_j}}^{(x_0,t_0)} \to x^T A x \quad \text{ in } C^{1,\alpha}_x \cap C^\alpha_t \text{ locally in } \R^{n+1},
		\]
		as $j \to +\infty$.
	\end{enumerate}
	The points where $(i)$ holds are called regular. The set of regular points form an open subset of $\partial\{u>0\}$ which is locally $C^\infty$. Points where $(ii)$ holds are called singular.
\end{thm}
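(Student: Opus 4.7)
The first goal is to set up the blow-up analysis. Using the optimal growth bound and the $C^{1,1}_x\cap C^1_t$ estimates (Lemma~\ref{lem:OptimalGrowth} and Theorem~\ref{thm:OptimalReg}), together with non-degeneracy, the family $\{u_r^{(x_0,t_0)}\}_{r\in(0,1)}$ is pre-compact in $C^{1,\alpha}_x\cap C^\alpha_t$ on every compact set of $\R^{n+1}$, so along any sequence $r_k\downarrow 0$ we can extract a blow-up limit $u_0$. Passing to the limit in \eqref{eq:RescalingEqn}: since $F(\cdot,x_0+rx)\to F(\cdot,x_0)$ and $f(x_0+rx)\to f(x_0)$, and since $\partial_t u_r^{(x_0,t_0)}\to 0$ uniformly on compacts (as $\partial_t u$ is continuous in time and $\partial_t u(x_0,t_0)=0$ by the $C^1_t$ property), the limit $u_0$ is independent of $t$ and solves a global elliptic obstacle problem
\[
F(D^2 u_0,x_0) = f(x_0)\chi_{\{u_0>0\}}\quad\text{in }\R^n.
\]
Moreover $u_0\ge 0$, is convex in space (by Proposition~\ref{prop:semicnovexity}, passing the semi-convexity estimate to the limit), has quadratic growth, and satisfies $u_0(0)=|\nabla u_0(0)|=0$.

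The next step is to classify such global convex solutions, following Caffarelli. Since $u_0$ is convex and nonnegative, the coincidence set $\{u_0=0\}$ is convex. There are two options. If $\{u_0=0\}$ has nonempty interior, then (again by convexity) it is a convex set of full measure in a half-space; a classical argument using that $u_0$ is a solution of the fully nonlinear PDE in the positivity set and zero on its complement shows $\{u_0=0\}=\{e\cdot x\le 0\}$ for some $e\in\Ss^{n-1}$, and by ODE analysis in the direction $e$ one obtains $u_0(x)=c(e\cdot x)_+^2$ with $c$ depending on $F,f(x_0)$ and $e$. If instead $\{u_0=0\}$ has empty interior, then $u_0>0$ on a dense set and by continuity $F(D^2u_0,x_0)=f(x_0)$ everywhere; combined with convexity and quadratic growth, Liouville-type arguments force $u_0(x)=x^T A x$ with $A\geq 0$ and $F(2A,x_0)=f(x_0)$. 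This dichotomy gives alternatives~(i)--(ii) at the level of subsequential limits.

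To upgrade~(i) from a subsequential statement to full convergence (without extracting subsequences), and hence to show the set of regular points is open, I would argue as follows: if some blow-up is of half-space type $c(e\cdot x)_+^2$, the direction $e$ and coefficient $c$ are determined by the normal to the (convex) contact set and by $F(\cdot,x_0), f(x_0)$, so all subsequential limits coincide; a standard compactness/uniqueness argument then gives full convergence as $r\downarrow 0$. Once $(x_0,t_0)$ is regular, continuity of the blow-up profile in $(x_0,t_0)$ (via the compactness of the family and uniqueness of the limit at regular points) shows that nearby free boundary points are also regular, so $\Reg(u)$ is open.

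Finally, one establishes the regularity of $\Reg(u)$ through the three-step ladder outlined in the paper. \emph{Step 1:} near a regular point, closeness to $c(e\cdot x)_+^2$ combined with the semi-convexity and nondegeneracy yields, in a small parabolic cylinder, a cone of directions in which $u$ is monotone; this gives Lipschitz regularity of $\partial\{u>0\}$ in space-time. \emph{Step 2:} once the free boundary is Lipschitz, $\partial_t u$ and directional derivatives $\partial_e u$ solve linearized equations and vanish on the free boundary, so the boundary Harnack principle in Lipschitz domains yields that normals are continuous in $x$ and H\"older-$1/2$ in $t$, i.e. $\Reg(u)\in C^1_x\cap C^{0,1}_t$. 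At this stage I would also use Proposition~\ref{prop:C1ofFB} to get $u\in C^2_x$ up to the free boundary. \emph{Step 3:} with the free boundary $C^1_x$ and $u\in C^2_x$ at the boundary, the higher order boundary Harnack estimates of \cite{K21} (or alternatively \cite[Theorem~3]{KN77}) apply iteratively to ratios of derivatives of $u$, giving $C^\infty$ regularity of $\Reg(u)$ in space and time. The main obstacle I expect is the second step, where transferring Lipschitz-in-$(x,t)$ information into $C^1_x$ regularity requires a delicate parabolic boundary Harnack argument adapted to the fully nonlinear setting; the classification of blow-ups itself and the bootstrap in Step 3 are more standard once compactness and the linearization tools are in place.
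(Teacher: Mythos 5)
Your overall strategy mirrors the paper's: compactness of the blow-up family, passage to the time-independent convex global solution \eqref{eq:BlowUpEqn}, the dichotomy based on whether $\{u_0=0\}$ has interior, and the three-step regularity ladder ending with the higher order boundary Harnack of \cite{K21} or \cite[Theorem 3]{KN77}. However, two steps as you have written them would not close.

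First, in the empty-interior case you assert that ``$u_0>0$ on a dense set and by continuity $F(D^2u_0,x_0)=f(x_0)$ everywhere.'' This is not a valid inference: $D^2 u_0$ is a priori only in $L^\infty$ and need not be continuous across the contact set, so there is no continuity available to invoke. The correct observation is that, since $\{u_0=0\}$ is convex with empty interior, it lies in a hyperplane and hence has zero Lebesgue measure, and one then needs a genuine removability result --- the paper invokes \cite[Theorem 2.7]{B01} --- to conclude that $u_0$ solves $F(D^2u_0,x_0)=-f(x_0)$ classically in all of $\R^n$ before applying Liouville (Lemma \ref{lem:blow-upSingularPoint}). Your sketch also replaces the crucial ``measure zero'' observation with the weaker ``dense'' one.

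Second, the upgrade of alternative (i) to full convergence (and the openness of $\Reg(u)$) is not established by your argument. You say the direction $e$ is ``determined by the normal to the (convex) contact set,'' but the only contact set whose normal you have at this stage is that of a particular subsequential blow-up, so the argument is circular: different subsequences could a priori produce different directions, and there is no monotonicity formula available in the fully nonlinear parabolic setting to rule this out. The paper resolves this by a different ordering: it first proves Lipschitz regularity of $\Reg(u)$ near a regular point (Proposition \ref{prop:LipschitzRegularityOfFB}) and then the $C^\infty$ regularity (Theorem \ref{thm:SmoothnessFB}), and only afterward reads off uniqueness of the blow-up direction from the normal to the now-smooth free boundary (the corollary following Theorem \ref{thm:SmoothnessFB}). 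Openness is obtained similarly, via Lemma \ref{lem:uniformConvergenceToBlowUp} and the fact that a Lipschitz free boundary forces nearby blow-ups to have contact sets containing a fixed cone. If you want to argue uniqueness before regularity, you must supply a mechanism that compensates for the lack of a monotonicity formula; your current sketch does not do so.
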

As already mentioned, we also investigate the properties of the singular set, as stated in the following theorem.
\begin{thm}\label{thm:1.2}
Let $u$ be a solution to \eqref{eq:ProblemIntro} with $F$ independent of $x$ and satisfying \eqref{eq:AssOnFIntro}-\eqref{eq:AssOnfIntro}. Assume that $\partial_tu>0$ in $\{u>0\}$ and let $\Sigma(u) \subset \R^n\times\R$ be the set of singular points. Then for any $\varepsilon>0$, $\Sigma(u)$ can be locally covered by a Lipschitz manifold of dimension $n-1$, which is $\varepsilon$-flat in space.\footnote{That means that the manifold can be expressed as a graph of a Lipschitz function whose Lipschitz semi-norm in space is smaller than $\varepsilon$.} 
\end{thm}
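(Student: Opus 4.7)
Fix a singular point and, by translation, assume $(0,0)\in\Sigma(u)$. By Theorem \ref{thm:1.1}(ii) combined with the non-degeneracy of Lemma \ref{lem:NonDegeneracy}, there exists a blow-up $u_{r_k}\to p_0(x)=x^TA_0x$ with $A_0\geq 0$ and $A_0\neq 0$. Let $\lambda_1:=\lambda_{\max}(A_0)>0$ and let $e\in\mathbb{S}^{n-1}$ be a unit eigenvector associated to $\lambda_1$. The strategy, adapted from Bonorino's elliptic argument \cite{B01}, combines two structural features: the strict monotonicity $\partial_t u>0$, which encodes the free boundary as a single-valued graph $\{t=T(x)\}$ and effectively reduces the problem by one dimension; and the rigid quadratic behaviour of $p_0$, which singles out a distinguished \emph{main growth direction} $e$ along which the coincidence set is extremely thin.

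Since $\{u>0\}$ is monotone increasing in $t$, locally it has the form $\{t>T(x)\}$ and the free boundary is the graph of $T$. The improved bound on $\partial_t u$ provided by Lemma \ref{lem:towardsLipschitzness}, combined with the non-degeneracy estimate $\sup_{Q_r}u\geq c r^2$, forces the free boundary to be quantitatively vertical near a singular point, so that $T$ is Lipschitz continuous with Lipschitz constant $\leq\varepsilon$ provided the neighbourhood is small enough. On the other hand, the locally uniform $C^{1,\alpha}_x$ convergence $u_{r_k}\to p_0$, the semi-convexity in space (Proposition \ref{prop:semicnovexity}), and Lemma \ref{lem:positivityOfSecondDer} together yield the quantitative second-derivative estimates
\[
\partial_{ee}u(x,t)\geq 2\lambda_1-\varepsilon,\qquad |\partial_{ee'}u(x,t)|\leq\varepsilon\quad\text{for every } e'\in\mathbb{S}^{n-1}\cap e^\perp,
\]
on a sufficiently small parabolic cylinder $Q_{r_\varepsilon}$. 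The first inequality makes $s\mapsto u(y+se,t)$ strictly convex for each fixed $(y,t)\in(e^\perp\cap B_{r_\varepsilon})\times(-r_\varepsilon^2,0]$, so $\{u=0\}$ meets every line parallel to $e$ in at most one point; hence $\{u=0\}\cap Q_{r_\varepsilon}$ is contained in a graph $\{x\cdot e=h(x^\perp,t)\}$. Applying the implicit function theorem to $\partial_e u=0$ (which holds everywhere on the coincidence set since $\nabla u=0$ there) and using the mixed-derivative bound, $h$ is Lipschitz in $x^\perp$ with Lipschitz constant at most $\varepsilon/(2\lambda_1-\varepsilon)$, which is arbitrarily small.

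Assembling the two graph representations, $\Sigma(u)\cap Q_{r_\varepsilon}$ is contained in the intersection $\{t=T(x)\}\cap\{x\cdot e=h(x^\perp,t)\}$, and the projection $\pi:(x,t)\mapsto x^\perp$ is injective on this intersection: two distinct singular points sharing the same $x^\perp$ would, by the monotone shrinking of $\{u=0\}$ in $t$, produce two distinct points of $\{u=0\}$ on a single line parallel to $e$ at the earlier time, contradicting the one-point-per-line property. Parameterising by $y\in e^\perp\cap B_{r_\varepsilon}$ and solving the coupled Lipschitz system $s=h(y,t)$, $t=T(y+se)$ by a standard contraction argument, one obtains Lipschitz functions $\widetilde h(y),\widetilde T(y)$ with $\mathrm{Lip}(\widetilde h)\leq\varepsilon$, and the map $y\mapsto(y+\widetilde h(y)e,\widetilde T(y))$ provides the desired $(n-1)$-dimensional Lipschitz manifold covering $\Sigma(u)$ locally and $\varepsilon$-flat in the space direction. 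The main difficulty lies in Lemma \ref{lem:positivityOfSecondDer}: upgrading pointwise blow-up convergence at a single singular point to a uniform quantitative lower bound on $\partial_{ee}u$ throughout a full space-time neighbourhood requires the classification of Theorem \ref{thm:1.1} to propagate to every nearby singular point — a parabolic counterpart of the key technical step in \cite{B01} — and, together with the improved time-derivative bound of Lemma \ref{lem:towardsLipschitzness}, constitutes the technical heart of the proof.
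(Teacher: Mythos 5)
Your overall architecture is right: show (a) the free boundary is a Lipschitz graph $t=\tau(x)$ (Corollary \ref{cor:LipschitzFB}, built on Lemma \ref{lem:towardsLipschitzness}), and (b) the spatial projection of $\Sigma(u)$ is covered by a graph along the principal direction $e$ of the blow-up with arbitrarily small Lipschitz seminorm, then compose. However, the way you try to obtain (b) contains a genuine gap.

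You claim that the $C^{1,\alpha}_x$ convergence $u_{r_k}\to p_0$, semi-convexity, and Lemma \ref{lem:positivityOfSecondDer} yield $\partial_{ee}u\geq 2\lambda_1-\varepsilon$ and $|\partial_{ee'}u|\leq\varepsilon$ on a full parabolic cylinder $Q_{r_\varepsilon}$. Neither inequality follows from the cited results. The $C^{1,\alpha}_x$ convergence says nothing about second derivatives; the semi-convexity estimate \eqref{eq:LogSemiConvexity} only gives $\partial_{ee}u\geq -C|\log(\cdot)|^{-\varepsilon}$, which degenerates at the free boundary and gives no positive lower bound; and Lemma \ref{lem:positivityOfSecondDer} only yields the much weaker statement $\partial_{ee}u\geq c\,|\nabla u|$, whose right-hand side vanishes on $\partial\{u>0\}$. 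A uniform bound $\partial_{ee}u\geq 2\lambda_1-\varepsilon$ throughout a neighbourhood of $(0,0)$ would in particular force $D^2u$ to be close to $D^2p_0$ on $\overline{\{u>0\}}\cap Q_{r_\varepsilon}$; this would imply uniqueness of blow-ups and $C^2$ regularity up to the singular part of the free boundary, both of which the paper explicitly leaves open. The bound $|\partial_{ee'}u|\leq\varepsilon$ is likewise unsupported. Consequently your ``one point per line'' argument, and the implicit-function-theorem step on $\partial_e u=0$ (which anyway requires control of $\partial_{ee}u$ and its continuity at coincidence-set points), do not go through as written.

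The paper's actual route avoids these Hessian bounds entirely. From the weaker conclusion $\partial_{ee}u\geq c|\nabla u|\geq 0$ of Lemma \ref{lem:positivityOfSecondDer} it derives, by a contradiction argument exploiting convexity along nearby directions together with non-degeneracy and the maximum principle, the \emph{cone property} of Corollary \ref{cor:coneProperty}: at every singular point $(x_0,t_0)$ in a neighbourhood, $u>0$ in the time-slice cone $\{(x,t_0):|(x-x_0)_n|>\cos\vartheta\,\|x-x_0\|\}$ for any prescribed opening $\vartheta$. Combined with time-monotonicity of $\{u>0\}$ and the Lipschitz graph $t=\tau(x)$ (Lemma \ref{lem:projcetionConeProperty}), this yields that the spatial projection $\operatorname{pr}(\Sigma(u))$ avoids arbitrarily wide two-sided cones, so it is covered by an $\varepsilon$-Lipschitz graph $x_n=G(x')$ (Corollary \ref{cor:coveringManifold}), and the final covering is $\{(x',G(x'),\tau(x',G(x')))\}$. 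You should replace your Hessian bounds with this cone-property argument. As a minor point, the claim that $T=\tau$ has Lipschitz constant $\leq\varepsilon$ is also incorrect (and unnecessary): Lemma \ref{lem:towardsLipschitzness} gives $\partial_tu\geq c|\nabla u|$ for some fixed $c>0$, hence a fixed Lipschitz bound $1/c$ for $\tau$, not a small one; the $\varepsilon$-flatness asserted in Theorem \ref{thm:1.2} is only in the spatial variables, as the footnote specifies.
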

Theorem \ref{thm:1.2} is sharp in the sense of the dimension of the covering manifolds, as examples can be constructed where the singular set is indeed of dimension $n-1$, see \cite{FigRosSerra1}.

Combining the above statement with \cite[Corollary 7.8]{FigRosSerra} we deduce that for almost every time the singular set must have at most Hausdorff dimension $n-2$.
\begin{cor}\label{cor:1.3}
	Let $u$ be a solution to \eqref{eq:ProblemIntro} with $F,f$ satisfying \eqref{eq:AssOnFIntro}-\eqref{eq:AssOnfIntro}. Assume that $\partial_tu > 0$ in $\{u > 0\}$ and let $\Sigma(u)_t$ the set of singular points at time $t$. Then:
	\[ 
	\operatorname{dim}_{\mathcal{H}} \left(\Sigma(u)_t\right)\leq n-2,
	\]
	where $\operatorname{dim}_{\mathcal{H}}(E)$ denotes the Hausdorff dimension of a set $E$.
\end{cor}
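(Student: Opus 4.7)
The statement follows by a direct combination of Theorem~\ref{thm:1.2} with the slicing result \cite[Corollary 7.8]{FigRosSerra}. The plan is first to produce, for every $\varepsilon > 0$, a countable $\varepsilon$-flat covering of $\Sigma(u)$, and then to feed such a covering into the cited slicing result, which yields the desired $(n-2)$-dimensional bound for almost every time slice.

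More concretely, I would proceed as follows. Fix $\varepsilon > 0$. By Theorem~\ref{thm:1.2}, every point $(x_0,t_0) \in \Sigma(u)$ admits a parabolic neighbourhood $Q_r(x_0,t_0) \subset Q_1$ such that $\Sigma(u) \cap Q_r(x_0,t_0)$ is contained in a Lipschitz manifold of dimension $n-1$ whose Lipschitz semi-norm in the space variables is smaller than $\varepsilon$. Since $Q_1$ is $\sigma$-compact, one can extract from this family a countable subcover. This yields, for each $\varepsilon > 0$, a countable covering of $\Sigma(u)$ by $(n-1)$-dimensional Lipschitz graphs whose spatial Lipschitz semi-norm is less than $\varepsilon$.

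The second step is to invoke \cite[Corollary 7.8]{FigRosSerra}, which asserts that any set in $\R^n \times \R$ admitting such a covering for every $\varepsilon > 0$ must satisfy $\operatorname{dim}_{\mathcal{H}}(E_t) \leq n-2$ for almost every $t$. Applied to $E = \Sigma(u)$, this gives the desired conclusion. Heuristically, the $\varepsilon$-flatness in space forces the time component of the graph parametrisation to carry essentially all the variation, so that a co-area/Fubini type argument (performed in the reference) pushes the generic time-slice dimension down from $n-1$ to $n-2$ in the limit $\varepsilon \to 0$.

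The main (and essentially only) obstacle is to verify that the geometric description of the covering manifolds provided by Theorem~\ref{thm:1.2} matches exactly the hypothesis of \cite[Corollary 7.8]{FigRosSerra}: both are stated in terms of $(n-1)$-dimensional Lipschitz graphs which are $\varepsilon$-flat in space, so this reduces to identifying the two formalisms. Once this correspondence is checked, no further estimates on the solution or on the free boundary are required.
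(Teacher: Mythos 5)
Your high-level plan --- apply Theorem~\ref{thm:1.2} and then invoke \cite[Corollary 7.8]{FigRosSerra} --- is the same route the paper takes, but the way you connect the two contains a misreading that matters. The cited corollary is not stated in terms of ``an $\varepsilon$-flat covering for every $\varepsilon>0$''. Its hypotheses are, roughly, (a) the projection of the set to $\R^n$ has Hausdorff dimension at most $n-1$, and (b) the set can be touched from above by cones in the time direction (equivalently, it lies locally on a Lipschitz graph $\{t=\tau(x)\}$). The paper's proof of Corollary~\ref{cor:1.3} verifies exactly these two facts before invoking the corollary: (a) follows because $\operatorname{pr}(\Sigma(u))$ is locally contained in the graph of a Lipschitz function $G:\R^{n-1}\to\R$ (Corollary~\ref{cor:coveringManifold}), hence has Hausdorff dimension $\leq n-1$; (b) follows from Corollary~\ref{cor:LipschitzFB}, which expresses the free boundary, hence in particular $\Sigma(u)$, as a Lipschitz graph $t=\tau(x)$ over the space coordinates.

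The $\varepsilon$-flatness plays no role in this step of the argument, and the heuristic you offer --- that ``$\varepsilon$-flatness in space forces the time component to carry essentially all the variation, pushing the time-slice dimension down to $n-2$ in the limit $\varepsilon\to0$'' --- is not how the dimension drop is obtained. The reduction from $n-1$ to $n-2$ is a one-shot coarea/Eilenberg-type inequality applied to the Lipschitz map $\tau$ restricted to a set of locally finite $\mathcal H^{n-1}$ measure; there is no limiting procedure in $\varepsilon$, and the argument would work verbatim with a covering manifold of uncontrolled spatial Lipschitz norm. The property your write-up never extracts, yet which is indispensable, is the Lipschitz-in-time graph structure from Corollary~\ref{cor:LipschitzFB}. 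So the gap is not the ``bookkeeping'' step of matching formalisms that you anticipate, but a genuine misidentification of the hypotheses of \cite[Corollary 7.8]{FigRosSerra}.
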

As a consequence of our main results, the smoothness of the regular part and the structure of the whole free boundary are completely understood. However, in the study of the singular set, many problems are still open. For instance: are the blow-ups unique at singular points? Can we improve the regularity of the covering manifold? Can we prove sharp bounds on the \emph{parabolic} Hausdorff dimension of the singular set? Is the free boundary \emph{generically} smooth, in the sense of \cite{FigRosSerra1}?

These delicate and challenging problems will be at the centre of our future research.

\subsection{Structure of the paper}
The paper is organized as follows. In Section \ref{sec:notation} we present the notations we use throughout the paper, as well as the definitions of the parabolic H\"older spaces. In Section \ref{sec:preliminaries} we state some preliminary results about fully nonlinear equations. In Section \ref{sec:Existence} we prove existence of solutions. In Section \ref{sec:optimalRegularity} we prove the optimal growth and the almost optimal $C^{1,1}_x\cap C^{0,1}_t$ regularity of solutions, while in Section \ref{sec:semiConvexity} we establish semi-convexity estimates and the continuity of the time derivative. Blow-ups are classified in Section \ref{sec:blowUps}, while Section \ref{sec:regularPoints} is devoted to the study of the free boundary near regular points. Finally, in Section \ref{sec:singularPoints} we study the singular part of the free boundary. At the very end there is an appendix, where we prove some of the results of Section \ref{sec:preliminaries}.

%
%
%%%%%%%%%%%%%%%%%%%%%%%%%%%%%%%%%%%%%%%%%%%%%%%%%%%%%%%%%%%%%%%%%%%%%%%%%%%%%%%%%%%%%%%%
%
%%%%%%%%%%%%%%%%%%%%%%%%%%%%%%%%%%%%%%%%%%%%%%%%%%%%%%%%%%%%%%%%%%%%%%%%%%%%%%%%%%%%%%%%
%
%
\section{Notations and definitions}\label{sec:notation}
Below we present the notations and definitions we use throughout the paper.
\subsection{Notations}
\begingroup
\allowdisplaybreaks
\begin{align*}
& B_r(x_0) = \{x \in \mathbb{R}^N: |x-x_0| < r \} \\
& \tilde{B}_r(x_0) = \{x \in \mathbb{R}^N: |x-x_0|_\infty < r \}, \qquad  |x|_\infty := \max \{|x_i|: i =1,\ldots,n\} \\
& Q_r(x_0,t_0) = B_r(x_0) \times (t_0-r^2,t_0+r^2)\\
& Q_r^+(x_0,t_0) := B_r(x_0) \times (t_0,t_0+r^2) \\
& Q_r^-(x_0,t_0) := B_r(x_0) \times (t_0-r^2,t_0) \\
& D_r^+(x_0,t_0) := B_r(x_0) \times (t_0 + 3r^2,t_0 + 4r^2) \\
& D_r^-(x_0,t_0) := B_r(x_0) \times (t_0 - 3r^2,t_0 - 2r^2) \\
& \tilde{Q}_r(x_0,t_0) := \tilde{B}_r(x_0) \times (t_0-r^2,t_0+r^2) \\
& \tilde{D}_r^+(x_0,t_0) := \tilde{B}_r(x_0) \times (t_0 + 3r^2,t_0 + 4r^2) \\
& \tilde{D}_r^-(x_0,t_0) := \tilde{B}_r(x_0) \times (t_0 - 3r^2,t_0 - 2r^2) \\
& d((x,t),(y,\tau)) := \sqrt{|x-y|^2 + |t-\tau|} \quad \text{ or } \quad d((x,t),(y,\tau)) := \max\{|x-y|,|t-\tau|^{1/2}\} \\
& d((x,t),A) := \inf_{(y,\tau) \in A} d((x,t),(y,\tau)), \quad A \subset \R^{n+1} \\
&N_\delta(A) = \{(x,t) \in \R^{n+1}: \dist((x,t),A) < \delta \}), \quad A\subset \R^{n+1} \\
&\intrr (A) := \overline{A} \setminus \partial A, \quad A \subset \R^{n+1} \\
& a_{ij}\partial_{ij}v := \sum_{i,j=1}^n a_{ij}\partial_{ij}v, \quad \{a_{ij}\}_{i,j=1}^n \;\; n \times n \text{-matrix} 
\end{align*}
\endgroup
\subsection{Definitions} We first introduce the following class of solutions, in the spirit of \cite{CafPetSha04}.
\begin{defi}\label{def:SolFNL} (Class of solutions) 
	\noindent Let $r,K > 0$ and $(x_0,t_0) \in \R^{n+1}$. We say that $u \in \mathcal{P}_r(x_0,t_0;K)$ if 
	
	\smallskip
	
	$\bullet$ $|u| + |f| \leq K$ in $Q_r(x_0,t_0)$;
	
	$\bullet$ $u \in C^{1,\alpha}_p $ and $u \in W^{2,p}_x \cap W_t^{1,p}$ locally in $Q_r(x_0,t_0)$, for all $\alpha \in (0,1)$ and $p \in (1,\infty)$;

	$\bullet$ $u$ satisfies \eqref{eq:ProblemIntro} a.e. in $Q_r(x_0,t_0)$.
	
	\smallskip
	
	\noindent When $(x_0,t_0)=(0,0)$ we write $\mathcal{P}_r(K)$. 
\end{defi}
Next, we define a class of parabolic H\"older spaces, following the definition given in \cite{K21}. 

\begin{defi}
	For a multi-index $\alpha\in \N_0^{n+1}$, we denote $\alpha_x$ the first $n$ components and $\alpha_t$ the last one, and define
	$$|\alpha|_p = |\alpha_x| + 2\alpha_t = \sum_{i=1}^n\alpha_i + 2\alpha_t.$$
	Furthermore, we define the parabolic derivatives with $D^k_p = \{\partial^\alpha:\text{ }|\alpha|_p = k\}$ 	and the parabolic polynomial spaces as follows:
	$$\textbf{P}_{k,p} = \left\{ \sum_{|\alpha|_p\leq k}c_\alpha (x,t)^\alpha;\text{ }c_\alpha \in\R\right\}.$$
	We say that $k$ is the parabolic degree of polynomial $q$, if $k$ is the least integer so that $q\in\textbf{P}_{k,p}.$
	For a polynomial $q = \sum_\alpha c_\alpha  (x,t)^\alpha$, we denote
	$$||q|| = \sum_\alpha|c_\alpha|.$$ 
\end{defi}

\begin{defi}\label{def:parabolicHolderSpaces} 

	Let $\Omega$ be an open subset of $\R^{n+1}$. For $\alpha\in(0,1]$ we define the parabolic H\"older seminorm of order $\alpha$ as follows
	$$\left[ u\right]_{C^\alpha_p(\Omega)} := \sup_{(x,t),(y,s)\in\Omega}\frac{|u(x,t)-u(y,s)|}{|x-y|^\alpha+|t-s|^{\frac{\alpha}{2}}},$$
	and
	$$\left[ u\right]_{C^{\alpha}_t(\Omega)} := \sup_{(x,t),(x,s)\in\Omega}\frac{|u(x,t)-u(x,s)|}{|t-s|^{\alpha}}.$$
	Furthermore, we set
	\begin{align*}
	\left[ u\right]_{C^{1,\alpha}_p(\Omega)}& :=  \left[ \nabla_x u\right]_{C^{\alpha}_p(\Omega)}+\left[ u\right]_{C^{\frac{1+\alpha}{2}}_t(\Omega)}.
	\end{align*}
	For indexes $k\geq 2$, we set
	\begin{align*}
		\left[ u\right]_{C^{k,\alpha}_p(\Omega)} := &\left[ \nabla_x u\right]_{C^{k-1,\alpha}_p(\Omega)}+\left[\partial_t u\right]_{C^{k-2,\alpha}_p(\Omega)}\\
		=& \sum_{|\gamma|_p=k} \left[ \partial^\gamma u \right]_{C^\alpha_p(\Omega)} + \sum_{|\gamma|_p=k-1} \left[ \partial^\gamma u\right]_{C^{\frac{1+\alpha}{2}}_t(\Omega)}.
	\end{align*}
	We say that $u\in C^{k,\alpha}_p(\Omega)$, when $\left[ u\right]_{C^{k,\alpha}_p(\Omega)}<\infty$, and define 
	$$||u||_{C^{k,\alpha}_p(\Omega)} := \sum_{i\leq k} ||D^i_p u||_{L^\infty(\Omega)} + 	\left[ u\right]_{C^{k,\alpha}_p(\Omega)}.$$
	
	For $k\in\N$, we say that $u\in C^k_p(\Omega)$, if there exists a modulus of continuity $\omega\colon \left[ 0, \infty\right) \to \left[ 0,\infty\right) $ -- a continuous, increasing function with $\omega(0)=0$ -- so that for all $(x,t),(y,s)\in\Omega$
	$$|D^k_p u(x,t) - D^k_p u(y,s)|\leq \omega(|x-y|+|t-s|^{\frac{1}{2}}),$$
	and
	$$|D^{k-1}_p u(x,t) - D^{k-1}_p u(x,s)|\leq |t-s|^{\frac{1}{2}}\omega(|t-s|^{\frac{1}{2}}).$$
	We set
	$$||u||_{C^k_p(\Omega)} := \sum_{l\leq k} ||D^l_p u||_{L^\infty(\Omega)} .$$
	We will often write $C^{k+\alpha}$ instead of $C^{k,\alpha}.$
	
\end{defi} 
\begin{rem}
	The parabolic H\"older space of order $\beta > 0$ introduced above is made of functions which can be approximated by polynomials $\textbf{P}_{\lfloor\beta\rfloor,p}$, up to an error of order $\beta$. More precisely, if $u\in C^\beta_p(\Omega)$ for some $\beta>0$, then for every $(x_0,t_0)\in \Omega$, there is a polynomial $p_{(x_0,t_0)}\in \textbf{P}_{\lfloor\beta\rfloor,p}$ such that
	$$|u(x,t)-p_{(x_0,t_0)} (x,t)|\leq C (|x-x_0|^{\beta}+|t-t_0|^\frac{\beta}{2}),$$
	in a neighbourhood of $(x_0,t_0)$, for some $C > 0$ depending on $u$, see \cite[Chapter IV.]{Lieberman1996} for more details.
\end{rem}
%
%
%

%
%
%%%%%%%%%%%%%%%%%%%%%%%%%%%%%%%%%%%%%%%%%%%%%%%%%%%%%%%%%%%%%%%%%%%%%%%%%%%%%%%%%%%%%%%%
%
%%%%%%%%%%%%%%%%%%%%%%%%%%%%%%%%%%%%%%%%%%%%%%%%%%%%%%%%%%%%%%%%%%%%%%%%%%%%%%%%%%%%%%%%
%
%
\section{Preliminaries}\label{sec:preliminaries}
In this section we present some known results about parabolic equations with fully nonlinear diffusion which we will exploit later on in the paper. We begin with the Harnack inequality for solutions to fully nonlinear parabolic equations: the symbols $\mathcal{M}^+,\mathcal{M}^-$ denote the Pucci extremal operators for ellipticity constants $\lambda$ and $\Lambda$. For the definition of Pucci extremal operators we refer to \cite{FR21}.
\begin{thm}[\protect{Harnack inequality \cite[Theorem 4.32]{IS12}}]\label{lem:Harnack} 
	Let $r > 0$, $(x_0,t_0) \in \R^{n+1}$ and let $u\geq 0$ satisfy 
	\begin{equation}\label{eq:FNL}
	\begin{cases}
	\partial_t u - \mathcal{M}^+(D^2u) \leq C_0 &\text{ in }Q_r(x_0,t_0)\\
	\partial_t u - \mathcal{M}^-(D^2u) \geq -C_0 &\text{ in }Q_r(x_0,t_0).
	\end{cases}
	\end{equation}
	Then there exists $C > 0$ depending only on $n$, $\lambda$ and $\Lambda$ such that\footnote{The statement is slightly different from \cite{IS12}, but the proof is exactly the same.}
	\begin{equation*}
	\sup_{D_{r/2}^-(x_0,t_0)} u \leq C \bigg( \inf_{D_{r/2}^+(x_0,t_0)} u + r^2 C_0 \bigg).
	\end{equation*}
\end{thm}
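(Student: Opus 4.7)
The result is essentially the Krylov--Safonov Harnack inequality for viscosity sub/supersolutions of Pucci extremal inequalities, as developed by Wang and, in the setting relevant here, by Imbert--Silvestre \cite{IS12}. My first step would be to reduce to the canonical case $(x_0,t_0)=(0,0)$ and $r=1$ via the parabolic rescaling $\tilde u(x,t)=u(rx,r^{2}t)$. Since $\mathcal{M}^{\pm}$ is positively $1$-homogeneous in $D^{2}u$ and the time derivative and $D^{2}$ both pick up a factor $r^{2}$ under this change of variables, $\tilde u$ satisfies the same extremal inequalities in $Q_{1}$ with right-hand side $r^{2}C_{0}$ in place of $C_{0}$. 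Hence it suffices to prove
\[
\sup_{D_{1/2}^{-}} u \leq C\bigl( \inf_{D_{1/2}^{+}} u + C_{0}\bigr)
\]
for $u\geq 0$ satisfying \eqref{eq:FNL} with $r=1$.

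Next I would normalise the inhomogeneous term. A clean way is to set $v(x,t):= u(x,t)+C_{0}(t+1)$; then $\partial_{t}v-\mathcal{M}^{+}(D^{2}v)\leq 0$ and $\partial_{t}v-\mathcal{M}^{-}(D^{2}v)\geq 0$ in $Q_{1}$, and $v\geq 0$. Thus a homogeneous Harnack inequality applied to $v$ gives the inhomogeneous statement for $u$, since $\sup v\leq C\inf v$ translates, upon reversing the substitution, to the desired bound with an additive $C_{0}$ on the right-hand side (the extra factor from $\|t+1\|_{L^{\infty}(Q_{1})}\leq 2$ being absorbed into $C$).

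For the homogeneous Harnack inequality itself I would invoke the machinery of \cite[\S 4]{IS12}, whose two ingredients are: (a) a \emph{weak Harnack inequality} (an $L^{\varepsilon}$-estimate saying that the measure of the super-level sets of a nonnegative supersolution, on a cylinder placed earlier in time, is controlled by the infimum of the supersolution on a cylinder placed later in time); this follows from the parabolic ABP estimate of Tso together with a Calderón--Zygmund-type decomposition; and (b) a \emph{local maximum principle} controlling $\sup v$ on a small forward cylinder by an $L^{\varepsilon}$-average on a larger one, established from the ABP estimate applied to $-v$. Combining (a) and (b), with the chain of cylinders chosen so that the target cylinders $D_{1/2}^{-}\subset B_{1}\times(-3/4,-1/2)$ (for the sup) and $D_{1/2}^{+}\subset B_{1}\times(3/4,1)$ (for the inf) are reached through finitely many intermediate Harnack steps, yields the claim. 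The strict separation in time between $D_{1/2}^{-}$ and $D_{1/2}^{+}$, with a gap of order one, is exactly what is needed so that the forward-in-time propagation required by the ABP-type estimates works without backward blow-up.

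\textbf{Main obstacle.} The technical heart, which I would borrow from \cite{IS12} rather than reproduce, is the weak Harnack inequality: controlling the distribution function of $u$ through the parabolic ABP, and then upgrading pointwise control via a Calderón--Zygmund decomposition tailored to parabolic cylinders. The residual bookkeeping---namely verifying that, after scaling to $r=1$ and the affine shift $v=u+C_{0}(t+1)$, the particular cylinders $D_{1/2}^{\pm}(x_{0},t_{0})$ lie in the right configuration relative to those used in \cite[Theorem 4.32]{IS12}---is the only place where the statement differs from the reference, and it is handled by a finite chaining of Harnack balls.
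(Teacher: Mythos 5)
Your overall plan---scale to $r=1$, invoke the weak Harnack inequality and the local maximum principle from \cite[\S 4]{IS12}, and chain cylinders to move from their specific $K^\pm$ to the $D^\pm_{r/2}$ used here---is the right one, and the chaining remark correctly identifies the only thing that differs from the cited theorem, which is exactly what the paper's footnote alludes to. However, the normalization step is wrong and cannot be repaired in the form you wrote it. With $v(x,t)=u(x,t)+C_0(t+1)$ one has $D^2v=D^2u$ and $\partial_t v=\partial_t u+C_0$, so the supersolution inequality improves to $\partial_t v-\mathcal{M}^-(D^2v)\geq 0$, but the subsolution inequality \emph{worsens} to $\partial_t v-\mathcal{M}^+(D^2v)\leq 2C_0$, not $\leq 0$. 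The opposite shift $v=u-C_0(t+1)$ homogenizes the subsolution inequality but then $\partial_t v-\mathcal{M}^-(D^2v)\geq -2C_0$, and moreover $v\geq0$ is lost. This is not a typo: because the two Pucci inequalities have opposite signs in front of $C_0$, no single additive correction (affine in $t$, or even with $D^2\phi\neq 0$, given the sub/superadditivity properties of $\mathcal{M}^\pm$) can annihilate both simultaneously. This is precisely why the parabolic Harnack inequality in \cite{IS12} is formulated and proved directly for the inhomogeneous two-sided extremal inequalities, with the additive $r^2C_0$ term appearing in the conclusion.

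To close the gap, drop the simultaneous normalization: either apply the two half-Harnacks separately, using a shift adapted to each one (e.g.\ $u+C_0(t-t_{\min})\geq 0$ for the weak Harnack applied to the supersolution inequality, and $u-C_0(t-t_{\min})$---which need not be nonnegative, and that is acceptable for the local maximum principle---for the subsolution inequality, then absorb the $O(C_0)$ discrepancy between the shifted functions and $u$ into the additive term); or, more simply, quote \cite[Theorem~4.32]{IS12} in its inhomogeneous form verbatim and do only the cylinder chaining, which is the route the paper itself takes (it states that the proof is the same as in \cite{IS12} up to the choice of cylinders, and gives no further argument).
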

When considering nonnegative supersolutions instead of solutions, we can control the infimum with the $L^p$ average. 
\begin{thm}[\protect{Weak Harnack inequality \cite[Theorem 4.15]{IS12}}]\label{thm:basicWeakHarnack}  
Let $r > 0$, $(x_0,t_0) \in \R^{n+1}$ and let $u\geq0$ satisfy
$$\partial_tu - \mathcal{M}^-(D^2u)\geq -C_0 \quad \text{ in }Q_r(x_0,t_0).$$
Then there exist $C > 0$ and $p \in (0,1)$ depending only on $N$, $\lambda$ and $\Lambda$ such that
\begin{equation}\label{eq:WeakHarnack}
\bigg( \fint_{D_{r/2}^-(x_0,t_0)} u^p \bigg)^{\frac{1}{p}}\leq C\bigg(\inf_{D_{r/2}^+(x_0,t_0)} u + r^2C_0\bigg).
\end{equation}

\end{thm}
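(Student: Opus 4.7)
The plan is to follow the classical Krylov--Safonov--Tso strategy in its parabolic fully nonlinear version. After the change of variables $\tilde u(x,t) = u(x_0 + rx, t_0 + r^2 t)/K$ with $K := \inf_{D^+_{r/2}}u + r^2 C_0$, the problem reduces to $r=1$, $(x_0,t_0)=(0,0)$, $C_0 = 0$, and $\inf_{D^+_{1/2}} u \leq 1$; the goal becomes the universal bound $\bigl(\fint_{D^-_{1/2}} u^p\bigr)^{1/p} \leq C$ for some $p \in (0,1)$ depending only on $n,\lambda,\Lambda$.

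The analytic core is the parabolic Alexandrov--Bakelman--Pucci--Krylov--Tso inequality: any viscosity subsolution $w$ of $\partial_t w - \mathcal{M}^-(D^2 w) \leq g$ in a parabolic cube $Q$ satisfies
\[
\sup_Q w \leq \sup_{\partial_p Q} w^+ + C\,\|g^+\|_{L^{n+1}(\{w=\Gamma w\}\cap Q)},
\]
where $\Gamma w$ is the parabolic monotone envelope and $\partial_p Q$ the parabolic boundary. Plugging $\psi - u$ into this inequality, with $\psi$ a fixed smooth parabolic barrier that vanishes on $\partial_p Q$ and equals $1$ on $D^+_{1/2}$, and using the supersolution hypothesis on $u$, one derives the key measure lemma: there exist universal $\mu \in (0,1)$ and $M>1$ such that
\[
\inf_{D^+_{1/2}} u \leq 1 \;\;\Longrightarrow\;\; |\{u \leq M\} \cap D^-_{1/2}| \geq \mu\,|D^-_{1/2}|.
\]

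Iterating this lemma inside each dyadic subcube of $D^-_{1/2}$ on which the desired bound fails, and combining it with a parabolic Calder\'on--Zygmund cube decomposition (built from the ``stacking of cubes'' / ``ink-spots'' covering lemma adapted to the parabolic geometry), one obtains the geometric decay
\[
|\{u > M^k\} \cap D^-_{1/2}| \leq (1-\mu)^k\,|D^-_{1/2}|,\qquad k\in\N,
\]
which is equivalent to the weak $L^p$ estimate \eqref{eq:WeakHarnack} with $p = -\log(1-\mu)/\log M > 0$. Undoing the normalization restores the $r^2 C_0$ term on the right-hand side.

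The main obstacle, absent in the elliptic setting, is the time-directionality. Transferring information from the ``future'' cylinder $D^+_{1/2}$ down to the ``past'' cylinder $D^-_{1/2}$ through a parabolic supersolution is intrinsically one-directional; this forces the asymmetric geometry of $D^\pm$ in the statement and makes the covering step significantly heavier than its elliptic counterpart. Concretely, one must carefully stack forward-in-time translates of small subcubes and iterate the measure estimate before the time horizon of $Q_1$ is exhausted, keeping track of the universal constants in each stacking. Once this parabolic stacking lemma is established, the final $L^p$ conclusion follows by a standard Caffarelli-type geometric iteration.
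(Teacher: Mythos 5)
The paper does not prove this statement; it cites it verbatim from \cite[Theorem 4.15]{IS12} and uses it as a black box. Your sketch is a faithful outline of the proof given there (and in the parabolic Krylov--Safonov--Tso literature generally): parabolic ABP/Krylov--Tso, a barrier giving the key measure estimate, the parabolic ``ink-spots'' stacking lemma replacing elliptic Calder\'on--Zygmund, and the geometric decay of the superlevel sets, so your proposal matches the cited reference's strategy and is consistent with what the theorem asserts.
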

Sometimes this result is also called the half-Harnack inequality, as for subsolutions it holds that the supremum is controlled by the $L^p$ average (see \cite[Proposition 4.34]{IS12}).
We also need the following version of the weak Harnack inequality, that quantifies the growth of the constant, as the set on which we compute the $L^p$ average approaches the boundary.

\begin{lem}\label{lem:iteratedWeakHarnack}
	   
	Let $\delta \in (0,\tfrac{1}{4})$, $r > 0$, $(x_0,t_0) \in \R^{n+1}$ and let $u\geq0$ satisfy
	$$\partial_tu - \mathcal{M}^-(D^2u)\geq -C_0 \quad\text{ in }Q_r(x_0,t_0).$$
	Then there exist $C,m > 0$, $p \in (0,1)$ depending only on $n$, $\lambda$ and $\Lambda$ such that
	\begin{equation}\label{eq:ParHalfHarnackdel}
	\bigg( \fint_{Q_{\delta r}(x,t)} u^p \bigg)^{\frac{1}{p}}\leq \frac{C}{\delta^m} \bigg(\inf_{Q_{r/2}^+(x_0,t_0)} u + r^2C_0\bigg),
	\end{equation}
	for all $(x,t) \in B_{(1-2\delta)r}(x_0) \times (t_0 - (1-4\delta^2)r^2, t_0 - \tfrac{3}{4}r^2)$.
	 
\end{lem}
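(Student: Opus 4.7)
I would prove this quantitative weak Harnack by chaining the basic weak Harnack (Theorem~\ref{thm:basicWeakHarnack}) along a sequence of parabolic cylinders that propagates backwards in time from a near-minimizer of $u$ on $Q_{r/2}^+(x_0,t_0)$ down to the scale $\delta r$ around $(x,t)$. After a parabolic rescaling I may assume $(x_0,t_0)=(0,0)$ and $r=1$, and set $m:=\inf_{Q_{1/2}^+(0,0)}u$.

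\textbf{Initialization and inductive step.} Pick a near-minimizer $z_0\in\overline{Q_{1/2}^+(0,0)}$ with $u(z_0)\le 2m$. Suppose that at level $k$ I have a bound $(\fint_{E_k}u^p)^{1/p}\le M_k$ for some region $E_k\subset Q_1(0,0)$ (with $M_0$ produced by applying Theorem~\ref{thm:basicWeakHarnack} once on an initial cylinder $Q_{\rho_0}(z_0^\circ)\subset Q_1(0,0)$ with $z_0\in\overline{D^+_{\rho_0/2}(z_0^\circ)}$). Chebyshev's inequality then yields a point $z_{k+1}\in E_k$ with $u(z_{k+1})\le 2^{1/p}M_k$. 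I would next apply Theorem~\ref{thm:basicWeakHarnack} at a new center $z_{k+1}^\circ$ at scale $\rho_{k+1}$ chosen so that $Q_{\rho_{k+1}}(z_{k+1}^\circ)\subset Q_1(0,0)$ and $z_{k+1}\in\overline{D^+_{\rho_{k+1}/2}(z_{k+1}^\circ)}$. Since $\inf_{D^+_{\rho_{k+1}/2}(z_{k+1}^\circ)}u\le u(z_{k+1})$, this gives
\[
\bigg(\fint_{E_{k+1}}u^p\bigg)^{1/p}\le C'\bigl(M_k+\rho_{k+1}^2 C_0\bigr),\qquad E_{k+1}:=D^-_{\rho_{k+1}/2}(z_{k+1}^\circ),
\]
with $C'=C\cdot 2^{1/p}$. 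Each such step advances approximately $\rho_k^2$ backwards in time and up to $\rho_k$ in space.

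\textbf{Choice of scales and termination.} I would design the scale schedule so that the chain first covers the macroscopic time and space distances from $z_0$ to $(x,t)$ with $O(1)$ steps at scale $\rho_k\simeq 1$, and then halves the scale ($\rho_k\simeq 2^{-k}$) until $\rho_N\simeq\delta$, with $E_N\supset Q_\delta(x,t)$. This takes $N\simeq\log_2\delta^{-1}$ steps, so the total multiplicative constant is $(C')^N\le\delta^{-m'}$ with $m'=\log_2 C'$, and the accumulated $C_0$-errors form a geometric series dominated by a constant multiple of $C_0$. A final volume comparison
\[
\fint_{Q_\delta(x,t)}u^p\le\frac{|E_N|}{|Q_\delta(x,t)|}\fint_{E_N}u^p\lesssim\delta^{-(n+2)}\fint_{E_N}u^p
\]
contributes an extra factor $\delta^{-(n+2)/p}$ and yields \eqref{eq:ParHalfHarnackdel} with $m=m'+(n+2)/p$.

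\textbf{Main obstacle.} The analytic content at each step is nothing more than one application of Theorem~\ref{thm:basicWeakHarnack}. The delicate part is the geometric book-keeping of the chain, namely the construction of the sequence $\{(z_k^\circ,\rho_k)\}_{k=0}^N$ so that (i) every cylinder $Q_{\rho_k}(z_k^\circ)$ lies inside $Q_1(0,0)$, (ii) the chain can traverse the required distances, $\sum\rho_k^2\gtrsim 1$ in time and $\sum\rho_k\gtrsim 1$ in space, and (iii) the final region $E_N$ indeed contains $Q_\delta(x,t)$. The ``macroscopic sweep then geometric halving'' schedule described above provides one valid choice; any comparable schedule works equally well.
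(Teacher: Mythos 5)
Your overall strategy -- chaining the weak Harnack (Theorem~\ref{thm:basicWeakHarnack}) across a sequence of $D^\pm$-cylinders with a geometric scale schedule, accumulating a factor $C$ per step over $N\simeq\log_2\delta^{-1}$ steps -- is exactly what the paper does. However, there is a genuine gap in how you transfer the bound from one step to the next. You select $z_{k+1}\in E_k$ with $u(z_{k+1})\le 2^{1/p}M_k$ by Chebyshev and then centre the next cylinder $z_{k+1}^\circ$ within distance $\rho_{k+1}/2$ of $z_{k+1}$. But Chebyshev gives no control over \emph{where} inside $E_k$ the chosen point lies: it belongs to a ``good'' set of measure $\geq(1-2^{-p})|E_k|$, which can sit anywhere. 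Since the steering freedom in placing $z_{k+1}^\circ$ is only $\rho_{k+1}/2$ while the uncertainty in $z_{k+1}$ is $\sim\rho_k$, the chain's position drifts by $\sum_k\rho_k\simeq 1$ and you cannot guarantee it reaches the prescribed $\delta$-scale region around $(x,t)$. (If you instead try to force $z_{k+1}$ to lie in a prescribed sub-cylinder of $E_k$ of size $\rho_{k+1}\simeq\rho_k/2$, i.e.\ a fraction $2^{-(n+2)}$ of $E_k$, there is no guarantee the good set meets it, because $p<1<n+2$.) The paper sidesteps this entirely: rather than pick a near-minimizer, it uses the deterministic inequality $\inf_{D^+_{\delta_k}(x_k,t_k)}u\le\inf_{\hat Q_{k,k+1}}u\le\bigl(\tfrac{|D^-_{\delta_{k+1}}|}{|\hat Q_{k,k+1}|}\fint_{D^-_{\delta_{k+1}}}u^p\bigr)^{1/p}$ over the geometric overlap $\hat Q_{k,k+1}:=D^+_{\delta_k}(x_k,t_k)\cap D^-_{\delta_{k+1}}(x_{k+1},t_{k+1})$, which is constructed to have $|\hat Q_{k,k+1}|\gtrsim\delta_k^{n+2}$. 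This keeps the whole chain explicit and is the key ingredient you are missing; the per-step constant becomes $\sim 2^{(2n+2)/p}$ instead of $2^{1/p}$ but that is immaterial. (The paper also chains in the opposite direction -- from the small target cylinder outward with doubling radii $\delta_k=2^k\delta$ toward the pivot -- but that is a cosmetic difference once the chain is deterministic.)

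Two smaller inaccuracies: the final inclusion $E_N\supset Q_\delta(x,t)$ with $Q_{\rho_N}(z_N^\circ)\subset Q_1$ fails in the worst case $|x-x_0|\approx(1-2\delta)r$ (temporal containment forces $\rho_N\geq 2\sqrt2\,\delta$, which then sticks out laterally), so you should cover $Q_\delta(x,t)$ by a bounded number of admissible $E_N$'s and apply Jensen, as the paper does in the proof of Corollary~\ref{cor:weakHarnack}. And once $\rho_N\simeq\delta$ the ratio $|E_N|/|Q_\delta(x,t)|$ is $O(1)$, not $\delta^{-(n+2)}$, so the exponent should simply be $m=m'$ rather than $m'+(n+2)/p$.
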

We postpone the proof to the appendix, due to its technical nature.
\begin{rem}\label{rem:ParHalfHarnParCubes}
	We remark that the same statement remains valid when estimating the $L^p$ norm of supersolutions in ``parabolic cubes'' $\tilde{Q}_{\delta r}(x,t)$: under the assumptions of Lemma \ref{lem:iteratedWeakHarnack}, there holds 
	\begin{equation}\label{eq:ParHalfHarnackdelParCubes}
	\bigg( \fint_{\tilde{Q}_{\delta r}(x,t)} u^p \bigg)^{\frac{1}{p}}\leq \frac{C}{\delta^m} \bigg(\inf_{Q_{r/2}^+(x_0,t_0)} u + r^2 C_0\bigg),
	\end{equation}
	for all $(x,t) \in B_{(1-2\delta)r}(x_0) \times (t_0 - (1-4\delta^2)r^2, t_0 - \tfrac{3}{4}r^2)$.

	To see this, we fix $r=1$, $(x_0,t_0) = (0,0)$, $\delta \in (0,\tfrac{1}{4})$ and $(x,t) \in B_{1-2\delta} \times (-1 + 4\delta^2, -\tfrac{3}{4})$. We define
	\[
	\varrho := \inf\{ \rho > 0: \tilde{Q}_\delta(x,t) \subset Q_\rho(x,t)\}.
	\]
	By construction, $\varrho = c_n \delta$, for some $c_n > 0$ (depending only on $n$). Consequently, by \eqref{eq:ParHalfHarnackdel}
	\[
	\bigg( \fint_{\tilde{Q}_{\delta}(x,t)} u^p \bigg)^{\frac{1}{p}} \leq \bigg( \frac{|Q_\varrho|}{|\tilde{Q}_{\delta}|}\fint_{Q_\varrho(x,t)} u^p \bigg)^{\frac{1}{p}} \leq \frac{C}{\delta^m} \bigg(\inf_{Q_{1/2}^+} u + C_0\bigg),
	\]
	and \eqref{eq:ParHalfHarnackdelParCubes} follows.
\end{rem}

We also need a version of the weak Harnack inequality, where the average is not computed over a cylinder or a cube.

\begin{cor}\label{cor:weakHarnack}
	 
	  Let $u\geq0$ satisfy
	  $$\partial_tu - \mathcal{M}^-(D^2u)\geq -C_0 \quad\quad\text{ in }Q_r(x_0,t_0).$$ 
	  Let $\delta \in (0,\tfrac{1}{4})$ and $C_0,m_0 > 0$. Then there exist $C,m > 0$, $p \in (0,1)$ depending only on $n$, $\lambda$, $\Lambda$, $C_0$ and $m_0$ such that 
	\begin{equation}\label{eq:ParHalfHarnackdelBis}
	\bigg( \fint_A u^p \bigg)^{\frac{1}{p}}\leq \frac{C}{\delta^m} \bigg(\inf_{Q_{r/2}^+(x_0,t_0)} u + C_0\bigg),
	\end{equation}
	for every open set $A \subset B_{1-2\delta} \times (t_0-(1 - 4\delta^2)r^2,t_0 -\tfrac{3}{4}r^2)$ satisfying $|A| \geq C_0\delta^{m_0}r^{n+2}$. 
\end{cor}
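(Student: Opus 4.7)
The plan is to reduce Corollary \ref{cor:weakHarnack} to the parabolic-cube version of the weak Harnack inequality stated in Remark \ref{rem:ParHalfHarnParCubes}, by covering $A$ by small parabolic cubes at a scale comparable to $\delta r$ and using the hypothesized lower bound on $|A|$ to offset the covering count.

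First I would fix a small dimensional constant $c_n \in (0,1)$ and set $\eta := c_n \delta$. The role of $c_n$ is to guarantee that any $(x,t) \in A$, when taken as the center of a parabolic cube of radius $5\eta r$, still lies in the admissible region $B_{(1-10\eta)r}(x_0) \times (t_0 - (1-100\eta^2)r^2,\, t_0 - \tfrac{3}{4}r^2)$ required to apply Remark \ref{rem:ParHalfHarnParCubes} with parameter $5\eta$. This reduces to an elementary monotonicity check that uses only the inclusion $A \subset B_{(1-2\delta)r}(x_0) \times (t_0-(1-4\delta^2)r^2,\, t_0-\tfrac{3}{4}r^2)$ from the statement.

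Next I would apply Vitali's covering lemma (the parabolic quasi-metric is doubling, so this is standard) to the family $\{\tilde{Q}_{\eta r}(x,t)\}_{(x,t)\in A}$ and extract a pairwise disjoint subfamily $\{\tilde{Q}_{\eta r}(x_i,t_i)\}_i$ whose $5$-dilates cover $A$. Remark \ref{rem:ParHalfHarnParCubes} applied at each center produces an estimate of the form
\[
\int_{\tilde{Q}_{5\eta r}(x_i,t_i)} u^p \leq C \eta^{-mp}\, |\tilde{Q}_{\eta r}(x_i,t_i)|\, \bigl(\inf_{Q^+_{r/2}(x_0,t_0)} u + C_0\bigr)^p,
\]
with $C,m,p$ depending only on $n,\lambda,\Lambda$ (the factor $5^{n+2+mp}$ is absorbed into $C$). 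Summing over $i$, invoking the disjointness of the $\tilde{Q}_{\eta r}(x_i,t_i)$ to bound $\sum_i |\tilde{Q}_{\eta r}(x_i,t_i)| \leq C r^{n+2}$, and then dividing by $|A| \geq C_0\delta^{m_0} r^{n+2}$ gives
\[
\fint_A u^p \leq \frac{C}{C_0\,\delta^{mp+m_0}}\bigl(\inf_{Q^+_{r/2}(x_0,t_0)} u + C_0\bigr)^p.
\]
Taking the $p$-th root and relabeling the exponent as $m \leftarrow (mp+m_0)/p$ yields exactly \eqref{eq:ParHalfHarnackdelBis}.

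The only genuinely delicate point is to ensure the cube centers survive inside the admissible region of Remark \ref{rem:ParHalfHarnParCubes} at the larger scale $5\eta r$, which is why the dimensional safety factor $c_n$ between $\eta$ and $\delta$ is built in from the outset; beyond this, the argument is a routine Vitali packing, and the potential degeneracy as $\delta \to 0^+$ is absorbed entirely into the factor $\delta^{-m}$ on the right, thanks to the measure lower bound $|A| \gtrsim \delta^{m_0} r^{n+2}$.
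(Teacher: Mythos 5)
Your proof is correct and conceptually it does what the paper does: cover $A$ by parabolic cubes of radius comparable to $\delta r$, apply the cube version of the weak Harnack inequality on each, aggregate using disjointness, and divide by the measure lower bound $|A| \geq C_0 \delta^{m_0} r^{n+2}$. The differences from the paper's argument are in the implementation of the covering/aggregation step and are worth noting. The paper tiles the entire slab $B_1 \times (-1,-3/4)$ directly with a finite family of pairwise disjoint parabolic cubes $\tilde{Q}_\delta(x_k,t_k)$ and then aggregates by applying Jensen's inequality for the convex map $z \mapsto z^{1/p}$ to the arithmetic mean of the quantities $(\fint_{\tilde{Q}_\delta(x_k,t_k)} u^p)^{1/p}$, finally comparing $\int_{\cup_k \tilde{Q}_\delta}$ with $\int_A$. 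You instead invoke the Vitali $5r$-covering lemma on the family $\{\tilde{Q}_{\eta r}(x,t)\}_{(x,t)\in A}$ with $\eta = c_n\delta$, apply the weak Harnack on the dilated cubes, and sum the $p$-th powers directly before dividing by $|A|$, which bypasses Jensen entirely. Your version is arguably cleaner on one technical point: by introducing the safety factor $c_n$ you ensure the cube centers stay inside the admissible region of Remark~\ref{rem:ParHalfHarnParCubes}, a detail the paper's tiling handles only implicitly. The price is that you bring in Vitali where a direct gridding suffices (the $5r$-lemma needs no doubling hypothesis, so your parenthetical remark about doubling is superfluous but harmless). Both routes produce the stated bound with constants of the same quality, with $m$ relabeled to absorb $m_0/p$, exactly as you indicate.
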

The proof is in the appendix. We end this preliminary section with a simple, yet important remark.
\begin{rem}\label{rem:Scaling} Let $u$ be a solution to \eqref{eq:FNL}. Then, the function 
	\begin{equation}%\label{eq:ParRescaling}
	\tilde{u}(x,t) = u(x_0 + rx,t_0 + r^2t)
	\end{equation}
	is a solution to
	\[
	\begin{cases}
	\partial_t u - \tilde{\mathcal{M}}^+(D^2u) \leq r^2C_0 &\text{ in } Q_1\\
	\partial_t u - \tilde{\mathcal{M}}^-(D^2u) \geq  r^2C_0 &\text{ in }Q_1,
	\end{cases}
	\]
	where
	\[
	\tilde{\mathcal{M}}^{\pm}(M) := r^2 \mathcal{M}^{\pm}(r^{-2}M).
	\]
One can easily verify that both $\tilde{\mathcal{M}}^+$ and $\tilde{\mathcal{M}}^-$ satisfy the conditions in \eqref{eq:AssOnFIntro} uniformly w.r.t. $r > 0$ and $(x_0,t_0)$.	
\end{rem}

%
%
%%%%%%%%%%%%%%%%%%%%%%%%%%%%%%%%%%%%%%%%%%%%%%%%%%%%%%%%%%%%%%%%%%%%%%%%%%%%%%%%%%%%%%%%
%
%%%%%%%%%%%%%%%%%%%%%%%%%%%%%%%%%%%%%%%%%%%%%%%%%%%%%%%%%%%%%%%%%%%%%%%%%%%%%%%%%%%%%%%%
%
%
\section{Existence of solutions via penalization method}\label{sec:Existence}
The classical parabolic obstacle problem can be formulated in two equivalent ways:
	\[
	\begin{cases}
	\partial_t v - \Delta v \geq 0 \\
	\partial_t v - \Delta v = 0 \quad \text{ in } \{v > \varphi\} \\
	v \geq \varphi,
	\end{cases}
	\qquad\qquad
	\begin{cases}
	\partial_t u - \Delta u = -\chi_{\{u >0\}} \\
	u \geq 0, 
	\end{cases}
	\]
where $\varphi:\R^n \to \R$ is a smooth bounded obstacle satisfying $\Delta \varphi = -1$. Under suitable regularity assumptions, the second is obtained by the first by setting $u := v - \varphi$.

In this section, we show existence of solutions to the fully nonlinear analogue of the first formulation above, as stated in the following proposition. The approach we follow is the so-called ``penalization method'', combined with suitable uniform estimates and a compactness argument (see for instance \cite[Section 1]{Lee1998} and \cite[Chapter 1]{PetShahUralt} for the elliptic framework).
%
%
%
% \cite[Subsection 3.2]{IS12}
%
\begin{prop}\label{thm:existence} Let $\Omega \subset \R^n$ be a bounded domain with smooth boundary, $T > 0$ and $Q := \Omega \times (0,T)$. Let $\tilde{F} : \mathcal{S} \to \R$ satisfying \eqref{eq:AssOnFIntro} (independent of $x$) and $\tilde{g},\varphi \in C^\infty(\overline{\Omega})$ be such that $\tilde{g} \geq \varphi$ in $\overline{\Omega}$. Then there exists a function $v$ continuous up to $\partial_pQ$ satisfying
\[
v \in C^{1,\alpha}_p \cap W^{2,p}_x \cap W_t^{1,p} \text{ locally in } Q
\]
for all $\alpha \in (0,1)$, all $p \in (1,\infty)$ and
\begin{equation}\label{eq:ParObProb2}
\begin{cases} 
\partial_t v - \tilde{F}(D^2v) \geq 0, \quad v \geq \varphi &\text{ a.e. in } Q \\
\partial_t v - \tilde{F}(D^2v) = 0                          &\text{ in } \{v > \varphi\} \\
v = \tilde{g}                                               &\text{ in } \partial_p Q.
\end{cases}
\end{equation}
Furthermore, the function $u := v - \varphi$ is as regular as $v$ and satisfies 
\begin{equation}\label{eq:ProblemIntroBis}
\begin{cases}
\partial_tu - F(D^2u,x) = f(x) \chi_{\{u > 0\}} &\text{a.e. in } Q \\
u \geq 0,                                       &\text{in } Q \\
u = g,                                &\text{in } \partial_p Q
\end{cases}
\end{equation}
where 
\[
F(M,x) := \tilde{F}(M + D^2\varphi(x)) - \tilde{F}(D^2\varphi(x)), \qquad f(x) := \tilde{F}(D^2\varphi(x)). 
\]
and $g := \tilde{g} - \varphi$.
\end{prop}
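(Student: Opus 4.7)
The plan is to construct $v$ by the classical penalization scheme. For $\varepsilon > 0$, let $\beta_\varepsilon \in C^\infty(\R)$ be a smooth penalty function with $\beta_\varepsilon \geq 0$, $\beta_\varepsilon' \leq 0$, $\beta_\varepsilon(s) = 0$ for $s \geq 0$, and such that, for every fixed $s < 0$, $\beta_\varepsilon(s) \to +\infty$ as $\varepsilon \downarrow 0$. Consider the penalized (non-obstacle) problem
\[
\partial_t v_\varepsilon - \tilde F(D^2 v_\varepsilon) = \beta_\varepsilon(v_\varepsilon - \varphi) \quad \text{in } Q, \qquad v_\varepsilon = \tilde g \quad \text{on } \partial_p Q.
\]
Since $\tilde F$ is smooth, uniformly elliptic and convex, and the RHS is smooth and non-increasing in $v_\varepsilon$ (the correct monotonicity for the maximum principle), the existence of a unique classical solution $v_\varepsilon \in C^{2,\alpha}_p(\overline Q)$ follows from Evans--Krylov theory and a standard fixed-point / continuity-method argument (see e.g.\ \cite{CC95, Lieberman1996}).

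The core of the argument is then to derive uniform estimates as $\varepsilon \downarrow 0$. First I would establish $\|v_\varepsilon\|_{L^\infty(Q)} \leq C$ by comparison: an upper bound via a supersolution of the limit problem with boundary value $\max(\tilde g, \|\varphi\|_{L^\infty})$, and a lower bound via $\varphi$ possibly adjusted by a linear-in-time correction. The delicate step is obtaining a uniform bound $\|\beta_\varepsilon(v_\varepsilon-\varphi)\|_{L^\infty(Q)} \leq C$: the boundary contribution vanishes since $\tilde g \geq \varphi$ on $\partial_p Q$ forces $\beta_\varepsilon(\tilde g - \varphi) = 0$, and applying the maximum principle to the quantity $\beta_\varepsilon(v_\varepsilon-\varphi) = \partial_t v_\varepsilon - \tilde F(D^2 v_\varepsilon)$, combined with the smoothness of $\tilde F(D^2\varphi)$, reduces the interior bound to the boundary one. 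Once the RHS is uniformly bounded, the interior $C^{1,\alpha}_p$ and $W^{2,p}_x \cap W^{1,p}_t$ estimates for fully nonlinear parabolic equations recalled in Section~\ref{sec:preliminaries} yield uniform local compactness of $\{v_\varepsilon\}$, while the boundary estimates for convex parabolic equations ensure continuity up to $\partial_p Q$.

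Equipped with these bounds, I would extract a subsequence $v_{\varepsilon_k} \to v$ converging in $C^{1,\alpha}_p$ locally and weakly in $W^{2,p}_x \cap W^{1,p}_t$, with $v$ continuous up to $\partial_p Q$ and $v = \tilde g$ there. The inequality $v \geq \varphi$ is a direct consequence of the uniform bound on $\beta_\varepsilon$ (otherwise, at a point where $v < \varphi$ in the limit, $\beta_\varepsilon(v_{\varepsilon_k}-\varphi)$ would blow up); the inequality $\partial_t v - \tilde F(D^2 v) \geq 0$ passes to the limit since $\beta_\varepsilon \geq 0$; finally, on any set relatively compact in $\{v > \varphi\}$, one has $v_{\varepsilon_k} > \varphi$ for $k$ large, so $\beta_{\varepsilon_k}(v_{\varepsilon_k}-\varphi) = 0$ and the equation $\partial_t v - \tilde F(D^2 v) = 0$ is recovered in the limit. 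This establishes \eqref{eq:ParObProb2}.

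To conclude, I would translate \eqref{eq:ParObProb2} into \eqref{eq:ProblemIntroBis} for $u := v - \varphi$. The regularity of $u$ is inherited from $v$ (modulo the smooth obstacle $\varphi$), and using $D^2 u = D^2 v - D^2 \varphi$ and the definitions of $F$ and $f$ one computes
\[
\partial_t u - F(D^2 u, x) = \partial_t v - \tilde F(D^2 v) + \tilde F(D^2 \varphi(x)),
\]
which equals $\tilde F(D^2 \varphi(x)) = f(x)$ on $\{u > 0\}$ (where $\partial_t v - \tilde F(D^2 v) = 0$) and vanishes a.e.\ on $\{u = 0\}$ (where $D^2 u = 0 = \partial_t u$ at interior points of the contact set, and $F(O,x) = 0$). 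The main obstacle in the whole scheme is the uniform bound on the penalty $\beta_\varepsilon(v_\varepsilon - \varphi)$; once this is secured, the rest is a standard adaptation of the elliptic/parabolic penalization argument to the convex fully nonlinear setting.
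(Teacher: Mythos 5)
Your proposal follows the same penalization strategy as the paper, and the overall argument is correct: the heart of both proofs is the uniform bound on the penalty term $h_\varepsilon := \beta_\varepsilon(v_\varepsilon-\varphi)$ via the maximum principle, using that an interior maximum of $h_\varepsilon$ is an interior (parabolic) minimum of $v_\varepsilon-\varphi$ and ellipticity of $\tilde F$ to conclude $h_\varepsilon \leq -\tilde F(D^2\varphi)$ there, plus the boundary control coming from $\tilde g \geq \varphi$. The paper makes two technical choices different from yours: it takes the specific two-sided penalty $\beta_\varepsilon(z) = e^{-z/\varepsilon}$ (so that $\beta_\varepsilon(0)=1$, hence the boundary contribution is bounded by $1$ rather than being exactly zero as in your one-sided choice, which is immaterial), and it constructs $w_\varepsilon$ as a viscosity solution by Perron's method sandwiched between explicit sub/super-solutions rather than by a continuity/fixed-point method. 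The Perron route is safer here, because your claimed $C^{2,\alpha}_p(\overline Q)$ regularity for $v_\varepsilon$ would require compatibility conditions at the parabolic corner $\partial\Omega\times\{0\}$ that are not generally satisfied by the data $(\tilde g,\varphi)$; the argument only needs interior $C^{1,\alpha}_p\cap W^{2,p}_x\cap W^{1,p}_t$ estimates plus continuity up to $\partial_pQ$, which both methods deliver. One small wording issue: the interior bound on $h_\varepsilon$ is not ``reduced to the boundary one'' --- it is an independent estimate at an interior maximum via ellipticity and the boundedness of $\tilde F(D^2\varphi)$ --- but this is phrasing, not substance; the limiting argument and the passage from \eqref{eq:ParObProb2} to \eqref{eq:ProblemIntroBis} (using that $\partial_t u = \nabla u = D^2u=0$ a.e.\ on $\{u=0\}$ for $W^{2,p}_x\cap W^{1,p}_t$ functions and $F(O,x)=0$) match the paper.
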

\begin{proof} For any $\vep \in (0,1)$, we set $\beta_\vep(z) := e^{-z/\vep}$ and we consider the solution $w_\vep$ to the penalized problem
\begin{equation}\label{eq:PenProb}
\begin{cases}
\partial_t w_\vep - \tilde{F}(D^2w_\vep) = \beta_\vep(w_\vep-\varphi)   &\text{ in } Q \\
w_\vep = \tilde{g}                                                             &\text{ in } \partial_p Q.
\end{cases}
\end{equation}
Viscosity solutions to \eqref{eq:PenProb} can be constructed by means of the Perron's method as follows: by the definition of $\beta_\vep$ and since $\tilde{g} \geq \varphi$, one can easily check that the functions
\[
w_-(x,t) := -\|\tilde{g}\|_{L^\infty(\Omega)} \qquad \text{ and } \qquad w_+(x,t) := \tilde{g}(x) + (1+C)t
\]
are (ordered) subsolutions and superolutions to problem \eqref{eq:PenProb}, respectively, where $C := \| F(D^2\tilde{g})\|_{L^\infty(\Omega)}$. Consequently, the ``least supersolution'' defined as
\[
w_\vep(x,t) := \inf \{w(x,t): w \text{ is a supersolution to } \eqref{eq:PenProb} \text{ and } w \geq \tilde{g} \text{ in } \partial_p Q \}
\]
is a viscosity solution to \eqref{eq:PenProb} satisfying $w_- \leq w_\vep \leq w_+$, see for instance \cite[Section 3]{IS12}. Furthermore, by the comparison principle, we easily see that $w_\vep$ is unique.

By the argument above, we have a family of viscosity solutions $\{w_\vep\}_{\vep \in (0,1)}$ to \eqref{eq:PenProb}, uniformly bounded in $L^\infty(Q)$. Next we show that 
\begin{equation}\label{eq:CompactnessEsts}
\{w_\vep\}_{\vep \in (0,1)} \text{ is uniformly bounded in } C^{1,\alpha}_p \cap W^{2,p}_x \cap W_t^{1,p} \text{ locally in } Q,
\end{equation}
for all $\alpha \in (0,1)$ and all $p \in (1,\infty)$.\footnote{Actually, since $\tilde{g} \in C^\infty(\overline{\Omega})$ and $\partial \Omega \in C^\infty$, we also have $w_\vep \in C^{1,\alpha}_p$ up to $\partial_p Q$, uniformly w.r.t. $\vep \in (0,1)$.} 
In light of the Sobolev estimates \cite[Theorem 5.6, Theorem 5.7]{Wang92} and the Schauder estimates \cite[Theorem 4.8, Theorem 4.12]{Wang92bis}, it is enough to show that the r.h.s. of the penalized equation is uniformly bounded. Specifically, we prove that
\begin{equation}\label{eq:UnifBoundExis}
0 \leq h_\vep \leq \max\{1,\|f\|_{L^\infty(\Omega)}\},  
\end{equation}
for all $\vep \in (0,1)$, where $h_\vep := \beta_\vep(w_\vep - \varphi)$ and $f := \tilde{F}(D^2\varphi)$. The bound from below directly follows from the definition of $\beta_\vep$. Now let $(x_0,t_0)$ be a maximum point of $h_\vep$ in $\overline{Q}$. If $(x_0,t_0) \in \partial_p Q$, then
\[
h_\vep(x_0,t_0) = \beta_\vep(w_\vep(x_0,t_0) - \varphi(x_0)) = \beta_\vep(\tilde{g}(x_0) - \varphi(x_0)) \leq \beta_\vep(0) = 1,
\]
since $\tilde{g} \geq \varphi$. If $(x_0,t_0) \in Q$ is a local maximum of $h_\vep$ then, since $\beta_\vep$ is decreasing, $(x_0,t_0)$ is a local minimum of $w_\vep - \varphi$ and so $\partial_t(w_\vep - \varphi)(x_0,t_0) = 0$ and $D^2(w_\vep - \varphi)(x_0,t_0) \geq 0$. Consequently, by ellipticity of $\tilde{F}$, we obtain
\[
\begin{aligned}
h_\vep(x_0,t_0) &= \partial_t(w_\vep - \varphi)(x_0,t_0) - \tilde{F}(D^2(w_\vep - \varphi)(x_0,t_0) + D^2\varphi(x_0)) \\
&\leq - \tilde{F}(D^2\varphi(x_0)) = -f(x_0) \leq \|f\|_{L^\infty(\Omega)}.
\end{aligned}
\]
A similar argument yields the same bound when $(x_0,t_0) \in \Omega\times\{t=T\}$ and \eqref{eq:UnifBoundExis} follows.

As mentioned above, \eqref{eq:UnifBoundExis} gives us \eqref{eq:CompactnessEsts} which, in turn, yields the existence of $v \in C^{1,\alpha}_p \cap W^{2,p}_x \cap W_t^{1,p}$ locally in $Q$ and a sequence $\vep_k \to 0$ such that
\[
w_{\vep_k} \to v \quad \text{ in } C^{1,\alpha}_p \cap W^{2,p}_x \cap W_t^{1,p} \text{ locally in } Q
\]
as $k \to +\infty$, for all $\alpha \in (0,1)$ and all $p \in (1,\infty)$.

We are left to show that $v$ is a solution to \eqref{eq:ParObProb2}. The first inequality in \eqref{eq:ParObProb2} follows by passing to the limit in the viscosity sense and using that $\beta_\vep \geq 0$ (the differential inequality is also satisfied a.e. thanks to the Sobolev regularity of $v$). Further, we have $v \geq \varphi$ in $Q$. Indeed, if by contradiction there are $(x_0,t_0) \in Q$ and $\delta > 0$ such that $(v-\varphi)(x_0,t_0) = -\delta$, then $(w_{\vep_k}-\varphi)(x_0,t_0) \leq -\delta/2$ for large $k$'s and so
\[
h_{\vep_k}(x_0,t_0) \geq \beta_{\vep_k}(-\delta/2) = e^{\frac{\delta}{2{\vep_k}}} \to +\infty,
\]
as $k \to +\infty$, in contradiction with \eqref{eq:UnifBoundExis}. To see that $v$ satisfies the equation in $\{v > \varphi\}$, it is enough to check that
\[
h_{\vep_k} \to 0 \quad \text{ locally uniformly in } \{v > \varphi\},
\]
as $k \to +\infty$ (and pass to the limit in the viscosity sense, in compact subsets of $\{v > \varphi\}$). The proof of the limit above is straightforward: if $(x_0,t_0) \in \{v > \varphi\}$ with $(v - \varphi)(x_0,t_0) := \delta > 0$, uniform convergence shows that $(w_{\vep_k} - \varphi)(x_0,t_0) \geq \delta/2$ and thus 
\[
0 \leq h_{\vep_k}(x_0,t_0) \leq \beta_{\vep_k}(\delta/2) = e^{-\frac{\delta}{2{\vep_k}}} \to 0,
\]
as $k \to +\infty$. Finally, notice that $v = \tilde{g}$ in $\partial_p Q$ by uniform convergence up to $\partial_pQ$ and since $w_\vep = \tilde{g}$, for all $\vep \in (0,1)$.

To complete the proof, we notice that, since $\varphi \in C^\infty$, $u := v -\varphi$ is as regular as $v$. In particular, $u \in W^{2,p}_x \cap W_t^{1,p}$ locally in $Q$. This implies $\partial_t u = \nabla u = D^2u = 0$ a.e. in $\{u = 0\}$ (by Rademarcher's theorem) which, in turn, shows that $u$ is a solution to \eqref{eq:ProblemIntroBis}.
\end{proof}

%
%
%%%%%%%%%%%%%%%%%%%%%%%%%%%%%%%%%%%%%%%%%%%%%%%%%%%%%%%%%%%%%%%%%%%%%%%%%%%%%%%%%%%%%%%%
%
%%%%%%%%%%%%%%%%%%%%%%%%%%%%%%%%%%%%%%%%%%%%%%%%%%%%%%%%%%%%%%%%%%%%%%%%%%%%%%%%%%%%%%%%
%
%
\section{$C^{1,1}_x\cap C^{0,1}_t$ regularity and non-degeneracy}\label{sec:optimalRegularity}
In this section we establish the optimal $C^{1,1}_x$ spatial regularity of solutions to \eqref{eq:ProblemIntro} and the almost optimal $C^{0,1}_t$ time regularity.
The main result of this section is the following theorem. 
\begin{thm}\label{thm:OptimalReg} Let $u \in \mathcal{P}_1(K)$. Then $u \in C_x^{1,1}\cap C_t^{0,1}(Q_{1/2})$ and, further, there exists $C > 0$ depending only on $n$, $\lambda$, $\Lambda$ and $K$ such that
\begin{equation}\label{eq:OptimalReg}
\|\partial_t u\|_{L^\infty(Q_{1/2})} + \| D^2 u \|_{L^\infty(Q_{1/2})} \leq C.
\end{equation} 
\end{thm}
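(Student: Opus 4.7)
The strategy is to combine a quadratic growth bound for $u$ at free boundary points with interior $C^{2,\alpha}$ regularity for convex fully nonlinear parabolic equations on the positivity set, and to glue the two together by a Campanato-style rescaling argument. Both ingredients are known in their respective contexts, so that the theorem becomes a pointwise synthesis: the growth bound controls $u$ at the scale of the parabolic distance to $\partial\{u>0\}$, and the equation takes over once we zoom out to unit distance.

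The first and most delicate step is to establish quadratic growth: there exists $C_0=C_0(n,\lambda,\Lambda,K)$ such that, for every $(x_0,t_0)\in\partial\{u>0\}\cap Q_{1/2}$,
\[
\sup_{Q_r(x_0,t_0)}u\leq C_0\,r^{2},\qquad 0<r<\tfrac14.
\]
This is the content of Lemma \ref{lem:OptimalGrowth} referenced in the introduction. I would prove it by contradiction and dyadic iteration in the spirit of Caffarelli--Figalli--Shahgholian: if the bound failed, one would produce free boundary points $(x_k,t_k)$ and radii $r_k\downarrow 0$ with $\sup_{Q_{r_k}(x_k,t_k)} u / r_k^{2}\to\infty$; the normalised rescalings $u_{r_k}^{(x_k,t_k)}$ introduced in \eqref{eq:ParNormRescaling} would then be unbounded on $Q_1$ while still solving \eqref{eq:RescalingEqn} with data in the Pucci class of Section \ref{sec:preliminaries}. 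The sign hypothesis $f\le -c_\circ<0$ is the key ingredient ruling out such blow-up, via comparison of $u$ with an explicit quadratic barrier adapted to the Pucci extremal operators.

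Next, with quadratic growth in hand, I would upgrade to the pointwise bounds on $D^{2}u$ and $\partial_t u$ by rescaling to the unit-distance regime. Fix $(x_0,t_0)\in Q_{1/2}\cap\overline{\{u>0\}}$; points in the interior of the contact set satisfy $D^{2}u=\partial_t u=0$ a.e. and are covered trivially. Let $d$ be the parabolic distance from $(x_0,t_0)$ to $\partial\{u>0\}$, capped at $1/8$, and pick $(y_0,s_0)\in\partial\{u>0\}$ realising it. Consider
\[
\tilde u(x,t):=\frac{u(y_0+d\,x,\,s_0+d^{2}t)}{d^{2}}.
\]
By Remark \ref{rem:Scaling}, $\tilde u$ solves the obstacle problem on $Q_{1/d}$ with data $\tilde F(M,x):=F(M,y_0+dx)$ and $\tilde f(x):=f(y_0+dx)$, satisfying \eqref{eq:AssOnFIntro}-\eqref{eq:AssOnfIntro} uniformly in $d$, and the quadratic growth gives $\|\tilde u\|_{L^\infty(Q_1)}\leq C_0$. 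The image of $(x_0,t_0)$ lies in $\{\tilde u>0\}$ at parabolic distance $\geq \tfrac12$ from the free boundary of $\tilde u$, so on a cylinder of definite size around it $\tilde u$ is a classical solution of the smooth, convex, uniformly elliptic equation $\partial_t\tilde u-\tilde F(D^{2}\tilde u,x)=\tilde f(x)$. The interior Evans--Krylov / Schauder theory quoted in Section \ref{sec:Existence} (\cite{Wang92,Wang92bis}) then yields $|D^{2}\tilde u|+|\partial_t\tilde u|\leq C(n,\lambda,\Lambda,K)$ at that point. Under the chosen normalisation one has $D^{2}\tilde u=D^{2}u$ and $\partial_t\tilde u=\partial_t u$ at corresponding points, so the bound transfers to $(x_0,t_0)$ with constants independent of $d$. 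Taking essential suprema over $Q_{1/2}$ delivers \eqref{eq:OptimalReg}.

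The real work sits in the quadratic growth step. The rescaling in the second step is routine once the growth bound is available, and the interior estimates for convex fully nonlinear parabolic equations are textbook. Quadratic growth is delicate in the fully nonlinear setting because no ACF-type monotonicity formula is available to replace Caffarelli's classical argument, so one must run a contradiction/compactness scheme tailored to the Pucci extremal operators, using crucially the one-sided sign condition $f\le -c_\circ<0$ to prevent superquadratic ascent from a null point.
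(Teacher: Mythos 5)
Your overall strategy matches the paper's proof exactly: establish the quadratic growth bound at zero/free-boundary points (Lemma \ref{lem:OptimalGrowth}), then transfer it to pointwise bounds on $D^{2}u$ and $\partial_t u$ by applying interior Schauder estimates from \cite{Wang92bis} on a parabolic cylinder of size comparable to the distance to $\partial\{u>0\}$. The paper does this by applying Schauder directly on $Q_{d/2}(y,\tau)\subset\{u>0\}$ rather than rescaling to the nearest free boundary point as you do, but this is a cosmetic change of variables, and your computation that $D^{2}\tilde u$ and $\partial_t\tilde u$ agree with $D^{2}u$ and $\partial_t u$ under the natural parabolic normalisation is correct.

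One recurring misconception in your sketch of the growth lemma is worth flagging. You twice assert that the sign hypothesis $f\le -c_\circ<0$ is the key ingredient ruling out superquadratic growth. This is not so: the constant in Theorem \ref{thm:OptimalReg} and in Lemma \ref{lem:OptimalGrowth} depends only on $n,\lambda,\Lambda,K$ and \emph{not} on $c_\circ$. In the paper's barrier proof of Lemma \ref{lem:OptimalGrowth}, the supersolution $v=a(t-t_0+b|x-x_0|^2)$ works because $F(O,\cdot)\equiv 0$ and $F$ is uniformly elliptic; no sign of $f$ is used. In the compactness proof, the renormalised rescalings are arranged (via the auxiliary $\theta(r)$) to be \emph{uniformly bounded} — not unbounded as you suggest — with $\|v_j\|_{L^\infty(Q_R)}\le R^2$ and $\|v_j\|_{L^\infty(Q_1)}\ge\tfrac12$, and the right-hand side $\tilde f_j\to 0$ regardless of its sign, so that the blow-up limit solves a homogeneous equation and is killed by Liouville plus the maximum principle. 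The condition $f\le -c_\circ$ is used only later, for non-degeneracy (Lemma \ref{lem:NonDegeneracy}) and the free boundary analysis. This inaccuracy does not break your proof of the theorem since you are citing the lemma rather than reproving it, but it would mislead a reader about where the various structural hypotheses actually enter.
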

It is important to mention that the validity of the statement above is known, even in a more general framework: see \cite{FigShahBis,PetShah,Shah} and also \cite{FigShah,Lee1998} for the elliptic framework. Respect to these works, our approach heavily exploits the non-negativity and time-monotonicity of solutions, and it is equivalent to establish the optimal growth of solutions near the free boundary (see Lemma \ref{lem:OptimalGrowth}). Here we present two independent and new proofs: the first combines a special Harnack inequality with a comparison argument, while the second consists in a blow-up argument and has a perturbative flavour.

We begin with the following technical lemma.
\begin{lem}\label{lem:parabolaHarnack}

   Let $u \in \mathcal{P}_r(x_0,t_0;K)$ and let $\delta > 0$. Then there exists $C > 0$ depending only on $n$, $\lambda$, $\Lambda$ and $\delta$ such that
\begin{equation}\label{eq:HarnackIneqParbola}
\sup_{P_r^\delta(x_0,t_0)} u \leq C \big( u(x_0,t_0) + r^2\|f\|_{L^\infty(Q^-_r(x_0,t_0))} \big),
\end{equation}
where
\begin{equation}\label{eq:ParabolaDeltaDef}
P_r^\delta(x_0,t_0) := \{(x,t) \in Q_{r/2}^-(x_0,t_0): t - t_0 < -\delta|x-x_0|^2\}.
\end{equation}

\end{lem}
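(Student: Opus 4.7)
Plan: By the parabolic rescaling $\tilde u(x, t) = u(x_0 + rx, t_0 + r^2 t)$ and Remark \ref{rem:Scaling}, I reduce to $(x_0, t_0) = (0, 0)$ and $r = 1$; the task becomes to show
\[
\sup_{P_1^\delta} u \leq C(\delta, n, \lambda, \Lambda)\bigl(u(0, 0) + \|f\|_{L^\infty(Q_1^-)}\bigr)
\]
for $u \in \mathcal{P}_1(K)$. Since $F$ is uniformly elliptic with $F(O, x) = 0$, the non-negative function $u$ satisfies the Pucci inequalities
\[
\partial_t u - \mathcal{M}^+(D^2 u) \leq \|f\|_{L^\infty(Q_1^-)}, \qquad \partial_t u - \mathcal{M}^-(D^2 u) \geq -\|f\|_{L^\infty(Q_1^-)} \quad \text{in } Q_1,
\]
so that Theorem \ref{lem:Harnack} is at my disposal. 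My plan is a Harnack chain argument whose length is controlled uniformly over $P_1^\delta$ after a parabolic dilation.

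Fix $(y, s) \in P_1^\delta$; the condition $s < -\delta|y|^2$ combined with $(y, s) \in Q_{1/2}^-$ gives $|y| \leq |s|^{1/2}/\sqrt{\delta}$ and $|s| \in (0, 1/4)$, so the parabolic distance from $(y, s)$ to $(0, 0)$ is at most $\sqrt{1 + 1/\delta}\,\sqrt{|s|}$. I then set $\rho := \sqrt{|s|} \in (0, 1/2)$ and $v(x, t) := u(\rho x, \rho^2 t)$. The rescaled function $v$ satisfies the same Pucci inequalities in $Q_{1/\rho} \supset Q_2$ with right-hand side bounded by $\rho^2 \|f\|_{L^\infty} \leq \|f\|_{L^\infty}$, the point $(y, s)$ is mapped to $(\bar y, -1)$ with $|\bar y| \leq \min\{1/\sqrt{\delta}, 1/(2\rho)\}$, and both $(\bar y, -1)$ and $(0, 0)$ lie at distance at least $1/(2\rho) \geq 1$ from $\partial Q_{1/\rho}$. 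It then remains to prove $v(\bar y, -1) \leq C(v(0, 0) + \|f\|_{L^\infty})$. To do so, I construct a Harnack chain inside $Q_{1/\rho}$ joining the two points: a finite sequence $(z_k, \tau_k)_{k=0}^N$, with $(z_0, \tau_0) = (\bar y, -1)$ and $(z_N, \tau_N) = (0, 0)$, together with cylinders $Q_{r_k}(\bar z_k, \bar t_k) \subset Q_{1/\rho}$ such that $(z_k, \tau_k) \in D_{r_k/2}^-(\bar z_k, \bar t_k)$ and $(z_{k+1}, \tau_{k+1}) \in D_{r_k/2}^+(\bar z_k, \bar t_k)$. Since the parabolic distance between the endpoints is at most $\sqrt{1 + 1/\delta}$ and the margin around them inside $Q_{1/\rho}$ is at least $1$, both $N$ and the radii $r_k$ can be chosen uniformly in $(y, s)$, depending only on $\delta, n, \lambda, \Lambda$.

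Iterating Theorem \ref{lem:Harnack} along this chain then produces
\[
v(\bar y, -1) \leq C_0^N\, v(0, 0) + C_0^N \sum_{k=0}^{N-1} r_k^2 \|f\|_{L^\infty} \leq C(\delta, n, \lambda, \Lambda)\bigl(u(0, 0) + \|f\|_{L^\infty(Q_1^-)}\bigr),
\]
and taking the supremum over $(y, s) \in P_1^\delta$ yields \eqref{eq:HarnackIneqParbola}. The main obstacle is the uniform construction of the chain, and the role of the condition $s < -\delta|y|^2$ is precisely to ensure this: after rescaling by $\rho = \sqrt{|s|}$ it translates into the uniform spatial bound $|\bar y| \leq 1/\sqrt{\delta}$, independent of the specific point $(y, s) \in P_1^\delta$, which is what allows both the length $N$ and the cylinder radii $r_k$ to depend on $\delta$ alone. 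Without such a condition, $|\bar y|$ could be arbitrarily large in the rescaled picture and no chain of uniform length could connect $(\bar y, -1)$ to $(0, 0)$.
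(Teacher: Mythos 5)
Your proof is correct and reaches the conclusion via a genuinely different route, though both arguments boil down to iterating the parabolic Harnack inequality (Theorem~\ref{lem:Harnack}) a number of times controlled by $\delta$. The paper's proof first observes that applying the Harnack inequality on the cylinder $Q_r(0,-r^2)$ and letting $r$ range over an interval controls $u$ on the \emph{fixed} paraboloid $P^7=\{t<-7|x|^2\}$ with a single step; it then covers $P^\delta$ with a nested layer partition $\{R_k\}$ built from translated copies of $P^7$ and iterates across layers. You instead fix $(y,s)\in P_1^\delta$, rescale by $\rho=\sqrt{|s|}$ to place the point at time $-1$, use the key uniform bound $|\bar y|\leq 1/\sqrt{\delta}$, and run a Harnack chain of $N=N(\delta)$ steps from $(\bar y,-1)$ to $(0,0)$ inside $Q_{1/\rho}$ --- a more standard pointwise chain argument that avoids the somewhat abstract layer construction. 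Two things are worth adding if you write this out in full. First, the chain itself is asserted rather than built: one should exhibit the cylinders $Q_{r_k}(\bar z_k,\bar t_k)$ explicitly (e.g.\ with centres along the segment joining $(\bar y,-1)$ to $(0,0)$ and $r_k\sim\sqrt{\delta}$, so that $N r_k^2\sim 1$ and $N r_k\gtrsim|\bar y|$) and verify that not only the two endpoints but \emph{all} intermediate cylinders lie in $Q_{1/\rho}$; this does hold because the chain stays in $B_{1/(2\rho)}\times(-1-r_k^2,\,r_k^2)$ and $1/\rho\geq 2$. Second, the iterated error term is really $\sum_{k}C_0^{k+1}r_k^2\,\rho^2\|f\|_{L^\infty}$ (the $\rho^2$ coming from the rescaled forcing), which is still $\lesssim C_0^N\|f\|_{L^\infty(Q_1^-)}$ since $\sum_k r_k^2\lesssim 1$ and $\rho\leq 1/2$.
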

\begin{proof}   By Remark \ref{rem:Scaling}, it is enough to prove the statement for $r = 1$ and $(x_0,t_0) = (0,0)$. Let us set $P^\delta := P_1^\delta(0,0)$ and consider 
\[
D_{r/2}^+ := B_{r/2}\times(-\tfrac{1}{4}r^2,0) \quad \text{ and } \quad D_{r/2}^- := B_{r/2}\times(-\tfrac{7}{4}r^2,-\tfrac{3}{2}r^2),
\]
for $r \in I := (0,\sqrt{7}/7)$ (with $r$ in this range we automatically have $D_{r/2}^- \subset Q_{1/2}^-$). We first notice that applying the Harnack's inequality Theorem \ref{lem:Harnack} at every scale $r \in I$, we obtain
\[
\sup_{D_{r/2}^-} u \leq C \bigg( \inf_{D_{r/2}^+} u + r^2 \| f \|_{L^\infty(Q_1^-)} \bigg) \leq C \big( u(0,0) + \| f \|_{L^\infty(Q_1^-)} \big), \qquad \forall r \in I.
\]
By the arbitrariness of $r \in I$, it follows
\begin{equation}\label{eq:HarnackIneqParProof}
\sup_{A_0} u \leq C \big( u(0,0) + \| f \|_{L^\infty(Q_1^-)} \big), \qquad A_0:= P^7 = \big\{(x,t) \in Q_{1/2}^-: t < -7|x|^2 \big\}.
\end{equation}
We iterate this inequality as follows. Given a point $(y,\tau) \in Q_{1/2}^-$, we consider the set
\[
\tilde{A}_{y,\tau} := \big\{(x,t)\in Q_{1/2}^-: t - \tau < -7|x-y|^2 \big\}
\]
and we define inductively the family 
\[
\begin{cases}
R_0 := A_0 \\
R_k := \overline{A}_k \setminus A_{k-1}, \quad k \in \N\setminus\{0\}
\end{cases}
\qquad \text{where} \quad A_k := \bigcup_{(y,\tau) \in \partial A_{k-1}} \tilde{A}_{y,\tau}.
\]
By construction $\{R_k\}_{k\in\N}$ is a partition of $Q_{1/2}^-$. Now, we fix $\delta \in (0,1)$, pick $k_\delta \in \N$ such that
\[
P^\delta \subset \bigcup_{k=0}^{k_\delta} R_k
\]
and, for an arbitrary $(x,t) \in P^\delta$, we take $k \in \{0,\dots,k_\delta\}$ such that $(x,t) \in R_k$. Consequently, by construction and \eqref{eq:HarnackIneqParProof}, we have
\[
\begin{aligned}
u(x,t) &\leq \sup_{R_k} u \leq C \big( \sup_{R_{k-1}} u + \| f \|_{L^\infty(Q_1^-)} \big) \leq C^k \bigg( \sup_{P_0} u + \| f \|_{L^\infty(Q_1^-)} \sum_{j=0}^{k-1}C^{-j} \bigg) \\
&\leq C^{k+1} \big( u(0,0) + \tfrac{C}{C-1}\| f \|_{L^\infty(Q_1^-)} \big) \leq 2C^{k_\delta+1} \big( u(0,0) + \| f \|_{L^\infty(Q_1^-)} \big).
\end{aligned}
\]
The thesis follows by the arbitrariness of $(x,t) \in P^\delta$.
 \end{proof}
Next we establish the optimal growth control of solutions near the free boundary.
\begin{lem}\label{lem:OptimalGrowth} (Optimal growth)
  Let $u \in \mathcal{P}_1(K)$. Then there exists $C > 0$ depending only on $n$, $\lambda$, $\Lambda$ and $K$ such that 
\begin{equation}\label{eq:OptimalGrowth}
||u||_{L^\infty(Q_r(x_0,t_0))}\leq C r^2,
\end{equation}
for every $(x_0,t_0) \in \{u=0\}\cap Q_{1/2}$ and every $r \in (0,\tfrac{1}{2})$. 
 
\end{lem}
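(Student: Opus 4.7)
The plan is to argue by contradiction via a blow-up/compactness scheme culminating in a Liouville-type rigidity statement in $\R^{n+1}$.

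Assume the conclusion fails. Then there exist $u_k \in \mathcal{P}_1(K)$, free boundary points $(x_k,t_k) \in \{u_k = 0\} \cap Q_{1/2}$, and radii $r_k > 0$ with $\sup_{Q_{r_k}(x_k,t_k)} u_k \geq k\,r_k^2$; since $\|u_k\|_{L^\infty} \leq K$, necessarily $r_k \to 0$. I would then perform a maximal-scale selection: set $\theta_k(\rho) := \rho^{-2}\sup_{Q_\rho(x_k,t_k)} u_k$ and pick $\rho_k \in [r_k,\tfrac14]$ at which $\theta_k$ attains its maximum on this interval (continuity is provided by Krylov--Safonov). Since $\theta_k(\rho_k)\rho_k^2 \leq K$ and $\theta_k(\rho_k) \geq k$, one gets $\rho_k \to 0$ and $c_k := \theta_k(\rho_k) = h_k/\rho_k^2 \to \infty$, with $h_k := \sup_{Q_{\rho_k}(x_k,t_k)} u_k$; maximality moreover yields the crucial quadratic upper bound
\begin{equation*}
\sup_{Q_\rho(x_k,t_k)} u_k \leq (\rho/\rho_k)^2\, h_k, \qquad \rho \in [\rho_k,\tfrac14].
\end{equation*}

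Next I would introduce the rescalings
\begin{equation*}
v_k(x,t) := \frac{u_k(x_k+\rho_k x,\; t_k+\rho_k^2 t)}{h_k},
\end{equation*}
which satisfy $v_k \geq 0$, $\partial_t v_k \geq 0$, $v_k(0,0) = 0$, $\sup_{Q_1} v_k = 1$, and the quadratic-growth estimate $\sup_{Q_R} v_k \leq R^2$ for every $R \in [1,(4\rho_k)^{-1}]$. By Remark \ref{rem:Scaling}, $v_k$ solves $\partial_t v_k - F_k(D^2 v_k,x) = g_k(x)\chi_{\{v_k>0\}}$ in an exhausting family of cylinders, where $F_k(M,x) := c_k^{-1}F(c_k M, x_k+\rho_k x)$ is still $(\lambda,\Lambda)$-uniformly elliptic with $F_k(O,x)=0$, and $g_k(x) := c_k^{-1}f(x_k+\rho_k x)$ converges to zero uniformly on compacts.

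The compactness step would exploit the interior Krylov--Safonov $C^{\alpha}_p$ estimate for $v_k$ (applicable because $g_k\chi_{\{v_k>0\}}$ is uniformly bounded). Passing to a subsequence along which $x_k \to x^*$, the convexity of $F(\cdot,x)$ in \eqref{eq:AssOnFIntro} guarantees the existence of a convex, positively $1$-homogeneous, $(\lambda,\Lambda)$-elliptic recession operator $F_\infty(M) := \lim_{c\to\infty} c^{-1}F(cM,x^*)$ with $F_k \to F_\infty$ locally uniformly in $(M,x)$. Stability of viscosity solutions then yields a subsequential limit $v_\infty \in C(\R^{n+1})$ solving $\partial_t v_\infty - F_\infty(D^2 v_\infty) = 0$ in $\R^{n+1}$, nonnegative, vanishing at $(0,0)$, of at most quadratic growth, and with $\sup_{Q_1}v_\infty = 1$.

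To close, I would invoke the parabolic strong minimum principle: a nonnegative viscosity solution of a uniformly parabolic equation touching zero at the interior point $(0,0)$ must vanish on the entire backward parabolic component, here $\R^n\times(-\infty,0]$. Uniqueness of the Cauchy problem for $\partial_t v_\infty = F_\infty(D^2 v_\infty)$ with vanishing initial datum, within the Tychonoff class of at most quadratic growth, then forces $v_\infty \equiv 0$ in $\R^{n+1}$, contradicting $\sup_{Q_1} v_\infty = 1$. The principal obstacle is the identification of the limit operator $F_\infty$ with the correct ellipticity and the survival of the viscosity property in the limit---convexity of $F$ is crucial at precisely this step---whereas the concluding Liouville-type step (strong minimum principle together with Cauchy uniqueness) is a routine application of classical parabolic theory.
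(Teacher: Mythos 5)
Your proposal takes essentially the same route as the paper's \emph{second} proof of Lemma~\ref{lem:OptimalGrowth}: contradiction, a maximal-scale selection, a normalized blow-up, compactness via interior estimates, and a rigidity/Liouville argument for the entire limit. The maximal-scale selection and the explicit recession operator $F_\infty(M)=\lim_{c\to\infty}c^{-1}F(cM,x^*)$ are cosmetic variants of the paper's choices (the paper instead defines a uniform-in-$k$ monotone function $\theta$ and extracts a locally uniform sublimit of the rescaled operators from the equi-Lipschitz family of uniformly elliptic operators vanishing at $O$); both give the same limiting picture. Your observation that convexity of $F$ makes $c\mapsto c^{-1}F(cM,x^*)$ monotone and hence convergent is correct, and the recession operator indeed inherits $(\lambda,\Lambda)$-ellipticity and convexity.

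The one place where you diverge from the paper, and where the argument is underspecified, is the final rigidity step. After the strong minimum principle forces $v_\infty\equiv 0$ on $\R^n\times(-\infty,0]$, you invoke ``uniqueness of the Cauchy problem within the Tychonoff class'' for the fully nonlinear equation $\partial_t v_\infty=F_\infty(D^2v_\infty)$ and call it a routine application of classical parabolic theory. For a general uniformly parabolic fully nonlinear operator this is not a textbook citation; one actually needs a Phragm\'en--Lindel\"of argument, e.g.\ comparing $v_\infty$ (which is a viscosity subsolution of $\partial_t u-\mathcal M^+(D^2u)=0$) against a Gaussian-bump barrier of the form $\eta(T^*-t)^{-a}\exp(\gamma|x|^2/(T^*-t))$ with $\gamma<\tfrac{1}{4\Lambda}$, and then send $R\to\infty$, $\eta\to 0$. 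This works, but it is a nontrivial step that you should spell out. The paper's route is cleaner: convexity of $F_\infty$ makes the parabolic Evans--Krylov $C^{2,\alpha}$ estimate available (the footnote in the paper points to \cite[Theorem~4.13]{Wang92bis}), so the entire solution $v_\infty$ with quadratic growth is a polynomial of degree $\le 2$ in $x$ and $\le 1$ in $t$; once it vanishes for $t\le 0$ it must vanish identically, with no Cauchy uniqueness in unbounded domains needed. Since you already note the convexity of $F_\infty$, you have all the ingredients to take this route, and I would recommend it. Finally, note that the paper also gives a \emph{first} proof by a direct barrier/comparison argument (Lemma~\ref{lem:parabolaHarnack} plus the supersolution $a(t-t_0+b|x-x_0|^2)$), which avoids compactness and Liouville altogether; your proposal corresponds only to the second.
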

\begin{proof}[First proof of Lemma \ref{lem:OptimalGrowth}.]   Let us fix $(x_0,t_0) \in \{u=0\}\cap Q_{1/2}$, and set $b := \tfrac{1}{2\Lambda n}$ and $\delta := \tfrac{b}{2}$. We first notice that by \eqref{eq:HarnackIneqParbola}, there is $C_0 > 0$ depending on $n$, $\lambda$ and $\Lambda$ such that
\begin{equation}\label{eq:OptBdPardel}
\sup_{P_r^\delta(x_0,t_0)} u \leq C_0 \big( u(x_0,t_0) + r^2\|f\|_{L^\infty(Q^-_r(x_0,t_0))} \big) \leq C_0 K r^2,
\end{equation}
for every $r \in (0,\tfrac{1}{2})$, where $P_r^\delta(x_0,t_0)$ is defined in \eqref{eq:ParabolaDeltaDef}. In particular, it follows
\begin{equation}\label{eq:OptBdBoundaryPardel}
\sup_{x \in \partial B_r(x_0),t=t_0-\delta r^2} u(x,t) \leq C_0 K r^2,
\end{equation}
for every $r \in (0,\tfrac{1}{2})$.

Now, in view of \eqref{eq:OptBdPardel}, it is enough to focus on the set $Q_r(x_0,t_0) \setminus P_r^\delta(x_0,t_0)$. To prove the optimal growth on such set we proceed with a comparison argument as follows. Let us define
\[
v(x,t) := a (t-t_0 + b|x-x_0|^2), \qquad \quad a := \max\{2C_0,8\} \cdot \tfrac{K}{b}.  
\]
By uniform ellipticity, the assumption $F(O,\cdot) \equiv 0$ and the definition of $b$, we see that
\[
\partial_t v - F(D^2v,x) = a - F(2nab I, x) \geq a (1 - 2\Lambda nb \|I\|) = 0 \quad \text{ in } Q_1.
\]
Further, by definition of $v$ and $\delta$, we have
\[
v(x,t)|_{t = t_0 -\delta|x-x_0|^2} = a(b -\delta)|x-x_0|^2 = \tfrac{ab}{2}|x-x_0|^2,
\]
and so
\begin{equation}\label{eq:OptGrowBdBelowSuper}
v(x,t)|_{t = t_0 -\delta|x-x_0|^2} = \tfrac{ab}{2} r^2 \quad \text{ in } \partial B_r(x_0),
\end{equation}
for every $r \in (0,\tfrac{1}{2})$. On the other hand,
\[
v(x,t) \geq  a (-\tfrac{\delta}{4}  + \tfrac{b}{4} ) = \tfrac{ab}{8}    \quad \text{ in } \partial_p Q_{1/2}(x_0,t_0) \setminus P_{1/2}^\delta(x_0,t_0).
\]
At this point, combining \eqref{eq:OptBdBoundaryPardel} with \eqref{eq:OptGrowBdBelowSuper} and using that $\tfrac{ab}{2} \geq C_0 K$ by definition of $a$,  we obtain $v \geq u$ in $\partial P_{1/2}^\delta(x_0,t_0)\cap Q_{1/2}(x_0,t_0)$ while, since $\tfrac{ab}{8} \geq K$, we also have $v \geq u$ in $ \partial_p Q_{1/2}(x_0,t_0) \setminus P_{1/2}^\delta(x_0,t_0)$. Consequently, by the comparison principle, it follows
\[
u \leq v \quad \text{ in }  Q_{1/2}(x_0,t_0) \setminus P_{1/2}^\delta(x_0,t_0),
\]
and thus, since $v \leq C r^2$ in $Q_r(x_0,t_0) \setminus P_r^\delta(x_0,t_0)$ for some $C > 0$ (depending only on $n$, $\lambda$, $\Lambda$ and $K$), the bound in \eqref{eq:OptimalGrowth} is proved.
\end{proof}
\begin{proof}[Second proof of Lemma \ref{lem:OptimalGrowth}]   We argue by contradiction,  assuming the existence of sequences $\{F_k\}_{k\in\N}$ satisfying \eqref{eq:AssOnFIntro}, $\{f_k\}_{k\in\N} \in L^\infty(B_1)$ and $\{u_k\}_{k\in\N} \in \mathcal{P}_1(K)$ solutions to
\begin{equation}\label{eq:ProblemIntrok}
\begin{cases}
\partial_tu_k - F_k(D^2u_k,x) = f_k(x) \chi_{\{u_k > 0\}} &\text{in } Q_1 \\
u_k,\partial_t u_k \geq 0             &\text{in } Q_1,
\end{cases}
\end{equation}
but
\begin{equation}\label{eq:CA2}
\sup_{r \in (0,1)}r^{-2}||u_k||_{L^\infty(Q_r(x_k,t_k))} \geq k,
\end{equation}
for some sequence $\{(x_k,t_k)\}_{k\in\N} \subset \{u_k=0\}\cap Q_{1/2}$. To obtain a contradiction, we show that a suitable rescaled and renormalized subsequence of $\{u_k\}_{k\in\N}$ converge to a non-negative, non-trivial entire solution satisfying $u(0,0) = 0$ and growing at infinity less than a polynomial of degree 2, in contrast with the Liouville theorem.\footnote{The Liouville theorem for entire parabolic fully nonlinear equations is an immediate consequence of the $C^{2,\alpha}$ estimates proved in \cite[Theorem 4.13]{Wang92bis}}

In this spirit, we consider the monotone non-increasing function $\theta:(0,1) \to \R_+$, defined as
\[
\theta(r) = \sup_{k\in\N} \sup_{\rho \in (r,1)} \rho^{-2}||u_k||_{L^\infty(Q_\rho(x_k,t_k))}.
\]
By assumption, we have $|u_k| + |f_k| \leq K$ in $Q_1$ for every $k$. Combining this with \eqref{eq:CA2}, we easily deduce that $\theta(r) \leq K / r^2$ for every $r\in(0,1)$, with $\theta(r)\to\infty$ as $r \to 0$.

Now, due to \eqref{eq:CA2} we have that for every $j\in\N$, there is $r_j' \in (0,1)$ such that $\theta(r_j')>j$. Furthermore, by definition of $\theta$, it is not difficult to see that there exist $k_j \in \N$, $r_j>r_j'$ and $(x_{k_j},t_{k_j}) \in \{u_{k_j} = 0\} \cap Q_{1/2}$ such that
\[
\theta(r_j')\geq r_j^{-2}||u_{k_j}||_{L^\infty(Q_{r_j}(x_{k_j},t_{k_j}))}\geq \tfrac{\theta(r_j')}{2}.
\]
So, using the definition of $\theta$ again, its monotonicity and $\theta(r_j')>j$, we obtain
\begin{equation}\label{eq:Propthetam}
\begin{aligned}
\theta(r_j) &\geq r_j^{-2}||u_{k_j}||_{L^\infty(Q_{r_j}(x_{k_j},t_{k_j}))}\geq \tfrac{\theta(r_j)}{2} \\
\theta(r_j) &\geq \tfrac{j}{2},
\end{aligned}
\end{equation}
which, in particular, implies that $r_j \to 0$ as $j \to \infty$. Then, we define the blow-up sequence
\[
v_j(x,t) := \frac{1}{\theta(r_j) r_j^2}u_{k_j}(r_jx + x_{k_j},r_j^2t + t_{k_j}), \qquad (x,t)\in Q_{1/r_j}, \; j\in\N.
\]
Notice that $v_j$ is non-negative with $v_j(0,0) = 0$ and, by \eqref{eq:Propthetam}, uniformly non-degenerate: 
\begin{equation}\label{eq:NonDeg}
||v_j||_{L^\infty(Q_1)} \geq \tfrac{1}{2}, \quad \forall j \in \N.
\end{equation}
Moreover, thanks to \eqref{eq:Propthetam} again and the monotonicity of $\theta$, we have
\begin{equation}\label{eq:GrowthControl}
||v_j||_{L^\infty(Q_R)}=\frac{1}{\theta(r_j)r_j^2}||u_{k_j}||_{L^\infty(Q_{Rr_j}(x_{k_j},t_{k_j}))}\leq \frac{\theta(Rr_j)(Rr_j)^2}{\theta(r_j)r_j^2}\leq R^2,
\end{equation}
for all $1\leq R<1/r_j$ and, using the equation for $u_{k_j}$, we easily see that the function $v_j$ satisfies
\[
\partial_t v_j - \tilde{F}_j (D^2v_j, x) = \tilde{f}_j(x)\chi_{\{v_j > 0\}} \quad \text{ in }Q_{1/r_j},
\]
where 
\[
\begin{aligned}
\tilde{F}_j(M,x) &:= \frac{1}{\theta(r_j)} F_{k_j}(\theta(r_j)M,r_jx + x_{k_j}) \\ 
\tilde{f}_j(x) &:= \frac{1}{\theta(r_j)} f_{k_j}(r_jx + x_{k_j}).
\end{aligned}
\]
By definition, the sequence $\{\tilde{F}_j\}_{j\in\N}$ is made of functions satisfying \eqref{eq:AssOnFIntro}, with ellipticity constants $\lambda$ and $\Lambda$, while $\tilde{f}_j \to 0$ locally uniformly in $\R^{n+1}$ as $j \to +\infty$. This has two consequences: 

- First, up to passing to a subsequence, we have $(x_{k_j},t_{k_j}) \to (\tilde{x},\tilde{t})$ and $\tilde{F}_j \to \tilde{F}$ locally uniformly, for some $(\tilde{x},\tilde{t}) \in Q_{1/2}$ and some $\tilde{F}$ satisfying \eqref{eq:AssOnFIntro}.

- Second, by \cite[Theorem 4.8]{Wang92bis}, for every fixed $R \geq 1$ we have 
\[
||v_j||_{C^{1,\alpha}(Q_R)}\leq C(R), \quad \forall j \in \N.
\]
Combining these facts with \eqref{eq:NonDeg} and \eqref{eq:GrowthControl}, we deduce that, $v_j \to v$ locally uniformly in $\R^{n+1}$ (up to passing to another subsequence), for some continuous function $v$ satisfying
\[
\begin{cases}
\partial_t v- \tilde{F}(D^2v,\tilde{x}) = 0 \quad \text{ in }\R^{n+1}\\
v\geq0, \; v(0,0)=0  \\
||v||_{L^\infty(Q_R)}\leq R^2, \; \forall R \geq 1 \\
||v||_{L^\infty(Q_1)}\geq 1/2 ,
\end{cases}
\]
Now, we apply the Liouville theorem to conclude that $v$ is a polynomial of degree at most $2$ in space and $1$ in time. Further, by the maximum principle \cite[Corollary 3.20]{Wang92} applied in $Q_R$ (for arbitrary $R>0$), we deduce that $v = 0$ in $\R^n\times(-\infty,0]$ which, in turn, implies that $v\equiv 0$. This contradicts the fact that $||v||_{L^\infty(Q_1)}\geq 1/2$ and gives us \eqref{eq:OptimalGrowth}.
\end{proof}
Combining  the growth control result with interior estimates (see \cite[Theorem 1.1]{CK17} or \cite[Theorem 1.1]{Wang92bis}), we obtain the $L^\infty$-bound for $\partial_t u$ and $D^2 u$.
\begin{proof}[Proof of Theorem \ref{thm:OptimalReg}] Since $\partial_t u = \partial_{ij} u = 0$ a.e. in $\{u = 0\}$ for every $i,j \in \{1,\ldots,n\}$, it is enough to focus on points in $\{u > 0\}$. So, let us fix $(y,\tau) \in \{ u > 0 \} \cap Q_{1/2}$ and let 
\[
d := \sup \{ r > 0 : Q_r(y,\tau) \subset \{ u > 0 \} \cap Q_{1/2} \}.
\]
By Schauder estimates \cite[Theorem 4.8, Theorem 4.12]{Wang92bis}
and \eqref{eq:OptimalGrowth}, we have
\[
\begin{aligned}
\|\partial_t u\|_{L^\infty(Q_{d/2}(y,\tau))} + \| D^2 u \|_{L^\infty(Q_{d/2}(y,\tau))} &\leq \frac{C_0}{d^2} \big( \| u \|_{L^\infty(Q_d(y,\tau))} + d^2\| f \|_{L^\infty(Q_d(y,\tau))} \big) \\
&\leq \frac{C_0}{d^2} \big( C d^2 + d^2\| f \|_{L^\infty(Q_1)} \big) \\
&\leq C_0 (C + K),
\end{aligned}
\]
for some constants $C_0,C > 0$ depending only on $n$, $\lambda$, $\Lambda$ and $K$. In particular,
\[
|\partial_t u (y,\tau)| + | D^2 u (y,\tau)| \leq C_0 ( C + 1)
\]
and thus \eqref{eq:OptimalReg} follows thanks to the arbitrariness of $(y,\tau) \in \{ u > 0 \} \cap Q_{1/2}$.  
\end{proof}
%
%
%
%
%
%\begin{rem}\label{rem:Grad0onFB} A similar proof shows that
%%
%\[
%|\nabla u| = 0 \quad \text{ in } \{u = 0\}\cap Q_{1/2}.
%\]
%%
%Indeed, let $(y,\tau) \in \{ u > 0 \} \cap Q_{1/2}$ and $d$ as above.   Using Schauder estimates \cite[Theorem 4.8]{Wang92bis} and \eqref{eq:OptimalGrowth} 
%%
%\footnote{Same as above.}
%%
%, we have
%%
%\[
%\frac{1}{d} \| \nabla u \|_{L^\infty(Q_{d/2}(z,\tau))} \leq \frac{C_0}{d^2} \big( \| u \|_{L^\infty(Q_d(z,\tau))} + d^2\| f \|_{L^\infty(Q_d(z,\tau))} \big) \leq C_0 ( C + K).
%\]
%%
%Consequently, $|\nabla u(z,\tau)| \to 0$ as $(z,\tau) \to \partial\{u > 0\}$, and our claim follows.
%\end{rem}
%
%
%
%
%
We end the section with the following non-degeneracy property. 
\begin{lem}\label{lem:NonDegeneracy}
   Let $u \in \mathcal{P}_1(K)$. Then there exists $c>0$ depending only on $n$, $\Lambda$ and $c_\circ$ such that
\begin{equation}\label{eq:NonDegeneracy}
||u||_{L^\infty(Q^-_r(x_0,t_0))} \geq cr^2,
\end{equation}
for every $(x_0,t_0) \in \partial \{ u > 0\} \cap Q_{1/2}$ and every $r \in (0,\tfrac{1}{2})$.

\end{lem}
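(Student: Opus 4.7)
The plan reduces the estimate to a spatial non-degeneracy on the time slice $\{t=t_0\}$: by monotonicity $\partial_t u \geq 0$ and continuity of $u$, one has $\sup_{Q_r^-(x_0,t_0)} u = \sup_{B_r(x_0)} u(\cdot,t_0)$, so it suffices to lower bound this spatial supremum by $cr^2$. I would proceed in three steps: an interior non-degeneracy via a barrier, a contradiction argument to rule out a degenerate scenario where the time slice at $t_0$ is trivial near $x_0$, and a limiting procedure to reach the free boundary.

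\textbf{Step 1 (interior non-degeneracy).} For $(y,\tau)\in\{u>0\}\cap Q_{1/2}$ and $r\in(0,1/2)$, I would prove
\[
\sup_{Q_r^-(y,\tau)} u \;\geq\; u(y,\tau) + c r^2,
\]
for some $c = c(n,\Lambda,c_\circ)>0$, via the standard barrier $w(x,t) := u(x,t) - c\bigl(|x-y|^2 + (\tau-t)\bigr)$. Using $\partial_t u - F(D^2u,x) = f \leq -c_\circ$ on $\{u>0\}$ together with the uniform ellipticity estimate $F(D^2u,x) - F(D^2u-2cI,x) \leq C(n,\Lambda)\,c$, a direct computation shows that, for $c$ chosen small enough depending on $n,\Lambda,c_\circ$, $w$ is a strict sub-solution of $\partial_t w - F(D^2w,x)\leq -c_\circ/2$ in $\{u>0\}\cap Q_r^-(y,\tau)$. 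Since $w(y,\tau) = u(y,\tau) > 0$ while $w\leq 0$ on $\partial\{u>0\}$, and since the strict sub-solution inequality excludes maxima in the spatial-temporal interior of $\{u>0\}\cap Q_r^-(y,\tau)$ as well as on its interior top face $B_r(y)\times\{\tau\}\cap\{u>0\}$ (via $\partial_t w(x^*,\tau)\geq 0$ and $D^2_x w(x^*,\tau)\leq 0$ at any such maximum), the parabolic maximum principle forces the max of $w$ on $\overline{Q_r^-(y,\tau)}$ to be attained on $\partial_p Q_r^-(y,\tau)\cap\overline{\{u>0\}}$. At such a maximizer $(x^*,t^*)$ one has either $|x^*-y|=r$ or $t^*=\tau-r^2$, and rearranging $w(x^*,t^*)\geq w(y,\tau)$ yields $u(x^*,t^*) \geq u(y,\tau) + cr^2$.

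\textbf{Step 2 (ruling out the degenerate scenario).} The main obstacle is to handle free boundary points $(x_0,t_0)$ for which $u(\cdot,t_0)\equiv 0$ on some ball $B_\delta(x_0)$, since Step 1 provides no information at such points. I would rule this out by contradiction. Monotonicity $\partial_t u\geq 0$ propagates the hypothesis to $u\equiv 0$ on $B_\delta(x_0)\times(-\infty,t_0]\cap Q_1$, and the sharp Lipschitz-in-time bound $|\partial_t u|\leq L$ from Theorem \ref{thm:OptimalReg} then gives $u(x,t)\leq L(t-t_0)_+$ on $B_\delta(x_0)\times\R\cap Q_1$. Meanwhile $(x_0,t_0)\in\partial\{u>0\}$ produces a sequence $(y_k,\tau_k)\in\{u>0\}$ with $(y_k,\tau_k)\to(x_0,t_0)$, which by the previous display must satisfy $\tau_k\downarrow t_0$ and $y_k\in B_\delta(x_0)$ for large $k$. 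For any fixed $r\in(0,\min(1/2,\delta/2))$, Step 1 at $(y_k,\tau_k)$ gives $\sup_{Q_r^-(y_k,\tau_k)}u\geq cr^2$, whereas the Lipschitz bound forces $\sup_{Q_r^-(y_k,\tau_k)}u\leq L(\tau_k-t_0)\to 0$: contradiction.

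\textbf{Step 3 (conclusion).} By Step 2, every free boundary point $(x_0,t_0)\in Q_{1/2}$ admits a sequence $y_k\to x_0$ with $u(y_k,t_0)>0$. Given $r\in(0,1/2)$, choose $k$ so large that $|y_k-x_0|<r$ and set $r_k := r-|y_k-x_0|$, so that $Q_{r_k}^-(y_k,t_0)\subset Q_r^-(x_0,t_0)$. Step 1 at $(y_k,t_0)$ with radius $r_k$ yields $\sup_{Q_{r_k}^-(y_k,t_0)}u\geq c r_k^2$, and hence $\sup_{Q_r^-(x_0,t_0)}u\geq c r_k^2$. Letting $k\to\infty$ so that $r_k\to r$ completes the proof.
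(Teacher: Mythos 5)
Your proof is correct, and the core of it (Step~1, the barrier $w(x,t) = u(x,t) - c(|x-y|^2 + (\tau-t))$ together with the computation that makes $w$ a strict subsolution and the parabolic maximum principle) is essentially identical to the paper's argument, including the same value $c\sim c_\circ/(2\Lambda n+1)$ and the same placement of the maximum on $\partial_p Q_r^-$. Where you diverge is in how you pass from interior points to the free boundary point. The paper does not reduce to the time slice at all: it simply picks an arbitrary sequence $(x_k,t_k)\in\{u>0\}$ with $(x_k,t_k)\to(x_0,t_0)$ (which exists trivially by definition of $\partial\{u>0\}$, with no constraint on the $t_k$), applies the barrier estimate at $(x_k,t_k)$ to get $\sup_{Q_r^-(x_k,t_k)}u\geq u(x_k,t_k)+cr^2$, and then lets $k\to\infty$. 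Because you insist on working on the fixed slice $\{t=t_0\}$, you are forced to add Step~2 to manufacture interior points $(y_k,t_0)$ on that slice, and Step~2 in turn pulls in the $L^\infty$ bound on $\partial_t u$ from Theorem~\ref{thm:OptimalReg} (harmlessly, since it only enters a contradiction argument, so the final constant is unaffected). The trade-off: your route is longer and logically heavier, but Step~2 in effect establishes directly the slice-wise non-degeneracy $\|u(\cdot,t_0)\|_{L^\infty(B_r(x_0))}\geq cr^2$, which the paper only records afterward as an easy remark (combining \eqref{eq:NonDegeneracy} with $\partial_t u\geq0$). If you wanted the shortest path to the stated lemma, you could drop the time-slice reduction and Step~2 entirely and run Step~3 with $(y_k,\tau_k)\in\{u>0\}$, $(y_k,\tau_k)\to(x_0,t_0)$, exactly as the paper does.
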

\begin{proof}
  Let us fix $(x_0,t_0) \in \partial \{u > 0\} \cap Q_{1/2}$, $r \in (0,1/2)$ and let 
\[
\{(x_k,t_k)\}_{k\in\N} \subset \{u > 0\} \cap Q_{1/2} \quad \text{ such that } \quad (x_k,t_k) \to (x_0,t_0) \;\text{ as } k \to +\infty.
\]
We set $c := c_\circ/(2\Lambda n + 1)$ and consider the sequence
\[
v_k(x,t) := u(x,t) - u(x_k,t_k) - c(|x-x_k|^2 - (t-t_k)), \quad k \in \N.
\]
Then
\[
\begin{aligned}
\partial_tv_k - F(D^2v_k,x) &= \partial_t u - F(D^2u -2cI,x) + c \\
&= \partial_t u - F(D^2u,x) + F(D^2u,x) - F(D^2u -2cI,x) + c \\
&= f(x)\chi_{\{u > 0\}} + F(D^2(u-c|x|^2) + 2cI,x) - F(D^2(u-c|x|^2),x) + c  \\
&\leq -c_\circ + c(2\Lambda n + 1) = 0 \quad \text{ in } \{ u > 0 \} \cap Q_r^-(x_k,t_k), 
\end{aligned}
\]
for every $k \in \N$. Further by definition, we have 
\[
v_k(x_k,t_k) = 0 \quad \text{ and } \quad v_k < 0 \; \text{ in } \; \partial\{ u > 0 \} \cap Q_r^-(x_k,t_k) 
\]
for every $k \in \N$ and thus, by the maximum principle (\cite[Proposition 4.34]{IS12}) it follows
\[
0 = v_k(x_k,t_k) \leq \sup_{Q_r^-(x_k,t_k)} v_k = \sup_{\partial_p Q_r^-(x_k,t_k)} v_k = \sup_{\partial_p Q_r^-(x_k,t_k)} u - u(x_k,t_k) - cr^2.
\]
In turn, this implies
\[
\sup_{Q_r^-(x_k,t_k)} u \geq u(x_k,t_k) + cr^2,
\]
for every $k \in \N$. Since $u(x_k,t_k) \to u(x_0,t_0) = 0$ as $k \to +\infty$ and $c$ is independent of $k$, we obtain \eqref{eq:NonDegeneracy} by passing to the limit as $k \to +\infty$.
\end{proof}
\begin{rem} 
The non-degeneracy property \eqref{eq:NonDegeneracy} gives us nontrivial information about the geometry of the free boundary: it excludes that free boundary points are parabolic interior for $\{u = 0\}$, in the sense that
\[
\{u > 0\} \cap Q_r^-(x_0,t_0) \neq \emptyset, \quad \forall r \in (0,1),
\]
where $(x_0,t_0) \in \partial\{u > 0\}$ is fixed (see \cite[Subsection 1.2]{CafPetSha04}).
\end{rem}
\begin{rem} Let $u \in \mathcal{P}_1(K)$ and $(x_0,t_0) \in \partial\{ u > 0\} \cap Q_{1/2}$. Combining the non-degeneracy estimate \eqref{eq:NonDegeneracy} with time-monotonicity $\partial_t u \geq 0$, we deduce that the function $u_{t_0} := u|_{t = t_0}$ satisfies
\[
||u_{t_0}||_{L^\infty(B_r(x_0))} \geq cr^2,
\]
where $c>0$ is as in \eqref{eq:NonDegeneracy} and depends only on $n$, $\Lambda$ and $c_\circ$. 
\end{rem}
%
%
%

%
%
%%%%%%%%%%%%%%%%%%%%%%%%%%%%%%%%%%%%%%%%%%%%%%%%%%%%%%%%%%%%%%%%%%%%%%%%%%%%%%%%%%%%%%%%
%
%%%%%%%%%%%%%%%%%%%%%%%%%%%%%%%%%%%%%%%%%%%%%%%%%%%%%%%%%%%%%%%%%%%%%%%%%%%%%%%%%%%%%%%%
%
%
\section{Semi-convexity and \texorpdfstring{$C_t^1$}{} estimates}\label{sec:semiConvexity}
The purpose of this section is to establish a semi-convexity estimate for solutions $u \in \mathcal{P}_1(K)$ and a log-continuity estimate for their time-derivatives $\partial_tu$, as stated in the following proposition. It is important to mention that for the semi-convexity estimates we require that the function $F$ is independent of the variable $x$ (which is enough for our purposes): in this way, thanks to convexity of $F$, the second derivatives of the solution become super-solutions to the linearised equation. The main result of this section is the following:
\begin{prop}\label{prop:semicnovexity}
    Let $u \in \mathcal{P}_1(K)$ with $(0,0) \in \partial \{ u > 0 \}$. Then there exist $\vep,C > 0$ depending only on $n$, $\lambda$, $\Lambda$ and $K$ such that 
\begin{equation}\label{eq:LogContinuityut}
\partial_t u \leq C \big| \log \big(|x| + \sqrt{|t|} \big) \big|^{-\varepsilon}  \quad \text{ in } Q_1.
\end{equation}
Furthermore, if the function $F$ in \eqref{eq:AssOnFIntro} is independent of $x$ and
\begin{equation}\label{eq:RegAssFfSemiConv}
\|f\|_{C^{1,1}(B_1)} \leq K,
\end{equation}
then
\begin{equation}\label{eq:LogSemiConvexity}
\partial_{ee} u \geq - C \big| \log \big(|x| + \sqrt{|t|} \big) \big|^{-\varepsilon}  \quad \text{ in } Q_1,
\end{equation}
for every $e \in \Ss^{n-1}$.
\end{prop}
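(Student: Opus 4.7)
Both estimates follow from the same oscillation-improvement scheme, applied to appropriate non-negative derivatives of $u$, the engine being the weak Harnack inequality combined with a density estimate on $\{u = 0\}$.

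The first step is to derive Pucci-extremal (in)equations for the derivatives of interest. For $\partial_t u$, taking incremental quotients in $t$ of the equation \eqref{eq:ProblemIntro} (using $u(\cdot, t+h) \geq u(\cdot, t)$ to control the sign of the difference of characteristic functions) and passing to the limit yields
$$\partial_t(\partial_t u) - \mathcal{M}^+(D^2 \partial_t u) \leq 0 \quad \text{in } Q_1$$
in the viscosity sense; only uniform ellipticity of $F$ is used. For $\partial_{ee} u$ with $e \in \mathbb{S}^{n-1}$, differentiating the equation twice in $e$ and using that $F$ is independent of $x$ kills all mixed terms $F_{Mx}$, $F_{xx}$, while convexity of $F$ in $M$ makes the Hessian term $F_{MM}(D^2 u)[\partial_e D^2 u, \partial_e D^2 u]$ non-negative. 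Together with $|\partial_{ee} f| \leq K$, this yields
$$\partial_t(\partial_{ee} u) - \mathcal{M}^-(D^2 \partial_{ee} u) \geq -K \quad \text{in } Q_1.$$
Both inequalities extend across the free boundary because $\partial_t u$ and $\partial_{ee} u$ vanish a.e.\ on $\operatorname{int}\{u = 0\}$ (by time monotonicity plus Rademacher).

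By Theorem \ref{thm:OptimalReg}, the quantities
$$v^{(1)} := \|\partial_t u\|_{L^\infty(Q_1)} - \partial_t u, \qquad v^{(2)} := \partial_{ee} u + \|D^2 u\|_{L^\infty(Q_1)}$$
are non-negative $\mathcal{M}^-$-supersolutions up to a bounded RHS. Applying the weak Harnack inequality (Theorem \ref{thm:basicWeakHarnack}) on $Q_r = Q_r(0,0)$, $r \in (0,1)$, gives
$$\left(\fint_{D_{r/2}^-} (v^{(i)})^p\right)^{1/p} \leq C\left(\inf_{D_{r/2}^+} v^{(i)} + Kr^2\right), \qquad i = 1,2.$$
On $\{u = 0\}$ one has $v^{(1)} \equiv \|\partial_t u\|_{L^\infty}$ and $v^{(2)} \equiv \|D^2 u\|_{L^\infty}$, so the LHS is bounded below by a constant multiple of these norms times $(|\{u = 0\} \cap D_{r/2}^-|/|D_{r/2}^-|)^{1/p}$. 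Combining the non-degeneracy (Lemma \ref{lem:NonDegeneracy}), the optimal growth (Lemma \ref{lem:OptimalGrowth}), and the time-monotonicity (which propagates zeros backward in time: if $u(x,t) = 0$ then $u(x,s) = 0$ for $s \leq t$), I aim to establish a quantitative density lower bound
$$\frac{|\{u = 0\} \cap D_{r/2}^-(0,0)|}{|D_{r/2}^-|} \geq \mu(r), \qquad \mu(r) \geq c\,|\log r|^{-m_0},$$
for some $m_0 > 0$. Substitution yields the oscillation-improvement
$$\sup_{D_{r/2}^+} \partial_t u \leq \big(1 - c\mu(r)^{1/p}\big) \sup_{Q_r} \partial_t u + Cr^2,$$
and its analogue for $-\inf \partial_{ee} u$.

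Finally, dyadic iteration on $r = 2^{-k}$ converts these oscillation-improvements into the claimed log-modulus bounds: since $\sum_{k} \mu(2^{-k})^{1/p}$ diverges under the lower bound on $\mu$, one deduces $\sup_{Q_r(0,0)} \partial_t u \leq C\,|\log r|^{-\varepsilon}$ and $\inf_{Q_r(0,0)} \partial_{ee} u \geq -C\,|\log r|^{-\varepsilon}$, so \eqref{eq:LogContinuityut} and \eqref{eq:LogSemiConvexity} follow upon choosing $r = |x| + \sqrt{|t|}$. The hard part is the density estimate on $\{u = 0\}$: at singular free boundary points the blow-up $x^T A x$ vanishes only on a lower-dimensional subspace, so a uniform positive density cannot hold, and the logarithmic modulus in the statement precisely reflects how fast this density may degenerate. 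Extracting a sub-polynomial lower bound on $\mu(r)$ from non-degeneracy and time-monotonicity alone is the central technical point; everything else is a rather standard application of extremal parabolic theory.
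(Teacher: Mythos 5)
Your proposal correctly identifies the general engine (weak Harnack plus dyadic iteration), but the crucial ingredient you flag as ``the central technical point''---the density lower bound
$\mu(r):=|\{u=0\}\cap D_{r/2}^-|/|D_{r/2}^-|\gtrsim|\log r|^{-m_0}$ with $m_0<p$---is not merely unproved, it is false. At a singular free boundary point whose blow-up is a strictly positive quadratic form $x^TAx$ with $A>0$, the contact set $\{u=0\}$ collapses onto $\{x=0\}$ (a vertical line in $(x,t)$), so $\mu(r)\to 0$ and there is no reason for the decay to be only logarithmic; it can easily be of power type, in which case $\sum_k \mu(2^{-k})^{1/p}<\infty$ and the product $\prod_k(1-c\mu(2^{-k})^{1/p})$ stays bounded away from zero. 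The iteration then stalls and gives no decay at all. In fact the paper records (Lemma 6.5) precisely the opposite density statement---that singular points have \emph{zero} density in $\{u=0\}$---so any scheme that feeds the mass of the contact set into the weak Harnack inequality cannot close by itself.

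The paper's proof avoids the density of $\{u=0\}$ entirely. In Lemmas \ref{lem:semiconvexity} and \ref{lem:continuityOfTimeDerivative} one fixes an interior point $(z,\tau)\in\{u>0\}\cap Q_{r/2}$, lets $d$ be the distance to the free boundary, and picks the nearest free boundary point $(z_0,\tau_0)$. The ``deficit'' of the non-negative supersolution ($v=\partial_{ee}u+\gamma$ or $v=\gamma-\partial_t u$) is then produced not on $\{u=0\}$ but on a \emph{thin region inside $\{u>0\}$} adjacent to $(z_0,\tau_0)$: for the semiconvexity, a Taylor expansion along $e$ starting from a slice $A_h$ of width $h$, combined with the optimal growth $u\le Ch^2$, $|\nabla u|\le Ch$ near the boundary, shows that $\fint_{\mathcal{C}_h}(\partial_{ee}u+\gamma)\ge\gamma/4$ once $h\sim\gamma^2 d$; for the time derivative, the fact that $u(z_0,t)=0$ for $t\le\tau_0$ lets one integrate $\partial_t u$ in time over a small cylinder near $z_0$ and bound the integral by $Ch^2$ via optimal growth, again giving a deficit in the average of $\gamma-\partial_t u$. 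The crucial coupling is that the thinness parameter $\delta=h/d$ is tied to the current bound $\gamma$, and the power loss $\delta^{-m}$ in the scaled weak Harnack inequality (Lemma \ref{lem:iteratedWeakHarnack}, Corollary \ref{cor:weakHarnack}) then yields the \emph{nonlinear} improvement $m_{k+1}\le m_k-am_k^q+b2^{-2k}$, which iterates to $m_k\le Ck^{-\varepsilon}$. Your linear-type improvement with density weight $\mu(r)^{1/p}$ cannot reproduce this, because the required density fails at singular points. To repair the argument you would need to replace the set $\{u=0\}$ by a subset of $\{u>0\}$ near the free boundary where the derivative is provably small, exactly as the paper does.
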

This was already know for the Laplacian, see \cite{Caf78Bis} and \cite{CF79}. The proof of the above statement relies on the iterative use of the lemmas we establish below, which exploit the Weak Harnack inequality from Lemma \ref{lem:iteratedWeakHarnack}.
\begin{lem}\label{lem:semiconvexity}    Let $u \in \mathcal{P}_1(K)$ with $(0,0) \in \partial \{ u > 0 \}$. Assume that the function $F$ in \eqref{eq:AssOnFIntro} does not depend on $x$ and \eqref{eq:RegAssFfSemiConv} holds true. Then there exist $a, b, q > 0$ depending only on $n$, $\lambda$, $\Lambda$ and $K$ such that if
\[
\partial_{ee} u \geq -\gamma \quad \text{ in } Q_r,
\]
for some $r \in (0,1)$, $\gamma > 0$ and $e \in \Ss^{n-1}$, then
\begin{equation}\label{eq:ImprovSemiConvex}
\partial_{ee} u \geq -\gamma + a\gamma^q - br^2 \quad \text{ in } Q_{r/2}.
\end{equation}
\end{lem}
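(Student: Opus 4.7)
The plan is to linearize the equation along $e$ using convexity of $F$, turning the hypothesis $\partial_{ee}u \geq -\gamma$ into the nonnegativity of a Pucci supersolution $v := \partial_{ee}u + \gamma$; then to use the free-boundary contact $u(0,\cdot) \equiv 0$ to lower-bound $v$ on a genuinely thick set; and finally to convert this lower bound into the claimed improvement via the weak Harnack inequality (Corollary~\ref{cor:weakHarnack}).

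\textbf{Linearization.} Inside the open set $\{u>0\}$ the equation $\partial_t u - F(D^2u) = f$ is smooth and can be differentiated twice along $e$. Convexity of $F$ (and its $x$-independence) turns the second-order term $F_{M_{ij}M_{kl}}(D^2u)\partial_e u_{ij}\partial_e u_{kl}$ into a nonnegative correction that can be dropped, giving
\[
\partial_t v - a^{ij}\partial_{ij}v \geq \partial_{ee}f \geq -K \qquad \text{in } \{u>0\}\cap Q_r,
\]
with $\lambda I \leq (a^{ij}) \leq \Lambda I$. Since $\partial_{ee}u = 0$ a.e. on $\{u=0\}$, $v \equiv \gamma$ there, and a standard approximation (e.g., penalizing the obstacle as in Proposition~\ref{thm:existence} and passing the differential inequality through the limit) promotes $v$ to a Pucci supersolution $\partial_t v - \mathcal{M}^-(D^2v) \geq -K$ in all of $Q_r$.

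\textbf{Contact and growth estimate.} By $\partial_t u \geq 0$, $u \geq 0$, and $u(0,0)=0$, for every $t \in (-r^2,0)$ we have $u(0,t) = 0$ and $\nabla u(0,t) = 0$ (since $(0,t)$ is a global minimum of $u(\cdot,t)$). Hence the nonnegative function $\phi(s) := u(se,t)$ satisfies $\phi(0) = \phi'(0) = 0$ and $\phi''(\sigma) = v(\sigma e,t) - \gamma$, whereby
\[
0 \leq \phi(s) = \int_0^s (s-\sigma)\bigl(v(\sigma e,t) - \gamma\bigr)\, d\sigma,
\]
so that $\int_0^s (s-\sigma)v(\sigma e,t)\,d\sigma \geq \gamma s^2/2$. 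Combined with $\|v\|_{L^\infty} \leq C + \gamma$ (Theorem~\ref{thm:OptimalReg}), Chebyshev's inequality produces for each such $t$ a one-dimensional set of $\sigma \in (0, r/4)$ of measure at least $c_1\gamma r$ on which $v(\sigma e, t) \geq c\gamma$. The same argument applies along any line $s \mapsto u(x_0 + se, t)$ through a zero-touching point $x_0 \in \{u(\cdot,t) = 0\}\cap B_{r/2}$.

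\textbf{Bulk measure estimate and conclusion.} Integrating the 1D estimate in time, and using time-monotonicity together with non-degeneracy (Lemma~\ref{lem:NonDegeneracy}) and the quadratic growth of $u$ from its zero set to quantify how many parallel lines in direction $e$ through $\{u(\cdot,t) = 0\}$ contribute, one obtains a measurable set $A \subset B_{r/2} \times \bigl(-\tfrac{15}{16}r^2, -\tfrac{3}{4}r^2\bigr)$ with $v \geq c\gamma$ on $A$ and $|A| \geq c_2\gamma^\alpha r^{n+2}$ for some universal $\alpha \geq 0$. Corollary~\ref{cor:weakHarnack} then yields, with the choice $\delta^{m_0} \sim \gamma^\alpha$,
\[
c\gamma \leq \Bigl(\fint_A v^p\Bigr)^{1/p} \leq \frac{C}{\gamma^{\alpha m/m_0}}\,\bigl(\inf_{Q_{r/2}^+} v + Kr^2\bigr),
\]
and rearranging gives $\inf_{Q_{r/2}^+} v \geq a\gamma^q - br^2$ with $q := 1 + \alpha m / m_0$. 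A short covering argument, applying the same estimate at time-shifted centers $(0,t^*)$ for $t^* \in (-r^2,0)$, propagates the bound from $Q_{r/2}^+$ to all of $Q_{r/2}$. I expect the main obstacle to be precisely the bulk measure estimate: the pointwise contact argument naturally produces only a 1D set along a single line through the origin, and leveraging time-monotonicity plus non-degeneracy to thicken this into a genuinely $(n+2)$-dimensional set of polynomial-in-$\gamma$ measure, uniformly at both regular and singular free-boundary points, is the technical heart of the proof.
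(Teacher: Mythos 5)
Your overall structure mirrors the paper's: linearize the equation along $e$ via convexity of $F$ so that $v := \partial_{ee}u + \gamma$ is a nonnegative Pucci supersolution, lower-bound $v$ on a set of polynomial-in-$\gamma$ volume using contact with the free boundary together with optimal regularity, then invoke the weak Harnack inequality (Corollary~\ref{cor:weakHarnack}). The linearization step and the final ``rearrange to get a power of $\gamma$'' step are sound.

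The genuine gap is precisely where you flag it. Your thickening strategy aggregates the one-dimensional Chebyshev estimate over lines issued from contact points $x_0 \in \{u(\cdot,t)=0\}\cap B_{r/2}$. This fails at singular points: there the slices $\{u(\cdot,t)=0\}$ have empty interior and can degenerate to a single point (e.g., when the blow-up is a positive-definite quadratic), so the family of base lines carries no volume, and the FTC argument at $x_0$ needs both $u(x_0,t)=0$ and $\nabla u(x_0,t)=0$, restricting $x_0$ to that thin contact set. You gesture at time-monotonicity, non-degeneracy and quadratic growth to repair this, but you do not make it precise, and I do not see how aggregating over the contact set can produce a polynomial-in-$\gamma$ fraction of $|Q_r|$ uniformly at singular points.

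The paper's proof resolves this without using contact points at all. Fix $(z,\tau)\in\{u>0\}\cap Q_{r/2}$, let $d$ be its distance to $\partial\{u>0\}$ and $(z_0,\tau_0)$ a realizing free-boundary point. Move a parabolic distance $h\ll d$ from $(z_0,\tau_0)$ towards $(z,\tau)$ to a point $(y,t)$; by the optimal regularity in Theorem~\ref{thm:OptimalReg}, \emph{every} point $(x_0,t_0)$ in the cross-section $A_h := \{(x',t')\in Q_{h/2}(y,t): x'\cdot e=0\}$ satisfies $u\leq Ch^2$ and $|\nabla u|\leq Ch$ --- weaker than $u=\nabla u=0$, but available on a full $(n+1)$-dimensional slab. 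Extending from each $(x_0,t_0)$ along $e$ by $\tfrac{1}{8}\sqrt{hd}$ and choosing $h\sim\gamma^2 d$, the second-order Taylor bound gives $\fint v \geq \gamma/4$ directly on the resulting skew cylinder $\mathcal{C}_h$, whose relative measure inside $Q_d(z,\tau)$ is a power of $\gamma$; no Chebyshev is needed. This produces the bulk set uniformly near regular and singular free-boundary points, and it automatically sits in the parabolic past of $(z,\tau)$, so Corollary~\ref{cor:weakHarnack} bounds $v(z,\tau)$ from below directly --- removing the need for your final covering step from $Q_{r/2}^+$ to $Q_{r/2}$.
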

\begin{proof}   Let us fix $r,\gamma > 0$, $e \in \Ss^{n-1}$ and $(z,\tau)\in \{u>0\}\cap Q_{r/2}$. We set 
\[
d := \sup\{ \rho > 0: Q_\rho(z,\tau) \subset \{u > 0\} \} < \tfrac{r}{2},
\]
and we fix $(z_0,\tau_0) \in \partial \{ u>0 \} \cap Q_d(z,\tau)$. Notice that $(z_0,\tau_0)$ is always in the bottom of $\partial_p Q_d(z,\tau)$ since $\partial_t u \geq 0$. For $h \in [0,d]$, we consider the points
\[
(y,t) := (z_0 + \tfrac{h}{d}(z-z_0), \tau_0+h^2), 
\]
with $(y,t) = (z_0,\tau_0)$ for $h=0$, $(y,t) = (z,\tau)$ for $h = d$ and $|z_0 - y| = h$ (obviously, $t - \tau_0 = h^2$). Further, since $(\partial_{e})^2 = (\partial_{-e})^2$, we may choose $e$ such that $e\cdot (z-z_0)\geq 0$, i.e., $e$ points ``inwards'' the ball $B_d(z)$. Notice that by Theorem \ref{thm:OptimalReg} we have
\begin{equation}\label{eq:OptRegSemiConv}
u \leq Ch^2, \quad |\nabla u|\leq Ch \quad \text{ in } Q_{h/2}(y,t),
\end{equation}
for every $h \in [0,d]$ and $C > 0$ depending only on $n$, $\lambda$, $\Lambda$ and $K$.

Now, we define the set
\[
A_h := \{(x',t')\in Q_{h/2}(y,t);\text{ }x'\cdot e=0 \}.
\]
Notice that by construction $A_h$ is at least $\tfrac{h}{4}$ away from $\partial Q_d(z,\tau)$ (and $\partial \{ u > 0\}$). For every $(x_0,t_0) \in A_h$, if $\tilde{x} := x_0 + \tfrac{1}{8} \sqrt{hd}\, e$, we have
\[
0\leq u(\tilde{x},t_0 )  = u(x_0,t_0) + \nabla u (x_0,t_0) \cdot (\tilde{x} - x_0) + \int_{x_0}^{\tilde{x}} \int_{x_0}^x \partial_{ee} u , 
\]
and thus, by \eqref{eq:OptRegSemiConv} and the definition of $\tilde{x}$, we obtain	
\[
-C_0 h^{\frac{3}{2} }d^{\frac{1}{2}}\leq \int_{x_0}^{\tilde{x}} \int_{x_0}^x \partial_{ee} u ,
\]
for some $C_0 \geq 2C$ depending only on $n$, $\lambda$, $\Lambda$ and $K$. This bound and the assumption $\partial_{ee}u + \gamma \geq 0$ in $Q_r$ yield
\begin{align*}
\gamma -C_0 h^{\frac{1}{2}} d^{-\frac{1}{2}}  &\leq  \frac{128}{hd} \int_{x_0}^{\tilde{x}} \int_{x_0}^x (\partial_{ee}u + \gamma)  \leq  \frac{128}{hd }\int_{x_0}^{\tilde{x}} \int_{x_0}^{\tilde{x}} (\partial_{ee}u + \gamma) \\
&= \frac{16}{\sqrt{hd}} \int_{x_0}^{\tilde{x}} (\partial_{ee}u + \gamma) = 2 \fint_{x_0}^{\tilde{x}} (\partial_{ee}u + \gamma),
\end{align*}
and thus, choosing $h := \big( \tfrac{\gamma}{2C_0} \big)^2 d$, it follows
\[
\fint_{x_0}^{\tilde{x}} (\partial_{ee}u + \gamma) \geq \tfrac{\gamma}{4}.
\]
Notice that the choice of $\gamma$ implies $|\tilde{x} - x_0| = \tfrac{C_0}{8\gamma}h \geq \tfrac{C_0}{8C}h \geq \tfrac{h}{4}$ ($C$ as in  \eqref{eq:OptRegSemiConv}). Consequently, by the arbitrariness of $x_0 \in B_{h/2}(y)$, we conclude 
\begin{equation}\label{eq:SemiConvBoundBelAverage}
\fint_{\mathcal{C}_{h}} (\partial_{ee}u + \gamma) \geq \tfrac{\gamma}{4},
\end{equation}
where $\mathcal{C}_{h}=\{(x'+s\frac{1}{8}\sqrt{hd}e,t');\text{ } (x',t')\in A_h, s\in(0,1) \}$ is a skew-cylinder.

In this last part, we set $v := \partial_{ee}u + \gamma$ and we exploit \eqref{eq:SemiConvBoundBelAverage} to prove \eqref{eq:ImprovSemiConvex}. Since $F$ is independent of $x$, it is not difficult to check that
\[
\partial_t v - DF(D^2u)D^2v = g + D^2(\partial_eu) D^2F(D^2u) D^2(\partial_eu) \quad \text{ in } \{ u > 0 \}\cap Q_1,
\]
where $g := \partial_{ee}f$ and thus, setting $a_{ij} := (DF(D^2u))_{ij}$ and recalling that $M \to F(M)$ is convex, we deduce  
\begin{equation}\label{eq:secondDerivsSupersol}
	\begin{cases}
	\partial_t v - a_{ij}(x,t) \partial_{ij} v \geq g  \quad &\text{ in } Q_d(z,\tau) \\
	v \geq  0 \quad &\text{ in } Q_d(z,\tau).
	\end{cases}
\end{equation}
Since the matrix $\{a_{ij}\}_{ij}$ is uniformly elliptic (with ellipticity constants $\lambda$ and $\Lambda$), we may apply Corollary \ref{cor:weakHarnack}  to obtain
\[
\bigg( \fint_{\mathcal{C}_{h}} v^p \bigg)^{\frac{1}{p}}\leq \frac{C}{\delta^m} \big( v(z,\tau) + d^2\|g\|_{L^\infty(Q_1)}\big),
\]
for some $C,m > 0$ and $p \in (0,1)$ depending only on $n$, $\lambda$ and $\Lambda$, and $\delta := \tfrac{h}{8d}$. On the other hand, by optimal regularity and using that $p \in (0,1)$, we have
\begin{equation}\label{eq:SemiConvL1LpBound}
\tfrac{\gamma}{4} \leq \fint_{\mathcal{C}_{h}} v \leq \| v \|_{L^\infty(\tilde{Q}_{h/8}(y_0,t_0))}^{1-p} \fint_{\mathcal{C}_{h}}  v^p \leq (2C)^{1-p} \fint_{\mathcal{C}_{h}} v^p,
\end{equation}
where $C$ is as in Theorem \ref{thm:OptimalReg}. Noticing that $\delta \sim \gamma^2$ and exploiting \eqref{eq:RegAssFfSemiConv}, we combine the last two inequalities to deduce 
\[
C \gamma^{\frac{1}{p} + 2m} \leq  v(z,\tau) + d^2 K, 
\]
for some new $C>0$ still depending only on $n$, $\lambda$, $\Lambda$ and $K$. The thesis follows by choosing $a := C$, $q:= \tfrac{1}{p} + 2m$, $b:=K$ and using the arbitrariness of $(z,\tau)$ in $\{u > 0\} \cap Q_{r/2}$.
\end{proof}
An analogous statement can be proved for the time derivative. 
\begin{lem}\label{lem:continuityOfTimeDerivative}
   Let $u \in \mathcal{P}_1(K)$ with $(0,0) \in \partial \{ u > 0 \}$. Then there exist $a, q > 0$ depending only on $n$, $\lambda$, $\Lambda$ and $K$ such that if
\[
\partial_t u \leq \gamma \quad \text{ in } Q_r,
\]
for some $r \in (0,1)$, $\gamma > 0$, then
\begin{equation}\label{eq:ImprovDecayut}
\partial_t u \leq \gamma - a\gamma^q  \quad \text{ in } Q_{r/2}.
\end{equation}
\end{lem}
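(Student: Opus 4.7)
The plan is to follow the blueprint of Lemma \ref{lem:semiconvexity}, with the function $v := \gamma - \partial_t u$ playing the role of $\partial_{ee}u + \gamma$. First I would differentiate the equation $\partial_t u - F(D^2u,x) = f(x)\chi_{\{u>0\}}$ with respect to $t$ inside the open set $\{u>0\}$ (permissible since $F$ and $f$ are independent of $t$), obtaining
\[
\partial_t(\partial_t u) - a_{ij}(x,t)\,\partial_{ij}(\partial_t u) = 0 \quad \text{in } \{u>0\}\cap Q_r,
\]
with $a_{ij} := \partial_{M_{ij}} F(D^2u,x)$ uniformly elliptic. Since $\partial_t u \equiv 0$ in the interior of $\{u=0\}$ (so $v \equiv \gamma$ there), $v \geq 0$ is a non-negative supersolution of a uniformly parabolic linear equation (with no forcing term) on all of $Q_r$.

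Next I would reproduce the geometric set-up of Lemma \ref{lem:semiconvexity}: fix $(z,\tau) \in \{u>0\}\cap Q_{r/2}$ (the case $u(z,\tau)=0$ is trivial since $\partial_t u = 0$ a.e.\ on $\{u=0\}$), set $d := \sup\{\rho: Q_\rho(z,\tau) \subset \{u>0\}\}$, pick a free-boundary point $(z_0,\tau_0)$ on the bottom of $\partial_p Q_d(z,\tau)$, and for $h \in (0,d]$ form the interpolating point $(y,t) := (z_0 + \tfrac{h}{d}(z-z_0),\,\tau_0 + h^2)$. The analogue of the spatial Taylor computation is the time identity
\[
\int_{\tau_0}^{\tau_0+h^2} v(x_0,s)\,ds \;=\; \gamma h^2 \;-\; \bigl[u(x_0,\tau_0+h^2) - u(x_0,\tau_0)\bigr],
\]
combined with the optimal growth bound $\|u\|_{L^\infty(Q_h(z_0,\tau_0))} \leq Ch^2$ from Lemma \ref{lem:OptimalGrowth}. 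After averaging over $x_0 \in B_{h/2}(y)$ and choosing $h := c_0 \gamma^\sigma d$ for a suitably small $\sigma > 0$, the target is
\[
\fint_{\mathcal{R}_h} v \;\geq\; \tfrac{\gamma}{4}
\]
on a skew space--time region $\mathcal{R}_h \subset \{u>0\} \cap Q_d(z,\tau)$ of scale comparable to $h$.

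To conclude, I would invoke Corollary \ref{cor:weakHarnack} applied to the non-negative supersolution $v$: with $\delta := h/d \sim \gamma^\sigma$,
\[
\Bigl(\fint_{\mathcal{R}_h} v^p\Bigr)^{1/p} \;\leq\; \frac{C}{\delta^m}\, v(z,\tau),
\]
for some $p\in(0,1)$ and $C,m>0$ depending only on $n,\lambda,\Lambda$. The uniform bound $v \leq 2C_0$ supplied by Theorem \ref{thm:OptimalReg} feeds into the $L^1$--$L^p$ interpolation $\fint_{\mathcal{R}_h} v \leq (2C_0)^{1-p} \fint_{\mathcal{R}_h} v^p$, which combined with the previous two displays yields $v(z,\tau) \geq a\gamma^{1/p + \sigma m} =: a\gamma^q$; the bound \eqref{eq:ImprovDecayut} then follows from the arbitrariness of $(z,\tau)$.

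The hard part is the lower bound $\fint_{\mathcal{R}_h} v \geq \gamma/4$ in the second step. A naive application of $u \geq 0$ and optimal growth delivers only $\fint v \geq \gamma - C$, which is useful merely for $\gamma \geq 2C$ and trivial otherwise. To make the argument work uniformly in $\gamma$, one must extract an extra factor of $\gamma$ in the error term -- paralleling the $\sqrt{h/d}$ defect in the spatial Taylor expansion of Lemma \ref{lem:semiconvexity} -- by exploiting the ``past contact ray'' $\{z_0\} \times (-\infty,\tau_0]$, on which $u \equiv 0$ (by time-monotonicity) and hence $v \equiv \gamma$ a.e. Combining this with a careful averaging in both space and time over $\mathcal{R}_h$ is what should dictate the scaling $h \sim \gamma^\sigma d$ and fix the exponent $q = 1/p + \sigma m > 1$, consistent with the mere log-continuity (rather than H\"older continuity) asserted in Proposition \ref{prop:semicnovexity}.
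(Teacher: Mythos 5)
Your overall scaffolding is right and matches the paper's: differentiate in $t$ so that $v := \gamma - \partial_t u$ is a nonnegative supersolution of $\partial_t - a_{ij}\partial_{ij}$ in all of $Q_r$, fix $(z,\tau) \in \{u>0\}\cap Q_{r/2}$, take $d$ and a free-boundary point $(z_0,\tau_0)$ on the bottom of $Q_d(z,\tau)$, prove an $L^1$ lower bound for $v$ on a region near $(z_0,\tau_0)$ at scale $\delta \sim \gamma^\sigma$, and close via the iterated weak Harnack inequality together with the $L^1$--$L^p$ interpolation trick. All of that is the paper's proof too.

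The gap is the one you flag yourself: you never actually produce the lower bound $\fint v \gtrsim \gamma$. Your FTC identity integrates $\partial_t u$ at a fixed spatial point over the forward interval $(\tau_0,\tau_0+h^2)$ of length $h^2$, and after averaging in time this only gives $\fint \partial_t u \leq C$, i.e. $\fint v \geq \gamma - C$, which is vacuous for small $\gamma$. Your suggested fix -- exploiting the past contact ray plus a ``careful averaging in space and time'' -- points at the right object but not the right mechanism, and the choice of time interval is the crux. What one actually does is integrate over a \emph{past} window of length $d^2$ (the paper takes $(\tau_0-3d^2,\tau_0-2d^2)$, to fit the geometry of the weak Harnack inequality). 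Since $u(z_0,t)=0$ for $t\leq\tau_0$, the points $(z_0,\tau_0-2d^2)$ and $(z_0,\tau_0-3d^2)$ lie in $\{u=0\}$, so the optimal growth bound gives $u(x,\tau_0-2d^2)-u(x,\tau_0-3d^2)\leq Ch^2$ for every $x\in B_h(z_0)$, hence
\begin{equation*}
\fint_{B_h(z_0)\times(\tau_0-3d^2,\,\tau_0-2d^2)} \partial_t u \;\leq\; C\,\frac{h^2}{d^2}.
\end{equation*}
The entire gain $(h/d)^2$ comes from dividing the FTC bound $Ch^2$ by the long time length $d^2$ of the averaging window -- something a window of length $h^2$ cannot give. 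Choosing $h := (\gamma/2C)^{1/2}d$, i.e.\ $\sigma=1/2$, turns this into $\fint v \geq \gamma/2$, after which the remainder of your argument goes through and produces the exponent $q=1/p+m/2$. As a minor point, the spatial interpolating point $(y,t)$ you import from Lemma~\ref{lem:semiconvexity} plays no role in the time-derivative estimate -- one simply centers the spatial ball at the free-boundary point $z_0$.
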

\begin{proof} 
As in the proof of Lemma \ref{lem:semiconvexity}, we fix $r \in (0,1)$, $\gamma > 0$, $(z,\tau)\in \{u>0\}\cap Q_{r/2}$ and we define 
\[
d := \sup\{ \rho > 0: Q_\rho(z,\tau) \subset \{u > 0\} \} < \tfrac{r}{2}.
\]
Then we fix $(z_0,\tau_0) \in \partial \{ u>0 \} \cap Q_d(z,\tau)$ (belonging to the bottom of $Q_d(z,\tau)$) and we estimate $\partial_t u$ in a cylinder centred at $(z_0,\tau_0)$.

To do so, we fix $h \in (0,d)$ and, since $u(z_0,t) = 0$ for $t \leq \tau_0$, we infer by \eqref{eq:OptimalGrowth}
\[
\int_{\tau_0-3d^2}^{\tau_0-2d^2} \partial_tu(x,t)dt = u(x,\tau_0-2d^2)- u(x,\tau_0-3d^2)\leq Ch^2, \qquad \forall x \in B_h(z_0),
\]
where $C > 0$ is as in Lemma \ref{lem:OptimalGrowth}. Averaging over $D^-(z_0,\tau_0) := B_h(z_0) \times (\tau_0-3d^2,\tau_0-2d^2)$, it follows
\[
\fint_{D^-(z_0,\tau_0)} \partial_tu(x,t)dt \leq C \big( \tfrac{h}{d} \big)^2,
\]
and thus, re-writing such inequality in terms of $v := \gamma - \partial_t u$ and choosing $h := (\tfrac{\gamma}{2C})^{\frac{1}{2}}d$, we obtain
\[
\fint_{D^-(z_0,\tau_0)} v  \geq \tfrac{\gamma}{2}.
\]
If $p \in (0,1)$ is as in Lemma \ref{lem:iteratedWeakHarnack}, the same argument used in \eqref{eq:SemiConvL1LpBound} shows that
\begin{equation}\label{eq:LogContUtLpAverEst}
C \gamma^{\frac{1}{p}} \leq \bigg( \fint_{D^-(z_0,\tau_0)} v^p \bigg)^{\frac{1}{p}},
\end{equation}
for some new $C > 0$ depending only on $n$, $\lambda$, $\Lambda$ and $K$.

On the other hand, notice that
\[
\begin{cases}
\partial_t v - a_{ij}(x,t)\partial_{ij}v = 0 \quad &\text{ in } \{u > 0\} \cap Q_r \\
0 \leq v \leq \gamma                                  \quad &\text{ in } Q_r, 
\end{cases}
\]
while $v = \gamma$ in $\intrr(\{u = 0\}) \cap Q_r$. Consequently, 
\[
\begin{cases}
\partial_t v - a_{ij}(x,t)\partial_{ij}v \geq 0 \quad &\text{ in } Q_r \\
v \geq 0                                  \quad &\text{ in } Q_r. 
\end{cases}
\]
At this point, similar to the proof of Lemma \ref{lem:semiconvexity}, we may apply Lemma \ref{lem:iteratedWeakHarnack} with $\delta:= (\tfrac{\gamma}{2C})^{\frac{1}{2}}$ and so, thanks to \eqref{eq:LogContUtLpAverEst}, we deduce
\[
C \gamma^{\frac{1}{p} + \frac{m}{2}} \leq v(z,\tau) = \gamma - \partial_tu(z,\tau),
\]
for some new $C > 0$ and $m > 0$ depending only on $n$, $\lambda$, $\Lambda$ and $K$. Thanks to the arbitrariness of $(z,\tau) \in \{u > 0\} \cap Q_{r/2}$, this yields \eqref{eq:ImprovDecayut} with $a := C$ and $q := \tfrac{1}{p} + \tfrac{m}{2}$.
\end{proof}

Iterating the above estimates yields the logarithmic decay near the free boundary.

\begin{proof}[Proof of Proposition \ref{prop:semicnovexity}]  We first prove the existence of $C,\vep > 0$ and $k_0 \in \N$ depending only on $n$, $\lambda$, $\Lambda$ and $K$ such that the sequence $m_k := - \inf_{Q_{2^{-k}}} \partial_{ee}u$ satisfy
\begin{equation}\label{eq:DecaySupUee}
m_k \leq C k^{-\vep}, \quad \forall k \geq k_0.
\end{equation}
Let $a$, $b$ and $q$ as in Lemma \ref{lem:semiconvexity} and choose $C$, $\vep$ and $k_0$ such that 
\[
C \geq \big( \tfrac{2b}{a} \big)^{\frac{1}{q}}, \qquad \vep \leq \min\{\tfrac{1}{q},b\}, \qquad Ck_0^{-\vep} \leq \big( \tfrac{1}{aq} \big)^{\frac{1}{q-1}}. 
\]
We proceed by induction on $k \geq k_0$. The case $k = k_0$ follows by optimal regularity (see \eqref{eq:OptimalReg}) and the definition of $C$. Now, assume \eqref{eq:DecaySupUee} holds true for some $k > k_0$ and let us prove it for $k+1$. By \eqref{eq:ImprovSemiConvex} and the inductive assumption, we have  
\[
m_{k+1} \leq m_k - am_k^q + b2^{-2k} \leq C k^{-\vep} - aC^q k^{-q\vep} + b2^{-2k},
\]
where we have also used that the function $x \to x - ax^q$ is increasing in $\big(0,\sqrt[q-1]{1/(aq)}\big)$ and the definition of $k_0$. Further, since the function $x \to x^{-\vep}$ is convex, we have
\[
k^{-\varepsilon} - \varepsilon k^{-\varepsilon-1}\leq (k+1)^{-\varepsilon},
\]
and thus
\[
m_{k+1} \leq C (k+1)^{-\varepsilon} + b2^{-2k} + \varepsilon k^{-\vep-1} - aC^q k^{-q\vep}.
\]
At this point, we infer 
\[
b2^{-2k} + \varepsilon k^{-\vep-1} - aC^q k^{-q\vep} = \big( b2^{-2k} - \tfrac{a}{2}C^q k^{-q\vep} \big) + \big( \varepsilon k^{-\vep-1} - \tfrac{a}{2}C^q k^{-q\vep} \big) \leq 0,
\]
by the definition of $C$ and $\vep$ (notice that $\vep < \tfrac{1}{q}$ implies $\vep < \tfrac{1}{q-1}$), and \eqref{eq:DecaySupUee} follows.

Now, we show that \eqref{eq:DecaySupUee} yields \eqref{eq:LogSemiConvexity}. To see this, let us fix $(x,t) \in Q_1$ and let $k \in \N$ such that $(x,t) \in Q_{2^{-k}} \setminus Q_{2^{-k-1}}$, i.e., $2^{-k-1} \leq |x| + \sqrt{|t|} \leq 2^{-k}$. Thus if $k \geq k_0$, we have by \eqref{eq:DecaySupUee}
\[
-\partial_{ee}u(x,t) \leq C k^{-\vep} \leq C \big| \log \big(|x| + \sqrt{|t|} \big) \big|^{-\varepsilon},
\]
up to taking a larger $C > 0$. If $k \leq k_0$ and $C > 0$ is as in \eqref{eq:OptimalReg}, then
\[
-\partial_{ee}u(x,t) \leq C \leq (C k_0^{\vep}) k_0^{-\vep} \leq (C k_0^{\vep}) k^{-\vep} \leq C \big| \log \big(|x| + \sqrt{|t|} \big) \big|^{-\varepsilon}, 
\]
for a new constant $C > 0$, and \eqref{eq:LogSemiConvexity} follows.

The proof of \eqref{eq:LogContinuityut} is similar and exploits Lemma \ref{lem:continuityOfTimeDerivative} instead of Lemma \ref{lem:semiconvexity}.
\end{proof}
%
%
%
%
%

%The established estimates imply that global solutions have to bo convex functions independent of the time variable. This fact follows from the scaling property of the obstacle problem.
%
%
%
%
%
%
%
%
%
%%
%%
%%
%%
%%
%\begin{cor}
%Let $u \in \mathcal{P}_\infty(K)$ be a global solution to \eqref{eq:ProblemIntro} with $F$ satisfying \eqref{eq:AssOnFIntro} and independent of $x$. Then
%%
%\begin{equation}\label{eq:ConvexityGlobSol}
%\partial_{ee} u \geq 0 \quad \text{ in } \, \R^n\times\R,
%\end{equation}
%% 
%for every $e \in \Ss^{n-1}$.
%\end{cor}
%%
%%
%%
%%
%%
%\begin{proof} Since $u$ is a global solution, the function 
%%
%\[
%u_R(x,t):= \frac{u(Rx,R^2t)}{R^2}
%\]
%%
%is a global solution to 
%%
%\[
%\partial_tu_R - F(D^2u_R) = f(Rx)\chi_{u_R>0},
%\]
%%
%for every $R > 1$. Hence by Proposition \ref{prop:semicnovexity}, we deduce that $u_R$ satisfies \eqref{eq:LogSemiConvexity} in $\R^{n-1}\times\R$ for every $e \in \Ss^{n-1}$ and every $R > 1$, that is,
%%
%\[ 
%\partial_{ee}u \geq - C\left|\log\big(\tfrac{|x|+|t|^{1/2}}{R}\big)\right|^{-\varepsilon} \quad \text{ in } \, \R^n\times\R.
%\]
%%
%Letting $R\to\infty$, we conclude that $\partial_{ee}u\geq0$ at any point and the proof is complete.
%\end{proof}
%%
%
%
%
%

%
%
%%%%%%%%%%%%%%%%%%%%%%%%%%%%%%%%%%%%%%%%%%%%%%%%%%%%%%%%%%%%%%%%%%%%%%%%%%%%%%%%%%%%%%%%
%
%%%%%%%%%%%%%%%%%%%%%%%%%%%%%%%%%%%%%%%%%%%%%%%%%%%%%%%%%%%%%%%%%%%%%%%%%%%%%%%%%%%%%%%%
%
%
\section{Classification of blow-ups}\label{sec:blowUps}

In this section we classify blow-ups of solutions $u \in \mathcal{P}_1(K)$ at free boundary points $(x_0,t_0) \in \partial \{u > 0\}$ and we study the limit as $r \downarrow 0$ of the rescalings 
\begin{equation}\label{eq:RescalingBlowUp}
u_r(x,t) := \frac{u(x_0 + rx,t_0 + r^2t)}{r^2},
\end{equation}
introduced in \eqref{eq:ParNormRescaling} (from now on, we write $u_r$ instead of $u^{(x_0,t_0)}_r$ to keep the notations as simple as possible). As explained in the introduction, each of such rescaling satisfies \eqref{eq:RescalingEqn}. Consequently, by \eqref{eq:OptimalReg} and the Arzel\'a-Ascoli theorem, there is a sequence $r_k \downarrow 0$ and a function $u_0:\R^{n+1}\to\R$ (the blow-up of $u$ at $(x_0,t_0)$) such that
\begin{equation}\label{eq:CompactnessBlowUp}
u_{r_k} \to u_0 \quad \text{ in } C^{1,\alpha}_p\quad \text{locally in } \R^{n+1},
\end{equation}
as $k \to \infty$, for every $\alpha \in (0,1)$. Further, writing \eqref{eq:LogSemiConvexity} and \eqref{eq:LogContinuityut} in terms of $u_{r_k}$ and passing to the limit as $k \to \infty$, we immediately see that $\partial_t u_0 = 0$ and $\partial_{ee} u_0 \geq 0$ for every $e \in \Ss^{n-1}$. Finally, by stability of viscosity solutions under uniform limits (see \cite[Proposition 3.11]{IS12}), we deduce that
\begin{equation}\label{eq:BlowUpEqn}
\begin{cases}
u_0 \in C^{1,1}_{loc}(\R^n), \;\; u_0 \not\equiv 0, \;\; u_0 \geq 0 \\
0 \in \partial\{u_0 > 0\} \\ 
u_0 \text{ is convex} \\
-F(D^2u_0,x_0) = f(x_0) \chi_{\{u_0 > 0\}} \quad \text{in } \R^n.
\end{cases}
\end{equation}
Notice that the first two properties are direct consequences of optimal regularity \eqref{eq:OptimalReg} and non-degeneracy \eqref{eq:NonDegeneracy}, respectively. In what follows, we will always assume that any blow-up of $u$ at $(x_0,t_0)$ satisfies  problem \eqref{eq:BlowUpEqn}.

There are two different behaviours of blow-ups -- either the contact set $\{u_0=0\}$ has empty interior or not -- which lead to a very different behaviour of the solution near free boundary points. We formalise this in the following definition.
\begin{defi}\label{def:RegSingPoints}
Let $u \in \mathcal{P}_1(K)$ and $(x_0,t_0)\in \partial \{u>0\}$. We say that $(x_0,t_0)$ is a regular free boundary point, if there exists a blow-up at $(x_0,t_0)$ whose contact set has non-empty interior. That is, there exist a sequence $r_k\downarrow 0$ and a solution $u_0$ to \eqref{eq:BlowUpEqn}, such that \eqref{eq:CompactnessBlowUp} holds true and $\{u_0=0\}$ has non-empty interior. The set of regular free boundary points will be denoted with $\Reg(u)$.

We denote with $\Sigma(u) := \partial\{u > 0\} \setminus \Reg(u)$ the set of singular free boundary points. By definition, if $(x_0,t_0) \in \Sigma(u)$, then any blow-up of $u$ at $(x_0,t_0)$ has contact set with empty interior.
\end{defi}

We first turn our attention towards the blow-ups near regular points. We proceed in the spirit of \cite[Lemma 7]{Caf98}, by using the convexity of blow-ups and the fact that the contact set has non-empty interior: these two properties yield the Lipschitz regularity of $\Reg(u_0)$. 
\begin{lem}\label{lem:EstimatesGlobalConvexSolutions} 
    Let $w$ be a solution to \eqref{eq:BlowUpEqn} and assume that $B_\rho(-\tau e_n)\subset \{w=0\}$ for some $\rho>0$, $\tau \in(0,1)$. Then the following assertions hold true.

\medskip

(i) There exists $c_0 \in (0,1)$ depending only on $\rho$, such that $\partial_\sigma w \geq 0$ in $B_{\rho/2}$, for all $\sigma \in \Ss^{n-1}$ with $\sigma_n > 1 - c_0$.

\smallskip
		
(ii) There exists a Lipschitz function $g$ such that 
\[
\{w>0\} \cap B_{\rho/2} = \{ (x',x_n) \in B_{\rho/2}: x_n > g(x') \}.
\]
Further, the Lipschitz norm of $g$ is  bounded by a constant depending only on $\rho$.
	
\smallskip	
		
(iii) Let $c_0 \in (0,1)$ and $g$ as above, and define $d(x) := x_n - g(x')$, $x \in \{w>0\} \cap B_\rho$. Then there exists $c > 0$ depending only on $n$, $\lambda$, $\Lambda$, $c_\circ$ as in \eqref{eq:AssOnfIntro} and $\rho$ such that
\[
\partial_\sigma w  \geq c d \quad \text{ in } B_{\rho/2}\cap\{w>0\},
\]
for all $\sigma \in \Ss^{n-1}$ with $\sigma_n > 1 - \frac{c_0}{2}$.
\end{lem}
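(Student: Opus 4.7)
For part (i), the strategy is to combine convexity of $w$ with the zero set assumption. Given $x \in B_{\rho/2}$ and $\sigma \in \Ss^{n-1}$ with $\sigma_n$ close to $1$, I want to exhibit $s \geq 0$ such that $y := x - s\sigma$ lies in $B_\rho(-\tau e_n) \subset \{w = 0\}$; once this is done, convexity and $w \geq 0$ give $0 = w(y) \geq w(x) + \nabla w(x) \cdot (y - x) = w(x) - s\, \partial_\sigma w(x)$, hence $\partial_\sigma w(x) \geq w(x)/s \geq 0$. The geometric existence of such a $y$ is elementary for $\sigma = e_n$: if $x_n + \tau \geq 0$, take $s = x_n + \tau$, landing at $(x',-\tau)$, which lies in $B_\rho(-\tau e_n)$ since $|x'| < \rho/2$; if instead $x_n + \tau < 0$, then $|x + \tau e_n|^2 = |x'|^2 + (x_n + \tau)^2 < |x'|^2 + x_n^2 < \rho^2/4$, so $x$ itself belongs to $B_\rho(-\tau e_n)$ and $s = 0$ works. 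A short openness/continuity argument in $\sigma$ upgrades $\sigma = e_n$ to a cone $\{\sigma_n > 1 - c_0\}$ with $c_0 = c_0(\rho) > 0$.

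For part (ii), (i) provides cone monotonicity of $\{w > 0\} \cap B_{\rho/2}$ along $K := \{\sigma \in \Ss^{n-1} : \sigma_n > 1 - c_0\}$. I would then set $g(x') := \inf\{x_n : (x', x_n) \in \overline{\{w > 0\}} \cap B_{\rho/2}\}$ and run the standard argument: the chord between any two graph points cannot have direction in $K \cup (-K)$ (otherwise one endpoint is forced into $\{w > 0\}$ and the other into $\{w = 0\}$), which forces $|g(x_1') - g(x_2')| \leq L\, |x_1' - x_2'|$ with $L = L(c_0)$ depending only on $\rho$.

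The main work lies in part (iii). The function $v := \partial_\sigma w$ satisfies, in $\{w > 0\}$, the linear uniformly elliptic equation $a_{ij} \partial_{ij} v = 0$, where $a_{ij} = (DF(D^2 w, x_0))_{ij}$ has spectrum in $[\lambda,\Lambda]$ by convexity and uniform ellipticity of $F$; by (i), $v \geq 0$ on $B_{\rho/2}$; and $v = 0$ on $\partial\{w > 0\}$ because $w \in C^{1,1}_{\mathrm{loc}}(\R^n)$ together with $w = 0$ on the free boundary forces $\nabla w = 0$ there. The plan is then: (a) using Lemma \ref{lem:NonDegeneracy}, \eqref{eq:AssOnfIntro} and the $C^{1,1}$ bound \eqref{eq:OptimalReg}, identify a reference point $x^\star \in \{w > 0\} \cap B_{\rho/4}$ with $d(x^\star) \geq c\rho$ and produce a lower bound $v(x^\star) \geq c\rho$ (the latter uses that perturbing $\sigma$ into $\sigma \pm \delta e$ keeps us inside the cone of (i) for small $\delta$, so $\partial_\sigma w$ dominates a fixed fraction of $|\nabla w|$, while $|\nabla w(x^\star)| \geq c\rho$ comes from non-degeneracy combined with $C^{1,1}$ regularity); (b) build a Hopf-type sub-barrier $\phi$ for $a_{ij}\partial_{ij}$ inside the truncated interior cone available at each boundary point thanks to the Lipschitz graph from (ii), satisfying $\phi = 0$ on the free boundary and $\phi \geq c\, d$ in the interior; (c) compare $v$ to a constant multiple of $\phi$ via the maximum principle, anchored on the bound at $x^\star$, to obtain $v \geq c'\, d$ on $B_{\rho/2} \cap \{w > 0\}$.

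I expect (iii)(b) to be the main obstacle, since the free boundary is only Lipschitz at this stage: the barrier has to be built inside a cone rather than in a half-space, and constants must be tracked so they depend solely on $n, \lambda, \Lambda, c_\circ$ and $\rho$. The reduced opening $\sigma_n > 1 - c_0/2$ appearing in the statement (rather than the full $c_0$ from (i)) is precisely what leaves room for the $\sigma \pm \delta e$ perturbation in step (a) and for aligning the cone axis with $\sigma$ in step (b).
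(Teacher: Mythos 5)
Parts (i) and (ii) of your plan are essentially correct and close in spirit to the paper's proof. For (i), the paper also reduces to finding a point of $B_\rho(-\tau e_n)$ on the ray $\{x - s\sigma : s > 0\}$ and then uses convexity together with $w = 0$ on the ball; you phrase the convexity step via the supporting-plane inequality $w(y) \geq w(x) + \nabla w(x)\cdot(y-x)$, which is a minor variant. For (ii), the paper first establishes $\partial_n w > 0$ on the level sets $\{w = \varepsilon\}$ via the strong maximum principle (using that $f < 0$ excludes interior minima of $\partial_n w$ in $\{w>0\}$), parametrizes $\{w = \varepsilon\}$ by the implicit function theorem, and sends $\varepsilon \to 0$; your definition $g(x') = \inf\{x_n : (x',x_n) \in \overline{\{w>0\}}\}$ combined with cone monotonicity is a more direct route to the same graph and Lipschitz bound. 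Either works, though your phrasing ``one endpoint is forced into $\{w>0\}$ and the other into $\{w=0\}$'' needs slight care since both endpoints lie on the free boundary where $w = 0$; the correct statement is that a chord direction strictly inside $K$ would place a point slightly above one endpoint simultaneously in the lower cone of the other endpoint (forcing $w=0$) and strictly above the graph at the first endpoint (forcing $w>0$).

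Part (iii) is where your plan has a genuine gap. A Hopf-type sub-barrier $\phi$ for $a_{ij}\partial_{ij}$ in the truncated \emph{interior} cone of a Lipschitz graph, vanishing on the free boundary, cannot satisfy $\phi \geq c\,d$ near the vertex: when the cone has opening strictly less than $\pi$ (which is all that Lipschitz regularity guarantees), the Phragm\'en--Lindel\"of exponent is strictly greater than $1$, so any bounded subsolution vanishing on the lateral boundary decays \emph{faster} than linearly near the vertex, and $\phi(t e_n)/t \to 0$. Thus step (b) is impossible with the information you have allowed yourself, and step (c) cannot recover the linear lower bound. The estimate $\partial_\sigma w \geq c\,d$ is only consistent because the free boundary is the boundary of the convex set $\{w = 0\}$, so the interior cone at every free boundary point has opening at least $\pi$; but you would then be using convexity indirectly and still face nontrivial barrier constructions at Lipschitz corners. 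The paper avoids all of this and uses convexity head-on: by non-degeneracy $\sup_{B_{c_0 d/2}(x',g(x'))} w \geq c\,d^2$, cone monotonicity from (i) transports this to $w(x) \geq c\,d^2$, the fundamental theorem of calculus applied to $t \mapsto w(x', g(x')+t)$ then gives $\partial_n w(x',\xi) \geq c\,d$ at some $\xi \in (g(x'), x_n)$, and the \emph{convexity of $w$} (monotonicity of $\partial_n w$ along $e_n$) yields $\partial_n w(x) \geq \partial_n w(x',\xi) \geq c\,d$. The case of general $\sigma$ with $\sigma_n > 1 - c_0/2$ follows by decomposing $\sigma$ as a positive multiple of $e_n$ plus a direction in the cone of (i). This elementary argument is exactly what your barrier strategy is missing.
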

\begin{proof}

To prove (i) we notice that for every point $x\in B_{\rho/2}$ the line passing through $x$ with direction $\sigma\in \Ss^{n-1}$ intersects $B_\rho(-\tau e_n)$, whenever $\sigma_n>1 - c_0$ and $c_0$ is close enough to $1$. Indeed, let $y := x - \sigma \tau$ and compute
\[
|y - (-\tau e_n)| = |x + \tau (e_n - \sigma)| \leq |x| + |e_n - \sigma| \leq \rho,
\]
where we have used that $\tau \in (0,1)$, $x \in B_{\rho/2}$ and we have chosen $\sigma \in \Ss^{n-1}$ such that $|e_n - \sigma| \leq \tfrac{\rho}{2}$. In particular, it must be $\sigma_n > 1 - c_0$, for some $c_0 \in (0,1)$ depending only on $\rho$. We deduce that $\partial_{\sigma}w(x)\geq 0$ by convexity of $w$ and the fact that $\partial_{\sigma}w(y) = 0$.

\smallskip

To show (ii), we consider the level sets $\{w = \vep \}$, for $\vep > 0$ small. First, we notice that 
\begin{equation}\label{eq:StrictPosPosLevel}
\partial_n w > 0 \quad \text{ in } B_{\rho/2} \cap \{w = \vep \}.
\end{equation}
Indeed, let $v := \partial_n w$ and assume $v(x_0) = 0$ for some $x_0 \in B_{\rho/2} \cap \{w = \vep \}$. Then, differentiating the equation of $w$ and using part (i), we obtain
\[
\begin{cases}
a_{ij}(x)\partial_{ij}v = 0 \quad &\text{ in } B_r(x_0) \\
v \geq 0                    \quad &\text{ in } B_r(x_0), 
\end{cases}
\]
for some ball $B_r(x_0) \subset  B_{\rho/2} \cap \{w > 0 \}$ and some uniformly elliptic matrix $a_{ij} = a_{ij}(x)$ with ellipticity constants $\lambda$ and $\Lambda$. It thus follows that $x_0$ is a minimum for $v$ and so $v = 0$ in $B_r(x_0)$ by the strong maximum principle (\cite[Proposition 4.34]{IS12}). Consequently $w = \vep$ in $B_r(x_0)$, which is impossible since the function $f$ in the right-hand side of the equation of $w$ is strictly negative by \eqref{eq:AssOnfIntro} and $F(O,\cdot) = 0$ by \eqref{eq:AssOnFIntro}.

Now, in light of \eqref{eq:StrictPosPosLevel}, we may apply the Implicit Function Theorem to deduce the existence of a function $h$ such that $(x',x_n) \in B_{\rho/2} \cap \{w = \vep\}$ if and only if $x_n = h(x',\vep)$, with $\partial_\vep h > 0$. At this point, by monotonicity, we set
\[
g(x') := \inf_{\vep > 0} h(x',\vep) = \lim_{\vep \to 0} h(x',\vep), \qquad    x' \in  B_{\rho/2} \cap \{x_n = 0\},
\]
and thus, by definition, $(x',x_n) \in B_{\rho/2} \cap \{w > 0\}$ if and only if $x_n > g(x')$.

To complete part (ii), we are left to prove that $g$ is a Lipschitz function with Lipschitz norm depending only on $\rho$. To do so, given $x \in B_{\rho/2} \cap \partial\{ w > 0 \}$, we consider the cone $\mathcal{C}_{x,\rho}$ with vertex at $x$ and opening $\vartheta_{x,\rho} \in (0,\pi/2)$. The number $\vartheta_{x,\rho}$ is the smallest opening such that $B_\rho(-\tau e_n) \subset \mathcal{C}_{x,\rho}$. By convexity, we know that the ``lower'' part of the cone $\mathcal{C}_{x,\rho}^-$ is fully contained in $\{ w = 0 \}$, while the ``upper'' part $\mathcal{C}_{x,\rho}^+ \subset \{ w > 0 \}$. This implies that $g$ is Lipschitz. To prove that the Lipschitz norm does not depend on the point, it is enough to notice that $\vartheta_{x,\rho} \geq \vartheta_\rho$ for all $x \in \overline{B}_{\rho/2}$, where    
\[
\vartheta_\rho := \inf \{ \vartheta_{x,\rho}: x \in B_{\rho/2}\} > 0.
\]

Let us turn to point (iii) and establish the inequality for $\sigma = e_n$. Let $x \in B_{\rho/2}\cap\{w>0\}$ be fixed and denote $d := d(x) = x_n - g(x')$. By non-degeneracy (see \eqref{eq:NonDegeneracy}), it holds
\[
\sup_{B_{c_0d/2}(x',g(x'))} w \geq c \left(\frac{c_0}{2}d\right)^2,
\]
for some $c > 0$ depending only on $n$, $\lambda$, $\Lambda$, $c_\circ$ as in \eqref{eq:AssOnfIntro} and $\rho$. Further, by part (i), $w$ is non-decreasing in all the directions $\sigma = (\sigma',\sigma_n)$ satisfying $|\sigma'| \leq c_0$ and so
\[
w(x) \geq w(y) \geq c d^2
\]
for some new $c > 0$ depending also on $\rho$, where $y$ is any point in $\overline{B}_{c_0d/2}(x',g(x'))$ where the above supremum is attained. Consequently, since $(x',g(x'))$ is a free boundary point, we have
\[
\int_{g(x')}^{x_n}\partial_n w(x',\xi) d\xi = w(x) \geq c d^2,
\]
and hence, by the mean value theorem, there must be a point $y$ in the segment $(x',f(x'))$ and $x$ such that
\[
\partial_n w(y)\geq c d.
\]
Exploiting the convexity of $w$ again (or the monotonicity of $\partial_n w$), we deduce $\partial_nw(x) \geq \partial_nw(y)\geq c d$, and the case $\sigma = e_n$ follows. To deduce the claim for all $\sigma \in \Ss^{n-1}$ satisfying $\sigma_n > 1 - \tfrac{c_0}{2}$ as in the statement, it suffices to write $\sigma = a e_n + \nu$, where $a > \tfrac{c_0}{2}$ and $\nu \in \Ss^{n-1}$ with $\nu_n > 1 - c_0$, and exploit part (i) to deduce
\[
\partial_\sigma w = a \partial_n w + \partial_\nu w \geq \tfrac{c_0}{2} cd + 0 = cd,
\]
for some new $c > 0$ depending only on $n$, $\lambda$, $\Lambda$, $c_\circ$ as in \eqref{eq:AssOnfIntro} and $\rho$.
\end{proof}
Through a finer analysis of partial derivatives of solutions to \eqref{eq:BlowUpEqn}, we deduce that the blow-ups at regular points must be one-dimensional. 
\begin{lem}\label{lem:halfParabola}
Let $w$ be a solution to \eqref{eq:BlowUpEqn}. Assume that $\{w = 0\}$ contains a half-cone with non-empty interior and vertex at $0$. Then, there are $e \in \Ss^{n-1}$ and $a > 0$ such that 
\begin{equation}\label{eq:BlowUpLimitReg}
w(x) = a \, (e \cdot x)_+^2.
\end{equation}
Furthermore, $\bar{a} \leq a \leq \bar{b}$ for some $0 < \bar{a} \leq \bar{b}$ depending only on $n$, $\lambda$, $\Lambda$ and $c_\circ$ as in \eqref{eq:AssOnfIntro}.
\end{lem}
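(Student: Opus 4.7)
The plan splits the proof into three main steps: extract directional monotonicity of $w$ in a spherical cap from the half-cone assumption together with convexity; use Lemma~\ref{lem:EstimatesGlobalConvexSolutions}(iii) at every scale together with a perturbative bootstrap to enlarge this cap to the full open hemisphere $\{\sigma\cdot e>0\}$; and finally reduce the PDE to a one-dimensional ODE whose only admissible solution is $a s_+^2$. For the first step, I write $\mathcal{K}\subset\{w=0\}$ for the half-cone, denote its axis by $-e$ for some $e\in\Ss^{n-1}$ and its half-angle by $\theta_0\in(0,\pi/2]$. For any $\sigma\in\Ss^{n-1}$ with $\sigma\cdot e>\cos\theta_0$ and any $y\in\R^n$, the ray $\{y-t\sigma\colon t\geq 0\}$ enters $\mathcal{K}$ for $t$ large, so the convex, non-negative function $s\mapsto w(y-s\sigma)$ vanishes for $s$ large and is therefore non-increasing; evaluating at $s=0$ yields $\partial_\sigma w(y)\geq 0$ throughout $\R^n$.

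\textbf{Step 2 (Strict monotonicity and expansion to a hemisphere).} Scale invariance of $\mathcal{K}$ (namely $\mathcal{K}/R=\mathcal{K}$) ensures that each rescaling $w_R(x):=w(Rx)/R^2$ is again a solution of~\eqref{eq:BlowUpEqn} whose contact set contains $\mathcal{K}$. Choosing $\rho,\tau\in(0,1)$ (depending only on $\theta_0$) with $B_\rho(-\tau e)\subset\mathcal{K}$, applying Lemma~\ref{lem:EstimatesGlobalConvexSolutions}(iii) to each $w_R$ and unscaling as $R\to\infty$, I obtain a universal $c>0$ and a cap of directions $\sigma$ near $e$ for which
\[
\partial_\sigma w\geq c\,d\quad\text{in }\{w>0\},
\]
where $d:=\dist(\cdot,\partial\{w>0\})$. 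The $C^{1,1}$ regularity of Theorem~\ref{thm:OptimalReg}, combined with $w=|\nabla w|=0$ on $\partial\{w>0\}$, gives $|\nabla w|\leq C\,d$ in $\{w>0\}$; decomposing an arbitrary $\sigma\in\Ss^{n-1}$ as $(\sigma\cdot e)e+\sigma_\perp$ then yields $\partial_\sigma w\geq[(\sigma\cdot e)c-|\sigma_\perp|C]\,d$, which is strictly positive on a cap strictly larger than the original. Moreover, once $\partial_\sigma w\geq 0$ for $\sigma$ in some cap $\mathcal{C}$, the convex set $\Lambda=\{w=0\}$ is invariant under translations in every direction $-\sigma$, $\sigma\in\mathcal{C}$, hence contains a fatter cone with vertex $0$; feeding this fatter cone back into Lemma~\ref{lem:EstimatesGlobalConvexSolutions}(iii) and iterating, the cap of monotonicity can be pushed all the way to the full open hemisphere $\{\sigma\cdot e>0\}$. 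Continuity extends the monotonicity to the equator, and applying it to both $\sigma$ and $-\sigma$ there gives $\partial_\sigma w\equiv 0$ for every $\sigma\perp e$.

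\textbf{Step 3 (Classification and bounds on $a$).} The vanishing of all directional derivatives perpendicular to $e$ forces $w(x)=h(e\cdot x)$ for some convex, non-negative $h\colon\R\to\R$ with $h(s)=0$ for $s\leq 0$ (from $\mathcal{K}\subset\{w=0\}$) and $h\not\equiv 0$ for $s>0$; $C^{1,1}$-regularity then gives $h(0)=h'(0^+)=0$. The PDE reduces to the ODE $F(h''(s)\,ee^T,x_0)=-f(x_0)$ for $s>0$, and uniform ellipticity together with $F(O,\cdot)=0$ singles out a unique value $h''\equiv 2a>0$, so $h(s)=a s_+^2$ and $w(x)=a(e\cdot x)_+^2$. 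The same identity combined with uniform ellipticity yields $2a\lambda\leq -f(x_0)\leq 2a\Lambda$, which together with $-f(x_0)\geq c_\circ$ from~\eqref{eq:AssOnfIntro} and the $L^\infty$ bound on $f$ gives the universal bounds $\bar{a}\leq a\leq\bar{b}$.

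\textbf{Expected main difficulty.} The most delicate step is the bootstrap in Step~2 driving the cap of monotonicity all the way to a full hemisphere: a single perturbative enlargement gives only a fixed finite increase, so I must iterate carefully, at each stage recovering a new strict monotonicity constant via Lemma~\ref{lem:EstimatesGlobalConvexSolutions}(iii) applied to the enlarged cone in $\Lambda$, and verifying that the iteration does not degenerate before the cap exhausts $\{\sigma\cdot e>0\}$.
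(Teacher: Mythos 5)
Your Steps 1 and 3 align with the paper, but Step 2 takes a genuinely different route and it is exactly where the gap lies. The paper never enlarges the cap of monotone directions: having established that $v_\sigma := \partial_\sigma w\geq 0$ for $\sigma$ in an open set of directions and that $\{w>0\}$ is Lipschitz, it sets $\mu(\sigma):=\sup\{\mu\geq0:\;\partial_\sigma w-\mu\,\partial_n w\geq0\}$ and invokes the boundary Harnack principle in Lipschitz domains (\cite[Theorem 1.1]{DS20}) for the pair $v_\sigma-\mu(\sigma)v_{e_n}$, $v_{e_n}$. If the difference were not identically zero, boundary Harnack would give $v_\sigma-\mu(\sigma)v_{e_n}\geq c_0 v_{e_n}$, contradicting the maximality of $\mu(\sigma)$; hence $v_\sigma\equiv\mu(\sigma)v_{e_n}$, $\nabla w$ is a constant vector times $\partial_n w$, and $w$ is one-dimensional. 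No hemisphere is ever produced or needed.

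The bootstrap in your Step 2 is not just unverified; it appears to fail quantitatively. Tracking constants in Lemma~\ref{lem:EstimatesGlobalConvexSolutions}: part (i) yields the cap $\{\sigma_n>1-c_0\}$ with $c_0\sim\rho^2$ (coming from the condition $|e_n-\sigma|\leq\rho/2$), and the non-degeneracy step in the proof of part (iii) then gives $\partial_\sigma w\geq c\,d$ on the subcap $\{\sigma_n>1-c_0/2\}$ with $c\sim c_0^2\sim\rho^4$. Your perturbation $\partial_\sigma w\geq[(\sigma\cdot e)c-|\sigma_\perp|C]\,d$ is therefore positive only within angle $\arctan(c/C)\sim\rho^4$ of a direction where the lower bound $\partial_\sigma w\geq c\,d$ already holds, so the resulting cap has aperture of order $\rho^2+\rho^4$ --- \emph{smaller}, not larger, than the aperture $\sim\rho$ of the original monotonicity cap $-\mathcal{K}$. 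The ``fatter cone'' claim does not hold either: the monotonicity cap from part (i) is by construction contained in $-\mathcal{K}$ (it consists of directions whose backward ray from every $x\in B_{\rho/2}$ meets $B_\rho(-\tau e)$), so the cone it generates is at most $\mathcal{K}$ again. There is also a directional issue you do not address: the axis $e'$ of the given half-cone $\mathcal{K}$ need not coincide with the eventual growth direction $e$ of $w$, so a symmetric iteration around $e'$ is not even aimed at the correct hemisphere. To make Step 2 rigorous you need a mechanism that is not perturbative in the cap aperture; the boundary Harnack comparison of the positive solutions $v_\sigma$ and $v_{e_n}$, as in the paper, is precisely such a mechanism.
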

\begin{proof}     Let $\mathcal{C} \subseteq \{w = 0\}$ be the half-cone having non-empty interior. Up to a rotation of the coordinate system, we may assume $\mathcal{C} = \{ (x',x_n) \in \R^n : x_n < - \tan \vartheta |x'|\}$, for some $\vartheta \in [0,\pi/2)$.
For any $\sigma \in \Ss^{n-1}$ with $-\sigma \in \mathcal{C}$, we have that $v_\sigma := \partial_\sigma w \geq 0$ in $\R^n$,  as in $(i)$ from Lemma \ref{lem:EstimatesGlobalConvexSolutions}. Hence $\{w>0\}$ must be Lipschitz as in $(ii)$ from Lemma \ref{lem:EstimatesGlobalConvexSolutions}. Differentiating the equation of $w$, we deduce
\[
\begin{cases}
a_{ij}(x) \partial_{ij}v_\sigma = 0 \quad &\text{ in }  \{ w > 0 \} \\
v_\sigma = 0  \quad &\text{ in } \partial\{ w > 0 \},
\end{cases}
\]
for some uniformly elliptic matrix $\{a_{ij}\}_{ij}$ with ellipticity constants $\lambda$ and $\Lambda$. Now, define $\mu(\sigma) = \sup \{\mu\geq0:\text{ } \partial_\sigma w-\mu\partial_n w \geq 0\}$ and conclude that
\[
\partial_\sigma w = \mu(\sigma) \partial_n w \quad \text{ in } \R^n.
\]
Indeed, if there is a point where $\partial_\sigma w - \mu(\sigma)\partial_n w>0$ we could apply the Boundary Harnack theorem in Lipschitz domains \cite[Theorem 1.1]{DS20} to $v_\sigma-\mu(\sigma) v_{e_n}$ and $v_{e_n}$, to get that 
$$v_\sigma-\mu(\sigma) v_{e_n}\geq c_0 v_{e_n},$$
for some $c_0>0$, which contradicts the definition of $\mu(\sigma)$.
Since $\sigma$ can vary over an open subset of $\Ss^{n-1}$, we conclude that $\nabla w = b\partial_n w$ for some constant vector $b\in\R^n$. Hence the level sets of $w$ are hyperplanes perpendicular to $b$, that is, $w$ is one dimensional. Without loss of generality, $w = w(x_n)$. In particular, $\{ w > 0 \} =  \{ x_n > 0\}$. To complete the proof, we notice that 
\[
D^2w =  w'' M, \qquad F(w'' M,x_0) = - f(x_0) \quad \text{ in } \R_+,
\]
where $M$ is the $n \times n$ matrix with zero entries everywhere except at the $n\times n$ position, where the entry is $1$. If we show that $w'' = a$ in $\R_+$ for some $a > 0$, our statement follows since $w(0) = w'(0) = 0$. To see this, we fix $h,k \geq 0$ and we notice that by uniform ellipticity
\[
F(hM,x_0) - F(kM,x_0) = F(kM + (h-k)M,x_0) - F(kM,x_0) \geq \lambda |h-k| \|M\| ,
\]
that is, $h \not= k$ implies $F(hM,x_0) \not= F(kM,x_0)$. Consequently, since the r.h.s. of the equation of $w$ is constant, $w''$ must be constant as well. Finally, by uniform ellipticity, we have
\[
\Lambda a = \Lambda w'' \| M \| \geq F(w'' M,x_0) = - f(x_0) \geq c_\circ, 
\]
and thus $a \geq \bar{a}:= \tfrac{c_\circ}{\Lambda}$. Conversely, 
\[
K \geq - f(x_0) =  F(w'' M,x_0) \geq \lambda w'' \| M \| = \lambda a,
\]
which yields $a \leq \bar{b} := \tfrac{K}{\lambda}$.
\end{proof}
On the other hand, when the blow-up has contact set with empty interior, we show that it satisfies the equation in the whole space. Then, by the Liouville theorem, it has to be a quadratic polynomial.
\begin{lem}\label{lem:blow-upSingularPoint}
	Let $w$ be a solution to \eqref{eq:BlowUpEqn}. Assume that $\{w = 0\}$ has empty interior. Then
	\begin{equation}\label{eq:SingBlowUp}
	w(x) = x^T A x,
	\end{equation}
	for some $n \times n$ matrix $A \geq 0$.
\end{lem}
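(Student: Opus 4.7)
The plan is to show that $w$ is a global entire viscosity solution of the uniformly elliptic equation $F(\,\cdot\,,x_0) = -f(x_0)$ on all of $\R^n$ and then invoke a Liouville-type rigidity. The key observation is that the hypothesis that $\{w=0\}$ has empty interior forces a very rigid structure on the contact set.

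First I would note that, by convexity of $w$ and the fact that $w\geq 0$, the contact set $\{w=0\}$ is itself convex, and a convex subset of $\R^n$ with empty interior must be contained in an affine hyperplane. In particular, $|\{w=0\}| = 0$, so the identity
\[
F(D^2 w, x_0) = -f(x_0) \qquad \text{a.e. in } \R^n
\]
follows from \eqref{eq:BlowUpEqn}. Since $w \in C^{1,1}_{loc}$ with $\|D^2 w\|_{L^\infty(\R^n)}$ controlled by the uniform estimates from Theorem \ref{thm:OptimalReg} applied to the rescalings generating $w$, this means $w$ is a $W^{2,\infty}_{loc}$ strong solution of a uniformly elliptic fully nonlinear equation.

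Next, I would upgrade the a.e.\ identity to a global viscosity identity. For the subsolution side, a standard mollification $w_\vep:=w*\eta_\vep$ combined with the convexity of $F$ in $M$ (Jensen's inequality) yields $F(D^2 w_\vep, x_0) \leq \eta_\vep * F(D^2 w, x_0) = -f(x_0)$ pointwise, so each $w_\vep$ is a smooth subsolution; letting $\vep\to 0$ and invoking stability gives $F(D^2 w, x_0) \leq -f(x_0)$ in the viscosity sense on all of $\R^n$. The supersolution side I would obtain by appealing to the classical equivalence between strong $W^{2,n}_{loc}$ solutions and viscosity solutions for uniformly elliptic equations with continuous data (see \cite{CC95}).

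Finally, $w$ is an entire viscosity solution of $F(\,\cdot\,,x_0) = -f(x_0)$ with at most quadratic growth, since $w(0)=0$, $\nabla w(0)=0$ and $\|D^2 w\|_{L^\infty}<\infty$ give $0\leq w(x)\leq C|x|^2$. The interior $C^{2,\alpha}$ estimates for convex fully nonlinear elliptic equations (Evans--Krylov, see \cite{CC95}), rescaled to balls of radius $R$, yield $[D^2 w]_{C^\alpha(B_{R/2})} \leq C R^{-\alpha}\to 0$ as $R\to\infty$. Hence $D^2 w$ is a constant symmetric matrix, say $2A$, so $w$ is a quadratic polynomial; the identities $w(0)=0$ and $\nabla w(0)=0$ then force $w(x)=x^T A x$, and convexity of $w$ gives $A\geq 0$. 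The main delicate point of the argument is the passage from a.e.\ identity to viscosity identity: the subsolution side comes for free from the convexity of $F$, but the supersolution side genuinely requires the strong-to-viscosity equivalence (or, alternatively, a direct test-function argument exploiting that $\{w=0\}$ has codimension at least one and $w$ already satisfies the equation classically on the complementary open dense set).
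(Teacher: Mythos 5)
Your argument follows the paper's proof in all essentials: both observe that the convex set $\{w=0\}$ with empty interior must lie in a hyperplane, hence has zero Lebesgue measure; both upgrade the a.e.\ identity $F(D^2w,x_0)=-f(x_0)$ to a genuine solution on all of $\R^n$; and both conclude from the quadratic growth bound and a Liouville theorem that $w$ is a convex quadratic polynomial with $w(0)=|\nabla w(0)|=0$. The only structural difference is that the paper delegates the a.e.-to-global step to a single citation (\cite[Theorem 2.7]{B01}), whereas you reconstruct that step by hand (mollification plus the strong-to-viscosity equivalence) and also unpack the Liouville step via a rescaled interior $C^{2,\alpha}$ estimate; both are perfectly standard routes.

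One sign slip is worth correcting. With the paper's conventions ($F$ nondecreasing along the positive cone and $f\le -c_\circ<0$), Jensen's inequality and convexity of $M\mapsto F(M,x_0)$ give $F(D^2 w_\vep,x_0)\le \eta_\vep * F(D^2 w,x_0)\le -f(x_0)$, which is the \emph{super}solution inequality; passing to the limit shows $w$ is a viscosity supersolution essentially for free. The direction that genuinely needs the strong-to-viscosity equivalence (or a direct test-function argument) is the \emph{sub}solution inequality $F(D^2w,x_0)\ge -f(x_0)$: if $\phi$ touches $w$ from above at a point of $\{w=0\}$, one only gets $D^2\phi(x_0)\ge 0$ and hence $F(D^2\phi(x_0),x_0)\ge 0$, which is not yet the required lower bound $c_\circ$. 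Swap your labels for the two directions and the proof is correct.
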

\begin{proof}
	Since $w$ is convex and its contact set has empty interior, it follows that $\{w = 0\}$ is contained in a hyperplane and, in particular, it has zero Lebesgue  measure. Hence, by \cite[Theorem 2.7]{B01}, $w$ satisfies $F(D^2w,x_0) = f(x_0)$ in $\R^n$ in the classical sense while, by optimal growth, $\|w \|_{L^\infty(B_R)}\leq C R^2$, for all $R>1$, where $C > 0$ is as in \eqref{eq:OptimalGrowth}. Then, by the Liouville theorem, we deduce that $w$ has to be a quadratic polynomial. Since $w(0) = |\nabla w(0)| = 0$, we conclude that $w(x)=x^TAx$ for some matrix $A$ . Since $w\geq 0$ also $A\geq 0$.
\end{proof}
A standard argument allows us to show that singular points have zero density (in the parabolic sense). 
\begin{lem}
	Let $u\in\mathcal{P}_1(K)$ and let $(x_0,t_0) \in \Sigma(u)$. Then:
	$$\lim_{r\to 0}\frac{\left|\{u=0\}\cap Q_r(x_0,t_0)\right|}{\left|Q_r\right|} = 0.$$
\end{lem}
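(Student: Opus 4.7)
The natural approach is a compactness/blow-up argument by contradiction. Suppose the conclusion fails: then there exist $\delta>0$ and a sequence $r_k \downarrow 0$ such that
\[
\frac{|\{u=0\}\cap Q_{r_k}(x_0,t_0)|}{|Q_{r_k}|} \geq \delta.
\]
The change of variables $(x,t)\mapsto(x_0+r_kx, t_0+r_k^2t)$ translates this, in terms of the rescalings $u_{r_k}$ from \eqref{eq:RescalingBlowUp}, into
\[
\frac{|\{u_{r_k}=0\}\cap Q_1|}{|Q_1|} \geq \delta.
\]

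Next I would extract a blow-up. By optimal regularity (Theorem \ref{thm:OptimalReg}) the family $\{u_{r_k}\}$ is uniformly bounded in $C^{1,1}_x \cap C^{0,1}_t$ on every compact set, so, up to a subsequence, $u_{r_k} \to u_0$ uniformly on compact subsets of $\R^{n+1}$, where $u_0$ is a blow-up of $u$ at $(x_0,t_0)$. Since $(x_0,t_0)\in\Sigma(u)$, Definition \ref{def:RegSingPoints} gives that $\{u_0=0\}$ has empty interior, and Lemma \ref{lem:blow-upSingularPoint} then yields $u_0(x,t)= x^T A x$ for some symmetric $A \geq 0$. Non-degeneracy (Lemma \ref{lem:NonDegeneracy}), written for $u_{r_k}$ and passed to the limit, excludes $A=0$; therefore $\ker A$ is a proper linear subspace of $\R^n$, and
\[
|\{u_0=0\}\cap Q_1| = |(\ker A \cap B_1)|\cdot 2 = 0.
\]

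The final step is to promote the uniform convergence $u_{r_k}\to u_0$ on $\overline{Q_1}$ into convergence of the contact sets in measure. If $(x,t)\in Q_1$ satisfies $u_0(x,t)>0$, then by uniform convergence $u_{r_k}(x,t)>0$ for $k$ large; equivalently, $\liminf_k \mathbf{1}_{\{u_{r_k}>0\}}(x,t)\geq \mathbf{1}_{\{u_0>0\}}(x,t)$ pointwise. Fatou's lemma applied to these indicator functions yields
\[
\liminf_{k\to\infty} |\{u_{r_k}>0\}\cap Q_1| \geq |\{u_0>0\}\cap Q_1| = |Q_1|,
\]
so that $|\{u_{r_k}=0\}\cap Q_1|\to 0$, contradicting our lower bound $\delta|Q_1|$. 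This contradiction proves the claim.

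The argument is essentially routine once the classification of blow-ups at singular points is in place; the only point requiring a little care is the passage from uniform convergence of $u_{r_k}$ to the vanishing of the measure of the contact sets, which is handled by the Fatou step above. No new estimate is needed beyond the compactness, classification, and non-degeneracy results already established.
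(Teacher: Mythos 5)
Your proof is correct and follows essentially the same strategy as the paper's: contradiction, blow-up extraction, classification of the blow-up as $x^TAx$ with $\{u_0=0\}$ of measure zero, then promotion of uniform convergence to convergence of the measures of the contact sets. The only cosmetic difference is in the final step, where you apply Fatou's lemma to the indicator functions while the paper works directly with an $\varepsilon$-lower bound for $u_0$ off a thin slab $\{|x_1|\le\delta\}$; both are equally valid.
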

\begin{proof}
	Without loss of the generality we can assume that $(x_0,t_0)=(0,0)$. Assume by contradiction that there is a sequence $r_k\to0$ such that
	$$\lim_{k\to\infty}\frac{\left|\{u=0\}\cap Q_{r_k}(x_0,t_0)\right|}{\left|Q_{r_k}\right|} =\theta> 0.$$
	Passing to a subsequence, we get $u_{r_{k_j}}\to u_0$ locally uniformly in $\R^{n+1}$ and $u_0$ satisfies \eqref{eq:BlowUpEqn}. Since $(0,0)$ is a singular point, $\{u_0=0\}$ has empty interior. Combined with the convexity of $u_0$, we conclude that it has to be contained in a hyperplane, say $\{x_1 = 0\}$.
	
	Since $u_0>0$ in $\{x_1\neq 0\}$ and $u_0$ is continuous, we have  for every $\delta>0$
	$$u_0\geq\varepsilon\quad \quad\text{in }\{|x_1|>\delta\}\cap Q_1,$$
	for some $\varepsilon>0.$
	
	Therefore by uniform convergence of $u_{r_{k_j}}$ to $u_0$ in $Q_1$ we have for $l$ large enough
	$$u_{r_{k_j}}\geq \frac{\varepsilon}{2}\quad \quad\text{in }\{|x_1|>\delta\}\cap Q_1.$$
	In particular, the contact set of $u_{r_{k_j}}$ is contained in $\{|x_1|\leq \delta\}\cap Q_1$, hence
	$$\frac{|\{u_{r_{k_j}}=0\}\cap Q_1|}{\left|Q_1\right|} \leq C\delta.$$
	Scaling back to $u$, we find that
	$$\frac{|\{u=0\}\cap Q_{r_{k_j}}|}{|Q_{r_{k_j}}|} \leq C\delta,$$
	for large $l$'s. Since $\delta>0$ was arbitrary, we conclude that
	$$\lim_{j\to\infty}\frac{|\{u=0\}\cap Q_{r_{k_j}}|}{|Q_{r_{k_j}}|} =0,$$
	which contradicts the contradiction assumption.
\end{proof}
\begin{rem}
The inverse implication of the above statement holds true as well, that is, any free boundary point with zero density is a singular point. The proof of this fact is an immediate consequence of the Lipschitz regularity of the free boundary near regular points (see Proposition \ref{prop:LipschitzRegularityOfFB}).
\end{rem}
%
%
%
%
%

%
%%%%%%%%%%%%%%%%%%%%%%%%%%%%%%%%%%%%%%%%%%%%%%%%%%%%%%%%%%%%%%%%%%%%%%%%%%%%%%%%%%%%%%%%%%%%%%%%%%%%%%%%%%%%%%%%%%%%%%%%%%%%%%%%%%%%%%%%%%%%%
%
%
%
\section{Analysis of regular points}\label{sec:regularPoints}

In this section we establish the $C^\infty$ regularity of the regular part of the free boundary, as the following theorem asserts.
\begin{thm}\label{thm:SmoothnessFB}
	
	Let $u \in \mathcal{P}_1(K)$ with $(0,0) \in \Reg(u)$ and assume that $f\in C^\infty.$ 
	Then, there exists $\varrho_0 > 0$ and a $C^\infty$ function $g$ such that 
	\[
	Q_{\varrho_0} \cap \{ u >0\}  = \{ (x',x_n,t) \in Q_{\varrho_0} : x_n > g(x',t) \},
	\]
	up to a rotation of the spatial coordinates. 
\end{thm}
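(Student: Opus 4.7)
The plan is to follow the three-step scheme outlined in the introduction, exploiting the classification of blow-ups at regular points (Lemma \ref{lem:halfParabola}) together with the quantitative information provided by Lemma \ref{lem:EstimatesGlobalConvexSolutions}.

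\textbf{Step 1: Lipschitz regularity.} Since $(0,0) \in \Reg(u)$, Lemma \ref{lem:halfParabola} yields a blow-up of the form $a(e\cdot x)_+^2$ for some $a>0$ and $e\in\Ss^{n-1}$. By Lemma \ref{lem:EstimatesGlobalConvexSolutions}(i), there is an open cone $\mathcal{C}\subset\Ss^{n-1}$ of directions around $e$ for which $\partial_\sigma u_0 \geq 0$ in a neighbourhood of the origin, and in fact $\partial_\sigma u_0\geq c\, d$ inside the positivity set by Lemma \ref{lem:EstimatesGlobalConvexSolutions}(iii). The plan is to transfer this monotonicity to the rescalings $u_r$ for $r$ small: using the $C^{1,\alpha}_p$ convergence $u_r\to u_0$, the non-degeneracy \eqref{eq:NonDegeneracy}, and the log-continuity of $\partial_t u$ in Proposition \ref{prop:semicnovexity}, one can show $\partial_\sigma u_r \geq 0$ in $Q_1$ for all $\sigma\in \mathcal{C}'\Subset\mathcal{C}$ (a slightly smaller cone) and every sufficiently small $r>0$. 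Scaling back, this gives directional monotonicity of $u$ in a full neighbourhood of $(0,0)$, which implies that $\partial\{u>0\}\cap Q_{\varrho_1}$ is the graph of a Lipschitz function $g=g(x',t)$ in both space and time. This is (the content of) Proposition \ref{prop:LipschitzRegularityOfFB}.

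\textbf{Step 2: $C^2_x$ regularity of $u$ up to the free boundary and $C^1_x\cap C^{0,1}_t$ regularity of $g$.} The Lipschitz monotonicity of Step 1 makes the positivity set a Lipschitz domain on which $v_\sigma=\partial_\sigma u$ solves a linear uniformly elliptic equation (with a parabolic right-hand side) and vanishes on $\partial\{u>0\}$. Applying a parabolic boundary Harnack inequality to the ratios $\partial_i u/\partial_n u$ for $i<n$ and $\partial_t u /\partial_n u$ shows these ratios are H\"older continuous up to the free boundary; in particular, their limits at a free boundary point define the normal direction and time-slope, giving $g \in C^1_x\cap C^{0,1}_t$. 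A parallel argument, using that along the normal direction $u$ grows quadratically with a definite leading coefficient (from $a\geq\bar a$ in Lemma \ref{lem:halfParabola}), upgrades $u$ itself to $C^2_x$ up to the free boundary. This is Proposition \ref{prop:C1ofFB}, the technical heart of this step.

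\textbf{Step 3: Bootstrap to $C^\infty$.} Once the free boundary is $C^1_x\cap C^{0,1}_t$ and $u \in C^2_x$ up to $\partial\{u>0\}$, one flattens the free boundary by the change of variables $y_n = x_n - g(x',t)$ and applies the higher-order parabolic boundary Harnack estimates of \cite{K21} to the ratios $\partial_i u/\partial_n u$ and $\partial_t u/\partial_n u$. Each application gains one order of H\"older regularity on these ratios, hence on the components of the normal and time-slope of the free boundary, hence on $g$. Iterating, one obtains $g\in C^{k,\alpha}_p$ for every $k$, i.e. $g\in C^\infty$. Alternatively, having reached $C^{1,\alpha}_p$ regularity of $g$ and $C^2_x$ regularity of $u$ up to $\partial\{u>0\}$, one may directly invoke \cite[Theorem 3]{KN77}, whose hypotheses are then fulfilled, to conclude the smoothness of $g$.

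\textbf{Main obstacle.} The delicate point is Step 2: optimal regularity only provides $u\in C^{1,1}_x$, which is not quite enough to feed into the higher-order boundary Harnack machinery, since the required ratios $\partial_i u/\partial_n u$ must extend continuously (indeed, H\"olderly) up to the free boundary. Establishing that $u$ is actually $C^2_x$ up to $\partial\{u>0\}$ near regular points --- equivalently, that the expansion $u(x,t) = a(e\cdot x)_+^2 + o(|x|^2+|t|)$ holds with a quantitative remainder uniform in a neighbourhood --- is the crux, and presumably requires a compactness/improvement-of-flatness argument combined with the semi-convexity and log-continuity estimates of Proposition \ref{prop:semicnovexity}.
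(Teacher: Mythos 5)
Your three-step outline matches the paper's scheme at a high level, but the dependency inside Step~2 is reversed, which would make the argument circular as written. You propose to derive the $C^1_x\cap C^{0,1}_t$ regularity of the free boundary from a parabolic boundary Harnack inequality applied to $\partial_i u/\partial_n u$ and $\partial_t u/\partial_n u$, treating the $C^2_x$ regularity of $u$ up to $\partial\{u>0\}$ as a ``parallel'' consequence. However, the boundary Harnack result that is actually strong enough for this (\cite[Theorem~1.2]{K21}) requires the coefficients $a_{ij}=(DF(D^2u,x))_{ij}$ to be \emph{continuous} up to the free boundary, which is exactly the $C^2_x$ regularity of $u$ near regular points that you wish to deduce in parallel. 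The cruder boundary Harnack of De~Silva--Savin used in Lemma~\ref{lem:halfParabola} does hold on Lipschitz domains with merely bounded measurable coefficients, but it only yields comparability of two nonnegative solutions, not H\"older continuity of their ratio, and so cannot produce $C^1$ regularity of the graph on its own. You flag this yourself as the ``main obstacle'' and correctly guess that a compactness/improvement argument using Proposition~\ref{prop:semicnovexity} is the fix, but as written your Step~2 leaves that gap open and the application of boundary Harnack is premature.

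The paper closes this gap in Proposition~\ref{prop:C1ofFB} \emph{without} any boundary Harnack: the $C^1_x\cap C^{0,1}_t$ regularity of $g$ and the continuity of $D^2u$ up to the free boundary are derived together from the uniform rate of convergence to the one-dimensional blow-up (Lemma~\ref{lem:uniformConvergenceToBlowUp}) over all nearby free boundary points, combined with non-degeneracy to trap $\partial\{u>0\}$ between graphs $x_n=\pm\,\omega(r)\,r$, and with interior Schauder estimates applied to $u-q^{(x_0,t_0)}$ where $q^{(x_0,t_0)}$ is the full quadratic extension of the blow-up parabola. Only afterwards, in the proof of Theorem~\ref{thm:SmoothnessFB}, does the higher-order boundary Harnack of \cite{K21} enter, first to upgrade $g$ to $C^{1+\alpha}_p$ and then to bootstrap to $C^\infty$ exactly as in your Step~3. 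So the missing piece in your argument is an actual proof that $D^2u$ extends continuously to $\partial\{u>0\}$ near regular points \emph{before} invoking boundary Harnack; without that, Step~2 is not established.
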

A key step to transfer the information from the blow-up to the solution is the so called ``almost positivity'' lemma below. 
\begin{lem}\label{lem:AlmostNonnegativityLemma}
    Let $\varrho \in (0,1]$, $K > 0$ and let $\{ u_r \}_{r \in (0,1)}$ be a family of solutions to
\begin{equation}\label{eq:EqnBlowUpSeqPosLem}
\begin{cases}
\partial_t v - F(D^2v,rx) = f(rx)\chi_{\{v > 0\}} \quad  &\text{ in } Q_\varrho \\
v, \partial_t v \geq  0    \quad &\text{ in } Q_\varrho,
\end{cases}
\end{equation}
with $F$ and $f$ satisfying 
\begin{equation}\label{eq:RegAssFfLipschitz}
\|f\|_{C^{0,1}(B_1)}  + \|F\|_{C^{0,1}_x(\mathcal{S} \times B_1)} \leq K.
\end{equation}
Then there exist $\vep_0,r_0 \in (0,1)$ depending only on $n$, $\Lambda$, $K$, $c_\circ$ as in \eqref{eq:AssOnfIntro} and $\varrho$, such that if 
\[
\partial_\sigma u_r - \partial_t u_r - u_r \geq - \varepsilon \quad \text{ in } Q_\varrho,
\]
for some $r \in (0,r_0)$, $\vep \in (0,\vep_0)$ and $\sigma \in \Ss^{n-1}$, then 
\[
\partial_\sigma u_r - \partial_t u_r - u_r \geq 0 \quad \text{ in } Q_{\varrho/2}.
\]
\end{lem}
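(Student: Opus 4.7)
The strategy is a contradiction--compactness argument built on the observation that $v_r := \partial_\sigma u_r - \partial_t u_r - u_r$ is a strict supersolution in $\{u_r>0\}$. First, differentiating the rescaled PDE in the directions $\sigma$ and $t$ gives $(\partial_t - L_r)(\partial_\sigma u_r) = O(r)$ and $(\partial_t - L_r)(\partial_t u_r) = 0$, where $L_r = F_{M_{ij}}(D^2 u_r,rx)\partial_{ij}$ is uniformly elliptic with constants $\lambda,\Lambda$. Convexity of $F(\cdot,x)$ together with $F(0,\cdot)=0$ gives $L_r u_r \geq F(D^2 u_r, rx)$, so $(\partial_t - L_r)u_r \leq f(rx) \leq -c_\circ$ in $\{u_r>0\}$. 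Summing these contributions,
\[
(\partial_t - L_r) v_r \;\geq\; c_\circ - Cr \;\geq\; c_\circ/2 \qquad \text{in } \{u_r>0\},
\]
provided $r \leq r_0$ for a small $r_0 = r_0(n,\lambda,\Lambda,K,c_\circ)$. Moreover $v_r \equiv 0$ on $\{u_r=0\}$ by the $C^{1,1}_x\cap C^{0,1}_t$ regularity of Theorem \ref{thm:OptimalReg}, so any hypothetical violation of the conclusion must sit in $\{u_r>0\}$.

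Next, I would argue by contradiction: assume the statement fails along sequences $r_k\downarrow 0$, $\varepsilon_k\downarrow 0$, $\sigma_k \in \Ss^{n-1}$, $u_k := u_{r_k}$, with $v_k := \partial_{\sigma_k}u_k - \partial_t u_k - u_k \geq -\varepsilon_k$ in $Q_\varrho$ but $v_k(x_k, t_k) < 0$ at some $(x_k,t_k) \in Q_{\varrho/2}\cap\{u_k>0\}$. By Theorem \ref{thm:OptimalReg} and Arzel\`a--Ascoli, up to a subsequence $u_k \to u_0$ in $C^{1,\alpha}_p$ locally; since $(0,0)\in\Reg(u)$ and $r_k\to 0$, Lemma \ref{lem:halfParabola} forces $u_0(x,t)=a(e\cdot x)_+^2$ for some $e\in \Ss^{n-1}$ and $a>0$. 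The log-continuity estimate \eqref{eq:LogContinuityut} of Proposition \ref{prop:semicnovexity} gives $\partial_t u_k \to 0$ uniformly on $Q_\varrho$, so after extracting further $\sigma_k\to\sigma_0$ and
\[
v_k \to v_0 \;=\; \partial_{\sigma_0}u_0 - u_0 \;=\; a\,(e\cdot x)_+\bigl[\,2\sigma_0\cdot e - (e\cdot x)_+\,\bigr]
\]
uniformly on compact subsets of $Q_\varrho$. The bound $v_k \geq -\varepsilon_k$ passes to $v_0 \geq 0$ in $Q_\varrho$, forcing $\sigma_0 \cdot e \geq \varrho/2$; consequently $v_0 \geq \tfrac{a\varrho}{2}(e\cdot x)_+$ in $Q_{\varrho/2}$.

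Along a further subsequence $(x_k, t_k) \to (x_*, t_*) \in \overline{Q}_{\varrho/2}$; the inequality $v_0(x_*,t_*) \leq 0$ forces $e\cdot x_* \leq 0$, while uniform convergence $u_k\to u_0$ combined with non-degeneracy (Lemma \ref{lem:NonDegeneracy}) yields Hausdorff convergence of $\{u_k>0\}$ to $\{e\cdot x>0\}$ on compact subsets of $Q_\varrho$, so $u_k(x_k,t_k)>0$ also gives $e\cdot x_* \geq 0$. Thus $(x_*,t_*)$ lies on the flat free boundary $\{e\cdot x=0\}$ of $u_0$. To close the contradiction, I would use a quantitative Hopf-type lower bound: for $k$ large, $\partial\{u_k>0\}$ satisfies a uniform interior-ball condition near $(x_*,t_*)$ (from Hausdorff closeness to the half-space), so the strict supersolution inequality $(\partial_t - L_k)v_k \geq c_\circ/2$ in $\{u_k>0\}$ together with $v_k=0$ on $\partial\{u_k>0\}$ yields $v_k \geq c_1\, d(\cdot,\partial\{u_k>0\})$ in a neighborhood of $(x_*,t_*)$ inside $\{u_k>0\}$, with $c_1>0$ independent of $k$, contradicting $v_k(x_k,t_k)<0$. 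The main obstacle I anticipate is exactly this Hopf-type step: upgrading the approximate flatness of $\partial\{u_k>0\}$ and the strict supersolution property of $v_k$ into a uniform quantitative normal-derivative bound.
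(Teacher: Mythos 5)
Your supersolution computation agrees with the paper's: with $w_r := \partial_\sigma u_r - \partial_t u_r - u_r$ and $a_{ij} := (DF(D^2u_r,rx))_{ij}$, convexity of $F(\cdot,x)$ together with $F(O,\cdot)=0$ yield $\partial_t w_r - a_{ij}\partial_{ij}w_r \geq c_\circ - Cr$ in $\{u_r>0\}$. The closing step, however, has a genuine gap, precisely the one you flag, and it has two parts. First, the reduction via Lemma~\ref{lem:halfParabola} quietly assumes that $\{u_r\}$ is the rescaling family of a fixed solution at a \emph{regular} free boundary point, whereas the lemma concerns an arbitrary family of solutions of the rescaled equations, so no blow-up classification is available. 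Second and more serious, the Hopf step does not go through: Hausdorff closeness of $\{u_k>0\}$ to a half-space gives $\varepsilon$-flatness of $\partial\{u_k>0\}$ but no interior-ball condition (no free-boundary regularity is available at this stage -- it is what this lemma is a step towards), and in any case Hopf's lemma presupposes $v_k\geq 0$ near the boundary point, which is precisely the statement to be proved. The strict supersolution property alone cannot push the $-\varepsilon$ boundary data on $\partial_p Q_\varrho$ up to $0$ in the interior: the minimum principle only propagates $-\varepsilon$ inward.

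The missing ingredient is an explicit barrier used at an \emph{arbitrary} interior point, with no compactness and no reference to the shape of $\partial\{u_r>0\}$. Fix $(x_0,t_0)\in Q_{\varrho/2}\cap\{u_r>0\}$ and set
$$
b(x,t) := \tfrac{c_\circ}{4}\left(\tfrac{1}{2n\Lambda}|x-x_0|^2 - (t-t_0)\right).
$$
Then $\partial_t b - a_{ij}\partial_{ij}b \geq -\tfrac{c_\circ}{2}$, so $w_r+b$ is still a supersolution in $\{u_r>0\}$ for $r\leq r_0 := \min\{1,c_\circ/(2K)\}$; moreover $b$ vanishes at $(x_0,t_0)$, is nonnegative on $\{t\leq t_0\}$, and satisfies $b\geq \varepsilon_0 := c_\circ/(128n\Lambda)$ on $\partial_p Q_{1/4}^-(x_0,t_0)$. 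Since $w_r+b>0$ on $Q_{1/4}^-(x_0,t_0)\cap\partial\{u_r>0\}$ (there $u_r=|\nabla u_r|=\partial_tu_r=0$) and $w_r+b\geq -\varepsilon+\varepsilon_0\geq 0$ on $\partial_p Q_{1/4}^-(x_0,t_0)$, the minimum principle on $Q_{1/4}^-(x_0,t_0)\cap\{u_r>0\}$ gives $w_r(x_0,t_0)\geq 0$. This is the paper's proof: direct, quantitative, and entirely free of blow-up classification, contradiction, or free-boundary regularity.
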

\begin{proof}
    By scaling (see Remark \ref{rem:Scaling}), we may assume $\varrho = 1$. The proof is based on a comparison argument as follows. Let us set $u := u_r$ and $a_{ij}(x,t) := (DF (D^2u,rx))_{ij}$. Since the function $M \to F(M,\cdot)$ is convex, we have 
\[
0 = F(O)\geq F(D^2u(x,t),rx) - a_{ij}(x,t) \partial_{ij}u(x,t), 
\]
and hence
\begin{equation}\label{eq:SubSoluPosLemma1}
\partial_t u - a_{ij}(x,t) \partial_{ij}u \leq  f(rx) \quad \text{ in } Q_1 \cap \{u>0\}.
\end{equation} 
Differentiating the equation in \eqref{eq:EqnBlowUpSeqPosLem} along the direction $\sigma$, we obtain that $v := \partial_\sigma u$ satisfies 
\begin{equation}\label{eq:SolvPosLemma}
\partial_t v - a_{ij}(x,t) \partial_{ij} v = r\partial_\sigma F(D^2u,rx) + r \partial_\sigma f(rx), 
\end{equation}
while, differentiating with respect to $t$ and setting $\tilde{v} := \partial_t u$, we see that
\begin{equation}\label{eq:SolvtildePosLemma}
\partial_t \tilde{v} - a_{ij}(x,t) \partial_{ij} \tilde{v} = 0.
\end{equation}
Now, let $(x_0,t_0) \in Q_{1/2}\cap\{u>0\}$ be arbitrarily fixed, set 
\[
r_0 := \min\left\{1, c_\circ / (2K) \right\},
\]
and define
\[  
w = v - \tilde{v} - u + \tfrac{c_\circ}{4} \left[ \tfrac{1}{2n\Lambda}|x-x_0|^2 - (t-t_0) \right]. 
\]
Then, by \eqref{eq:AssOnfIntro} and combining \eqref{eq:SubSoluPosLemma1}, \eqref{eq:SolvPosLemma} and \eqref{eq:SolvtildePosLemma}, we have
\begin{align*}
		\partial_t w - a_{ij}(x,t) \partial_{ij}w &\geq r \left[\partial_\sigma F(D^2u,rx) +\partial_\sigma f(rx)\right] - f(rx) - \tfrac{c_\circ}{2}\\
        &\geq - r \left( \| \nabla_x F \|_{L^\infty(\mathcal{S}\times B_1)} + \| \nabla f \|_{L^\infty(B_1)} \right)	- f(rx) - \tfrac{c_\circ}{2} \\
        & \geq - rK - f(rx) - \tfrac{c_\circ}{2} \\
        & \geq -c_0 + f(rx) \geq 0 \qquad \text{ in } Q_{1/2}\cap \{u>0\},
\end{align*}
for all $r \in (0,r_0)$ while, since $|\nabla u| = \partial_t u = u = 0$ on $\partial \{ u > 0\}$, it must be $w > 0$ in $Q_{1/4}^-(x_0,t_0) \cap \partial\{ u > 0\}$. Further,  on $\partial_p Q_{1/4}^-(x_0,t_0)$, we have by assumption $w \geq - \vep + \vep_0$, where $\vep_0 := \tfrac{c_\circ}{128n\Lambda}$ and thus the minimum principle (\cite[Proposition 4.34]{IS12}) yields 
\[
\inf_{Q_{1/4}^-(x_0,t_0) \cap \{ u > 0\} } w = \inf_{\partial_p (Q_{1/4}^-(x_0,t_0) \cap \{ u > 0\} )} w \geq 0,
\]
for all $\vep \in (0,\vep_0)$. The thesis follows by the arbitrariness of $(x_0,t_0) \in Q_{1/2}\cap\{u>0\}$.
\end{proof}
%TODO: see footnote 6
%
%
%
%
%
Applying this result to the partial derivatives of solutions near regular points, we deduce that such derivatives are nonnegative along the direction of growth of the blow-up. This yields the Lipschitz regularity of the regular part of the free boundary.
\begin{prop}\label{prop:LipschitzRegularityOfFB}
 
  Let $u \in \mathcal{P}_1(K)$ with $(0,0) \in \Reg(u)$, and assume that $f$ and $F$ satisfy \eqref{eq:RegAssFfLipschitz}. 
Then, there exists $\varrho_0 > 0$ and a Lipschitz function $g$ such that 
\[
Q_{\varrho_0} \cap \{ u >0 \}  = \{ (x',x_n,t) \in Q_{\varrho_0} : x_n > g(x',t) \},
\]
up to a rotation of the spatial coordinates. 
\end{prop}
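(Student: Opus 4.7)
The strategy is to upgrade the obvious cone-monotonicity of the blow-up at $(0,0)$ to the same property for $u$ on a small neighbourhood via Lemma~\ref{lem:AlmostNonnegativityLemma}, and then read off the Lipschitz graph representation from this cone-monotonicity.

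Since $(0,0)\in\Reg(u)$, Lemma~\ref{lem:halfParabola} provides, up to a rotation of spatial coordinates, a constant $a\in[\bar a,\bar b]$ and a subsequence $r_k\downarrow 0$ along which $u_{r_k}\to u_0(x,t):=a(x_n)_+^2$ in $C^{1,\alpha}_p$ locally in $\R^{n+1}$. Introducing the linear operator $L_\sigma v:=\partial_\sigma v-\partial_t v-v$ for $\sigma\in\Ss^{n-1}$, a direct computation yields
\[
L_\sigma u_0(x,t)=a(x_n)_+\bigl[2\sigma_n-(x_n)_+\bigr],
\]
which is nonnegative throughout $Q_1$ as soon as $\sigma_n\ge 1/2$. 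Therefore $L_\sigma u_0\ge 0$ in $Q_1$ for every $\sigma$ in the open spherical cap $\mathcal{C}_0:=\{\sigma\in\Ss^{n-1}:\sigma_n\ge 1/2\}$.

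Because $u_{r_k}\to u_0$ in $C^{1,\alpha}_p(\overline{Q}_1)$ and $L_\sigma$ depends linearly (hence continuously) on $\sigma$, we have $L_\sigma u_{r_k}\to L_\sigma u_0$ uniformly for $(x,t)\in\overline{Q}_1$ and $\sigma\in\mathcal{C}_0$. Let $\varepsilon_0,r_0>0$ be the parameters furnished by Lemma~\ref{lem:AlmostNonnegativityLemma} applied with $\varrho=1$ (the Lipschitz bounds \eqref{eq:RegAssFfLipschitz} on $F$ and $f$ being part of the hypotheses). Picking $k$ so large that $r_k<r_0$ and
\[
L_\sigma u_{r_k}\ge-\varepsilon_0\quad\text{in } Q_1\ \text{ for every } \sigma\in\mathcal{C}_0,
\]
the lemma gives $L_\sigma u_{r_k}\ge 0$ in $Q_{1/2}$ for every $\sigma\in\mathcal{C}_0$. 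Since $\partial_t u_{r_k},u_{r_k}\ge 0$, this entails in particular $\partial_\sigma u_{r_k}\ge 0$ and $\partial_\sigma u_{r_k}-\partial_t u_{r_k}\ge 0$ in $Q_{1/2}$, uniformly in $\sigma\in\mathcal{C}_0$.

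Rescaling back, both inequalities hold in $Q_{\varrho_0}$ with $\varrho_0:=r_k/2$. The first expresses that, at every fixed time, $\{u(\cdot,t)>0\}$ is cone-monotone in the direction $e_n$ with aperture determined by $\mathcal{C}_0$, so by the standard argument recalled in Lemma~\ref{lem:EstimatesGlobalConvexSolutions}(ii) the positivity set is described by a function $x_n=g(x',t)$ which is Lipschitz in $x'$ with a constant controlled by the aperture of $\mathcal{C}_0$. The second inequality, combined with $\partial_t u\ge 0$, says that $u$ is non-decreasing along every space-time direction $(\sigma,-1)$ with $\sigma\in\mathcal{C}_0$, which translates directly into a one-sided Lipschitz bound for $g$ in time; the other side is given by $\partial_t u\ge 0$ (which makes $g$ non-increasing in $t$) combined again with cone-monotonicity in $x'$. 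The delicate point of the argument is the step producing the uniform bound $L_\sigma u_{r_k}\ge-\varepsilon_0$: since $L_\sigma u_0$ vanishes on $\partial\{u_0>0\}$ rather than being strictly positive, we cannot simply transfer a positive lower bound; it is precisely the slack of size $\varepsilon_0$ in Lemma~\ref{lem:AlmostNonnegativityLemma} that lets the uniform $C^1_p$ convergence $u_{r_k}\to u_0$ close the argument.
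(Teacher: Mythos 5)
There is a genuine gap in the convergence step. You claim that since $u_{r_k}\to u_0$ in $C^{1,\alpha}_p(\overline{Q}_1)$ (and later ``$C^1_p$''), one has $L_\sigma u_{r_k}\to L_\sigma u_0$ uniformly, where $L_\sigma v=\partial_\sigma v-\partial_t v-v$. This does not follow: by Definition~\ref{def:parabolicHolderSpaces}, $C^{1,\alpha}_p$ (and $C^1_p$) convergence controls $\nabla_x u_{r_k}$ and the $C^{(1+\alpha)/2}$ time modulus of $u_{r_k}$, but it gives \emph{no} uniform control on $\partial_t u_{r_k}$. Although $\partial_t u_0\equiv 0$, the time derivatives $\partial_t u_{r_k}$ enter $L_\sigma u_{r_k}$ with a minus sign and could a priori be of order one, pushing $L_\sigma u_{r_k}$ below $-\varepsilon_0$. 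Your closing remark misidentifies the obstruction as the vanishing of $L_\sigma u_0$ on the free boundary; the real issue is the uncontrolled term $\partial_t u_{r_k}$.

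The fix is exactly the mechanism the paper uses: the rescaled log-continuity estimate \eqref{eq:LogContinuityut} from Proposition~\ref{prop:semicnovexity}, which yields $\sup_{Q_1}\partial_t u_{r_k}=\sup_{Q_{r_k}}\partial_t u\leq C|\log r_k|^{-\varepsilon}\to 0$, hence $\partial_t u_{r_k}\to 0$ uniformly. Once you add that estimate as a separate ingredient, your argument closes. Modulo this repair, your route is essentially the same as the paper's; the only genuine (and pleasant) difference is that you exploit the explicit form $u_0=a(x_n)_+^2$ from Lemma~\ref{lem:halfParabola} to compute $L_\sigma u_0$ directly on $Q_1$, whereas the paper uses only non-empty interior of $\{u_0=0\}$ together with Lemma~\ref{lem:EstimatesGlobalConvexSolutions}(iii) and must shrink to $Q_\varrho$ with $\varrho$ small; both work, and yours is marginally more elementary here since the one-dimensional classification is already in hand.
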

\begin{proof}     Since $(x_0,t_0) := (0,0)$ is a regular free boundary point, there exist a sequence $r_k\downarrow 0$ and solution $u_0$ to \eqref{eq:BlowUpEqn} such that the rescalings $u_k := u_{r_k}$ defined in \eqref{eq:RescalingBlowUp} satisfy \eqref{eq:CompactnessBlowUp} as $k \to +\infty$ and $\{u_0=0\}$ has non-empty interior. Consequently, thanks to the invariance of the problem under spatial rotations, we may assume $B_\rho(-\tau e_n) \subset \{u_0=0\}$ for some $0 < \rho < \tau < 1$. Thus, by Lemma \ref{lem:EstimatesGlobalConvexSolutions} (part (iii)) and \eqref{eq:OptimalGrowth}, it follows
\[
\partial_\sigma u_0 - u_0 \geq c d - C d^2 \geq 0 \quad \text{ in } B_{\rho/2} \cap \{u_0>0\} \cap \{d < c/C\},
\]
where $c > 0$, $\sigma \in \Ss^{n-1}$ and $d = d(x)$ are as in Lemma \ref{lem:EstimatesGlobalConvexSolutions} and $C > 0$ as in Lemma \ref{lem:OptimalGrowth} (in particular, $\sigma_n > 1 - c_0$, for some $c_0 \in (0,1)$ small depending on $\rho$). Combining this with \eqref{eq:CompactnessBlowUp}, we deduce that, given any $\vep > 0$, there is $k_\vep $ such that
\[
\partial_\sigma u_k - u_k \geq - \tfrac{\vep}{2} \quad \text{ in } Q_\varrho,
\]
for all $k \geq k_\vep$, where $\varrho > 0$ is taken small enough depending on $\rho$, $c$ and $C$. Further, the rescaled version of \eqref{eq:LogContinuityut} in Proposition \ref{prop:semicnovexity} gives
\[
\partial_\sigma u_k - \partial_t u_k - u_k \geq -\varepsilon \quad  \text{ in } Q_\varrho,
\]
up to taking $k_\vep$ larger. At this point, we fix $\vep := \vep_0/2$ and $k_\vep$ (eventually larger) such that $r_k \in (0,r_0/2)$ for all $k \geq k_\vep$, where $\vep_0$ and $r_0$ are as in Lemma \ref{lem:AlmostNonnegativityLemma}. Since each $u_k$ satisfies \eqref{eq:EqnBlowUpSeqPosLem} in $Q_\varrho$ with $r = r_k$, we conclude
\[
\partial_\sigma u_k - \partial_t u_k - u_k \geq 0  \quad \text{ in } Q_{\varrho /2},
\]
by virtue of Lemma \ref{lem:AlmostNonnegativityLemma} and thus, scaling back to $u$, it follows
\begin{equation}\label{eq:LipSpaceTimeFinalIneq}
\partial_\sigma u - r_k \partial_t u \geq \tfrac{1}{r_k}u   \quad \text{ in } Q_{r_k \varrho /2}.
\end{equation}
To complete the proof, we fix $k = k_\vep$ (notice that $k_\vep$ depends only on $n$, $\Lambda$, $K$ and $c_\circ$ as in \eqref{eq:AssOnfIntro}) and set $\varrho_0 := \tfrac{r_k \varrho}{4}$, $\nu := (\sigma,-r_k)$. The above inequality implies $\partial_\nu u \geq 0$ in $Q_{2\rho_0}$, with $\partial_\nu u > 0$ in $Q_{2\rho_0} \cap \{ u > 0\}$: these facts, combined with the arbitrariness of $\sigma \in \Ss^{n-1}$ with $\sigma_n \in (1-c_0,1)$, allow to repeat the argument used in the proof of Lemma \ref{lem:EstimatesGlobalConvexSolutions} part (ii) and to complete the proof of our statement.  
\end{proof}
Once the regular part is proved to be locally Lipschitz, we go ahead with the analysis of blow-up sequences around regular points and we give a uniform rate of convergence to the blow-up in a neighbourhood of the free boundary point. This is an important step towards the $C^1_x$ regularity of the free boundary. Moreover we conclude that the set of regular points is open inside the free boundary.

\begin{lem}\label{lem:uniformConvergenceToBlowUp}
 
  Let $u \in \mathcal{P}_1(K)$ with $(0,0) \in \partial\{ u > 0\}$, $0 < \bar{a} < \bar{b}$ as in Lemma \ref{lem:halfParabola} and let $\{ u_r \}_{r \in (0,1)}$ be the family of rescalings \eqref{eq:RescalingBlowUp}. Then for every $\vartheta \in [0,\pi/2)$ and $\vep \in (0,1)$, there exists $r = r(\vartheta,\vep) \in (0,1)$ such that if
\[
Q_{1/r} \cap \{x_n \leq - \tan \vartheta \, (|x'|+\sqrt{|t|}) \} \subset Q_{1/r} \cap \{ u_r = 0 \},
\]
then there exist $e \in \Ss^{n-1}$ and $a \in [\bar{a},\bar{b}]$ such that
\[
\| u_r - a(e \cdot x)_+^2\|_{C_p^{1,\alpha}(Q_1)} \leq \varepsilon.
\]
\end{lem}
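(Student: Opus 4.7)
The plan is to argue by contradiction, leveraging the compactness of blow-up families together with the classification in Lemma \ref{lem:halfParabola}. Suppose the conclusion fails for some $\vartheta \in [0,\pi/2)$ and $\vep \in (0,1)$: then there are sequences $r_k \downarrow 0$ and solutions $u_{r_k}$ satisfying the cone-inclusion hypothesis, yet
\[
\| u_{r_k} - a(e\cdot x)_+^2 \|_{C_p^{1,\alpha}(Q_1)} > \vep, \qquad \forall\, e \in \Ss^{n-1},\ a \in [\bar a,\bar b].
\]
The goal is to extract a subsequential blow-up limit, identify it via Lemma \ref{lem:halfParabola}, and reach a contradiction.

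By the optimal regularity in Theorem \ref{thm:OptimalReg}, the family $\{u_{r_k}\}$ is uniformly bounded in $C^{1,1}_x\cap C^{0,1}_t$ on any compact subset of $\R^{n+1}$ (since $r_k \downarrow 0$ eventually pushes $Q_R$ inside $Q_{1/r_k}$ for any fixed $R$). Arzelà--Ascoli yields, along a further subsequence, $u_{r_k} \to u_0$ in $C^{1,\alpha}_p$ locally in $\R^{n+1}$. Writing the log-decay and log-semiconvexity bounds \eqref{eq:LogContinuityut} and \eqref{eq:LogSemiConvexity} for $u_{r_k}$ and passing to the limit forces $\partial_t u_0 \equiv 0$ and $D^2_x u_0 \geq 0$. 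Stability of viscosity solutions under uniform limits (as in \cite{IS12}), combined with the uniform convergence $F(M,r_k x) \to F(M,0)$ and $f(r_k x) \to f(0)$ on compact sets, gives that $u_0$ solves
\[
-F(D^2 u_0, 0) = f(0)\chi_{\{u_0 > 0\}} \quad \text{in } \R^n,
\]
with $u_0(0)=0$, $u_0\geq 0$, and $u_0 \not\equiv 0$ in every neighborhood of $0$ by the non-degeneracy estimate \eqref{eq:NonDegeneracy}. In other words, $u_0$ is a global solution to \eqref{eq:BlowUpEqn}.

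Next, I propagate the cone-inclusion hypothesis to the limit. For every fixed $R>0$ and every $k$ large enough that $r_k < 1/R$, the hypothesis gives
\[
Q_R \cap \{x_n \leq -\tan\vartheta\,(|x'|+\sqrt{|t|})\} \subset \{u_{r_k} = 0\},
\]
and uniform convergence $u_{r_k} \to u_0$ together with continuity of $u_0$ yields the same inclusion with $u_{r_k}$ replaced by $u_0$. Since $u_0$ is time-independent, restricting to $t=0$ produces
\[
\{x_n \leq -\tan\vartheta\,|x'|\} \subset \{u_0 = 0\} \quad \text{in } \R^n,
\]
a half-cone with non-empty interior (because $\vartheta < \pi/2$). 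Lemma \ref{lem:halfParabola} then forces $u_0(x) = a(e\cdot x)_+^2$ for some $e \in \Ss^{n-1}$ and $a \in [\bar a,\bar b]$. Consequently $u_{r_k} \to a(e\cdot x)_+^2$ in $C_p^{1,\alpha}(Q_1)$, and for $k$ large enough $\|u_{r_k} - a(e\cdot x)_+^2\|_{C_p^{1,\alpha}(Q_1)} < \vep$, contradicting the choice of the sequence.

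The main point to treat carefully is the passage to the limit in the viscosity sense, which is what guarantees that $u_0$ solves a frozen-coefficient equation with the constant-in-$x$ operator $F(\cdot,0)$ and constant right-hand side $f(0)$; once this is in place together with time-independence and convexity of $u_0$, Lemma \ref{lem:halfParabola} closes the argument without any further ingredients. Everything else — compactness, limit of the cone inclusion, and the contradiction — is a clean application of the estimates already established.
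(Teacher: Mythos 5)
Your proof is correct and follows essentially the same route as the paper's: argue by contradiction, extract a subsequential $C^{1,\alpha}_p$-blow-up limit $u_0$ solving \eqref{eq:BlowUpEqn} via the compactness and log-estimates, pass the cone inclusion to the limit (using the parabolic scale-invariance of the cone so it propagates to every $Q_R$), and invoke Lemma~\ref{lem:halfParabola} to identify $u_0 = a(e\cdot x)_+^2$, which contradicts the quantitative failure of convergence. The only difference is cosmetic: you spell out the viscosity-limit and coefficient-freezing step explicitly, whereas the paper simply cites the already-established fact that blow-up limits satisfy \eqref{eq:BlowUpEqn}.
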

\begin{proof}
    Assume by contradiction there are $\theta \in [0,\pi/2)$ and $\vep \in (0,1)$ such that for every sequence $r_k\downarrow 0$ there holds 
\[
Q_{1/r_k} \cap \{x_n \leq - \tan \vartheta \, (|x'|+\sqrt{|t|}) \} \subset Q_{1/r_k} \cap \{ u_k = 0 \},
\]
but
\[
\| u_k - a (e \cdot x)_+^2||_{C_p^{1,\alpha}(Q_1)} \geq \varepsilon,
\]
for every $e \in \Ss^{n-1}$ and $a \geq \bar{a}$, where we have set $u_k := u_{r_k}$. Passing to a subsequence, we may assume $u_k \to u_0$ as $k \to +\infty$ in the sense of \eqref{eq:CompactnessBlowUp}, for some limit $u_0$ satisfying \eqref{eq:BlowUpEqn}. Furthermore, since 
\[
u_k = 0 \quad \text{ in } Q_{1/r_k} \cap \{x_n \leq - \tan \vartheta \, (|x'|+\sqrt{|t|}) \}
\]
for all $k \in \N$, we obtain $u_0 = 0$ in $\{x_n \leq - \tan \vartheta \, |x'| \}$, that is, $\{x_n \leq - \tan \vartheta \, |x'| \} \subset \{u_0 = 0\}$. We may thus apply Lemma \ref{lem:halfParabola} to deduce that $u_0$ is of the form \eqref{eq:BlowUpLimitReg}, obtaining the desired contradiction.
\end{proof}
We next improve the spatial regularity of the free boundary near regular points and show that the second spatial derivatives are continuous up to the boundary there.
\begin{prop}\label{prop:C1ofFB}
   
Let $u \in \mathcal{P}_1(K)$ with $(0,0) \in \Reg(u)$, and assume that $f$ and $F$ satisfy \eqref{eq:RegAssFfLipschitz}. Then, there exists $\varrho_0 > 0$ and a $C^1_{x'} \cap C^{0,1}_t$ function $g$ such that 
\[
Q_{\varrho_0} \cap \{ u >0\}  = \{ (x',x_n,t) \in Q_{\varrho_0} : x_n > g(x',t) \},
\]
up to a rotation of the spatial coordinates. Furthermore, there exists a modulus of continuity $\omega: \R_+ \to \R_+$ such that
\[
\sup_{(x,t),(y,\tau) \in Q_r \cap \overline{\{ u > 0\}}} |D^2u(x,t) - D^2u(y,\tau)| \leq \omega(r)
\]
for all $r \in (0,\tfrac{\varrho_0}{2})$.
\end{prop}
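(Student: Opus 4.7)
The plan is to upgrade the Lipschitz graph representation of Proposition \ref{prop:LipschitzRegularityOfFB} to a $C^1_{x'}$ graph and to read off the modulus of continuity of $D^2u$ from the blow-up analysis combined with interior Schauder estimates inside $\{u>0\}$. The key mechanism is an improvement-of-flatness argument driven by Lemma \ref{lem:uniformConvergenceToBlowUp}, which exploits the Lipschitz cone condition produced by Proposition \ref{prop:LipschitzRegularityOfFB}.

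First, by Proposition \ref{prop:LipschitzRegularityOfFB}, there exist $\varrho_1>0$ and a Lipschitz function $g$ with $Q_{\varrho_1}\cap\{u>0\}=\{x_n>g(x',t)\}$. In particular, at every free boundary point $(x_0,t_0)\in \partial\{u>0\}\cap Q_{\varrho_1/2}$ the contact set contains a rotated half-cone with opening bounded away from zero, uniformly in $(x_0,t_0)$ and depending only on the Lipschitz norm $L$ of $g$. Applying Lemma \ref{lem:uniformConvergenceToBlowUp} to each rescaling $u_r^{(x_0,t_0)}$ yields, for every $\varepsilon>0$, a universal scale $r_\varepsilon\in(0,1)$, a direction $e_\varepsilon(x_0,t_0)\in\Ss^{n-1}$ and a coefficient $a_\varepsilon(x_0,t_0)\in[\bar a,\bar b]$ (with $\bar a,\bar b$ as in Lemma \ref{lem:halfParabola}) such that
\[
\|u_{r_\varepsilon}^{(x_0,t_0)}-a_\varepsilon(x_0,t_0)\,(e_\varepsilon(x_0,t_0)\cdot x)_+^2\|_{C^{1,\alpha}_p(Q_1)}\le\varepsilon.
\]

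Next I would iterate. Once $\varepsilon$ is small, the contact set of $u_{r_\varepsilon}^{(x_0,t_0)}$ is forced to contain a cone with opening arbitrarily close to $\pi/2$ about the axis $-e_\varepsilon(x_0,t_0)$, so Lemma \ref{lem:uniformConvergenceToBlowUp} applies again at a smaller scale, giving a better half-parabola approximation whose direction is a small perturbation of $e_\varepsilon(x_0,t_0)$. Iterating yields sequences of directions $\{e_k(x_0,t_0)\}_k$ and amplitudes $\{a_k(x_0,t_0)\}_k$ at geometric scales $r_k\downarrow 0$ which are Cauchy with decay rate uniform in $(x_0,t_0)$, hence converging to unique limits $e_*(x_0,t_0)\in\Ss^{n-1}$ and $a_*(x_0,t_0)\in[\bar a,\bar b]$. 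By construction $e_*(x_0,t_0)$ is the unique blow-up direction at $(x_0,t_0)$, and by uniform continuity of the rescalings with respect to the base point at any fixed scale, both $e_*$ and $a_*$ depend continuously on $(x_0,t_0)$ along the free boundary. Since $e_*(x_0,t_0)$ is the inward unit spatial normal to the Lipschitz graph $\{x_n=g(x',t)\}$, continuity of $e_*$ amounts to continuity of $\nabla_{x'}g$, so $g\in C^1_{x'}$; the Lipschitz dependence on $t$ is already contained in Proposition \ref{prop:LipschitzRegularityOfFB}.

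Finally, for the modulus of continuity of $D^2u$ up to the free boundary: inside $\{u>0\}$ the function $u$ solves a smooth uniformly elliptic fully nonlinear equation with Lipschitz data, so interior Evans--Krylov / Schauder estimates from \cite{Wang92bis} give local $C^{2,\alpha}_p$ regularity. Combining this with the $C^{1,\alpha}_p$ blow-up convergence $u_r^{(x_0,t_0)}\to a_*(x_0,t_0)\,(e_*(x_0,t_0)\cdot x)_+^2$, interior parabolic estimates upgrade the convergence to $C^{2,\alpha}_p$ on compact subsets of the half-space $\{e_*(x_0,t_0)\cdot x>0\}$, so
\[
D^2u(y,\tau)\longrightarrow 2\,a_*(x_0,t_0)\,e_*(x_0,t_0)\otimes e_*(x_0,t_0)
\]
as $(y,\tau)\to (x_0,t_0)$ from within $\{u>0\}$. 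Extending $D^2u$ to $\overline{\{u>0\}}$ by this continuous limit and using the continuity of $(a_*,e_*)$ along the free boundary, the required modulus $\omega(r)$ follows from the uniform rate of the iteration. The main obstacle is arranging the iteration cleanly enough to produce a geometric decay rate for the directions uniform in the base point: each step rotates the cone of monotonicity, and one must verify that the improved opening $\vartheta_k\uparrow \pi/2$ produced by Lemma \ref{lem:uniformConvergenceToBlowUp} translates into a flatness gain compounding geometrically. Controlling the behaviour near the flat face $\{e_*\cdot x=0\}$ of the limit also requires some care, since the interior parabolic estimates degenerate there.
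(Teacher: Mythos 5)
Your outline correctly identifies the two ingredients (the compactness Lemma~\ref{lem:uniformConvergenceToBlowUp}, and interior Schauder estimates in $\{u>0\}$), but there are two substantive gaps, both of which you yourself flag at the end and neither of which is actually needed for what the proposition claims.

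First, the iteration to produce a geometric decay rate for the directions $e_k(x_0,t_0)$ is not justified and is in fact the wrong tool for the job. Lemma~\ref{lem:uniformConvergenceToBlowUp} is a pure compactness statement: given an opening $\vartheta$ and an $\varepsilon$, it produces \emph{some} scale $r(\vartheta,\varepsilon)$, but it says nothing about the ratio between consecutive scales, so the claimed Cauchy property with geometric rate does not follow. Moreover, after one application the improved cone condition is only known in $Q_{1/2}$ of $u_r$, not in $Q_{1/\rho}$ of the next rescaling $u_{r\rho}$, so the hypothesis of the lemma at the next step is not verified without further work. The paper sidesteps all of this: the statement only asks for $C^1_{x'}$, not $C^{1,\alpha}_{x'}$, so a single application of the compactness lemma (uniform in the base point) giving \eqref{eq:convergenceToBlowUpNew} already produces a modulus of continuity $\omega$ for the flatness \eqref{eq:BoundFBModCont}, and two-sided touching of the graph of $g$ by $x_n=\pm\,\omega(|x'|+\sqrt{|t|})(|x'|+\sqrt{|t|})$ then yields differentiability with continuous gradient directly. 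No iteration, no uniqueness of blow-ups, no geometric rate.

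Second, for the continuity of $D^2u$ up to the free boundary, you acknowledge that the interior parabolic estimates degenerate near the flat face $\{e_*\cdot x=0\}$, and this is precisely where your argument is missing a device. The paper's resolution is to subtract the \emph{full} parabola $q^{(x_0,t_0)}(x)=a_0\left[e_0\cdot(x-x_0)\right]^2$ (rather than the half-parabola $p^{(x_0,t_0)}=a_0\left[e_0\cdot(x-x_0)\right]_+^2$). Since $F(D^2q^{(x_0,t_0)},x_0)=-f(x_0)$, the difference $v:=u-q^{(x_0,t_0)}$ solves a translated fully nonlinear equation with small data in $Q_r(x_0,t_0)\cap\{u>0\}$, and the $L^\infty$-bound $\|v\|_{L^\infty}\le 2\omega(r)r^2$ from the Taylor expansions \eqref{eq:TaylorUOrder0}--\eqref{eq:TaylorUOrder1} feeds cleanly into the scaled interior Schauder estimate to give $\|D^2u-D^2q^{(x_0,t_0)}\|_{L^\infty}\le C\omega(r)$ on interior cylinders, and hence up to the free boundary. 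Continuity of the map $(x,t)\mapsto D^2q^{(x,t)}$ along $\partial\{u>0\}$, and the final modulus for $D^2u$, then follow from triangle-inequality comparisons in common cylinders, with the near/far case split $4\rho\lessgtr d$. Without this subtraction the interior estimate only controls $D^2(u-p^{(x_0,t_0)})$, and since $p^{(x_0,t_0)}$ is not $C^2$ across $\{e_0\cdot x=0\}$ the estimate cannot be closed uniformly up to the boundary.
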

\begin{proof}
    To establish the first part of the statement, we show the existence of $\varrho,r_0 > 0$ and a modulus of continuity $\omega$ (that is, $\omega: \R_+ \to \R_+$ with $\omega(r) \to 0$, as $r \downarrow 0$) such that for every $(x_0,t_0) \in Q_\varrho \cap \partial\{u>0\}$, there is $e \in \Ss^{n-1}$ (depending on $(x_0,t_0)$) such that
\begin{equation}\label{eq:BoundFBModCont}
Q_r(x_0,t_0) \cap \partial\{u>0\} \subset \{ (x,t): |e\cdot (x-x_0)|\leq \omega(r) r \},
\end{equation}
for all $r \in (0,r_0)$. This will be achieved in a couple of steps as follows.

First, we show that there exists  $\varrho > 0$ such that for every $\vep \in (0,1)$, there is $r \in (0,1)$ such that, for every $(x_0,t_0) \in Q_\varrho \cap \partial \{ u > 0 \}$ there holds
\begin{equation}\label{eq:convergenceToBlowUpNew}
\|u_r - a (e\cdot (x-x_0))_+^2 \|_{C_x^{1,\alpha}\cap C_t^1(Q_1)} \leq \vep,
\end{equation}
for some $a \in [\bar{a},\bar{b}]$ and $e \in \Ss^{n-1}$ depending on $(x_0,t_0)$, where $\bar{a}$ and $\bar{b}$ are as in Lemma \ref{lem:halfParabola}. This claim follows by noticing that, in light of Proposition \ref{prop:LipschitzRegularityOfFB}, we have
\[
\{ u >0\} \cap Q_{2\varrho} = \{ (x',x_n,t) \in Q_{2\varrho} : x_n > g(x',t) \},
\]
up to a  rotation of spatial coordinates, for some $\varrho > 0$ and a Lipschitz function $g$ (with Lipschitz norm depending only on $\varrho$). Consequently, there is $\vartheta \in [0,\pi/2)$ and $r = r(\vartheta,\vep) \in (0,1)$ satisfying
\[
Q_{1/r} \cap \{x_n \leq - \tan \vartheta \, (|x'|+\sqrt{|t|}) \} \subset Q_{1/r} \cap \{ u_r = 0 \}.
\]
Then \eqref{eq:convergenceToBlowUpNew} follows directly from Lemma \ref{lem:uniformConvergenceToBlowUp}.

Second, we exploit \eqref{eq:convergenceToBlowUpNew} to show
\begin{equation}\label{eq:BoundFBsqrtVep}
Q_r(x_0,t_0) \cap \partial\{u>0\} \subset \{ (x,t): |e\cdot (x-x_0)|\leq C \sqrt{\varepsilon}r \},
\end{equation}
for all $(x_0,t_0) \in Q_\varrho \cap \partial\{u>0\}$ and some $C > 0$ depending only on $n$, $\lambda$, $\Lambda$ and $c_\circ$ as in \eqref{eq:AssOnfIntro}. Once this inclusion is established, \eqref{eq:BoundFBModCont} easily follows: notice that $\omega$ is uniform in $Q_\varrho$ since the constant $C$ above is independent of $(x_0,t_0)$, and $r_0$ is the biggest $r \in (0,1)$ for which \eqref{eq:convergenceToBlowUpNew} holds true (take for instance $\vep = 1$).

To check \eqref{eq:BoundFBsqrtVep}, let us choose $C > 0$ such that
\[
C^2 > \max\{1/\bar{a},1/c \},
\]
where $c > 0$ is as in Lemma \ref{lem:NonDegeneracy}. Then, for every $(x,t)\in Q_r(x_0,t_0) \cap \{ e \cdot (x-x_0) > C \sqrt{\varepsilon}r \}$, we have by rescaling \eqref{eq:convergenceToBlowUpNew}
\[
u(x,t) \geq a [e \cdot (x-x_0)]^2 - \vep r^2 \geq \bar{a} C^2\vep r^2 -\vep r^2 = \vep r^2 (\bar{a} C^2 - 1) > 0,
\]
while, whenever $(x,t)\in Q_r(x_0,t_0) \cap \{ e \cdot (x-x_0) < - C \sqrt{\varepsilon}r \}$ and $(x,t) \in \partial\{ u > 0 \}$, we obtain by non-degeneracy and \eqref{eq:convergenceToBlowUpNew} again
\[
c C^2\vep r^2 \leq \sup_{Q_{C\sqrt{\vep}r}(x,t)} u = \sup_{Q_{C\sqrt{\vep}r}(x,t)} u - a [e \cdot (x-x_0)]_+^2 \leq \vep r^2,
\]
which is impossible and thus \eqref{eq:BoundFBsqrtVep} follows.

At this point, it suffices to notice that \eqref{eq:BoundFBModCont} guarantees that at each point in $Q_\varrho$ the graph of $g$ can be touched from below and from above by the functions
\[
x_n = \pm \, \omega \big(|x'| + \sqrt{|t|} \big) \big( |x'| + \sqrt{|t|} \big),
\]
up to a rotation and a translation. Consequently, $g$ is differentiable in a neighbourhood of $(0',0)$ and, since $\omega$ is uniform in $Q_\varrho$, it follows that $\nabla_{x'} g$ is continuous in a neighbourhood of $(0',0)$ with modulus of continuity $2\omega$. The fact that $g \in C^{0,1}_t$ is a direct consequence of Proposition \ref{prop:LipschitzRegularityOfFB} and the first part of our statement follows with $\varrho_0 := \varrho$.

Now, we show that $D^2u$ is continuous in $Q_{\varrho_0/2} \cap \overline{\{ u > 0\}}$. As above, we proceed in some steps. Let us set
\[
p^{(x_0,t_0)}(x) := a_0 [e_0 \cdot (x-x_0)]_+^2, \qquad\qquad q^{(x_0,t_0)}(x) := a_0 [e_0 \cdot (x-x_0)]^2,
\]
where $a_0 \in [\bar{a},\bar{b}]$ and $e_0 \in \Ss^{n-1}$ stand for $a(x_0,t_0)$ and $e(x_0,t_0)$, respectively. By \eqref{eq:convergenceToBlowUpNew} and the first part of the proof, we have
\[
\| u - p^{(x_0,t_0)} \|_{L^\infty(Q_r(x_0,t_0))} \leq \omega(r)r^2,
\]
for all $r \in (0,r_0)$. Furthermore,  
\[
\| q^{(x_0,t_0)} - p^{(x_0,t_0)} \|_{L^\infty(Q_r(x_0,t_0) \cap \{u > 0\})} \leq \omega(r)r^2,
\]
for all $r \in (0,r_0)$, taking eventually $r_0$ smaller. To see this, let us assume $(x_0,t_0) = (0,0)$ and $e_0 = e_n$ (which is always the case up to a rotation and a translation) and set $p:= p^{(x_0,t_0)}$, $q := q^{(x_0,t_0)}$. Then $p - q = 0$ in $x_n \geq 0$, whilst $p-q = -q$ in $x_n < 0$ and thus, for every $(x,t) \in Q_r \cap \{u > 0\}$, we have
\[
|p(x)-q(x)| \leq  q(x) = a_0 x_n^2 \leq \bar{b} g(x',t)^2  \leq \bar{b} \omega^2(r)r^2 \leq \omega(r) r^2, 
\]
where $g$ is the $C^1_x \cap C^{0,1}_t$ function parametrizing $Q_r \cap \partial\{u > 0\}$ as above and $\bar{b} > 0$ is as in Lemma \ref{eq:BlowUpLimitReg}. Passing to the supremum, our claim follows.

Combining the two estimates above, we obtain
\begin{equation}\label{eq:TaylorUOrder0}
\| u - q^{(x_0,t_0)} \|_{L^\infty(Q_r(x_0,t_0) \cap \{u > 0\})} \leq 2\omega(r)r^2,
\end{equation}
for all $r \in (0,r_0)$ and, in a similar way, we also conclude
\begin{equation}\label{eq:TaylorUOrder1}
\| \nabla u - \nabla q^{(x_0,t_0)} \|_{L^\infty(Q_r(x_0,t_0)\cap\{u>0\})}\leq 2\bar{b} \, \omega(r)r.
\end{equation}
In particular, \eqref{eq:TaylorUOrder0} and \eqref{eq:TaylorUOrder1} imply that $D^2u(x_0,t_0)$ exists and equals $D^2 q^{(x_0,t_0)} := A_0$, for some suitable $n \times n$ matrix $A_0$ depending on $a_0$ and $e_0$ (and thus, on $(x_0,t_0)$). Actually, the following quantitative bound holds true:
\begin{equation}\label{eq:TaylorUOrder2}
\| D^2 u - D^2 q^{(x_0,t_0)} \|_{L^\infty(Q_r(x_0,t_0)\cap\{u>0\})}\leq C \omega(r),
\end{equation}
for all $r \in (0,r_0)$ and some $C > 0$ depending only on $n$, $\lambda$ and $\Lambda$.

To complete the proof, we notice that, since $p^{(x_0,t_0)}$ satisfies \eqref{eq:BlowUpEqn} and is $1$-dimensional, it immediately follows that $q^{(x_0,t_0)}$ satisfies
\[
-F(D^2q^{(x_0,t_0)},x_0) = -F(A_0,x_0) = f(x_0).
\]
Consequently, the function $v := u - q^{(x_0,t_0)}$ is a solution to
\[
\partial_t v - \tilde{F}(D^2v,x) = \tilde{f}(x) \quad \text{ in } Q_r(x_0,t_0) \cap \{ u > 0 \},
\]
where $\tilde{F}(M,x) := F(M + A_0,x) - F(A_0,x)$ and $\tilde{f}(x) := f(x) - F(A_0,x)$.
Notice that $|F(A_0,x) - F(A_0,x_0)|\leq \tilde{\omega}(|x-x_0|)$, as well as $|f(x)-f(x_0)|\leq \tilde{\omega}(|x-x_0|)$ for some modulus of continuity $\tilde{\omega}.$ Since $\tilde{F}$ belongs to the class \eqref{eq:AssOnFIntro}  and $\| f \|_{C^{0,1}} \leq K$, we may combine the Schauder estimates \cite[Theorem 4.8]{Wang92bis} with \eqref{eq:TaylorUOrder0} to deduce 
\begin{equation}\label{eq:growthOfSecondDerivativesNew}
\begin{aligned}
\| D^2u - D^2q^{(x_0,t_0)} \|_{L^\infty(Q_{r/4}(z,s))} &\leq \frac{C}{r^2} \big( \| u - q^{(x_0,t_0)} \|_{L^\infty(Q_{r/2}(z,s))} + r^2 \| \tilde{f} \|_{L^\infty(Q_{r/2}(z,s))} \big) \\
&\leq \frac{C}{r^2} \big( \omega(r) r^2 + K r^2 \tilde{\omega}(r)\big) \\
&\leq (C + K) \max\{\omega(r),\tilde{\omega}(r)\},
\end{aligned}
\end{equation}
where $Q_{r/2}(z,s) \subset Q_r(x_0,t_0) \cap \{ u > 0 \}$ and $C$ depends only on $n$, $\lambda$ and $\Lambda$. Applying this twice on $Q_{r/2}(z,s) \subset Q_r(x_0,t_0) \cap Q_r(y_0,\tau_0) \cap \{ u > 0 \}$, where $(y_0,\tau_0) \in \partial \{ u > 0 \}$, it follows
\[
\| D^2q^{(x_0,t_0)} - D^2 q^{(y_0,\tau_0)} \|_{L^\infty(Q_{r/4}(z,s))} \leq  C \omega(r),
\]
for some new $C > 0$ depending only on $n$, $\lambda$, $\Lambda$ and $K$. Since $q$ is a polynomial of degree 2, this is equivalent to say
\begin{equation}\label{eq:ContSecondDerivativesFB}
d((x_0,t_0),(y_0,\tau_0)) \leq r   \qquad \Rightarrow \qquad   \| D^2q^{(x_0,t_0)} - D^2 q^{(y_0,\tau_0)} \|_\infty \leq  C \omega(r),
\end{equation}
for all $r > 0$, that is, the function $\partial \{ u > 0 \} \ni (x,t) \mapsto D^2q^{(x,t)}$ is continuous.

To complete the proof, let us fix $(x,t),(y,\tau) \in \overline{\{ u > 0\}}$, set $\rho := d((x,t),(y,\tau))$ and consider $(x_0,t_0),(y_0,\tau_0) \in \partial \{ u > 0 \}$ projections of $(x,t)$ and $(y,t)$ over $\partial \{u > 0 \}$. Further, let us set 
\[
d_{x,t} := \sup \{ r > 0 : Q_r(x,t) \subset \{ u > 0 \} \}, \qquad d_{y,\tau} := \sup \{ r > 0 : Q_r(y,\tau) \subset \{ u > 0 \} \},
\]
and $d := \min\{d_{x,t},d_{y,\tau} \}$ (by symmetry we may assume $d = d_{x,t}$).

Let us first examine the case in which $4 \rho \leq d$: under such assumption, we may assume $Q_{2\rho}(x,t) \subset Q_d(x_0,t_0) \cap Q_d(y_0,\tau_0) \cap \{ u > 0 \}$ and thus, in light of \eqref{eq:growthOfSecondDerivativesNew}, \eqref{eq:ContSecondDerivativesFB} and the definition of $\rho$, we deduce
\[
\begin{aligned}
|D^2u(x,t) - D^2u(y,\tau)| &\leq \| D^2u - D^2 q^{(x_0,t_0)} \|_{L^\infty(Q_\rho(x,t))} \\
&\quad  + \| D^2q^{(x_0,t_0)} - D^2 q^{(y_0,\tau_0)} \|_\infty \\
&\quad + \| D^2u - D^2 q^{(y_0,\tau_0)} \|_{L^\infty(Q_\rho(y,\tau))} \leq C \omega(|x-y| + \sqrt{|t-\tau|}),
\end{aligned}
\]
and the continuity of $D^2u$ follows. On the other hand, when $4 \rho \geq d$, we have $d_{y,t} \leq d + \rho$ and thus \eqref{eq:TaylorUOrder2} and \eqref{eq:ContSecondDerivativesFB} yield
\[
\begin{aligned}
|D^2u(x,t) - D^2u(y,\tau)| &\leq \| D^2u - D^2 q^{(x_0,t_0)} \|_{L^\infty(Q_d(x_0,t_0)\cap\{u>0\})} \\
&\quad  + \| D^2q^{(x_0,t_0)} - D^2 q^{(y_0,\tau_0)} \|_\infty \\
&\quad + \| D^2u - D^2 q^{(y_0,\tau_0)} \|_{L^\infty(Q_{d+\rho}(y_0,\tau_0)\cap\{u>0\})} \\
&\leq 2C\omega(d) + C \omega(d+\rho) \leq C \omega(|x-y| + \sqrt{|t-\tau|}),
\end{aligned}
\]
for some new $C > 0$ and the proof is complete.
\end{proof}
At this point, to complete the proof of Theorem \ref{thm:SmoothnessFB}, we apply the higher order boundary Harnack inequalities from \cite{K21} (as already mentioned in the introduction, we may alternatively apply \cite[Theorem 3]{KN77}): in this way, it follows that near regular points the free boundary is $C^\infty$ in space and time.
\begin{proof}[Proof of Theorem \ref{thm:SmoothnessFB}]
    The proof combines Proposition \ref{prop:C1ofFB} and the higher order boundary Harnack inequalities established in \cite{K21} as follows. 

By Proposition \ref{prop:C1ofFB}, we have 
\[
Q_{\varrho_0} \cap \{ u >0\}  = \{ (x',x_n,t) \in Q_{\varrho_0} : x_n > g(x',t) \},
\]
up to a rotation of the spatial coordinates, for some $\varrho_0 > 0$ and some $C^1_{x'} \cap C^{0,1}_t$ function $g$. Further, as already obtained in \eqref{eq:SolvPosLemma} and \eqref{eq:SolvtildePosLemma}, given any $\nu \in \Ss^n \subset \R^{n+1}$, the partial derivatives $v_\nu := \partial_\nu u$ satisfy
\[
\begin{cases}
\partial_t v_\nu - a_{ij}(x,t) \partial_{ij} v_\nu = f_\nu(x,t) \quad &\text{ in } Q_{\varrho_0} \cap\{ u > 0 \} \\
v_\nu  =  0                                                     \quad &\text{ in } Q_{\varrho_0} \cap \partial\{u>0\},	
\end{cases}
\]
for some $f_\nu \in L^\infty(Q_{\varrho_0})$ and $a_{ij}(x,t) := (DF(D^2u,x))_{ij}$. The coefficients $a_{ij}$ are continuous in $Q_{\varrho_0} \cap \overline{\{ u > 0 \}}$, by the second part of Proposition \ref{prop:C1ofFB}. On the other hand, by \eqref{eq:LipSpaceTimeFinalIneq}, we know that
\[
\partial_\nu u \geq \tfrac{1}{r} u \quad \text{ in } Q_{r\varrho_0/2}
\]
whenever $\nu = (\sigma,\pm r)$, $\sigma \in \Ss^{n-1}$ is sufficiently close to $e_n$ and $r > 0$ is small enough, up to taking $\varrho_0$ smaller: $\sigma_n \in (1-c_0,1)$ and $r \in (0,r_0/2)$, where $c_0 \in (0,1)$ is as in Lemma \ref{lem:EstimatesGlobalConvexSolutions} and $r_0 \in (0,1)$ as in Proposition \ref{prop:LipschitzRegularityOfFB}. In particular, $\partial_\nu u \geq 0$ in $Q_{\varrho_0}$ up to taking $\varrho_0$ smaller and thus, a straightforward adaptation of the proof of Lemma \ref{lem:EstimatesGlobalConvexSolutions} part (iii) shows that
\[
\partial_n u \geq c d \quad \text{ in } Q_{\varrho_0/2} \cap \{ u > 0 \}.
\]
This allows us to apply the boundary Harnack principle (see \cite[Theorem 1.2]{K21}) to the functions $v_{e_i} = \partial_i u$, $v_{e_{n+1}} := \partial_t u$ and $v_{e_n} = \partial_n u$ and deduce that
\begin{equation}\label{eq:RegQuotientsBH}
\frac{\partial_i u}{\partial_n u}  \in C^{\alpha}_p (Q_{\varrho_0/4} \cap \overline{\{ u > 0 \}}),
\end{equation}
for every $\alpha \in (0,1)$ and $i = 1,\ldots,n+1$. Consequently, the functions
\[
\bar{\nu}_i = \frac{\partial_i u}{|\nabla_{x,t} u|} = \frac{\partial_i u/\partial_nu}{\left(\sum_{j=1}^{n-1}(\partial_ju/\partial_nu)^2 + 1 + (\partial_tu/\partial_nu)^2\right)^{1/2}}, \qquad i = 1,\ldots,n+1
\]
can be $C^{\alpha}_p$-extended up to $\overline{\{ u > 0 \}}$ and so, by definition of $\bar{\nu}$ and $g$, this yields 
\[
g \in C^{1+\alpha}_p,
\]
for every $\alpha \in (0,1)$.

We can bootstrap this argument by means of the higher orders boundary Harnack inequalities \cite[Theorem 1.3]{K21}.
Assume that the free boundary is already $C^\beta_p$, for some $\beta>1$. First by \cite[Corollary 5.3]{K21} all the derivatives $\partial_\sigma u$ are $C^\beta_p$ up to the boundary, hence $D^2u$ is $C^{\beta-1}_p$. It follows that the coefficients of the equation for the derivatives are also $C^{\beta-1}_p$, which allows us to apply higher order boundary Harnack inequality \cite[Theorem 1.3]{K21} to deduce that $\frac{\partial_i u}{\partial_n u}$, $i=1,\ldots,n+1$ are $C^{\beta}_p$. Hence $g$ is $C^{\beta+1}_p$ and the claim follows.
\end{proof}

\begin{proof}[Proof of Theorem \ref{thm:1.1}]
	The result is an immediate consequence of Theorem \ref{thm:SmoothnessFB}, Lemma \ref{lem:halfParabola} and Lemma \ref{lem:blow-upSingularPoint}.
\end{proof}
We end the section with a couple of corollaries of Theorem \ref{thm:SmoothnessFB}. In the first one, we show uniqueness of blow-ups at regular points while, in the second, we prove that regular points have density $1/2$ (in the parabolic sense). 
\begin{cor}
	Let $u\in\mathcal{P}_1(K)$ and let $(x_0,t_0) \in \Reg(u)$. Then the blow-up of $u$ at $(x_0,t_0)$ is unique.
\end{cor}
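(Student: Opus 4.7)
The plan is to show that both the direction and the magnitude of any blow-up of $u$ at $(x_0,t_0)$ are intrinsically determined by $u$ and $(x_0,t_0)$, by exploiting the $C^1_x$ regularity of the free boundary from Proposition \ref{prop:C1ofFB} and the strict monotonicity of $F(\cdot,x_0)$.

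First, by Proposition \ref{prop:C1ofFB}, in a parabolic neighbourhood of $(x_0,t_0)$ the positivity set $\{u>0\}$ equals $\{x_n>g(x',t)\}$ for some $g\in C^1_{x'}\cap C^{0,1}_t$, up to a fixed rotation of spatial coordinates. In particular, the spatial inward normal $e_0\in\Ss^{n-1}$ to $\partial\{u>0\}$ at $(x_0,t_0)$ is well-defined and depends only on $(x_0,t_0)$. Now let $u_0$ be any blow-up of $u$ at $(x_0,t_0)$, obtained as a $C^{1,\alpha}_p$-limit along a sequence $r_k\downarrow 0$. Since $(x_0,t_0)\in\Reg(u)$, Lemma \ref{lem:halfParabola} forces
$$u_0(x)=a\,(e\cdot x)_+^2,\qquad a\in[\bar a,\bar b],\;e\in\Ss^{n-1};$$
in particular $\partial\{u_0>0\}$ is the hyperplane $\{e\cdot x=0\}$.

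The central step is then to identify $e$ with $e_0$. To this end, note that the rescaled free boundaries $\partial\{u_{r_k}^{(x_0,t_0)}>0\}$ are parametrised (after the rotation) as $\{x_n = r_k^{-1}[g(x_0'+r_kx',t_0+r_k^2t)-g(x_0',t_0)]\}$, and by the $C^1_{x'}$ continuity of $\nabla_{x'}g$ together with the definition of $e_0$, these graphs converge locally uniformly to $\{e_0\cdot x=0\}$. On the other hand, the uniform convergence $u_{r_k}^{(x_0,t_0)}\to u_0$ combined with the non-degeneracy estimate \eqref{eq:NonDegeneracy} implies that $\partial\{u_{r_k}^{(x_0,t_0)}>0\}\to\partial\{u_0>0\}$ in the Hausdorff sense locally in $\R^{n+1}$. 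Comparing the two descriptions yields $e=e_0$.

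Finally, plugging $u_0=a(e_0\cdot x)_+^2$ into \eqref{eq:BlowUpEqn} we get $F(2a\,e_0\otimes e_0,x_0)=-f(x_0)$; by uniform ellipticity of $F(\cdot,x_0)$, the map $a\mapsto F(2a\,e_0\otimes e_0,x_0)$ is strictly increasing on $[0,+\infty)$ (exactly as in the last paragraph of the proof of Lemma \ref{lem:halfParabola}), so $a$ is uniquely determined by $-f(x_0)$. Hence every blow-up coincides with the canonical $a_0(e_0\cdot x)_+^2$, which gives uniqueness. I expect the identification $e=e_0$ to require the most care, as it is the only point where the interior convergence of the $u_{r_k}^{(x_0,t_0)}$ must be synchronised with the boundary information coming from the smoothness of $g$; the remaining two steps are essentially algebraic.
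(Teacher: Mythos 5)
Your proof is correct, and it rests on the same key observation as the paper's: by the $C^1$ regularity of the free boundary near regular points, the blow-up direction is forced to agree with an intrinsic normal, and the magnitude is then determined by the equation. The execution differs in a few ways worth flagging. The paper argues by contradiction (assuming two distinct limiting directions along different sequences), invokes the full smoothness from Theorem~\ref{thm:SmoothnessFB} to parametrize the free boundary, and identifies the normal through the ratio $\nabla_{x,t}u_r/|\nabla_{x,t}u_r|$; you argue directly, use only Proposition~\ref{prop:C1ofFB} (which is all that is actually needed), and match directions through the local Hausdorff convergence $\partial\{u_{r_k}>0\}\to\partial\{u_0>0\}$ forced by uniform convergence plus non-degeneracy and optimal growth. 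This avoids evaluating a normalised gradient at a point of the free boundary where the gradient vanishes, which in the paper is implicit and slightly informal. You also make explicit the identification of the amplitude $a$ via the strict monotonicity of $a\mapsto F(2a\,e_0\otimes e_0,x_0)$; the paper dismisses this with the remark that the two blow-ups ``satisfy the same ODE,'' which is only fully justified after the directions have been shown to coincide, so your ordering (direction first, then amplitude) is cleaner. In short: same idea, slightly more elementary and self-contained execution on your side.
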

\begin{proof} Up to a translation and a rotation, we may take $(x_0,t_0) = (0,0)$ and assume that $u_0(x) = a(x_n)_+^2$ is a blow-up of $u$ at $(0,0)$ along some sequence $r_k \to 0$, for some $a > 0$.

Now, assume by contradiction there is $\tilde{u}_0(x) = a(e\cdot x)_+^2$ such that $u_{\rho_k} \to \tilde{u}_0$ as $k \to \infty$ in the sense of \eqref{eq:CompactnessBlowUp}, along some new sequence $\rho_k \to 0$, for some $e \in \mathbb{S}^{n-1}$ such that $e \not= e_n$ (notice that the constant $a > 0$ is the same since, up to a rotation, $u_0$ and $\tilde{u}_0$ satisfy the same ODE, cf. Lemma \ref{lem:halfParabola}).

By Theorem \ref{thm:SmoothnessFB}, there is a smooth function $g$ such that
\[
(x,t) \in \Reg(u) \quad \text{ if and only if } \quad x_n = g(x',t)
\]
locally near $(0,0)$. Taylor expanding $g$ around the origin and re-scaling, it is not difficult to check that the normal vector $\nu_r$ to $\partial\{u_r > 0\}$ at $(0,0)$ is given by
\[
\nu_r = (-\nabla_{x'}g(0,0),1,-r\partial_tg(0,0)),
\]
for all $r \in (0,1)$. In particular, 
\[
\lim_{r \to 0} \nu_r = (-\nabla_{x'}g(0,0),1,0).
\]
On the other hand, $\nu_r = \nabla_{x,t} \, u_r/|\nabla_{x,t} \, u_r|$ and thus, a direct computation combined with \eqref{eq:LogContinuityut} and \eqref{eq:CompactnessBlowUp} shows that
\[
\lim_{k \to \infty} \nu_{r_k} = \frac{\nabla u_0}{|\nabla u_0|} = e_n \not= e = \frac{\nabla \tilde{u}_0}{|\nabla \tilde{u}_0|} = \lim_{k \to \infty} \nu_{\rho_k},
\]
in contradiction with the existence of the limit of $\nu_r$ as $r \to 0$. 
\end{proof}
\begin{cor}
	Let $u\in\mathcal{P}_1(K)$ and let $(x_0,t_0) \in \Reg(u)$. Then:
	$$\lim_{r \to 0}\frac{\left|\{u=0\}\cap Q_r(x_0,t_0)\right|}{\left|Q_r\right|} = \frac{1}{2}.$$
\end{cor}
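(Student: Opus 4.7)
The plan is to reduce the density computation to a direct computation on the graphical representation of the free boundary near regular points, and then pass to the limit using that the graph's tangent plane at the regular point is horizontal (in space).

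First, after a translation and a rotation of spatial coordinates, we may assume $(x_0,t_0)=(0,0)$ and that the blow-up of $u$ at $(0,0)$, given by Theorem \ref{thm:1.1}, takes the form $c(x_n)_+^2$. By Theorem \ref{thm:SmoothnessFB}, there exist $\varrho_0>0$ and a $C^\infty$ function $g$ such that
\[
Q_{\varrho_0}\cap\{u>0\}=\{(x',x_n,t)\in Q_{\varrho_0}:\, x_n>g(x',t)\},
\]
and in particular $g(0,0)=0$. Moreover, since the spatial normal to $\partial\{u>0\}$ at $(0,0)$ coincides with the direction $e_n$ of the blow-up, one has $\nabla_{x'}g(0,0)=0$. (One can see this either directly, by noting that the tangent hyperplane at $(0,0)$ to the graph is $\{x_n=0\}$, or by using the computation with $\nabla_{x,t}u/|\nabla_{x,t}u|$ carried out in the proof of the uniqueness of blow-ups at regular points above.)

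Next, for $r$ small enough that $Q_r\subset Q_{\varrho_0}$, the change of variables $(y,s)=(x/r,t/r^2)$ gives
\[
\frac{|\{u=0\}\cap Q_r|}{|Q_r|}=\frac{1}{|Q_1|}\int_{Q_1}\chi_{\{y_n\le g_r(y',s)\}}\,dy\,ds,\qquad g_r(y',s):=\frac{g(ry',r^2s)}{r}.
\]
Since $g\in C^1$ with $g(0,0)=0$ and $\nabla_{x'}g(0,0)=0$, a first-order Taylor expansion yields
\[
g_r(y',s)=\nabla_{x'}g(0,0)\cdot y'+rs\,\partial_t g(0,0)+o(1)=o(1)\quad\text{as }r\to 0,
\]
uniformly for $(y',s)$ in any bounded set. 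Consequently, $g_r\to 0$ uniformly on $Q_1$.

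Finally, since $|\{y_n=0\}\cap Q_1|=0$, dominated convergence gives
\[
\lim_{r\to 0}\int_{Q_1}\chi_{\{y_n\le g_r(y',s)\}}\,dy\,ds=\int_{Q_1}\chi_{\{y_n\le 0\}}\,dy\,ds=\frac{|Q_1|}{2},
\]
which proves the claimed limit $1/2$. The only non-routine point is the identification $\nabla_{x'}g(0,0)=0$, but this follows from the one-dimensional character of the blow-up at a regular point (Lemma \ref{lem:halfParabola}) together with the graphical representation of Theorem \ref{thm:SmoothnessFB}; everything else is bookkeeping via the parabolic rescaling and Taylor's theorem.
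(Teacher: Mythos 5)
Your proof is correct, but it takes a genuinely different route from the paper's. The paper reduces the statement to the a.e.\ convergence $\chi_{\{u_r>0\}}\to\chi_{\{u_0>0\}}$ in $Q_1$: positivity is preserved under uniform convergence, and the non-degeneracy estimate of Lemma~\ref{lem:NonDegeneracy}, together with the $C^{1,1}$ bound, rules out that $u_r$ stays positive at a point with $x_n<0$ along a sequence $r_k\to 0$. Dominated convergence then gives the density. You instead appeal to the full strength of Theorem~\ref{thm:SmoothnessFB}: you parametrize the contact set inside $Q_r$ by the smooth graph $x_n\le g(x',t)$, pass to the rescaled profile $g_r(y',s)=g(ry',r^2s)/r$, and use that $g(0,0)=0$ and $\nabla_{x'}g(0,0)=0$ to get $g_r\to 0$ uniformly, after which the density computation is an elementary change of variables. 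The two approaches make different inputs explicit: yours isolates the geometric reason the density is exactly $1/2$ (the spatial tangent hyperplane at a regular point is flat, inherited from the one-dimensional blow-up), while the paper's argument is more robust in that it runs on non-degeneracy and $C^{1,1}$ regularity alone, without needing the smooth graph or the identification of the tangent plane. One small point worth making sharper in your write-up: the identification $\nabla_{x'}g(0,0)=0$ requires the rotation implicit in Theorem~\ref{thm:SmoothnessFB} to be the same one that brings the blow-up to the form $c(x_n)_+^2$; this is indeed how the graph in Proposition~\ref{prop:LipschitzRegularityOfFB} is constructed, and the normal-vector computation appearing in the proof of the preceding uniqueness corollary (or, alternatively, differentiating $\nabla u(x',g(x',t),t)=0$ in $x_i$, $i<n$, and using $D^2u(0,0)=2a\,e_n\otimes e_n$ from Proposition~\ref{prop:C1ofFB}) makes it rigorous, but it is worth flagging since it is the only place the argument is not pure bookkeeping.
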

\begin{proof} Without loss of generality we assume that $(x_0,t_0)=(0,0)$ and we let $u_0$ be the blow-up of $u$ at $(0,0)$. By Lemma \ref{lem:halfParabola} and Theorem \ref{thm:SmoothnessFB}, we may assume that $u_0(x) = a (x_n)_+^2$ for some $a > 0$. Now, since 
\[
\frac{|\{u = 0\}\cap Q_r|}{|Q_r |} = \frac{|\{u_r = 0\}\cap Q_1|}{|Q_1 |} \quad \text{ and } \quad \frac{|\{u_0 = 0\}\cap Q_1|}{|Q_1|} = \frac{1}{2},
\]
it is enough to show that
\begin{equation}\label{eq:ConChiBlowup}
\chi_{\{u_r > 0\}} \to \chi_{\{u_0 > 0\}} \quad \text{ a.e. in } Q_1, 
\end{equation}
and conclude that $|\{u_r = 0\}\cap Q_1| \to |\{u_0 = 0\}\cap Q_1|$ as $r \to 0$, by dominated convergence.

Let us show \eqref{eq:ConChiBlowup}. First, we fix $x \in \{u_0 > 0\}$ and set $u_0(x) := \delta$, for some $\delta > 0$ depending on $x$. By uniform convergence, $u_r(x) \geq \delta/2$ for all $r$ small enough and thus $\chi_{\{u_r > 0\}}(x) \to 1$ as $r \to 0$. 

Second, we show that $\chi_{\{u_r > 0\}} \to 0$ pointwise in $\{x_n < 0\} \cap Q_1$ as $r \to 0$. Assume not. Then there are $(x,t) \in \{x_n < 0\} \cap Q_1$ and $r_0 \in (0,1)$ such that $u_r(x,t) > 0$ for all $r \in (0,r_0)$. Let $\delta := \dist((x,t),\{x_n=0\})$. By construction, for every sequence $r_k \to 0$, there are $(x_k,t_k) \in \partial\{u_k > 0\}$ such that $(x_k)_n \leq -\delta$ for all $k \in \N$, where we have set $u_k := u_{r_k}$. However, by non-degeneracy \eqref{eq:NonDegeneracy}, we have
\begin{equation}\label{eq:Dens12NonDeg}
u_k(y_k,\tau_k) := \sup_{Q_{\delta/2}(x_k,t_k)} u_k \geq c \, \big(\tfrac{\delta}{2}\big)^2,
\end{equation}
for all $k \in \N$ and some $(y_k,\tau_k) \in \overline{Q_{\delta/2}(x_k,t_k)}$, where $c > 0$ is as in Lemma \ref{lem:NonDegeneracy}. Up to passing to a subsequence, we may assume $(y_k,\tau_k) \to (y_0,\tau_0)$ for some $(y_0,\tau_0) \in \{x_n \leq - \delta/2\}\cap\overline{Q}_1$. Notice that $u_0(y_0,\tau_0) = 0$ and so, by uniform convergence, $u_k(y_0,\tau_0) \to 0$ as $k \to \infty$. Consequently, by $C^{1,1}$ regularity, we have
\[
0 \leq u_k(y_k,\tau_k) \leq |u_k(y_k,\tau_k) - u_k(y_0,\tau_0)| + u_k(y_0,\tau_0) \to 0,
\]
obtaining a contradiction with \eqref{eq:Dens12NonDeg}. As a consequence, \eqref{eq:ConChiBlowup} and the proof of our statement follows.
\end{proof}
%
%
%
%
%

%
%
%
%
%
%%%%%%%%%%%%%%%%%%%%%%%%%%%%%%%%%%%%%%%%%%%%%%%%%%%%%%%%%%%%%%%%%%%%%%%%%%%%%%%%%%%%%%%%%%%%%%%%%%%%%%%%%%%%%%%%%%%%%%%%%%%%%%%%%%%%%%%%%%%
%
%
%
%
\section{Analysis of singular points}\label{sec:singularPoints}
The goal of this section is to prove Theorem \ref{thm:1.2} and Corollary \ref{cor:1.3} and establish some covering properties of the singular part of the free boundary. To derive such the properties we will exploit the fact that the second derivatives of the blow-up are positive. In this direction we prove  a second ``almost positivity'' result, in the spirit of Lemma \ref{lem:AlmostNonnegativityLemma}.

\begin{lem}\label{lem:almostPositivityImproved}
    Let $\varrho \in (0,1)$, $K > 0$ and let $\{ u_r \}_{r \in (0,1)}$ be a family of solutions to 
\begin{equation}\label{eq:EqnBlowUpSeqPosLemBis}
\begin{cases}
\partial_t v - F(D^2v) = f(rx)\chi_{\{v > 0\}} \quad  &\text{ in } Q_\varrho \\
v, \partial_t v \geq  0    \quad &\text{ in } Q_\varrho,
\end{cases}
\end{equation}
with $F$ satisfying \eqref{eq:AssOnFIntro} in $B_1$, $f$ satisfying \eqref{eq:AssOnfIntro} in $B_1$. Let $a_{ij}(x,t) := (DF(D^2u_r))_{ij}$ and let $\{ w_r \}_{r \in (0,1)}$ be a family of functions satisfying
\[
\partial_t v -  a_{ij}(x,t) \partial_{ij} v \geq r g(rx) \quad \text{ in } Q_\varrho \cap \{u_r > 0\},
\]
for some bounded function $g$ with $\|g\|_{L^\infty(B_\varrho)} \leq K$. Then there exist $\delta_0, \tilde{\vep}_0 > 0$ depending only on $n$, $c_\circ$, $\Lambda$, $K$ and $\varrho$ such that, for every $\delta \in (0,\delta_0)$, $\vep \in (0,1)$ and $\overline{C} > 0$ with $\vep/\overline{C} \leq \tilde{\vep}_0$, there exists $r \in (0,1)$ such that if
\begin{itemize}
		\item $w_r \geq 0 $ in $Q_\varrho \cap \partial\{u_r>0\}$,
		\item $w_r \geq -\varepsilon$ in $Q_\varrho$,
		\item $w_r \geq \overline{C}$ in $Q_\varrho \cap N_\delta^c(\{u_r = 0\})$,
\end{itemize}
then $w_ r \geq 0$ in $Q_{\varrho/2}$.
\end{lem}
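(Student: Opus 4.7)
By Remark \ref{rem:Scaling} we reduce to $\varrho = 1$. We argue by compactness and contradiction: suppose the conclusion fails, so that there exist sequences $r_k\downarrow 0$, $\delta_k \in (0,\delta_0)$, $\varepsilon_k > 0$ and $\overline{C}_k > 0$ with $\varepsilon_k/\overline{C}_k \to 0$, together with families $u_{r_k}$ solving \eqref{eq:EqnBlowUpSeqPosLemBis} and $w_{r_k}$ satisfying the supersolution inequality and the three bullets, but with $w_{r_k}(x_k,t_k) < 0$ at some $(x_k,t_k) \in \overline{Q}_{1/2}$. Setting $W_k := w_{r_k}/\overline{C}_k$, the rescaled function is a supersolution of a linear uniformly parabolic equation with right-hand side $r_k g(r_k\,\cdot)/\overline{C}_k$ in $\{u_{r_k}>0\}$ and satisfies $W_k \geq -\varepsilon_k/\overline{C}_k$ globally, $W_k \geq 0$ on $\partial\{u_{r_k}>0\}$, and $W_k \geq 1$ outside $N_{\delta_k}(\{u_{r_k}=0\})$, while $W_k(x_k,t_k) < 0$.

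Invoking the uniform $C^{1,1}_x \cap C^{0,1}_t$ bounds of Theorem \ref{thm:OptimalReg}, a subsequence of $u_{r_k}$ converges locally uniformly in $Q_1$ to a blow-up-type solution $u_\infty$ satisfying \eqref{eq:BlowUpEqn}, with coefficients $a_{ij}^{(k)}(\cdot) := (DF(D^2u_{r_k}))_{ij}$ converging to uniformly elliptic limits $a_{ij}^\infty$ locally uniformly on $\{u_\infty>0\}$, thanks to the smoothness of $F$. By the parabolic interior estimates of \cite[Theorem 4.8]{Wang92bis} applied on compacts of $\{u_\infty>0\}$, a further subsequence of $W_k$ converges locally uniformly on such compacts to a function $W_\infty$ that is a nonnegative viscosity supersolution of a linear uniformly parabolic equation in $\{u_\infty>0\}$. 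Moreover, the limit satisfies $W_\infty \geq 1$ on $\{u_\infty>0\}\setminus N_{\delta_0}(\{u_\infty=0\})$, and at a limit point $(x_\infty,t_\infty) \in \overline{Q}_{1/2}$ of $(x_k,t_k)$ we extract from $W_k(x_k,t_k) \in [-\varepsilon_k/\overline{C}_k,\,0)$ that any cluster value of $W_\infty$ at $(x_\infty,t_\infty)$ is at most $0$.

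The contradiction is reached by two cases. If $u_\infty(x_\infty,t_\infty) > 0$, continuity of $u_\infty$ and Hausdorff convergence $\{u_{r_k}=0\} \to \{u_\infty=0\}$ on compacts force $(x_k,t_k) \in \{u_{r_k}>0\}\setminus N_{\delta_k}(\{u_{r_k}=0\})$ for $k$ large (provided $\delta_0 < \tfrac{1}{2}\mathrm{dist}((x_\infty,t_\infty),\{u_\infty=0\})$), so bullet 3 gives $W_k(x_k,t_k) \geq 1$, contradicting $W_k(x_k,t_k) < 0$. If instead $u_\infty(x_\infty,t_\infty) = 0$, we apply the parabolic strong minimum principle to the nonnegative supersolution $W_\infty$ in the connected component of $\{u_\infty>0\}$ accumulating at $(x_\infty,t_\infty)$ and combine it with the structure of $\{u_\infty=0\}$ provided by Lemmas \ref{lem:halfParabola} and \ref{lem:blow-upSingularPoint} (which ensure that $\{u_\infty>0\}$ contains points outside $N_{\delta_0}(\{u_\infty=0\})$ within any such component) to conclude that the bulk positivity $W_\infty \geq 1$ is incompatible with vanishing at the contact point. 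The main obstacle is precisely this boundary case, since the convergence of $W_k$ is only locally uniform on compacts of $\{u_\infty>0\}$ and must be supplemented by a (weak) Harnack-type propagation (Theorem \ref{thm:basicWeakHarnack}) together with the quantitative non-degeneracy of $u_\infty$ (Lemma \ref{lem:NonDegeneracy}) to carry the positivity across neighborhoods of contact points; from the resulting contradiction one reads off the explicit thresholds $\delta_0$ and $\tilde\varepsilon_0$.
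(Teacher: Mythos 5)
Your proposal takes a fundamentally different route from the paper, and unfortunately the route has genuine gaps that prevent it from closing.

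The paper's proof is a direct barrier argument: for an arbitrary $(x_0,t_0)\in Q_{1/2}\cap\{u_r>0\}$ it forms
\[
v:=w_r-\gamma\Big[u_r-\tfrac{c_\circ}{4}\big(\tfrac{1}{2n\Lambda}|x-x_0|^2-(t-t_0)\big)\Big],
\]
verifies that $v$ is a supersolution of the linearized equation in $\{u_r>0\}$ once $r\leq r(\gamma,c_\circ,K)$, and then tunes $\gamma=C_0\vep$, $\delta_0$, $\tilde\vep_0$ so that the minimum principle forces $v\geq 0$ on $\partial_p(Q_{1/2}^-(x_0,t_0)\cap\{u_r>0\})$ (using the three bullets on the three pieces of the boundary together with the optimal growth of $u_r$). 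This gives explicit constants and needs no limiting object. You instead try a compactness–contradiction scheme. This is not merely a longer detour; two steps in it do not go through as written.

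\textbf{First gap: no upper bound on $W_k$.} You set $W_k:=w_{r_k}/\overline{C}_k$ and invoke the interior Schauder estimates of \cite[Theorem 4.8]{Wang92bis} to extract a locally uniformly convergent subsequence on compacts of $\{u_\infty>0\}$. Those estimates require an $L^\infty$ bound on $W_k$ on slightly larger sets. The hypotheses of the lemma provide only one-sided information: $W_k\geq -\varepsilon_k/\overline{C}_k$ globally, $W_k\geq 0$ on the free boundary, $W_k\geq 1$ outside $N_{\delta_k}$, and a \emph{super}solution inequality. There is no bound on $W_k$ from above, and the supersolution property gives none either. So the limit $W_\infty$ may not exist; the whole compactness argument halts here. (In the paper's applications the $w_r$ are derivatives of $u_r$ and hence uniformly bounded by Theorem \ref{thm:OptimalReg}, but the lemma as stated — and as you must prove it — does not assume this.)

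\textbf{Second gap: the boundary case is not resolved.} Even granting a limit $W_\infty\geq 0$ in $\{u_\infty>0\}$ with $W_\infty\geq 1$ away from $N_{\delta_0}(\{u_\infty=0\})$, there is no contradiction with the bad sequence when $u_\infty(x_\infty,t_\infty)=0$. The pointwise values $W_k(x_k,t_k)\in[-\varepsilon_k/\overline{C}_k,0)$ are at points $(x_k,t_k)$ whose limit lies on $\partial\{u_\infty>0\}$, i.e.\ outside the region where you have locally uniform convergence, so nothing about $W_\infty$ can be extracted from them. A nonnegative supersolution of a parabolic equation can certainly be $\geq 1$ in the bulk and tend to $0$ at the lateral boundary; the strong minimum principle and weak Harnack give no contradiction with such behaviour, and it is precisely this profile which the comparison argument in the paper controls quantitatively by exploiting the growth of $u_r$ itself as a barrier. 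Your final sentence gestures at ``Harnack-type propagation'' and ``quantitative non-degeneracy of $u_\infty$'', but no inequality is derived from them; as it stands the claim ``bulk positivity is incompatible with vanishing at the contact point'' is simply false for supersolutions without a further mechanism, and that mechanism is exactly what is missing.

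There is also a smaller issue worth noting: the negation of the statement has quantifier structure ``for every $\delta_0,\tilde\vep_0$ there exist $\delta,\vep,\overline{C}$ such that for \emph{all} $r$ the implication fails.'' This does not on its own give you a sequence $r_k\downarrow 0$ with $\delta_k$ staying bounded and $\vep_k/\overline{C}_k\to 0$ unless you diagonalize carefully; you omit this bookkeeping. It is repairable, unlike the two gaps above. On balance, the direct comparison argument of the paper is both shorter and the only one that actually closes.
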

\begin{proof}

    By scaling we may assume $\varrho = 1$. Let us set $u := u_r$, $w := w_r$, $r \in (0,1)$ and define
\[
v(x,t) := w(x,t) - \gamma \left[ u(x,t) -  \tfrac{c_\circ}{4} \left( \tfrac{1}{2n\Lambda} |x-x_0|^2 - (t-t_0) \right) \right]
\]
for some $\gamma > 0$ and $(x_0,t_0) \in Q_{1/2} \cap \{u > 0\}$. Noticing that
\[
\begin{aligned}
\partial_t v &\geq a_{ij}(x,t)\partial_{ij}w + r g(rx) - \gamma \big[ F(D^2u) + f(rx) + \tfrac{c_\circ}{4} \big] \\
\partial_{ij} v &= \partial_{ij}w  - \gamma \big[ \partial_{ij}u  - \tfrac{c_\circ}{4n\Lambda} \delta_{ij} \big],
\end{aligned}
\]
and recalling that the function $M \to F(M)$ is convex, it is not difficult to obtain  
\[
\begin{aligned}
\partial_t v - a_{ij}(x,t) \partial_{ij} v &\geq \gamma \big[ a_{ij}(x,t) - F(D^2u) \big] + r g(rx) - \gamma f(rx) - \gamma \tfrac{c_\circ}{4n\Lambda} \sum_{i=1}^n a_{ii} - \gamma \tfrac{c_\circ}{4} \\
&\geq 0 - r \|g\|_{L^\infty(B_1)} + \gamma c_\circ - \gamma \tfrac{c_\circ}{4} - \gamma \tfrac{c_\circ}{4} \\
&\geq - r K + \gamma \tfrac{c_\circ}{2} \geq 0 \quad \text{ in } Q_1 \cap \{u > 0\},
\end{aligned}
\]
provided $r$ is taken small enough depending on $c_\circ$, $\gamma$ and $K$. By the minimum principle \cite[Proposition 4.34]{IS12}, it thus follows
\begin{equation}\label{eq:MinPrincImprSemConv}
\inf_{Q_{1/2}^-(x_0,t_0) \cap \{u > 0\}} v = \inf_{\partial_p( Q_{1/2}^-(x_0,t_0) \cap \{u > 0\})}v .
\end{equation}
Now, let us take
\[
C_0 := \tfrac{64n\Lambda}{c_\circ}, \qquad \gamma := C_0\vep, \qquad \delta \leq  \sqrt{\tfrac{1}{C_0 C}}, \qquad \tfrac{\overline{C}}{\vep} \geq C_0 K,
\]
where $C > 0$ is as in \eqref{eq:OptimalGrowth}.  Then, in $Q_{1/2}^-(x_0,t_0) \cap \partial \{u > 0\}$, we have $w \geq 0$, $u = 0$ and thus
\[
v \geq \gamma\tfrac{c_\circ}{4} \left( \tfrac{1}{2n\Lambda} |x-x_0|^2 - (t-t_0) \right) \geq 0.
\]
Further, in $\partial_p(Q_{1/2}^-(x_0,t_0)) \cap N_\delta(\{u = 0\})$, we have $w \geq -\vep$ and, by optimal growth, 
\[
v \geq - \varepsilon - \gamma C \delta^2 + \gamma \tfrac{c_0}{32n\Lambda} \geq - \vep - \vep C_0 C \delta^2 + 2\vep \geq  \vep(1 - C_0 C \delta^2) \geq 0,
\]
thanks to the definitions of $\gamma$ and $\delta$. Finally, in $\partial_p(Q_{1/2}^-(x_0,t_0))\cap N_\delta( \{u = 0\})^c$, there holds
\[
v \geq \overline{C} - \gamma K  = \vep \big( \tfrac{\overline{C}}{\vep} - C_0K \big) \geq 0,
\]
by the choice of $\tfrac{\overline{C}}{\vep}$. The thesis follows by \eqref{eq:MinPrincImprSemConv} and the arbitrariness of $(x_0,t_0) \in Q_{1/2} \cap \{u > 0\}$.
\end{proof}

\subsection{Lipschitz regularity of the whole free boundary}
In what follows, we apply the ``almost positivity'' lemma to bound the gradient of the solution $\nabla u$ in terms of its time derivative $\partial_t u$, locally near any free boundary point. This fact, combined with the Implicit Function theorem, will allow us to conclude that the free boundary can be locally written as a Lipschitz graph, where time is explicit in terms of the space variables.

\begin{lem}\label{lem:towardsLipschitzness}
	 
	Let $u \in \mathcal{P}_1(K)$ with $(0,0) \in \partial \{u>0\}$. Assume that  $\|f\|_{C^{0,1}(B_1)} \leq K$ and that $\partial_t u>0$ in $\{u>0\}$. Then there exist $c > 0$ and $\varrho_0 > 0$ such that
	\begin{equation}\label{eq:BoundBelowut}
	\partial_t u \geq c \, |\nabla u|  \quad \text{ in } Q_{\varrho_0}.
	\end{equation}

\end{lem}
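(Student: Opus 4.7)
The plan is to apply the ``almost positivity'' Lemma \ref{lem:almostPositivityImproved} to the rescaled family
\[
w_r(x,t) := \partial_t u_r(x,t) - c r\, \partial_\sigma u_r(x,t),
\]
where $u_r(x,t) = u(rx,r^2 t)/r^2$, $\sigma \in \Ss^{n-1}$ is an arbitrary unit vector, and $c > 0$ is a small constant to be chosen. In original variables one has $w_r(x,t) = [\partial_t u - c\,\partial_\sigma u](rx,r^2 t)$, so showing that $w_r \geq 0$ on $Q_{\varrho/2}$ uniformly in $\sigma$ is exactly the statement $\partial_t u \geq c\,\partial_\sigma u$ on $Q_{r\varrho/2}$ for every $\sigma \in \Ss^{n-1}$; picking $\sigma = \nabla u/|\nabla u|$ pointwise then yields \eqref{eq:BoundBelowut} with $\varrho_0 := r\varrho/2$.

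Differentiating \eqref{eq:EqnBlowUpSeqPosLemBis} in $t$ and in $\sigma$ (using that $F$ is independent of $x$) and subtracting produces, in $\{u_r > 0\}$,
\[
\partial_t w_r - a_{ij}(x,t)\, \partial_{ij} w_r = -cr^2\, \partial_\sigma f(rx), \qquad a_{ij} := (DF(D^2 u_r))_{ij},
\]
with $a_{ij}$ uniformly elliptic. The right-hand side has the form $r\, g(rx)$ with $|g| \leq crK \leq K$ provided $c \leq 1/K$, matching the structural hypothesis of Lemma \ref{lem:almostPositivityImproved}. Two of the three pointwise conditions of that lemma are then immediate: on $\partial \{u_r>0\}$, the $C^{1,1}_x \cap C^{0,1}_t$ regularity from Theorem \ref{thm:OptimalReg} forces $\partial_t u_r = \nabla u_r = 0$, hence $w_r = 0$; and on the whole cylinder $Q_\varrho$ the same regularity gives $|\partial_\sigma u_r| \leq C$, whence $w_r \geq -crC$, which is $\geq -\vep$ provided $r \leq \vep/(cC)$.

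The decisive condition to verify is the strict lower bound $w_r \geq \bar C$ on $Q_\varrho \cap N_\delta^c(\{u_r=0\})$, and this is where the hypothesis $\partial_t u > 0$ in $\{u>0\}$ plays its role. Since $F$ does not depend on $x$, $\partial_t u$ is a nonnegative solution of the homogeneous linearised parabolic equation $\partial_t v - a_{ij}\partial_{ij} v = 0$ inside $\{u>0\}$, vanishing on the free boundary. Combining the strict positivity of $\partial_t u$ with the Harnack inequality (Theorem \ref{lem:Harnack}) and the non-degeneracy of $u$ near $\partial\{u>0\}$ (Lemma \ref{lem:NonDegeneracy}), one extracts a quantitative lower bound $\partial_t u_r \geq m > 0$ on $Q_\varrho \cap N_\delta^c(\{u_r=0\})$ that is uniform in the rescaling parameter $r$. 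Setting $\bar C := m/2$ and $\vep := \bar C\,\tilde\vep_0$ (with $\tilde\vep_0$ provided by the lemma), all three conditions are met, and Lemma \ref{lem:almostPositivityImproved} yields $w_r \geq 0$ on $Q_{\varrho/2}$.

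The main obstacle is precisely this uniform-in-$r$ lower bound in the third condition: by Proposition \ref{prop:semicnovexity}, every blow-up of $u$ at $(0,0)$ is time-independent, so $\partial_t u_r \to 0$ locally uniformly as $r \to 0$ and the bound cannot be derived from naive convergence to the blow-up. It must instead be extracted through a global Harnack-type propagation argument that exploits the strict positivity of $\partial_t u$ throughout $\{u>0\}$ together with the geometric stability of the sets $N_\delta^c(\{u_r=0\})\cap Q_\varrho$ as $r \to 0$, which in turn relies on the non-degeneracy estimate to keep these sets at definite parabolic distance from $\partial\{u_r > 0\}$ at every scale.
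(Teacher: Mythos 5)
Your overall plan — differentiate the equation, view $\partial_t u_r \pm (\text{const})\,\partial_\sigma u_r$ as a supersolution of the linearized equation, and feed it into Lemma~\ref{lem:almostPositivityImproved} — is the same plan the paper uses. But there is a genuine gap in the way you close the argument, and the sign of the free parameter you carry around is what causes it.

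You put the small constant $c$ on the \emph{gradient} term: $w_r = \partial_t u_r - c\,r\,\partial_\sigma u_r$. To verify the third bullet of Lemma~\ref{lem:almostPositivityImproved} you then need a lower bound $\partial_t u_r \geq m>0$ on $Q_\varrho \cap N_\delta^c(\{u_r=0\})$ that is \emph{uniform in $r$}. You correctly observe that this is in tension with the fact that the blow-ups are time-independent, and you appeal to a vague ``Harnack-type propagation'' to get around it. But there is nothing to get around: the uniform-in-$r$ bound is simply false. Proposition~\ref{prop:semicnovexity} gives $\partial_t u \leq C|\log(|x|+\sqrt{|t|})|^{-\vep}$, hence $\partial_t u_r = (\partial_t u)(rx,r^2t) \to 0$ uniformly on compact sets as $r\downarrow 0$, including on $N_\delta^c$. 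No Harnack or non-degeneracy argument can produce a bound that contradicts this convergence. Moreover, once you set $\vep$ to scale with $r$ (via $\vep = crC$ or $\vep = m(r)\tilde\vep_0/2$), the quantifier order in Lemma~\ref{lem:almostPositivityImproved} — which produces $r$ \emph{after} $\vep$ and $\overline C$ are given — becomes circular.

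The paper sidesteps this entirely by putting a \emph{large} free constant $A$ on the time derivative, $v := A\,\partial_t u_r \pm \partial_i u_r$, and by fixing the parameters in a different order. Concretely: fix $\delta\leq\delta_0$ and $\vep:=C\delta$ (independent of $r$); obtain $r$ from Lemma~\ref{lem:almostPositivityImproved} — and here one must notice, by inspecting that lemma's proof, that $r$ depends only on $\vep,\delta$ and not on $\overline C$; \emph{then} use $\partial_t u > 0$ to find a merely qualitative lower bound $\partial_t u_r \geq \bar c(r,\delta) > 0$ on $N_\delta^c$ for that one fixed $r$; \emph{then} choose $A$ so large that $\overline C := A\bar c/2$ dominates both $C$ (to absorb the gradient contribution) and $\vep/\tilde\vep_0$. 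Since $A$ is chosen last, it may (and does) depend on $\bar c$ and hence on $r$; no uniformity in $r$ is ever needed. The conclusion $\partial_t u \geq \tfrac1A|\nabla u|$ on $Q_{r/2}$ is then obtained with $c:=1/A$ and $\varrho_0:=r$. This ``choose the multiplier on $\partial_t u_r$ last'' device, combined with the observation that $r$ in Lemma~\ref{lem:almostPositivityImproved} is insensitive to $\overline C$, is exactly the piece missing from your argument.
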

\begin{proof}
	  Let $\{ u_r \}_{r \in (0,1)}$ be a family of rescalings of $u$ as in \eqref{eq:RescalingBlowUp}, satisfying \eqref{eq:EqnBlowUpSeqPosLemBis} in $Q_1$, and consider the functions
	\[
	v := A \partial_t u_r \pm \partial_i u_r,
	\]
	for $i = 1,\ldots,n$ and some $A > 1$. Differentiating the equation, we easily see that $v$ satisfies
	\[
	\partial_t v - a_{ij}(x,t) \partial_{ij} v = r \partial_i f(rx) \quad \text{ in } Q_1 \cap \{ u_r > 0 \}, 
	\]
	where, as always, $a_{ij}(x,t) := (DF(D^2u_r))_{ij}$ and, in addition, $v = 0$ in $\partial\{u_r > 0\}$. Further, given a small $\delta > 0$ as in Lemma \ref{lem:almostPositivityImproved}, we exploit Theorem \ref{thm:OptimalReg} to see that  
	\[
	v \geq - C\delta := -\vep \quad \text{ in } Q_1 \cap N_\delta(\{u_r = 0\}),
	\]
	where $C > 0$ depends only on $n$, $\lambda$, $\Lambda$ and $K$. On the other hand, since $\partial_tu_r >0$ in $\{ u_r > 0 \}$, we have that $\partial_tu_r \geq \bar{c}$ in $N_\delta(\{u_r = 0\})^c \cap Q_1$ for some constant $\bar{c} > 0$ depending on $\delta$ and $r$ and hence, by Theorem \ref{thm:OptimalReg} again, we may choose $A$ large enough such that
	\[
	v \geq A\bar{c} - C \geq \tfrac{A\bar{c}}{2} := \overline{C} \quad \text{ in } Q_1 \cap N_\delta(\{u_r = 0\})^c.
	\]
	Taking eventually $\delta$ smaller and $A$ larger, we may assume both $\delta \leq \delta_0$ and $\vep/\overline{C} \leq \tilde{\vep}_0$ where $\delta_0$ and $\tilde{\vep}_0$ are as in Lemma \ref{lem:almostPositivityImproved} and, by the same lemma, we deduce the existence of $r$ such that $v \geq 0$ in $Q_{1/2}$, which is equivalent to our statement with $c := 1/A$ and $\varrho_0 := r$.
\end{proof}
Below we prove the Lipschitz regularity of the free boundary near any point.
\begin{cor}\label{cor:LipschitzFB}
	Let $u \in \mathcal{P}_1(K)$ with $(0,0) \in \partial \{u>0\}$. Assume that  $\|f\|_{C^{0,1}(B_1)} \leq K$ and that $\partial_t u>0$ in $\{u>0\}$. Then there exists a Lipschitz function $\tau: B_{1/2} \to \R$ such that
	\begin{equation}\label{eq:LipschitzFB}
	Q_{1/2}\cap\{u > 0\} = \{ (x,t) \in Q_{1/2} : t > \tau(x)\}.
	\end{equation}

\end{cor}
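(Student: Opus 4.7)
The plan is to identify $\tau$ as the level-set function of the time-monotone family $\{u(x,\cdot)\}_{x}$ and to derive its Lipschitz regularity directly from the gradient bound in Lemma \ref{lem:towardsLipschitzness}, reading that bound as a cone-monotonicity statement. After rescaling (Remark \ref{rem:Scaling}) so that Lemma \ref{lem:towardsLipschitzness} gives $\partial_t u \geq c|\nabla u|$ on, say, $Q_1$, the assumptions $u \geq 0$, $\partial_t u \geq 0$ and $\partial_t u > 0$ in $\{u>0\}$ imply that for each fixed $x$ in the relevant ball the set $\{t : u(x,t) > 0\}$, when nonempty, is an open upward sub-interval of $(-1,1)$. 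This allows me to define
\[
\tau(x) := \inf\{ t \in (-1,1) : u(x,t) > 0 \},
\]
with the convention $\tau(x) = 1$ if the set is empty. By continuity of $u$ in $t$, $u(x,\tau(x))=0$, while monotonicity gives $u(x,t) > 0$ iff $t > \tau(x)$. This already yields the graph representation $Q_{1/2} \cap \{u > 0\} = \{t > \tau(x)\}$; the only remaining task is to show that $\tau$ is Lipschitz.

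The key geometric step is \emph{cone monotonicity}: if $P_0 = (x_0,t_0)$ and $P_1 = (x_1,t_1)$ both lie in the cylinder where Lemma \ref{lem:towardsLipschitzness} holds, and $t_1 - t_0 \geq \tfrac{1}{c}|x_1-x_0|$, then $u(P_1) \geq u(P_0)$. Indeed, parametrising the segment $\gamma(s) := (1-s)P_0 + sP_1$, the $W^{2,p}_x \cap W^{1,p}_t$ regularity from Definition \ref{def:SolFNL} justifies
\[
\tfrac{d}{ds} u(\gamma(s)) = (t_1-t_0)\,\partial_t u(\gamma(s)) + (x_1-x_0)\cdot \nabla u(\gamma(s)) \geq \bigl(c(t_1-t_0) - |x_1-x_0|\bigr)\,|\nabla u(\gamma(s))| \geq 0,
\]
using the Cauchy--Schwarz inequality and $\partial_t u \geq c|\nabla u|$. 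Integrating in $s$ gives $u(P_1) \geq u(P_0)$.

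The Lipschitz bound then follows by a short contradiction argument. Suppose $x_0, x_1$ are in a small ball around $0$ and $\tau(x_1) - \tau(x_0) > \tfrac{1}{c}|x_1-x_0|$. Choose $\varepsilon > 0$ so small that $s_1 := \tau(x_0) + \varepsilon + \tfrac{1}{c}|x_1-x_0| < \tau(x_1)$. By the definition of $\tau(x_0)$ as an infimum, the point $P_0 := (x_0, \tau(x_0)+\varepsilon)$ satisfies $u(P_0) > 0$; cone monotonicity applied to $P_0$ and $P_1 := (x_1, s_1)$ yields $u(P_1) \geq u(P_0) > 0$, contradicting $s_1 < \tau(x_1)$, which forces $u(P_1)=0$. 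Symmetry gives $|\tau(x_1)-\tau(x_0)| \leq \tfrac{1}{c}|x_1-x_0|$.

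The only real obstacle is bookkeeping: one must ensure that the segment $\gamma$ stays inside the cylinder $Q_{\varrho_0}$ on which Lemma \ref{lem:towardsLipschitzness} is available. This forces the argument to be carried out in some $Q_\rho$ with $\rho < \varrho_0$, and the radius $1/2$ in the statement is then recovered via the scaling invariance of Remark \ref{rem:Scaling}, at the price of an inessential adjustment of the constants. Everything else is pointwise.
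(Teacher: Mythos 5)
Your proof is correct and follows a genuinely different route from the paper's. The paper parametrizes the level sets $\{u=\varepsilon\}$ for $\varepsilon > 0$ by the Implicit Function Theorem (legitimate since $\partial_t u > 0$ in $\{u>0\}$), observes that $|\partial_i h| = |\partial_i u|/\partial_t u \leq C$ uniformly in $\varepsilon$ thanks to Lemma~\ref{lem:towardsLipschitzness}, and then sends $\varepsilon \to 0$ to recover $\tau$. You instead define $\tau$ directly as the hitting time and establish Lipschitz regularity through a cone-monotonicity argument: integrating the chain rule along a straight segment in spacetime, using Cauchy--Schwarz together with $\partial_t u \geq c|\nabla u|$ to conclude that $u$ is nondecreasing along any segment whose time increment dominates the spatial one by a factor $1/c$, and then running a contradiction. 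Your version is more elementary (no IFT, no approximation by smooth level sets), and the cone-monotonicity lemma is a self-contained geometric statement that makes the role of the gradient bound transparent. Both routes hinge entirely on Lemma~\ref{lem:towardsLipschitzness} and both inherit from it the restriction to the small cylinder $Q_{\varrho_0}$, so the ``bookkeeping'' you flag about the radius $1/2$ is already present in the paper's own proof; this is best read as the statement implicitly holding in a cylinder of some small radius $\varrho_0$ depending on the data. One minor point worth making explicit: in the cone-monotonicity computation the pointwise inequality $\partial_t u \geq c|\nabla u|$ and the validity of the chain rule along the segment are justified because $u \in C^{1,1}_x \cap C^{0,1}_t$ locally, so $u$ is (parabolically) Lipschitz and $s\mapsto u(\gamma(s))$ is absolutely continuous; this is a slightly stronger statement than the $W^{2,p}_x\cap W^{1,p}_t$ membership you cite, though it follows from Theorem~\ref{thm:OptimalReg}.
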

\begin{proof}
	 
	Let us consider the level sets $\{u = \vep\}$, for $\vep > 0$. Since $\partial_t u > 0$ in $\{u > 0\}$, we may apply the Implicit Function theorem to deduce the existence of $\varrho_0,\vep_0 >0$ and a $C^1$ function $h: B_{\varrho_0}\times(0,\vep_0) \to \R$ that locally parametrizes $\{u = \vep\}$, that is, $u(x,t) = \vep$ if and only if $t = h(x,\vep)$. Furthermore, $\partial_\vep h > 0$ and, by \eqref{eq:BoundBelowut}, we also have 
	\[
	|\partial_i h| = \frac{|\partial_i u|}{\partial_t u} \leq C,  \qquad i = 1,\dots,n,
	\]
	for some $C > 0$ independent of $\vep$, up to taking $\varrho_0$ smaller. Consequently,
	\[
	|h(x,\vep) - h(y,\vep)| \leq C |x-y|,
	\]
	for some new $C$ (still independent of $\vep$) and all $x,y \in B_{\varrho_0}$. Consequently, setting $\tau(x) := \lim_{\vep \to 0} h(x,\vep)$, we may pass to the limit as $\vep \to 0$ into the above inequality and conclude the proof of our statement.
\end{proof}

\subsection{$\varepsilon$-flatness in space of the singular set}

In order to establish further regularity in space, the idea is to apply the ``almost positivity'' lemma to the second spatial derivatives of the solution (which become supersolutions to the linearised equation, as in \eqref{eq:secondDerivsSupersol}). For this reason, in the majority of the following statements, we have to assume that $F$ in \eqref{eq:AssOnFIntro} is independent of the space variables.

We begin with an auxiliary result stating that, when we move away from the free boundary, the convergence of the blow-up sequence is actually in $C^2$.

\begin{lem}\label{lem:C2convergence}
    Let $u \in \mathcal{P}_1(K)$ with $(0,0) \in \Sigma(u)$. Let $\{u_r\}_{r \in (0,1)}$ be the family of rescalings defined in \eqref{eq:RescalingBlowUp} and assume $u_{r_k}$ is a blow-up sequence satisfying \eqref{eq:CompactnessBlowUp}. Then for every $\varepsilon,\delta > 0$, there exists $k_0 \in \N$ such that
\[ 
\|u_{r_k} - u_0 \|_{C^2(Q_{1/2}\cap N_\delta(\{u_{r_k} = 0 \})^c)}\leq \varepsilon,
\]
for all $k\geq k_0$.
\end{lem}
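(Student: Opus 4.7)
The plan is to upgrade the $C^{1,\alpha}_p$ convergence $u_{r_k}\to u_0$ to $C^2$ convergence on the set $Q_{1/2}\cap N_\delta(\{u_{r_k}=0\})^c$ by combining interior fully nonlinear estimates with an Arzel\`a--Ascoli compactness argument. The crucial observation is that if $(x_0,t_0)\in Q_{1/2}\cap N_\delta(\{u_{r_k}=0\})^c$, then $d((x_0,t_0),\{u_{r_k}=0\})\geq \delta$, and a direct computation shows the parabolic cylinder $Q_{\delta/2}(x_0,t_0)$ sits inside $\{u_{r_k}>0\}$. On this cylinder $u_{r_k}$ therefore solves, classically, the fully nonlinear parabolic equation
\[
\partial_t u_{r_k}-F^0_{r_k}(D^2 u_{r_k},x)=f^0_{r_k}(x),
\]
without any obstacle term, where $F^0_{r_k}$ is uniformly elliptic (with constants $\lambda,\Lambda$) and convex in $M$, and $f^0_{r_k}$ is smooth with uniform $C^\alpha$ bounds.

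The next step would be to invoke interior $C^{2,\alpha}_p$ estimates for such convex fully nonlinear parabolic equations (e.g.\ \cite[Theorem 4.13]{Wang92bis}) together with the uniform $L^\infty$ bound coming from $u_{r_k}\in \mathcal P_1(K)$, to obtain
\[
\|u_{r_k}\|_{C^{2,\alpha}_p(Q_{\delta/4}(x_0,t_0))}\leq C,
\]
where $C>0$ depends only on $n,\lambda,\Lambda,K$ and $\delta$, and is in particular independent of $k$ and of the chosen point $(x_0,t_0)$.

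Finally, I would argue by contradiction: assume there are $\varepsilon_0,\delta_0>0$, a subsequence (still denoted $r_k$) and points $(x_k,t_k)\in Q_{1/2}\cap N_{\delta_0}(\{u_{r_k}=0\})^c$ at which $|D^2 u_{r_k}(x_k,t_k)-D^2 u_0(x_k,t_k)|>\varepsilon_0$ (or the analogous bound with $\partial_t$). After extracting a further subsequence so that $(x_k,t_k)\to (x_*,t_*)\in\overline{Q}_{1/2}$, for $k$ large enough the cylinder $Q_{\delta_0/8}(x_*,t_*)$ is contained in $Q_{\delta_0/4}(x_k,t_k)$, so the uniform estimate above gives a uniform $C^{2,\alpha}_p$ bound for $u_{r_k}$ on the fixed cylinder $Q_{\delta_0/8}(x_*,t_*)$. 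Arzel\`a--Ascoli then yields $C^2$ convergence along a subsequence, and uniqueness of the $C^{1,\alpha}_p$ limit forces the limit to be $u_0$. In particular $u_0\in C^2(Q_{\delta_0/8}(x_*,t_*))$ and $D^2 u_{r_k}(x_k,t_k)\to D^2 u_0(x_*,t_*)$, while by continuity $D^2 u_0(x_k,t_k)\to D^2 u_0(x_*,t_*)$, contradicting the assumed lower bound $\varepsilon_0$.

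No serious obstacle arises: the entire argument is a standard compactness/bootstrapping procedure. The only care required is bookkeeping, since the sets $N_\delta(\{u_{r_k}=0\})^c$ vary with $k$; this is absorbed by the fact that the interior $C^{2,\alpha}_p$ constants are uniform in $k$ and in the base point, which is the content of the Evans--Krylov type estimate being applied.
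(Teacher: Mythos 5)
Your argument is correct, but it is organized differently from the paper's. You first obtain uniform interior $C^{2,\alpha}_p$ bounds on $u_{r_k}$ away from the contact set, then run a compactness/contradiction argument (pass to a limit point $(x_*,t_*)$, extract a $C^2$-convergent subsequence by Arzel\`a--Ascoli, identify the limit with $u_0$ via the known $C^{1,\alpha}_p$ convergence). The paper instead works \emph{directly} with the difference $v_k := u_{r_k} - u_0$: since $(0,0)$ is singular, $u_0$ is a quadratic polynomial with constant Hessian $A$ satisfying $-F(A) = f(0)$, so $v_k$ solves $\partial_t v_k - \tilde F(D^2 v_k) = \tilde f_k$ in $Q_1 \cap \{u_{r_k}>0\}$ with $\tilde F(M) := F(M+A)-F(A)$ and $\tilde f_k(x) := f(r_k x) + F(A)$. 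Applying the interior fully nonlinear estimate on a cylinder $Q_{2\delta}(x_0,t_0) \subset \{u_{r_k}>0\}$ gives a bound of the form $\|D^2 v_k\|_{L^\infty(Q_\delta(x_0,t_0))} \leq \tfrac{C}{\delta^2}\bigl(\|v_k\|_{L^\infty} + \delta^2\|\tilde f_k\|_{L^\infty}\bigr)$; since $v_k \to 0$ and $\tilde f_k \to 0$ locally uniformly, the right-hand side can be made $\leq \varepsilon$ uniformly over base points at distance $\geq \delta$ from $\{u_{r_k}=0\}$. The direct route is shorter, avoids extracting subsequences of base points, and produces a cleaner quantitative statement; your compactness route is equally valid and requires no explicit use of $u_0$ being a polynomial, but it is less transparent where the constant $k_0$ comes from. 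One small remark on your bookkeeping: $d((x_k,t_k),\{u_{r_k}=0\}) \geq \delta$ actually yields $Q_\delta(x_k,t_k) \subset \{u_{r_k}>0\}$ (not just $Q_{\delta/2}$), which gives you slightly more room; and in the final step you may simply note $D^2 u_0$ is constant, so $D^2 u_0(x_k,t_k) = D^2 u_0(x_*,t_*)$ with no continuity argument needed.
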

\begin{proof}   Similar to the proof of Proposition \ref{prop:C1ofFB}, since $u_0$ is a quadratic polynomial, $D^2u_0 := A$ is a constant matrix satisfying $-F(D^2u_0) = -F(A) = f(0)$. Now, let us set $u_k := u_{r_k}$ and $v_k := u_k - u_0$. Using the equations of $u_k$ and $u_0$, we easily see that 
\[ 
\partial_tv_k - \tilde{F}(D^2v_k) = \tilde{f}_k(x) \quad \text{in } Q_1 \cap \{u_k > 0 \},
\]
for all $k \in \N$, where $\tilde{F}(M) := F(M + A) - F(A)$ and $\tilde{f}_k(x) := f(r_kx) + F(A)$. Hence, interior estimates ( \cite[Theorem 1.1]{CK17} or \cite[Theorem 1.1]{Wang92bis}) yield
\[
\|D^2v_k\|_{L^\infty(Q_\delta(x_0,t_0))} \leq \frac{C}{\delta^2} \left( \|v_k \|_{L^\infty(Q_{2\delta}(x_0,t_0))} + \delta^2 \| \tilde{f}(r_kx) \|_{L^\infty(Q_{2\delta}(x_0,t_0))} \right),
\]
for some $C > 0$ depending only on $n$, $\lambda$ and $\Lambda$, and all $(x_0,t_0) \in Q_{1/2}$ and $\delta > 0$ such that $Q_{2\delta}(x_0,t_0) \subset Q_1\cap \{u_k > 0\}$. Consequently, given any $\vep \in (0,1)$, we may combine the above estimate with the fact that both $v_k$ and $\tilde{f}_k$ converge to zero locally uniformly in $\R^n$, to deduce the existence of $k_0 \in \N$ such that
\[
\|D^2v_k\|_{L^\infty(Q_\delta(x_0,t_0))} \leq \vep,
\]
for all $k \geq k_0$. By the arbitrariness of $(x_0,t_0) \in Q_{1/2} \cap N_\delta(\{u_k = 0\})^c$, we complete the proof of our statement.
\end{proof}
From the above result, we conclude that if the blow-up is positive in some direction, then the second derivative in space of the solution is positive along the same direction, when we look away from the boundary. On the other hand, thanks to the semiconvexity estimate \eqref{eq:LogSemiConvexity} the second derivative is not too negative close to the free boundary and hence the ``almost positivity'' lemma applies. The next lemma, in the spirit of \cite{B01}, is crucial to establish the $\vep$-flatness in space. 
\begin{lem}\label{lem:positivityOfSecondDer}
Let $u \in \mathcal{P}_1(K)$ with $F$ as in \eqref{eq:AssOnFIntro} and independent of $x$, $f$ satisfying \eqref{eq:RegAssFfSemiConv} and $(0,0) \in \Sigma(u)$. Assume also 
\begin{equation}\label{eq:SingBlowUpCanonic}
u_0(x) = \sum_{j = 1}^n \lambda_j x_j^2, \qquad \lambda_j \geq 0, \quad \lambda_n > 0,
\end{equation}
is a blow-up of $u$ at $(0,0)$. Then for every $\vartheta \in [0,\pi/2)$, there exist $\varrho_0,c>0$ depending on $\vartheta,\lambda_n$, such that 
\[
\partial_{ee}u \geq c \, |\nabla u| \quad \text{ in } Q_{\varrho_0},
\]
for every $e \in \mathbb{S}^{n-1}$ satisfying $e_n \geq \cos\vartheta$. 
\end{lem}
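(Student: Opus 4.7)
The approach is to apply the ``almost positivity'' Lemma \ref{lem:almostPositivityImproved} to the family of auxiliary functions
\[
w_r^{i,\pm} := \partial_{ee} u_r \pm c\,\partial_i u_r, \qquad i = 1,\ldots,n,
\]
for a small constant $c > 0$ to be chosen. Once $w_r^{i,\pm} \geq 0$ in $Q_{\varrho/2}$, one deduces $\partial_{ee} u_r \geq c|\partial_i u_r|$ for every $i$, and hence $\partial_{ee} u_r \geq (c/\sqrt{n})|\nabla u_r|$ in $Q_{\varrho/2}$; unscaling then gives the stated bound in $Q_{\varrho_0}$ with $\varrho_0 := r\varrho/2$.

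Set $\mu := \lambda_n \cos^2\vartheta > 0$. Every $e \in \Ss^{n-1}$ with $e_n \geq \cos\vartheta$ satisfies $\partial_{ee} u_0 \equiv 2\sum_j \lambda_j e_j^2 \geq 2\mu$, a lower bound that depends only on $\vartheta$ and $\lambda_n$; since all the constants below are controlled by this uniform bound, the final constants $\varrho_0$ and $c$ will automatically be uniform across the spherical cap. Because $F$ is convex and independent of $x$, differentiating the equation of $u_r$ (as in the derivations leading to \eqref{eq:secondDerivsSupersol} and \eqref{eq:SolvPosLemma}) yields
\[
\partial_t(\partial_{ee}u_r) - a_{ij}\partial_{ij}(\partial_{ee}u_r) \geq r^2\partial_{ee} f(rx), \qquad \partial_t(\partial_i u_r) - a_{ij}\partial_{ij}(\partial_i u_r) = r\,\partial_i f(rx),
\]
in $\{u_r > 0\}$, with $a_{ij} := (DF(D^2 u_r))_{ij}$. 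Therefore $w_r^{i,\pm}$ satisfies the supersolution hypothesis of Lemma \ref{lem:almostPositivityImproved} with an inhomogeneity controlled by $\|f\|_{C^{1,1}}$.

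I would then verify the three structural hypotheses of that lemma: on $\partial\{u_r > 0\}$ both $\partial_{ee} u_r$ and $\partial_i u_r$ vanish (by $C^{1,1}_x$ regularity and $u_r = |\nabla u_r| = 0$ there), so $w_r^{i,\pm} = 0$; in all of $Q_1$, the rescaled version of \eqref{eq:LogSemiConvexity} gives $\partial_{ee} u_r \geq -C|\log r|^{-\varepsilon_0}$, while $|\nabla u_r| \leq C$ follows from \eqref{eq:OptimalReg} together with $\nabla u(0,0) = 0$, so $w_r^{i,\pm} \geq -C(|\log r|^{-\varepsilon_0} + c)$; and in $Q_1 \cap N_\delta(\{u_r = 0\})^c$, Lemma \ref{lem:C2convergence} upgrades the blow-up convergence to $C^2$, so $\partial_{ee} u_r \geq 2\mu - \mu/2$ for $r$ small along the blow-up subsequence $\{r_k\}$, giving $w_r^{i,\pm} \geq \mu/2 - cC =: \overline{C}$ provided $c$ is small.

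The main obstacle is the simultaneous calibration of $c$, $\delta$ and $r$ required by the condition $\varepsilon/\overline{C} \leq \tilde{\varepsilon}_0$ in Lemma \ref{lem:almostPositivityImproved}. Concretely, I would first fix $\overline{C} \simeq \mu$, then choose $c$ small (depending only on $\mu$) so that the $\pm c\,\partial_i u_r$ correction is negligible both in the ``near'' bound and in the ``far'' bound, then $\delta \leq \delta_0$ small so that the $C^2$ lower bound holds throughout $N_\delta(\{u_r = 0\})^c$, and finally $r$ small along the subsequence $\{r_k\}$ so that both $|\log r|^{-\varepsilon_0}$ and $\|D^2 u_r - D^2 u_0\|_{L^\infty(N_\delta^c)}$ become arbitrarily small compared to $\mu$. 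With these choices, Lemma \ref{lem:almostPositivityImproved} delivers $w_r^{i,\pm} \geq 0$ in $Q_{\varrho/2}$, and rescaling back from $u_r$ to $u$ (using $\partial_{ee} u_r(x,t) = \partial_{ee} u(rx, r^2 t)$ and $|\nabla u_r(x,t)| = |\nabla u(rx, r^2 t)|/r$) concludes the proof.
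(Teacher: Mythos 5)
Your proof is correct and follows essentially the same route as the paper's: you apply the almost-positivity Lemma~\ref{lem:almostPositivityImproved} to the functions $\partial_{ee}u_r \pm c\,\partial_i u_r$, verify the three structural hypotheses (vanishing on the free boundary, semiconvexity estimate \eqref{eq:LogSemiConvexity} for the near bound, $C^2$ blow-up convergence via Lemma~\ref{lem:C2convergence} for the far bound, with $\partial_{ee}u_0\geq 2\lambda_n\cos^2\vartheta$ providing the uniform floor over the spherical cap), and then calibrate $c,\delta,r$ in the same spirit. The only cosmetic difference is in how you control the near bound: you read the rescaled semiconvexity estimate as $\partial_{ee}u_r\geq -C|\log r|^{-\varepsilon_0}$ in all of $Q_1$ and absorb the $c\,\partial_i u_r$ correction by optimal regularity, whereas the paper measures the estimate in terms of the parabolic distance $\delta$ to the contact set and absorbs the correction by $|\nabla u_k|\lesssim\delta$ near $\partial\{u_k>0\}$; both work, and both need $c$ small anyway for the far bound.
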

\begin{proof}
    Let us fix $\vartheta \in [0,\pi/2)$, $\lambda_n > 0$ and let $u_k := u_{r_k}$ be a blow-up sequence converging to $u_0$. The main idea is to apply Lemma \ref{lem:almostPositivityImproved} to the functions 
\[
v_{k,i} := \partial_{ee} u_k \pm c\partial_i u_k, \quad k \in \N, \quad i = 1,\ldots,n.
\]
Similar to the proof of Lemma \ref{lem:towardsLipschitzness}, it is not difficult to check that each $v := v_{k,i}$ satisfies 
\[
\partial_t v - a_{ij}(x,t) \partial_{ij} v \geq r_k g(r_kx) \quad \text{ in } Q_1 \cap \{u_k > 0\}, 
\]
where $g(x) := r \partial_{ee}f(x) \mp \partial_if(x)$ satisfies $\|g\|_{L^\infty(B_1)} \leq 2K$ by assumption. Furthermore, we have $v \geq 0$ in $Q_1 \cap \partial\{u_k > 0\}$ (this is an immediate consequence of the fact that $u_k \geq 0$ and $|\nabla u_k| = 0$ on $\partial\{ u_k > 0 \}$).

Now, let $e \in \Ss^{n-1}$ with $e_n \geq \cos \vartheta$. Since $\partial_{nn} u_0 = 2\lambda_n > 0$ and $\lambda_j \geq 0$ for every $j = 1,\ldots,n-1$, it is not difficult to see that $\partial_{ee} u_0 \geq \overline{C}$ for some $\overline{C} > 0$ depending on $\vartheta$ and $\lambda_n$. Consequently, in view of Lemma \ref{lem:C2convergence}, for every $\delta > 0$, there holds
\[
\partial_{ee} u_k \geq \tfrac{\overline{C}}{2} \quad \text{ in } N_\delta(\{u_k = 0\})^c,
\]
for all $k \geq k_0$ and some $k_0$ depending on $\vartheta$, $\lambda_n$ and $\delta$. If $C_0 > 0$ is such that $|\nabla u_k| \leq C_0$ in $Q_1$ ($k \geq k_0$), we may choose $c < \tfrac{\overline{C}}{4C_0}$ to obtain
\[
v \geq C_* := \tfrac{\overline{C}}{4}  \quad \text{ in } Q_1 \cap N_\delta(\{u_k =0\})^c.
\]
Finally, by Proposition \ref{prop:semicnovexity} and Lemma \ref{lem:OptimalGrowth}, we have that
\[
v \geq - C|\log \delta|^{-\varepsilon} - C \delta  \quad \text{ in } N_\delta(\{u_k = 0\}),
\]
for some new $C > 0$ and $\vep \in (0,1)$ depending only on $n$, $\lambda$, $\Lambda$ and $K$. Choosing $\delta$ small enough, the assumptions of Lemma \ref{lem:almostPositivityImproved} are fulfilled for each $k\geq k_0$, and so
\[
v \geq 0 \quad\text{ in } Q_{1/2},
\]
that is, taking $k = k_0$,
\[ 
\partial_{ee}u \geq \tfrac{c}{r_{k}} |\partial_iu| \quad \text{ in } Q_{r_{k}/2},
\]
for all $i = 1,\ldots,n$, which readily implies our claim with $\varrho_0 := r_{k_0}$.
\end{proof}
Once the second derivatives along a cone of directions are positive in a neighbourhood of a singular point, we can prove that the solution itself is positive along the same cone of directions at any other singular point in a smaller neighbourhood.
\begin{cor}\label{cor:coneProperty}
Let $u \in \mathcal{P}_1(K)$ with $F$ as in \eqref{eq:AssOnFIntro} and independent of $x$, $f$ satisfying \eqref{eq:RegAssFfSemiConv} and $(0,0) \in \Sigma(u)$. Assume also 
\[
u_0(x) = \sum_{j = 1}^n \lambda_j x_j^2, \qquad \lambda_j \geq 0, \quad \lambda_n > 0,
\]
is a blow-up of $u$ at $(0,0)$. Then for every $\vartheta \in [0,\pi/2)$, there exists $\varrho_0 > 0$ depending on $\vartheta,\lambda_n$, such that for every $(x_0,t_0)\in Q_{\varrho_0} \cap \Sigma(u)$, we have
\[ 
u > 0  \quad \text{ in } \left\{(x,t_0) \in B_{\varrho_0} : |(x-x_0)_n| > \cos \vartheta \, \|x-x_0\|  \right \}.
\]
\end{cor}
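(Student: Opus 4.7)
The proof combines Lemma \ref{lem:positivityOfSecondDer} with the localisation of singular points coming from the blow-up $u_0 = \sum_j \lambda_j x_j^2$ at $(0,0)$ and with the classification in Theorem \ref{thm:1.1}. First, I would apply Lemma \ref{lem:positivityOfSecondDer} with $\vartheta$ replaced by some $\vartheta_1 \in (\vartheta, \tfrac{\pi}{2})$, obtaining $c,\varrho_0 > 0$ such that $\partial_{ee} u \geq c|\nabla u| \geq 0$ in $Q_{\varrho_0}$ for every $e \in \Ss^{n-1}$ with $|e_n| \geq \cos\vartheta_1$; in particular $u$ is convex along every such direction. Using the $C^0$ convergence $u_{r_k} \to u_0$ on compact sets and the positivity $\lambda_n>0$, a small $\delta>0$ can be fixed and $\varrho_0$ shrunk so that $u(z,t) > 0$ on $Q_{\varrho_0} \cap \{|z_n| > \varrho_0\delta\}$; in particular every $(x_0, t_0) \in Q_{\varrho_0} \cap \Sigma(u)$ lies in the slab $\{|z_n| \leq \varrho_0\delta\}$. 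Combining with the $C^2$ convergence of Lemma \ref{lem:C2convergence} further gives the quantitative bound $\partial_{ee} u(z,t) \geq \lambda_n \cos^2\vartheta_1$ on the same set, uniformly for $e$ in the cone.

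Next, fix $(x_0, t_0) \in Q_{\varrho_0} \cap \Sigma(u)$ and $x \in B_{\varrho_0}$ with $|(x-x_0)_n| > \cos\vartheta\,\|x-x_0\|$; without loss of generality $(x-x_0)_n > 0$, and set $e := (x-x_0)/\|x-x_0\|$, so $e_n > \cos\vartheta > \cos\vartheta_1$. Since $u(x_0, t_0) = |\nabla u(x_0, t_0)| = 0$, the spatial Taylor formula based at $(x_0, t_0)$ reads
\[
u(x, t_0) = \|x-x_0\|^2 \int_0^1 (1-\sigma)\, \partial_{ee} u\bigl(x_0 + \sigma(x-x_0), t_0\bigr)\, d\sigma \geq 0,
\]
with non-negative integrand. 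When $\|x-x_0\| > 2\varrho_0\delta/\cos\vartheta$ the integration segment exits the slab $\{|z_n|\leq \varrho_0\delta\}$ on a set of positive $\sigma$-measure, on which the integrand is bounded below by $\lambda_n\cos^2\vartheta_1$; this yields $u(x, t_0) > 0$.

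The main obstacle is the short-range regime $\|x-x_0\| \leq 2\varrho_0\delta/\cos\vartheta$, when the segment is trapped in the slab and the previous lower bound degenerates. I would argue by contradiction: assuming $u(x, t_0) = 0$, convexity of $\psi(s) := u(x_0 + s e, t_0)$ combined with $\psi(0) = \psi(\|x-x_0\|) = 0 \leq \psi$ forces $\psi \equiv 0$ on $[0, \|x-x_0\|]$, and extending by convexity -- using that $\psi'$ is non-decreasing and vanishes on $[0, \|x-x_0\|]$ and that $\psi(s) > 0$ for $s > 2\varrho_0\delta/\cos\vartheta$ by the long-range step -- shows that $\psi$ vanishes on a maximal interval $[0, s_z]$ with $\|x-x_0\| \leq s_z \leq 2\varrho_0\delta/\cos\vartheta$. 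The endpoint $(x_z, t_0) := (x_0 + s_z e, t_0)$ is then a free boundary point at which $u$ and $\nabla u$ vanish while $u$ is strictly positive in the $+e$ direction. Classifying $(x_z, t_0)$ via Theorem \ref{thm:1.1}: if it is regular, openness of $\Reg(u)$ in $\partial\{u>0\}$ (Theorem \ref{thm:SmoothnessFB}) together with a uniform lower bound on the regular neighbourhood coming from the compactness argument of Lemma \ref{lem:uniformConvergenceToBlowUp}, and with the smallness of $s_z$ (made small by choosing $\delta$ small enough), forces $(x_0, t_0) \in \Reg(u)$, contradicting $(x_0, t_0) \in \Sigma(u)$; if it is singular, a worst-case selection -- maximising $s_z$ over singular points and cone directions in $Q_{\varrho_0}$ -- either reduces to the regular alternative or, applied at $(x_z, t_0)$ in the direction $-e$, produces a strictly larger interval of vanishing, contradicting maximality.
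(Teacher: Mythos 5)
Your opening moves match the paper's: invoke Lemma \ref{lem:positivityOfSecondDer} for a slightly wider cone of directions, deduce $\partial_{ee}u\geq 0$ there, and, assuming $u(x,t_0)=0$ for some $x$ in the cone at $x_0$, use convexity of $s\mapsto u(x_0+se,t_0)$ to conclude that $u$ vanishes on the whole segment $[x_0,x]$. The long-range step, in which the segment exits the slab $\{|z_n|\leq\varrho_0\delta\}$ and the quantitative lower bound on $\partial_{ee}u$ far from the free boundary forces $u(x,t_0)>0$, is fine.

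The short-range regime is where your argument has a genuine gap, and it is precisely the heart of the matter. You locate the maximal vanishing segment $[x_0,x_z]$ and try to classify the endpoint $(x_z,t_0)\in\partial\{u>0\}$ via Theorem \ref{thm:1.1}. If $(x_z,t_0)$ is regular, you appeal to ``a uniform lower bound on the regular neighbourhood'' to force $(x_0,t_0)\in\Reg(u)$. No such uniform bound exists or can exist: the radius of the regular neighbourhood around a regular point is \emph{not} controlled a priori, and in your contradiction scenario it is forced to be strictly smaller than $s_z$, since $(x_0,t_0)$ is by hypothesis a singular free boundary point at distance $s_z$. Shrinking $\delta$ does not help, because the size of the regular neighbourhood around $(x_z,t_0)$ shrinks with it. In the singular alternative, the ``worst-case selection'' over singular points and directions is not well-defined (the supremum need not be attained), and even if it were, applying the construction at $(x_z,t_0)$ in the direction $-e$ simply reproduces the same segment rather than a strictly larger one. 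Neither branch of the dichotomy closes.

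The paper proceeds very differently and quantitatively: from the vanishing of $u$ on $[x_0,x]$ and the positivity of $\partial_{e_1e_1}u,\partial_{e_2e_2}u$ along skew directions toward the two endpoints, it derives the two-sided bound $|\partial_e u(z)|\leq Cr^2$ for $z$ at distance $r$ from the segment, which combined with $\partial_{ee}u\geq c|\nabla u|$ yields $\int_{p_1}^{p_2}|\nabla u|\leq C|y'|^2$ and hence $\fint_{\mathcal{C}_r}u\leq Cr^3$ on thin cylinders around the segment. Picking a ball $B_r(y_r)$ inside the cylinder with $\fint_{B_r(y_r)}u\leq Cr^3$ and invoking time-monotonicity (so $u(\cdot,t_0)$ is a subsolution and the sup is controlled by the average), this contradicts the non-degeneracy bound $\sup_{Q_r^-(y_r,t_0)}u\geq cr^2$. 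This averaged-growth argument works uniformly over all segment lengths and does not split into regimes. You would need to replace your classification step with an analogue of this quantitative violation of non-degeneracy; as written, the short-range branch of your proof cannot be repaired by tuning $\delta$ or by appealing to the openness of $\Reg(u)$.
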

\begin{proof}
  Let us fix $\vartheta \in (0,\pi/2)$ and $\theta \in (0,\vartheta)$. By Lemma \ref{lem:positivityOfSecondDer}, there are $\varrho_0,c > 0$ depending on $\lambda_n$ and $\theta$, such that
\begin{equation}\label{eq:positivityOfSecondDerBis}
\partial_{ee}u \geq c \, |\nabla u| \quad \text{ in } Q_{\varrho_0},
\end{equation} 
for every $e \in \mathbb{S}^{n-1}$ with $e_n > \cos \theta$.

Now, by contradiction, we assume there is $(x_0,t_0) \in Q_{\varrho_0} \cap \Sigma(u)$ and $(x,t_0) \in \{ (x,t_0) : |(x-x_0)_n| > \cos \theta \, \|x-x_0\| \}$ such that $u(x,t_0) = 0$, and we proceed with a delicate geometrical construction as follows.

Let $e := (x-x_0)/\|x-x_0\|$. By \eqref{eq:positivityOfSecondDerBis}, we have $\partial_{ee}u \geq 0$ in $Q_{\varrho_0}$ and thus $u = 0$ on the segment $[x_0,x]$. Now, let us fix $x_1, x_2 \in \intrr([x_0,x])$ satisfying $|x_1 - x_2| = \ell > 0$, and let us choose a system of coordinates $y = (y',y_n)$ such that $x_1$ coincides with the new origin and $e$ is the new $n^{th}$ unit vector.

Let $\bar{e} \in \Ss^{n-1}$ be perpendicular to $e$ and $z=(1-s)x_1 + sx_2 + r \bar{e}$, where $s \in [0,1]$ and $r > 0$. Then if $r \in (0,r_0)$ and $r_0 >0$ is small enough (depending on $\theta$, $x_1$, $x_2$), we have $\partial_{e_ie_i} u \geq 0$, $i=1,2$, where  
\[
e_1 := \tfrac{x_0 - z}{\|x_0 - z\|}, \qquad e_2 := \tfrac{z-x}{\|z - x\|},
\]
and thus we may combine this observation with $u(x_0) = u(x) = 0$ to deduce
\[
\partial_{e_1}u(z) \geq 0 \quad \text{ and } \quad \partial_{e_2}u(z) \leq 0.
\]
Consequently, writing $e_i = \alpha_i(r)e + \beta_i(r)\bar{e}$ ($i = 1,2$) for some $\alpha_i,\beta_i:\R \to \R$ satisfying $\alpha_i(r) \in (1/2,1)$ in $(0,r_0)$ and $\beta_i(r) \leq C r$ in $(0,r_0)$ for some constant $C > 0$ depending on $\theta$ and $x_1$, we obtain
\[
\partial_e u(z)\geq - \tfrac{|\beta_1(r)|}{\alpha_1(r)} |\partial_{\bar{e}}u(z)| \geq -C r^2,
\]
for some new $C > 0$. Repeating the argument with $e_2$, it follows
\[
|\partial_e u(z)| \leq Cr^2.
\]
As a consequence, if $y' \in B_r'$ and $p_1 := x_1 + (y',0)$, $p_2 := x_2 + (y',0)$, we deduce
\[
\int_{p_1}^{p_2} \partial_{ee}u = \partial_eu(x_2+y')- \partial_eu(x_1+y') \leq C|y'|^2,
\]
which, combined with \eqref{eq:positivityOfSecondDerBis}, yields
\begin{equation}\label{eq:GradBoundPower2}
\int_{p_1}^{p_2} |\nabla u| \leq C|y'|^2,
\end{equation}
for some new $C > 0$. Now, we consider the cylinder
\[
\mathcal{C}_r := \{(1-s)x_1 + sx_2 + (y',0): s \in (0,1), \, y' \in B_r' \},
\]
with $r \in (0,r_0)$ and we show that
\begin{equation}\label{eq:averageR^3Bis}
\fint_{\mathcal{C}_r} u \leq Cr^3.
\end{equation}
Indeed, if $\nu(y') := y'/|y'|$, we may exploit \eqref{eq:GradBoundPower2} and that $r \leq r_0$ to estimate
\begin{align*}
\fint_{\mathcal{C}_r} u(y) dy = & \frac{c_n}{\ell r^{n-1}} \int_{\mathcal{C}_r} \int_0^{|y'|} \nabla u(s\nu(y'),y_n) \cdot \nu(y') \, ds dy \\
	\leq &\frac{c_n}{\ell r^{n-1}} \int_{B_r'} \int_0^{|y'|}\int_{p_1}^{p_2} |\nabla u|(s \nu(y'),y_n ) \, dy_n ds dy' \\
	\leq & \frac{c_n C}{\ell r^{n-1}} \int_{B_r'} \int_0^{|y'|}  s^2 ds dy' \leq \frac{c_n C}{\ell r^{n-1}} \int_{B_r'} |y'|^3 dy' \leq Cr^3,
\end{align*}
for some new $C > 0$. At this point, we take $m := \lfloor \ell/2r \rfloor$ disjoint balls inside $\mathcal{C}_r$ with centres $y_j^r \in [x_1,x_2]$, $j = 1,\ldots,m$. Since the volumes of $\cup_{j=1}^m B_r(y_j^r)$ and $\mathcal{C}_r$ are comparable by construction, the above inequality yields
\[
\fint_{\cup_{j=1}^m B_r(y_j^r)} u(y) dy \leq C r^3,
\]
for some new $C > 0$ and thus, there must be $j \in \{1,\ldots,m\}$ such that, setting $y_r := y_j^r$, we have
\[
\fint_{B_r(y_r)}u\leq Cr^3.
\]
Finally let us show that this is in contradiction with the non-degeneracy estimate \eqref{eq:NonDegeneracy}. Notice that since $\partial_t u \geq 0$, $u(\cdot,t_0)$ satisfies
$$F(D^2u(x,t_0))\geq 0. $$
Hence by the maximum principle \cite[Proposition 4.34]{IS12}, we have that 
$$\sup_{B_r(y_0)}u\leq C\fint_{B_{2r}(y_0)}u$$
for any $y_0$ and $r>0$.
But then for $y_r$ at scale $r$ we have
$$cr^2\leq \sup_{Q_r^-(y_r,t_0)}u\leq \sup_{B_r(y_r)}u(\cdot,t_0)\leq Cr^3,$$
which gives us the desired contradiction.
\end{proof}

The above result says that the free boundary can be touched at singular points with two-sided cones of arbitrarily large opening, when we look any time-slice of that singular point. Combining this property with the time monotonicity of $u$ and the fact that the free boundary is a Lipschitz graph $t = \tau(x)$, we show that the same is true also for the projection of the singular set
\[
\operatorname{pr}(\Sigma(u)) = \{x\in B_1:\text{ } (x,t)\in\Sigma(u)\text{ for some }t\in(-1,1)\}.
\]
For simplicity we denote  the two-sided  cone centred at $x_0$ of opening $\vartheta$ in direction $e_0$ with
\[
\mathcal{C}(x_0,e_0,\vartheta) := \{x\in\R^n: \text{ }\frac{\sqrt{|x-x_0|^2 - ((x-x_0)\cdot e_0)^2}}{|(x-x_0)\cdot e_0|}<\tan \vartheta\}.
\]
\begin{lem}\label{lem:projcetionConeProperty}
    Let $u \in \mathcal{P}_1(K)$ with $F$ as in \eqref{eq:AssOnFIntro} and independent of $x$, $f$ satisfying \eqref{eq:RegAssFfSemiConv} and $(0,0) \in \Sigma(u)$. Assume also  $\|f\|_{C^{0,1}(B_1)} \leq K$ and that $\partial_t u>0$ in $\{u>0\}$. Then there exists  $e_0 \in \mathbb{S}^{n-1}$ such that for every $\vartheta \in (0,\pi/2)$, there exists $\varrho_0 > 0$ such that 
\begin{equation}\label{eq:ProjectionConeEmptyinter}
B_{\varrho_0} \cap \operatorname{pr}(\Sigma(u)) \cap \mathcal{C}(x_0,e_0,\vartheta) = \emptyset
\end{equation}
for all $x_0 \in B_{\varrho_0} \cap \operatorname{pr}(\Sigma(u))$.  
\end{lem}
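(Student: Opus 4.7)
The plan is to combine Corollary \ref{cor:coneProperty} (which gives a cone of spatial positivity at each nearby singular point) with the Lipschitz graph structure $t = \tau(x)$ from Corollary \ref{cor:LipschitzFB} and the time monotonicity $\partial_t u \geq 0$, to propagate positivity from one singular point forward in time to another.

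First I would pick the direction $e_0$. Since $(0,0) \in \Sigma(u)$, Lemma \ref{lem:blow-upSingularPoint} produces a blow-up of the form $u_0(x) = x^T A x$ with $A \geq 0$, $A \neq 0$. Diagonalising $A$ by an orthonormal change of spatial coordinates, I may assume $u_0(x) = \sum_{j=1}^n \lambda_j x_j^2$ with $\lambda_j \geq 0$ and, after reordering, $\lambda_n = \max_j \lambda_j > 0$. I then set $e_0 := e_n$, so that the geometric cone in the statement coincides with the analytic cone appearing in Corollary \ref{cor:coneProperty}.

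Next, given $\vartheta \in (0,\pi/2)$, I would apply Corollary \ref{cor:coneProperty} to obtain $\varrho_1 > 0$ such that, for every singular point $(y_0,s_0) \in Q_{\varrho_1} \cap \Sigma(u)$, the slice $u(\cdot,s_0)$ is strictly positive on $B_{\varrho_1} \cap \mathcal{C}(y_0,e_0,\vartheta)$. In parallel I apply Corollary \ref{cor:LipschitzFB} to get $\varrho_2 > 0$ and a Lipschitz function $\tau$ with $Q_{\varrho_2}\cap\{u>0\} = \{t > \tau(x)\}$; in particular every free boundary point in $Q_{\varrho_2}$, and hence every singular point there, is of the form $(x,\tau(x))$. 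Using $\tau(0)=0$ together with the Lipschitz continuity of $\tau$, I choose $\varrho_0>0$ small enough that whenever $x_0,x_1 \in B_{\varrho_0}$ both points $(x_i,\tau(x_i))$ lie in $Q_{\min\{\varrho_1,\varrho_2\}}$.

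For the contradiction argument, suppose there existed distinct $x_0, x_1 \in B_{\varrho_0} \cap \operatorname{pr}(\Sigma(u))$ with $x_1 \in \mathcal{C}(x_0,e_0,\vartheta)$. Write $t_i := \tau(x_i)$, so $(x_i,t_i) \in \Sigma(u)$. Since the cone $\mathcal{C}(\cdot,e_0,\vartheta)$ is two-sided, by swapping indices if necessary I may assume $t_0 \leq t_1$. Corollary \ref{cor:coneProperty} applied at $(x_0,t_0)$ gives $u(x_1,t_0) > 0$, and the assumed monotonicity $\partial_t u \geq 0$ then yields $u(x_1,t_1) \geq u(x_1,t_0) > 0$, contradicting $(x_1,t_1) \in \partial\{u > 0\}$. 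The main obstacle is really packaged into Corollary \ref{cor:coneProperty}; the only delicate point here is the bookkeeping ensuring that both singular points lie inside the radius where that corollary can be invoked, which is resolved by continuity of $\tau$ at the origin.
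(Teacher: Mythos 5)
Your proposal is correct and follows essentially the same route as the paper's proof: pick $e_0$ from the diagonalised blow-up at the origin, invoke Corollary \ref{cor:coneProperty} for spatial cone positivity at nearby singular points, use the Lipschitz graph $t=\tau(x)$ from Corollary \ref{cor:LipschitzFB} to match projected points with their singular times, and then propagate positivity forward in time via $\partial_t u \geq 0$ to reach a contradiction. The bookkeeping of radii is slightly more explicit in your write-up (the paper instead chooses $\varrho_0 = \rho_0^2/L$ directly), but the argument is the same.
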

\begin{proof}
    Let $u_0$ be a blow-up of $u$ at $(0,0) \in \Sigma(u)$. Then, up to a rotation of the coordinates system, we may assume that $u_0$ is of the form \eqref{eq:SingBlowUpCanonic}. Consequently, by Corollary \ref{cor:coneProperty}, given any $\vartheta \in (0,\pi/2)$, we deduce the existence of $\rho_0 > 0$ such that $u > 0$ in $\mathcal{C}(x_0,e_n,\vartheta) \times \{t_0\}$, for any other singular point $(x_0,t_0)\in Q_{\rho_0}$. Furthermore, since $\partial\{u > 0\}$ can be written as the graph of a Lipschitz function $\tau = \tau(x)$ by Corollary \ref{cor:LipschitzFB}, the full free boundary in $B_{\varrho_0}\times\R$ is actually contained in $Q_{\rho_0}$, if $\varrho_0 := \tfrac{1}{L} \rho_0^2$, where $L$ is the Lipschitz constant of the function $\tau$.

Now, let $x_0 \in B_{\varrho_0} \cap \operatorname{pr}(\Sigma(u))$ and assume by contradiction there exists $y_0 \in B_{\varrho_0} \cap \operatorname{pr}(\Sigma(u)) \cap \mathcal{C}(x_0,e_n,\vartheta)$. Then, from we have just discussed above, there must be $t_0$ and $s_0$ such that $(x_0,t_0),(y_0,s_0) \in Q_{\varrho_0} \cap \Sigma(u)$ (notice that by symmetry we may assume $t_0 < s_0$) and, by Corollary \ref{cor:coneProperty}, there holds $u > 0$ in $\mathcal{C}(x_0,e_n,\vartheta) \times \{t_0\}$.  However, by time-monotonicity, the same is true in $\mathcal{C}(x_0,e_n,\vartheta)\times\{s_0\}$ which, in particular, contains $(y_0,s_0)$. This contradicts the fact that $(y_0,s_0)$ is a free boundary point.
\end{proof}
As each cone with vertex at singular points can be taken arbitrarily close to a half-space, the projection of the singular set is $\varepsilon$-flat for any $\varepsilon>0$, as stated in the next corollary.
\begin{cor}\label{cor:coveringManifold}
 Let $u \in \mathcal{P}_1(K)$ with $F$ as in \eqref{eq:AssOnFIntro} and independent of $x$, $f$ satisfying \eqref{eq:RegAssFfSemiConv} and lvet $(0,0) \in \Sigma(u)$. Assume also  $\|f\|_{C^{0,1}(B_1)} \leq K$ and that $\partial_t u>0$ in $\{u>0\}$. Then for every $\varepsilon > 0$, there exists $\varrho_0 > 0$ and a Lipschitz function $G: \R^{n-1} \to \R$ with $[G]_{C^{0,1}(\R^{n-1})} \leq \vep$ such that
\[
B_{\varrho_0} \cap \operatorname{pr}(\Sigma(u)) \subset B_{\varrho_0} \cap \operatorname{graph}(G),
\]
up to a rotation of the coordinates system.
\end{cor}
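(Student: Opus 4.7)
The strategy is to leverage Lemma \ref{lem:projcetionConeProperty}: since the opening $\vartheta$ of the forbidden cone can be taken arbitrarily close to $\pi/2$ while the axis $e_0$ stays fixed (depending only on the blow-up at $(0,0)$, not on $\vartheta$), the projection $\operatorname{pr}(\Sigma(u))$ must locally satisfy an $\varepsilon$-Lipschitz graph condition in the hyperplane $e_0^{\perp}$, and then a McShane--Whitney extension produces the sought function $G$. Given $\varepsilon > 0$, I would set $\vartheta := \pi/2 - \arctan \varepsilon$, so that $\cot \vartheta = \varepsilon$, and invoke Lemma \ref{lem:projcetionConeProperty} to obtain $\varrho_0 > 0$ with
\[
B_{\varrho_0} \cap \operatorname{pr}(\Sigma(u)) \cap \mathcal{C}(x_0, e_0, \vartheta) = \emptyset \qquad \text{for every } x_0 \in B_{\varrho_0} \cap \operatorname{pr}(\Sigma(u)).
\]
After a rotation I may assume $e_0 = e_n$ and split $x = (x', x_n)$ accordingly.

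Unpacking the definition of $\mathcal{C}(x_0, e_n, \vartheta)$, the exclusion above translates into the pointwise bound
\[
|(y_0)_n - (x_0)_n| \leq \cot\vartheta \cdot |y_0' - x_0'| = \varepsilon\, |y_0' - x_0'|
\]
for every $x_0, y_0 \in B_{\varrho_0} \cap \operatorname{pr}(\Sigma(u))$ (the case $(y_0 - x_0) \cdot e_n = 0$ is immediate, the other follows by rearranging the $\tan\vartheta$ inequality and using $|y_0 - x_0|^2 - ((y_0 - x_0)\cdot e_n)^2 = |y_0' - x_0'|^2$). This is precisely the $\varepsilon$-Lipschitz graph property: the projection map $x \mapsto x'$ is injective on $B_{\varrho_0} \cap \operatorname{pr}(\Sigma(u))$, and the inverse assignment $x' \mapsto x_n$ is $\varepsilon$-Lipschitz on its image.

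A standard McShane--Whitney extension
\[
G(x') := \inf \big\{ y_n + \varepsilon |x' - y'| \,:\, (y', y_n) \in B_{\varrho_0} \cap \operatorname{pr}(\Sigma(u)) \big\}
\]
then yields a function $G : \mathbb{R}^{n-1} \to \mathbb{R}$ with $[G]_{C^{0,1}(\mathbb{R}^{n-1})} \leq \varepsilon$ that agrees with $x \mapsto x_n$ on the image of the projection, so that $B_{\varrho_0} \cap \operatorname{pr}(\Sigma(u)) \subset \operatorname{graph}(G)$. The argument contains no real obstacle: all of the geometric content is packaged into Lemma \ref{lem:projcetionConeProperty}, and it is essential (but already verified in its statement) that $e_0$ there does not depend on $\vartheta$, so that the graph direction does not drift as $\varepsilon \to 0$; what remains are elementary trigonometric manipulations and the classical Lipschitz extension theorem.
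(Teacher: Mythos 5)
Your argument is correct and follows essentially the same route as the paper: fix $\vartheta$ so that $\cot\vartheta=\varepsilon$, invoke Lemma \ref{lem:projcetionConeProperty} to get the two-sided cone exclusion, translate this into an $\varepsilon$-Lipschitz graph condition on the image of the projection $x\mapsto x'$, and extend. The only cosmetic difference is that you use a McShane--Whitney formula where the paper cites Kirszbraun's theorem; for a scalar-valued function these are interchangeable.
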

\begin{proof}
    Let us fix $\vep > 0$ and set $\vartheta := \arctan(1/\varepsilon)$. Then, up to a rotation, Lemma \ref{lem:projcetionConeProperty} yields the existence of $\varrho_0 > 0$ such that 
\[
B_{\varrho_0} \cap \operatorname{pr}(\Sigma(u)) \cap \mathcal{C}(x_0,e_n,\vartheta) = \emptyset,
\]
for any other $x_0 \in B_{\varrho_0} \cap \operatorname{pr}(\Sigma(u))$. In particular, we deduce that for every $x' \in B_{\varrho_0}'$ there is at most one $x_n$ such that $(x',x_n) \in B_{\varrho_0} \cap \operatorname{pr}(\Sigma(u))$. Now, set
\[
S = \{x': \text{ there is } x_n \text{ such that } (x',x_n) \in B_{\varrho_0} \cap \operatorname{pr}(\Sigma(u) )\},
\]
and define $G(x') = x_n$ on $S$. By the above property, $G$ is Lipschitz continuous with $[G]_{C^{0,1}(B_{\varrho_0}')} \leq \varepsilon$ and thus, by Kirszbraun's theorem, $G$ may be extended to $\R^{n-1}$ without increasing its Lipschitz seminorm. It thus follows that $B_{\varrho_0} \cap  \operatorname{pr}(\Sigma)$ is covered by the graph of (the extension of) $G$ and our statement follows.
\end{proof}
In particular, since the free boundary is Lipschitz, this implies Theorem \ref{thm:1.2}.
\begin{proof}[Proof of Theorem \ref{thm:1.2}]
	Thanks to Corollary \ref{cor:LipschitzFB} the free boundary can be written as the graph over the time coordinate of a Lipschitz function $\tau$ as in \eqref{eq:LipschitzFB}. But as for any $\varepsilon>0$ the projection of the singular set can be locally covered by a graph of a  function $G$ with $[G]_{C^{0,1}(\R^{n-1})} \leq \vep$, see Corollary \ref{cor:coveringManifold}, the full singular set is locally covered by 
	$$\Sigma\cap Q_{r}\subset\{(x',G(x'),\tau(x',G(x'))\}.$$
	The claim follows.
\end{proof}
Once the singular set can be covered by a $(n-1)$-dimensional Lipschitz manifold, it follows that the singular set cannot be too large at most times, in view of \cite[Corollary 7.8]{FigRosSerra}.
\begin{proof}[Proof of Corollary \ref{cor:1.3}]
	Because the projection of the singular set is locally contained in a Lipschitz manifold of dimension $n-1$, it has at most Hausdorff dimension $n-1$. Moreover, since the full singular set is Lipschitz, it can be touched from above by cones. Hence \cite[Corollary 7.8]{FigRosSerra} applies and yields the result.
\end{proof}

\appendix
\section{}

\begin{proof}[Proof of Lemma \ref{lem:iteratedWeakHarnack}]
	By scaling, we may assume $r=1$. Let us fix $\delta \in (0,\tfrac{1}{4})$ and $(x,t) \in B_{1-2\delta} \times (-1 + 4\delta^2, -\tfrac{3}{4})$. By Remark \ref{rem:Scaling}, it is enough to prove
	\begin{equation*}
	\bigg( \fint_{Q_\delta^+(x,t)} u^p \bigg)^{\frac{1}{p}}\leq \frac{C}{\delta^m} \big(u(0,1) + ||f||_{L^\infty(Q_1)}\big),
	\end{equation*}
	and recover \eqref{eq:ParHalfHarnackdel} by translation. The idea is to recursively apply Theorem \ref{thm:basicWeakHarnack} on cylinders $D_{\delta_k}^-(x_k,t_k)$ and $D_{\delta_k}^+(x_k,t_k)$, where the sequences $\{\delta_k\}_{k \in \N}$ and $\{(x_k,t_k)\}_{k \in \N} \subset Q_1$ will be suitably chosen.

	We set $\delta_0 := \delta$ and we choose $(x_0,t_0) \in Q_1$ such that $D_{\delta_0}^-(x_0,t_0) = Q_{\delta_0}^+(x,t)$ (that is, $x_0=x$ and $t_0 = t + 3\delta_0^2$). Further, we define
	\[
	\delta_1 := 2\delta_0, \quad x_1 := \big(1-\tfrac{2}{|x_0|}\delta_0 \big) x_0, \quad t_1 := t_0 + 3\delta_1^2 + 3\delta_0^2.
	\]
	Notice that $x_1$ belongs to the segment joining $x_0$ and the origin, with $|x_1-x_0| = \delta_1$. Now, we firstly apply \eqref{eq:WeakHarnack} with $r = 2\delta_0$ to obtain 
	\begin{equation}\label{eq:WeakHarnStep00}
	\bigg( \fint_{D_{\delta_0}^-(x_0,t_0)} u^p \bigg)^{\frac{1}{p}} \leq C \bigg(\inf_{D_{\delta_0}^+(x_0,t_0)} u + \delta_0^2||f||_{L^\infty(Q_1)}\bigg),
	\end{equation}
	for some $C > 0$ depending only on $n$, $\lambda$ and $\Lambda$. Second, we notice that there exists $\bar{x}_0 \in B_{\delta_0}(x_0)$ such that $B_{\delta_0/2}(\bar{x}_0) \subset B_{\delta_0}(x_0) \cap B_{\delta_1}(x_1)$, by definition of $\delta_1$ and $x_1$. This implies that the set $\hat{Q}_{0,1} := D_{\delta_0}^+(x_0,t_0) \cap D_{\delta_1}^-(x_1,t_1)$ satisfies
	\[
	|\hat{Q}_{0,1}| \geq |B_{\delta_0/2}| \cdot \delta_0^2 = 2^{-n}\omega_n \delta_0^{n+2}.
	\]
	Consequently,
	\[
	\inf_{D_{\delta_0}^+(x_0,t_0)} u \leq \inf_{\hat{Q}_{0,1}} u \leq \bigg( \frac{|D_{\delta_1}^-|}{|\hat{Q}_{0,1}|} \fint_{D_{\delta_1}^-(x_1,t_1)} u^p \bigg)^{\frac{1}{p}} \leq 2^{\frac{2n+2}{p}} \bigg( \fint_{D_{\delta_1}^-(x_1,t_1)} u^p \bigg)^{\frac{1}{p}}.
	\]
	Combining the above inequality with \eqref{eq:WeakHarnStep00}, it follows
	\begin{equation}\label{eq:WeakHarnack01}
	\bigg( \fint_{D_{\delta_0}^-(x_0,t_0)} u^p \bigg)^{\frac{1}{p}} \leq C \bigg\{ \bigg( \fint_{D_{\delta_1}^-(x_1,t_1)} u^p \bigg)^{\frac{1}{p}} + \delta_0^2||f||_{L^\infty(Q_1)}\bigg\},
	\end{equation}
	for some new $C > 0$ (depending only on $n$, $\lambda$ and $\Lambda$).
	
	\smallskip
	
	Then, we iterate this procedure. Set 
	\[
	\delta_{k+1} := 2\delta_k, \quad x_{k+1} := \big( 1- \tfrac{2}{|x_k|} \delta_k) x_k, \quad t_{k+1} := t_k + 3\delta_{k+1}^2 + 3\delta_k^2.
	\]
	On the lines of the argument above, it is not difficult to find 
	\[
	\bigg( \fint_{D_{\delta_k}^-(x_k,t_k)} u^p \bigg)^{\frac{1}{p}} \leq C \bigg\{ \bigg( \fint_{D_{\delta_k}^-(x_{k+1},t_{k+1})} u^p \bigg)^{\frac{1}{p}} + \delta_k^2||f||_{L^\infty(Q_1)}\bigg\},
	\]
	for every $k \in \N$, where $C$ is as in \eqref{eq:WeakHarnack01}. The iteration stops at an index $k=N-1$ for which either
	\begin{equation}\label{eq:WeakHarnIterated}
	\begin{aligned}
	\bigg( \fint_{D_{\delta_0}^-(x_0,t_0)} u^p \bigg)^{\frac{1}{p}} &\leq C^{N-1} \bigg\{ \bigg( \fint_{D_{\delta_{N-1}}^-(x_{N-1},t_{N-1})} u^p \bigg)^{\frac{1}{p}} + 2^N ||f||_{L^\infty(Q_1)} \sum_{k=0}^{N-2} (2C)^{-j}\bigg\} \\
	& \leq C^N \bigg\{ \bigg( \fint_{D_{\delta_{N-1}}^-(x_{N-1},t_{N-1})} u^p \bigg)^{\frac{1}{p}} + 2^N ||f||_{L^\infty(Q_1)} \bigg\},
	\end{aligned}
	\end{equation}
	and $D_{\delta_{N-1}}^+(x_{N-1},t_{N-1}) \not\subseteq Q_1$, or
	\[
	\bigg( \fint_{D_{\delta_0}^-(x_0,t_0)} u^p \bigg)^{\frac{1}{p}} \leq C^N \bigg\{ \inf_{D_{\delta_{N-1}}^+(x_{N-1},t_{N-1})} u + 2^N ||f||_{L^\infty(Q_1)} \bigg\},
	\]
	and $D_{2\delta_N}^-(x_N,t_N) \not\subseteq Q_1$. In both cases, it is not difficult to check that $2^N\delta_0 \sim 1$, in the sense that $a_0 \leq 2^N\delta_0 \leq b_0$, where $a_0$ and $b_0$ are two positive numerical constants (for instance, one could easily check that in the first scenario we have $t_N - t_0 = 5\delta_0^2(2^{2N} - 1)$).

	Now, let us assume that \eqref{eq:WeakHarnIterated} holds true with $D_{\delta_{N-1}}^+(x_{N-1},t_{N-1}) \not\subseteq Q_1$ (the other case can treated similarly). Then we refine the definition of $\delta_k$ and $(x_k,t_k)$, by setting
	\[
	\delta_{k+1}' := \varrho \delta_k', \quad x_{k+1}' := \big( 1- \tfrac{\varrho}{|x_k|} \delta_k') x_k', \quad t_{k+1}' := t_k' + 3(\delta_{k+1}')^2 + 3(\delta_k')^2,
	\]
	where $\varrho \in (1,2]$, $\delta_0' := \delta_0$ and $(x_0',t_0') := (x_0,t_0)$. We then repeat the iteration above up to the step $N-1$. At this point, by continuity, we may choose $\varrho := \varrho_0 \in (1,2]$ such that
	\[
	(0,1) \in \overline{D_{\delta_{N-1}'}^+(x_{N-1}',t_{N-1}')} \subseteq \overline{Q_1}
	\]
	and so, applying once more \eqref{eq:WeakHarnack} to the r.h.s. of \eqref{eq:WeakHarnIterated}, it follows
	\begin{align*}
	\bigg( \fint_{D_{\delta_0}^-(x_0,t_0)} u^p \bigg)^{\frac{1}{p}} &\leq C^{N+1} \bigg\{ \inf_{D_{\delta_{N-1}'}^+(x_{N-1}',t_{N-1}')} u + 2^N ||f||_{L^\infty(Q_1)}  \bigg\} \\
	& \leq C^{N+1} \big( u(0,1) + 2^N ||f||_{L^\infty(Q_1)} \big).
	\end{align*}
	Recalling that $2^N\delta_0 \sim 1$ (that is, $N \sim |\log_2(\delta_0)|$), we immediately see that $C^{N+1} \leq C \delta_0^{-m}$ for some $C,m > 0$ depending only on $n$, $\lambda$ and $\Lambda$, and the thesis follows.
\end{proof}

\begin{proof}[Proof of Corollary \ref{cor:weakHarnack}]
	Thanks to translation and scaling invariance of the problem we can assume that $r=1$.
	Let us fix $\delta \in (0,1/4)$ and cover $B_1\times(-1,-3/4)$ with a finite number of disjoint ``parabolic cubes'' $\tilde{Q}_\delta(x_k,t_k)$, $k \in \{1,\dots,k_\delta\}$ for some $k_\delta \in \N$, such that
	\begin{equation}\label{eq:CovMeasBd}
	|\cup_k \tilde{Q}_{\delta}(x_k,t_k)| \leq |Q_1|.
	\end{equation}
	We also consider a family of cylinders $Q_r(x_k,t_k)$, where the radius $r$ is defined as follows: 
	\[
	r := \inf\{ \varrho > 0: \tilde{Q}_\delta(x_k,t_k) \subset Q_\varrho(x_k,t_k)\}.
	\]
	By construction, $r = c_n \delta$, for some $c_n > 0$ (depending only on $n$). Consequently, by \eqref{eq:ParHalfHarnackdel}
	\[
	\bigg( \fint_{\tilde{Q}_{\delta}(x_k,t_k)} u^p \bigg)^{\frac{1}{p}} \leq \bigg( \frac{|Q_r|}{|\tilde{Q}_{\delta}|}\fint_{Q_r(x_k,t_k)} u^p \bigg)^{\frac{1}{p}} \leq \frac{C}{\delta^m} \bigg(\inf_{Q_{1/2}^+} u + ||f||_{L^\infty(Q_1)}\bigg),
	\]
	for every $k \in \{1,\dots,k_\delta\}$, where $C$, $m$ and $p$ depend only on $n$, $\lambda$ and $\Lambda$. Summing on $k$, using Jensen's inequality ($p \in (0,1)$) and recalling \eqref{eq:CovMeasBd}, we obtain
	\begin{align*}
	\frac{C}{\delta^m}\bigg( \inf_{Q_{1/2}^-}u + ||f||_{L^\infty(Q_1)} \bigg) &\geq  \frac{1}{k_\delta}\sum_{k=1}^{k_\delta} \bigg( \fint_{\tilde{Q}_{\delta}(x_k,t_k)} u^p \bigg)^{\frac{1}{p}} \geq  \bigg( \sum_{k=1}^{k_\delta} \frac{1}{k_\delta |\tilde{Q}_{\delta}|} \int_{\tilde{Q}_{\delta}(x_k,t_k)} u^p \bigg)^{\frac{1}{p}} \\
	&= |\cup_k \tilde{Q}_{\delta}(x_k,t_k)|^{-\frac{1}{p}} \bigg(\int_{\cup_k \tilde{Q}_{\delta}(x_k,t_k)} u^p \bigg)^{\frac{1}{p}} \geq \bigg(\frac{|A|}{|Q_1|}\bigg)^{\frac{1}{p}} \bigg( \fint_A u^p \bigg)^{\frac{1}{p}} \\
	&\geq \bigg(\frac{C_0\delta^{m_0}}{|Q_1|}\bigg)^{\frac{1}{p}} \bigg( \fint_A u^p \bigg)^{\frac{1}{p}},
	\end{align*}
	which yields \eqref{eq:ParHalfHarnackdelBis}.

\end{proof}

%
%
%%%%%%%%%%%%%%%%%%%%%%%%%%%%%%%%%%%%%%%%%%%%%%%%%%%%%%%%%%%%%%%%%%%%%%%%%%%%%%%%%%%%%%%%%
%%%%%%%%%%%%%%%%%%%%%%%%%%%%%%%%%%%%%%%%%%%%%%%%%%%%%%%%%%%%%%%%%%%%%%%%%%%%%%%%%%%%%%%%%
%
%
% References
%

\end{document}